\documentclass[article]{amsart}
\usepackage{enumerate}
\usepackage{enumitem}
\usepackage{amsfonts,amssymb,amsmath,amsthm}
\usepackage{epsfig}
\usepackage{graphics}
\usepackage[normalem]{ulem}
\usepackage{color}
\usepackage{comment}
\usepackage{stmaryrd} 
\usepackage{url}
\usepackage{overpic}
\usepackage{mathrsfs}

\usepackage{tikz-cd}
\usetikzlibrary{arrows.meta,bending, positioning, shapes} 
\usetikzlibrary{automata,decorations.pathmorphing,arrows}
\usetikzlibrary{positioning}
\usetikzlibrary{calc}

\usepackage[pdftex]{hyperref}

\input xy 
\xyoption{all}
\numberwithin{equation}{section}

\definecolor{OrangeRed}{cmyk}{0,0.6,1,0}            
\definecolor{DarkBlue}{cmyk}{1,1,0,0.20}
\definecolor{DarkGreen}{cmyk}{1,0,0.6,0.2}
\definecolor{myblue}{rgb}{0.66,0.78,1.00}
\definecolor{Violet}{cmyk}{0.79,0.88,0,0}
\definecolor{Lavender}{cmyk}{0,0.48,0,0}

\newcounter{main}

\theoremstyle{plain}
        \newtheorem{theorem}{Theorem}[section]
        \newtheorem*{theorem*}{Theorem}
        \newtheorem*{conj*}{Conjecture}
        \newtheorem{lemma}[theorem]{Lemma}
        \newtheorem{corollary}[theorem]{Corollary}
        \newtheorem{prop}[theorem]{Proposition}

\theoremstyle{definition}
        \newtheorem{definition}[theorem]{Definition}
        \newtheorem*{definition*}{Definition}

\theoremstyle{remark}
        \newtheorem*{remark}{Remark}
        
        \newtheorem{rem}[theorem]{Remark}
        \newtheorem{example}[theorem]{Example}

        \newtheorem{question}{Question}
        
        \newtheorem*{example*}{Example}
        \newtheorem*{examples*}{Examples}        
        \newtheorem*{claim}{Claim}
        \newtheorem*{fact}{Fact}

        \newtheorem*{convention}{Convention}

\DeclareMathAlphabet{\mathbbmsl}{U}{bbm}{m}{sl}

\def\C{\mathbb{C}}
\def\P{\mathbb{P}}

\newcommand{\abs}[1]{| #1 |}

\def\bcases{\begin{cases}}

\def\ecases{\end{cases}}

\newcommand{\N}{\mathbb N}

\newcommand{\R}{\mathbb R}

\newcommand{\Z}{\mathbb Z}

\newcommand{\bea}{\begin{eqnarray*}}
\newcommand{\eea}{\end{eqnarray*}}

\newcommand{\be}{\begin{equation}}
\newcommand{\ee}{\end{equation}}

\newcommand{\MM}{\mathcal{M}}
\newcommand{\RR}{\mathcal{R}}
\newcommand{\GG}{\mathcal{G}}
\newcommand{\dist}{\operatorname{dist}}
\renewcommand{\epsilon}{\varepsilon}
\renewcommand{\phi}{\varphi}

\newcommand{\xbf}{{\bf x}}
\newcommand{\Xbf}{{\bf X}}
\newcommand{\ebf}{{\bf e}}
\newcommand{\obf}{{\bf 0}}
\newcommand{\zbf}{{\bf z}}
\newcommand{\ybf}{{\bf y}}

\newcommand{\wbf}{{\bf w}}
\newcommand{\ibf}{{\bf 1}}
\newcommand{\F}{\mathbbmsl{F}}
\newcommand{\IS}{\mathrm{I}}

\begin{document}

\title{Zeros of the independence polynomial on recursive sequences of graphs}

\author{Mikhail Hlushchanka}
\address{Korteweg-de Vries Instituut voor Wiskunde, Universiteit van Amsterdam,  1090 GE \newline Amsterdam, The Netherlands}
\email{mikhail.hlushchanka@gmail.com}

\author{Han Peters}
\address{Korteweg-de Vries Instituut voor Wiskunde, Universiteit van Amsterdam,  1090 GE \newline Amsterdam, The Netherlands}
\email{h.peters@uva.nl}

\begin{abstract}
    We study the hard-core model on recursively defined sequences $(G_n)_{n\geq0}$ of graphs with a fixed number $k\geq 1$ of labeled vertices in each graph. The next graph in the sequence is constructed by taking a fixed number $m\geq 2$ of copies of the previous graph, connecting these copies by identifying some labeled vertices according to a fixed rule, and afterward choosing $k$ labeled vertices in the resulting graph, again in accordance with a fixed rule.  Examples of such sequences include the Sierpi\'nski gasket graphs, hierarchical lattices, and many more. We prove that, when the vertex degrees of the graphs $G_n$ are uniformly bounded and the distances between the labeled vertices in $G_n$ diverge, the complex zeros of the univariate independence polynomials $Z_{G_n}(\lambda)$ avoid a neighborhood of the non-negative real axis. By the Lee--Yang theory this implies that no phase transitions occur for the hard-core model on these recursive sequences of graphs, independently of the starting graph $G_0$.

    The proof relies on the study of the dynamical properties of a one-parameter family of rational maps $F_\lambda$ on the $(2^k-1)$-dimensional complex projective space induced by the graph recursion operator. The dynamical framework developed in this paper can be naturally extended to other classical models in statistical mechanics (such as the Ising or Potts models) and to more general notions of graph recursions.
\end{abstract}

\maketitle
\tableofcontents

\section{Introduction}

\subsection{Main results.} 

The study of phase transitions in statistical mechanics is intimately connected with the study of the zeros of the appropriate partition functions. In this paper, we focus on the independence polynomial, which is the partition function of the \emph{hard-core model} (see Section~\ref{subsec: statistical physics}). Formally, the \emph{independence polynomial}
$Z_G(\lambda)$ of a finite graph $G$ is the generating function for \emph{independent sets}, i.e., subsets of pairwise non-adjacent vertices, in the graph $G$: the coefficient of $\lambda^n$ in $Z_G(\lambda)$ represents the number of independent sets of size $n$. This polynomial is a classical graph invariant in enumerative combinatorics, and its zeros play an important role not only in statistical physics, but also in problems from probabilistic combinatorics and theoretical computer science; see, for example, \cite{ScottSokal05, PatelRegts, BezakovaEtAl, BoerEtAlApprox}. It is known, in particular, that the roots of the independence polynomial are dense outside a neighborhood of the origin for the family of bounded-degree graphs \cite{BoerEtAlApprox,BezakovaEtAl}. However,
the overall structure of this zero locus, as well as the corresponding zero locus for concrete sequences of
graphs approximating some infinite graph or converging to some limit space, remains far less understood.

The goal of this paper is to develop a unified dynamical framework for the study of zeros of the independence polynomials $Z_{G_n}$ for sequences $(G_n)_{n\geq 0}$ of finite graphs that are defined recursively, i.e., $G_{n+1} = \RR(G_n)$. The recursion operator $\RR$ acts on finite graphs $G$ with a fixed number $k \in \mathbb N$ of distinct labeled vertices, by first taking a fixed number $m \ge 2$ of copies of $G$, then identifying some of the identically labeled vertices in these copies, and finally assigning distinct labels to $k$ previously labeled vertices in the resulting graph, all according to a fixed rule. An example of such a recursive procedure inspired by the Sierpi\'nski gasket fractal is illustrated in Figure~\ref{fig: sierpinski tripod graphs}; see also Figure~\ref{fig: sierpinski graphs} for an illustration of the same recursive construction with a different starting graph.

We now state our main result.

\begin{theorem}\label{main result zeros}
    Let $G_0$ be a starting graph with $k$ labeled vertices, and let $\RR$ be a recursion operator defining a sequence of graphs $(G_n)_{n\geq 0}$. Suppose that the graph sequence satisfies the following two conditions:
    \begin{enumerate}[label={\normalfont (\roman*)}]
        \item\label{item: non-deg intro} the maximal vertex degrees of the graphs $G_n$ are uniformly bounded, and 
        \item\label{item: exp intro} the minimal distance between pairs of distinct labeled vertices in $G_n$ diverges as $n \rightarrow \infty$. 
    \end{enumerate}
    Then the zeros of the independence polynomials $Z_{G_n}$ avoid a uniform neighborhood of $\R_{\geq 0}$. 
\end{theorem}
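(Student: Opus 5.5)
We plan to encode the independence polynomial of $G_n$ by the vector of restricted partition functions indexed by the subsets $S$ of the $k$ labeled vertices:
\[
Z_{G_n}^S(\lambda)=\sum_{I \text{ independent},\ I\cap L_n = S}\lambda^{|I|}, \qquad S\subseteq L_n=\{v_1,\dots,v_k\}.
\]
Subsets $S$ that are not independent in $G_n$ contribute zero, so it is harmless to index over all $2^k$ subsets. The recursion $\RR$ turns the vector $Z_{G_{n+1}}=(Z^S_{G_{n+1}})_S$ into a homogeneous polynomial map of degree $m$ applied to $Z_{G_n}$. Here one multiplies the $m$ copies' coordinates subject to compatibility on the identified vertices, and divides by $\lambda^{(\text{multiplicity}-1)}$ for each identified vertex in the chosen set. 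After clearing these powers this becomes a polynomial map $P_\lambda$, and projectivizing gives the rational map $F_\lambda$ on $\P^{2^k-1}$. We have $Z_{G_n}=\sum_S Z^S_{G_n}$, so a zero of $Z_{G_n}$ at $\lambda$ means that $[Z_{G_n}(\lambda)]$ lies on the hyperplane $H=\{\sum_S x_S=0\}$. It is also possible that the vector vanishes identically, which we must rule out separately.

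\textbf{Step 1.} For $\lambda>0$ all coordinates $Z^S_{G_n}(\lambda)$ with $S$ independent are positive. We will show that the projective orbit $[Z_{G_n}(\lambda)]$ converges to a point where the ratios $Z^S/Z^\emptyset$ factor approximately as $\prod_{v\in S} r_v$. This is the decorrelation statement: because the labeled vertices are at diverging distance and degrees are bounded, the hard-core measure has correlation decay between far-apart vertices. More modestly, and all we need, is that the ratios stay in a compact subset of the positive orthant. The key tool is that the $\lambda$-dependence and the ratio $Z^{\{v\}}/Z^\emptyset$ are bounded in $[c,C]$ with constants depending only on the degree bound $\Delta$ and on compact $\lambda$-ranges. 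This follows from the standard bound that the occupation probability of a vertex lies in $[\lambda/(1+\lambda)^{\Delta+1}\cdot\text{const},\ \lambda/(1+\lambda)]$. Hence the orbit for real $\lambda\in[\epsilon,1/\epsilon]$ lies in a compact set $K$ of the open positive orthant of $\P^{2^k-1}$, uniformly in $n$. Condition (ii) is what guarantees that the pairwise joint occupation probabilities of labeled vertices are also bounded below. When two labeled vertices are adjacent, the coordinate $Z^S$ is identically zero, so we need distance $\ge2$ eventually. This is guaranteed by (ii), after discarding finitely many $n$, which are handled since finitely many polynomials each have zeros away from $\R_{>0}$ (and $\lambda=0$ gives value $1$).

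\textbf{Step 2 (main obstacle).} We extend from the real segment to a complex neighborhood uniformly in $n$. The plan is to show that $F_\lambda$ for real $\lambda>0$ maps a neighborhood of $K$ strictly into itself, contracting in a Hilbert/Kobayashi-type metric on the positive cone. By holomorphic dependence on $\lambda$ and compactness, a slightly smaller complex neighborhood $U\supset K$ is then mapped into itself for all $\lambda$ in a complex neighborhood $\Lambda$ of each compact subsegment. Since $\overline U$ is disjoint from $H$, no zeros occur. Strict contraction in general fails for a single step, so I would use an iterate $F^N_\lambda$ and exploit (ii). After $N$ steps the labeled vertices are far apart, and the coordinates of $F^N$ essentially decouple into products of one-vertex quantities. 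The image of the positive cone is then relatively compact in the interior, which by Birkhoff's theorem gives strict contraction. I expect quantifying this is the hardest part, since it requires uniform control as $\lambda\to\infty$ and $\lambda\to0$.

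\textbf{Step 3.} Near $\lambda=0$ we use $Z_{G}(\lambda)=1+O(\lambda|V|)$ together with the bounded-degree cluster expansion: for $|\lambda|<e^{-1}/\Delta$ (Shearer/Dobrushin) no zeros occur. Near $\lambda=\infty$ we work in the normalized coordinates $Z^S/\lambda^{\max}$, where the dominant contributions come from maximum independent sets. Here one checks that the corresponding limit map at $\lambda=\infty$ (a tropical/degenerate map) still keeps a real compact invariant set away from $H$. This is the most delicate point. If a direct approach fails, I would establish the result for $\lambda$ in a sector around each compact segment $[\epsilon,1/\epsilon]$ and obtain the neighborhood near infinity by a separate monotone-limit argument. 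Combining the pieces gives a uniform neighborhood of $\R_{\ge0}$ free of zeros.
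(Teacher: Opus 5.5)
Your Step 2 has a genuine gap, and it is not a matter of quantification: the contraction mechanism you propose does not exist.

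\textbf{Why Step 2 fails.} For $\lambda>0$ the lift $\widehat F_\lambda$ is order-preserving and homogeneous of degree $m\ge 2$. From $x\le ty$ you only get $\widehat F_\lambda(x)\le t^m\widehat F_\lambda(y)$, so the map is $m$-Lipschitz for Hilbert's metric; Birkhoff's contraction theorem concerns linear maps and does not transfer.

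More decisively, $F_\lambda$ has a positive-dimensional family of fixed points in the open positive orthant. After passing to a pre-fixed iterate, $F_\lambda$ retracts the Segre variety $\MM$ of uncorrelated points onto a $k_0$-dimensional subvariety $\MM_0(\lambda)$, $k_0\ge 1$, on which it is the identity. For the Sierpi\'nski recursion, $F_\lambda$ is the identity on all of $\MM$.

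The ``one-vertex quantities'' your decoupling produces are exactly the coordinates along $\MM_0$, and they are preserved, not contracted. Hence $F_\lambda^N$ does not map the positive cone into a relatively compact set. Moreover, no neighbourhood of $K$ can be mapped compactly into itself: $K$ contains the limits of the real orbits, hence regular points of $\MM_0(\lambda)$ and so infinitely many fixed points. A map sending a bounded domain compactly into itself strictly contracts the Kobayashi metric and has at most one fixed point. So there is no strict self-map at real $\lambda$ to perturb.

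\textbf{The missing idea.} You should exploit the neutral structure rather than fight it.
\begin{itemize}
\item By non-degeneracy (local degree one at periodic labels), $\MM_0(\lambda)$ stays \emph{pointwise fixed for every complex} $\lambda$ and varies holomorphically in $\lambda$.
\item By expansion, $\MM_0(\lambda)$ is transversally superattracting: the defining quadrics of $\MM$ improve from $\epsilon$ to $C\epsilon^2$ in one step.
\item For the skew product $(\lambda,\xi)\mapsto(\lambda,F_\lambda(\xi))$, this gives a neighbourhood $U'$ of a point $p(\lambda_0)\in\MM_0(\lambda_0)$ and a disk around $\lambda_0$ such that orbits starting in $U'$ stay in a slightly larger $U$ disjoint from the hyperplane $\{\sum_S x_S=0\}$.
\end{itemize}
The drift along the neutral directions is controlled by the summable transversal distances, not by any contraction.

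\textbf{Consequences for Steps 1 and 3, and a missing reduction.}
\begin{itemize}
\item Decay of correlation is not optional, contrary to your Step 1. Compactness of the real orbits is not enough. You need that for each $\lambda_0>0$ the real orbit converges to $\MM$, hence to some $p(\lambda_0)\in\MM_0(\lambda_0)$. Then a finite iterate lands in $U'$, and continuity in $\lambda$ handles the first finitely many steps.
\item Your Step 3 at $\lambda=\infty$ is unnecessary. ``Uniform'' means independent of $n$: the zero-free set is the union of the local disks around each $\lambda_0>0$ together with Shearer's disk at $0$, and it may thin out at infinity. A zero-free neighbourhood of $\infty$ is in general false, since paths generated by the Chebyshev recursion have unbounded zeros.
\item Since (i) and (ii) are hypotheses on the sequence rather than on $\RR$, you need a reduction when labeled vertices lie in different components of $G_0$. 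In that case $\RR$ itself need not be non-degenerate or expanding, and the decoupling of $F_\lambda^N$ you rely on fails. One splits the periodic labels into classes that remain connected and treats each induced recursion separately.
\end{itemize}
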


\begin{figure}
\begin{tikzpicture}[every node/.style={font=\small}]

\def\h{0.866}
\def\x{1}

\begin{scope}
\draw[black, thick] (0,0) -- (\x/2,\x*\h/3) ;
\draw[black, thick] (\x,0) -- (\x/2,\x*\h/3) ;
\draw[black, thick] (\x/2,\x*\h) -- (\x/2,\x*\h/3);

\filldraw[black] (0,0) circle (2.4pt) node[below left] {$\mathbf{1}$};
\filldraw[black] (\x,0) circle (2.4pt) node[below right] {$\mathbf{2}$};
\filldraw[black] (\x/2,\x*\h) circle (2.4pt) node[above=2pt] {$\mathbf{3}$};
\filldraw[fill = gray] (\x/2,\x*\h/3) circle (1.6pt);

\node[font=\normalsize] at (\x/2,0) [below=15pt] {$G_0$};
\end{scope}

\begin{scope}[xshift=4cm]
\draw[DarkGreen, thick] (0,0) -- (\x/2,\x*\h/3) ;
\draw[DarkGreen, thick] (\x,0) -- (\x/2,\x*\h/3) ;
\draw[DarkGreen, thick] (\x/2,\x*\h) -- (\x/2,\x*\h/3);

\draw[blue, thick] (\x,0) -- (3/2*\x,\x*\h/3) ;
\draw[blue, thick] (2*\x,0) -- (3/2*\x,\x*\h/3) ;
\draw[blue, thick] (3/2*\x,\x*\h) -- (3/2*\x,\x*\h/3);

\draw[red, thick] (\x/2,\h) -- (\x,4/3*\h) ;
\draw[red, thick] (3/2*\x,\h) -- (\x,4/3*\h) ;
\draw[red, thick] (\x,2*\h) -- (\x,4/3*\h);

\filldraw[black] (0,0) circle (2.4pt) node[below left] {$\mathbf{1}/1$};
\filldraw[black] (2*\x,0) circle (2.4pt) node[below right] {$\mathbf{2}/2$};
\filldraw[black] (\x,2*\x*\h) circle (2.4pt) node[above] {$\mathbf{3}/3$};

\filldraw[black] (\x/2,\h) circle (1.6pt) node[left=1.5pt] {$2$};
\filldraw[black] (\x,0) circle (1.6pt) node[below=1.5pt] {$3$};;
\filldraw[black] (3/2*\x,\h) circle (1.6pt) node[right=1.5pt] {$1$};
\filldraw[fill =  gray] (\x/2,\h/3) circle (1.6pt);
\filldraw[fill = gray] (\x,4/3*\h) circle (1.6pt);
\filldraw[fill = gray] (3/2*\x,\h/3) circle (1.6pt);

\node[font=\normalsize] at (\x,0) [below=15pt] {$G_1$};
\end{scope}

\begin{scope}[xshift=10cm]
\draw[DarkGreen, thick] (0,0) -- (\x/2,\x*\h/3) ;
\draw[DarkGreen, thick] (\x,0) -- (\x/2,\x*\h/3) ;
\draw[DarkGreen, thick] (\x/2,\x*\h) -- (\x/2,\x*\h/3);

\draw[DarkGreen, thick] (\x,0) -- (3/2*\x,\x*\h/3) ;
\draw[DarkGreen, thick] (2*\x,0) -- (3/2*\x,\x*\h/3) ;
\draw[DarkGreen, thick] (3/2*\x,\x*\h) -- (3/2*\x,\x*\h/3);

\draw[DarkGreen, thick] (\x/2,\h) -- (\x,4/3*\h) ;
\draw[DarkGreen, thick] (3/2*\x,\h) -- (\x,4/3*\h) ;
\draw[DarkGreen, thick] (\x,2*\h) -- (\x,4/3*\h);

\begin{scope}[xshift=1cm,yshift=2*\h cm]
\draw[red, thick] (0,0) -- (\x/2,\x*\h/3) ;
\draw[red, thick] (\x,0) -- (\x/2,\x*\h/3) ;
\draw[red, thick] (\x/2,\x*\h) -- (\x/2,\x*\h/3);

\draw[red, thick] (\x,0) -- (3/2*\x,\x*\h/3) ;
\draw[red, thick] (2*\x,0) -- (3/2*\x,\x*\h/3) ;
\draw[red, thick] (3/2*\x,\x*\h) -- (3/2*\x,\x*\h/3);

\draw[red, thick] (\x/2,\h) -- (\x,4/3*\h) ;
\draw[red, thick] (3/2*\x,\h) -- (\x,4/3*\h) ;
\draw[red, thick] (\x,2*\h) -- (\x,4/3*\h);
\end{scope}

\draw[blue, thick] (2*\x,0) -- (5/2*\x,\x*\h/3) ;
\draw[blue, thick] (3*\x,0) -- (5/2*\x,\x*\h/3) ;
\draw[blue, thick] (5/2*\x,\x*\h) -- (5/2*\x,\x*\h/3);

\draw[blue, thick] (3*\x,0) -- (7/2*\x,\x*\h/3) ;
\draw[blue, thick] (4*\x,0) -- (7/2*\x,\x*\h/3) ;
\draw[blue, thick] (7/2*\x,\x*\h) -- (7/2*\x,\x*\h/3);

\draw[blue, thick] (5/2*\x,\h) -- (3*\x,4/3*\h) ;
\draw[blue, thick] (7/2*\x,\h) -- (3*\x,4/3*\h) ;
\draw[blue, thick] (3*\x,2*\h) -- (3*\x,4/3*\h);

\begin{scope}[xshift=0cm]
\filldraw[fill = gray] (\x/2,\h) circle (1.6pt);
\filldraw[fill = gray] (\x,0) circle (1.6pt);
\filldraw[fill = gray] (3/2*\x,\h) circle (1.6pt);
\end{scope}

\begin{scope}[xshift=2*\x cm]
\filldraw[fill = gray] (\x/2,\h) circle (1.6pt);
\filldraw[fill = gray] (\x,0) circle (1.6pt);
\filldraw[fill = gray] (3/2*\x,\h) circle (1.6pt);
\end{scope}

\begin{scope}[xshift=\x cm, yshift=2*\h cm]
\filldraw[fill = gray] (\x/2,\h) circle (1.6pt);
\filldraw[fill = gray] (\x,0) circle (1.6pt);
\filldraw[fill = gray] (3/2*\x,\h) circle (1.6pt);
\end{scope}

\filldraw[black] (2*\x,0) circle (1.6pt) node[below=1.5pt] {$3$};
\filldraw[black] (\x,2*\x*\h) circle (1.6pt) node[left=1.5pt] {$2$};
\filldraw[black] (3*\x,2*\x*\h) circle (1.6pt) node[right=1.5pt] {$1$};

\filldraw[black] (0,0) circle (2.4pt) node[below left] {$\mathbf{1}/1$};
\filldraw[black] (4*\x,0) circle (2.4pt) node[below right] {$\mathbf{2}/2$};
\filldraw[black] (2*\x,4*\x*\h) circle (2.4pt) node[above] {$\mathbf{3}/3$};

\filldraw[fill = gray] (\x/2,\h/3) circle (1.6pt);
\filldraw[fill = gray] (\x,4/3*\h) circle (1.6pt);
\filldraw[fill = gray] (3/2*\x,\h/3) circle (1.6pt);

\begin{scope}[xshift=1cm,yshift=2*\h cm]
\filldraw[fill = gray] (\x/2,\h/3) circle (1.6pt);
\filldraw[fill = gray] (\x,4/3*\h) circle (1.6pt);
\filldraw[fill = gray] (3/2*\x,\h/3) circle (1.6pt);
\end{scope}

\begin{scope}[xshift=2cm]
\filldraw[fill = gray] (\x/2,\h/3) circle (1.6pt);
\filldraw[fill = gray] (\x,4/3*\h) circle (1.6pt);
\filldraw[fill = gray] (3/2*\x,\h/3) circle (1.6pt);
\end{scope}

\node[font=\normalsize] at (2*\x,0) [below=15pt] {$G_2$};
\end{scope}

\end{tikzpicture}
    \caption{Sierpi\'{n}ski tripod graphs $(G_{n})_{n=0,1,2}$. The thicker black vertices with their bold labels correspond to the marked vertices in each graph, labeled $1,2,3$. The black vertices all together with their normal font labels correspond to the marked vertices of the copies of the graph from the previous step. For $n=1,2$, the edges of $G_{n+1}$ induced by different copies of the graph $G_n$ are shown in different colors.}
    \label{fig: sierpinski tripod graphs}
\end{figure}

In the theorem above and in the rest of the paper, ``uniform'' means ``independent of $n$''. The following is a quick consequence of our main result, combined with Lemma~\ref{lem: limit_energy}.

\begin{corollary}
    Let $(G_n)_{n\geq 0}$ be a sequence of graphs as in Theorem~\ref{main result zeros}. Then the limiting free energy per site 
    $$
\rho(\lambda) = \lim_{n\to\infty}\frac{\log Z_{G_n}(\lambda)}{\#V(G_n)}
$$
    is well defined and real-analytic on all of $\mathbb R_+$, that is, there are no phase transitions for the hard-core model on $(G_n)_{n\geq 0}$.
\end{corollary}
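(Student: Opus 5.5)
The plan is to derive the corollary from Theorem~\ref{main result zeros} together with Lemma~\ref{lem: limit_energy}, which (as we will use it) converts a uniform zero-free neighborhood of a real interval into existence and analyticity of the limiting free energy on that interval. By Theorem~\ref{main result zeros} there is an open set $U\supset \R_{\geq 0}$ such that $Z_{G_n}(\lambda)\neq 0$ for all $n$ and all $\lambda\in U$. We may shrink $U$ so that each connected component of its intersection with a bounded disk is simply connected; for instance, take for each $R>0$ a simply connected open neighborhood $U_R\subset U$ of the segment $[0,R]$. On $U_R$ we choose the branch of $\log Z_{G_n}$ that is real on $[0,R]$; this is possible because $Z_{G_n}>0$ there.

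Next we need uniform bounds on $f_n(\lambda)=\frac{1}{\#V(G_n)}\log Z_{G_n}(\lambda)$ on $U_R$. The real part is controlled from above trivially: $|Z_{G_n}(\lambda)|\le (1+|\lambda|)^{\#V(G_n)}$. The imaginary part needs more care. Here we plan to use the standard argument via zero-freeness and bounded degree, namely the cluster-expansion/Barvinok interpolation type bound. Alternatively, and presumably as encoded in Lemma~\ref{lem: limit_energy}, we write $\log Z_{G_n}$ as a sum of $\#V(G_n)$ logarithms of ratios $Z_{G\setminus\{v_1..v_{j}\}}/Z_{G\setminus\{v_1..v_{j-1}\}}$. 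Each ratio is a zero-free function whose values are confined to a fixed compact set avoiding $0$, and this uses the bounded-degree hypothesis~\ref{item: non-deg intro}. Consequently $\{f_n\}$ is a normal family on $U_R$.

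We also need existence of the limit on $\R_+$. For $\lambda>0$ the recursive structure gives $Z_{G_{n+1}}$ comparable to $Z_{G_n}^m$ up to a factor $e^{O(k)}$-type correction, since gluing at $k$ labeled vertices changes counts by boundedly many factors. Likewise $\#V(G_{n+1})=m\#V(G_n)-O(1)$ with $\#V(G_n)\sim c\,m^n$. Hence $f_n(\lambda)$ is Cauchy for real $\lambda>0$, and this is exactly the content we expect Lemma~\ref{lem: limit_energy} supplies.

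Finally, Vitali's theorem finishes the argument. The family is normal on $U_R$ and converges on the real segment, which has accumulation points, so it converges locally uniformly on $U_R$ to a holomorphic limit. Its restriction to $(0,R)$ is $\rho$, which is therefore real-analytic there. Letting $R\to\infty$ gives analyticity on all of $\R_+$. The main obstacle is the uniform bound on $f_n$ in a complex neighborhood, i.e.\ controlling the argument of $Z_{G_n}$. We will take it from Lemma~\ref{lem: limit_energy}; failing that, we would prove it by the telescoping ratio argument using zero-freeness on $U$ for all vertex-deleted subgraphs. This in turn requires that the dynamical proof of Theorem~\ref{main result zeros} also covers subgraphs obtained by pinning labeled vertices, which is plausible since those correspond to the other coordinates of $F_\lambda$.
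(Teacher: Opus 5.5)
Your overall route is the paper's: existence of the limit on $\R_+$, then a normal family of holomorphic branches $f_n=\log Z_{G_n}/\#V(G_n)$ on a simply connected zero-free neighborhood, then Vitali. The paper combines Lemma~\ref{lem: limit_energy} (existence) with the Lee--Yang type lemma in Section~\ref{subsec: statistical physics} (analyticity).

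Your existence argument is fine and slightly different from the paper's monotonicity argument. You have $(1+\lambda)^{-km}Z_{G_n}(\lambda)^m\le Z_{G_{n+1}}(\lambda)\le Z_{G_n}(\lambda)^m$: for the lower bound, count independent sets avoiding all glued vertices and use $Z_{G}\le (1+\lambda)^k Z_G^{\obf}$. This makes $\log Z_{G_n}/m^n$ Cauchy.

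\textbf{The gap is the normality step.} You assert that the argument of $Z_{G_n}$ must be controlled, and propose to get this from Lemma~\ref{lem: limit_energy} or from a telescoping product over vertex deletions. Neither works as stated.
\begin{itemize}
\item Lemma~\ref{lem: limit_energy} is a purely real statement. It uses $Z_{G_{n+1}}\le Z_{G_n}^m$ for $\lambda>0$ and $\#V(G_n)/m^n\to\alpha$, and says nothing off the real axis.
\item The telescoping argument needs $Z_{G_n-\{v_1,\dots,v_j\}}$ to be zero-free on $U$ for arbitrary deleted vertex sets. These graphs are not members of a recursive sequence. They are also not captured by the coordinates of $F_\lambda$, which only record the $2^k$ conditionings $Z^{\xbf}_{G_n}$ on the labeled vertices. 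So neither Theorem~\ref{main result zeros} nor its proof applies to them.
\item Even granting zero-freeness, the claim that the ratios stay in a fixed compact set avoiding $0$ would need a further argument.
\end{itemize}
As written, normality is unproved.

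\textbf{The missing observation is that no control of $\mathrm{Im}\, f_n$ is needed.} On $U_R$ you already have $\mathrm{Re}\, f_n\le \log(1+\sup_{U_R}|\lambda|)$. So all $f_n$ omit a fixed half-plane and form a normal family by Montel's theorem, in the spherical sense. No subsequence can tend to $\infty$ locally uniformly, because $f_n\to\rho$ on $(0,R)$ and $\rho$ is finite there. Vitali then gives a holomorphic limit. This is exactly the paper's argument.

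If you want an explicit bound on the argument anyway, it is elementary. Since $Z_{G_n}(0)=1$, write $Z_{G_n}(\lambda)=\prod_i(1-\lambda/z_i)$ over its at most $\#V(G_n)$ zeros. Take $U_R$ convex and zero-free, e.g.\ a small tubular neighborhood of $[0,R]$. Then each factor takes values in an open half-plane bounded by a line through $0$ and containing $1$, which gives $|\mathrm{Im}\, f_n|<\pi$ on $U_R$. The bounded-degree hypothesis enters only through Theorem~\ref{main result zeros}, not through this step.
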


We emphasize that the uniform zero-free neighborhood of the positive real axis in Theorem~\ref{main result zeros} depends on the starting graph $G_0$. For example, the independence polynomials of the Sierpi\'nski tripod graphs from Figure~\ref{fig: sierpinski tripod graphs} have complex zeros with positive real part, while for the Sierpi\'nski gasket graphs from Figure~\ref{fig: sierpinski graphs}, it is known that all zeros are real and negative due to a result of Chudnovsky and Seymour \cite{CS_clawfree}.

When the starting graph $G_0$ is connected, the assumptions~\ref{item: non-deg intro} and~\ref{item: exp intro} in Theorem~\ref{main result zeros} depend only on the recursion operator $\RR$, although the actual bound on the vertex degrees in~\ref{item: non-deg intro} also depends on the graph $G_0$. We will say that the operator $\RR$ is \emph{non-degenerate} if for some connected starting graph $G_0$ (and thus for any starting graph) the vertex degrees in the graphs $G_n=\RR^n(G_0)$ remain uniformly bounded. If for some connected starting graph $G_0$ (and thus for any starting graph) the minimal distance between the labeled vertices in $G_n$ diverges, we will say that $\RR$ is \emph{expanding}. 

The recursion operator $\RR$ induces a renormalization map on a multi-dimensional complex projective space. The proof of Theorem~\ref{main result zeros} relies on the analysis of the dynamical properties of this renormalization map (see Theorem~\ref{thm: dynamics}). In particular, the non-degeneracy and expansion properties of $\RR$ will play a key role in our considerations. Our dynamical framework can be naturally extended to other classical models in statistical mechanics, such as the Ising or Potts models. Moreover, some of our results hold for more general notions of graph recursion; see our earlier paper~\cite{HP2024}.

Finally, we remark that the graph recursion operators we consider in this paper naturally arise from fractal spaces originating in various settings, such as attractors of iterated function systems, Julia sets of rational maps, and limit spaces of self-similar groups. For instance, each Misiurewicz polynomial induces a non-degenerate and expanding recursion operator; see Example~\ref{ex: dendrite}. Conversely, given a non-degenerate and expanding recursion operator $\RR$ and an induced recursive sequence $(G_n)_{n\geq 0}$ of finite graphs, one may naturally define a limiting self-similar infinite graph (in the sense of local convergence for pointed graphs), as well as a limiting fractal space (in the sense of  Gromov--Hausdorff convergence of metric spaces or as the boundary at infinity of a Gromov hyperbolic space). These connections place our setting within a broader geometric framework and suggest further directions for investigation. We do not pursue these constructions in the present paper; for related perspectives, we refer the interested reader to the survey paper \cite{BGN_Fractal_groups} in the context of self-similar groups, and to more recent articles \cite{IGS,VERS} in the context of vertex and edge replacement systems for graphs.

\subsection{Dynamical setting and results.}

The main reason for studying partition functions on recursive sequences of graphs, rather than on sequences of graphs that are potentially more natural from a physical perspective, is that the graph recursion operator typically induces a renormalization map directly related to the partition function of interest. This allows one to use techniques from dynamical systems in order to prove results for the zeros of partition functions that may be impossible to obtain outside of the recursive setting; see Section~\ref{sss: results for rec graphs} for an overview of such studies in the literature. On the other hand, recursive graphs provide a large class of non-trivial rational dynamical systems, the study of which stimulates new challenges in complex dynamics. We now briefly introduce our dynamical framework for the study of the independence polynomial roots for recursive graph sequences.

Let $\GG_k$ denote the set of finite graphs with a fixed number $k\in \N$ of  distinct labeled vertices (with labels $1,\dots,k$). We refer the reader to Section~\ref{sec: graph recursion} for a formal description of graph recursion operators $\RR$ on $\GG_k$ that we consider in this paper. As above, $\RR(G)$ is obtained by gluing together a fixed number $m\geq 2$ of copies of $G\in \GG_k$ according to a fixed rule; see Figures~\ref{fig: sierpinski tripod graphs}--\ref{fig: dendrite} for an illustration. 

For each fixed parameter $\lambda\in \C^*:=\C\setminus \{0\}$, the recursion operator $\RR$ induces a homogeneous polynomial self-map $\widehat{F}_\lambda: \C^{2^k}\to \C^{2^k}$ of degree $m$ such that $\widehat{F}_\lambda$ is semi-conjugate to the operator $\RR$: there exists a map $\widehat{\phi}_\lambda: \GG_k\to \C^{2^k}$ with $\widehat{F}_\lambda \circ \widehat{\phi}_\lambda =\widehat{\phi}_\lambda \circ \RR$; see Section~\ref{sec: renormalization map} and Corollary~\ref{cor: renormalization map} in particular. Furthermore, $\widehat{F}_\lambda$ is related to the independence polynomial via the following equation: 
\begin{equation}\label{eq: relation_phi_Z}
\Sigma \circ \widehat{\phi}_\lambda(G) = Z_G(\lambda),    
\end{equation}
where $\Sigma: \C^{2^k}\to \C$ denotes the coordinate-sum map.

The homogeneous polynomial self-map $\widehat{F}_\lambda: \C^{2^k}\to \C^{2^k}$ descends to a rational map 
\[
F_\lambda: \P^{2^k-1} \dashrightarrow \P^{2^k-1}
\]
of degree $m$ on the $(2^k-1)$-dimensional complex projective space, which we call the \emph{renormalization map} (for the independence
polynomial) associated with the graph recursion $\RR$. (Here and below, the dashed arrow $\dashrightarrow$ indicates that the respective map is \emph{partial}, i.e., it is well-defined only outside a certain \emph{indeterminacy set}.) At the same time, the semi-conjugacy $\widehat{\phi}_\lambda: \GG_k\to \C^{2^k}$ for $\widehat{F}_\lambda$ induces a (partial) semi-conjugacy $\phi_\lambda: \GG_k\dashrightarrow \P^{2^k-1}$ for $F_\lambda$ satisfying 
\begin{equation}\label{eq: semi-conj_for_F}
    F_\lambda(\phi_\lambda(G)) = \phi_\lambda(\RR(G)),
\end{equation}
whenever the images $\phi_\lambda(G)$ and $\phi_\lambda(\RR(G))$ of a graph $G\in \GG_k$ are defined. In particular, for all parameters $\lambda\in \R_+$ and all starting graphs $G_0\in \GG_k$, we have 
\[
    F^n_\lambda(\phi_\lambda(G_0)) = \phi_\lambda(G_n),
\]
where $G_n:=\RR^n(G_0)$ for all $n\geq 0$.

Even though the coordinate-sum map $\Sigma: \C^{2^k}\to \C$ does not descend to $\P^{2^k-1}$, its zero set does. Equations~\eqref{eq: relation_phi_Z} and~\eqref{eq: semi-conj_for_F} therefore imply that the values $F^n_\lambda(\xi_0)$ of the orbit of $\xi_0=\phi_\lambda(G_0)\in \P^{2^k-1}$ under $F_\lambda$ determine whether the corresponding independence polynomials $Z_{G_n}$ vanish at $\lambda$. This relates the study of the roots of the polynomials $Z_{G_n}$ to the study of the dynamical properties of the renormalization map $F_\lambda$.

The following theorem summarizes the dynamical properties of $F_\lambda$, which are used to establish Theorem~\ref{main result zeros}.

\begin{theorem}\label{thm: dynamics}
Let $k\in \N$ and $\lambda\in \C^*$, and suppose $F_\lambda$ is the renormalization
map associated with a graph recursion operator $\RR$ on $\GG_k$. There exists an explicit $k$-dimensional algebraic variety $\MM \subset \P^{2^k-1}$ that is invariant under $F_\lambda$ and such that the following statements are true: 
\begin{enumerate}[label={\normalfont (\roman*)}]
    \item\label{item: dyn1} If $\RR$ is non-degenerate, then, after passing to an iterate of $F_\lambda$ if necessary, $F_\lambda$ retracts $\MM$ to a subvariety $\MM_0$ of dimension $k_0 \le k$, which consists entirely of fixed points.
    \item\label{item: dyn2} If $\RR$ is non-degenerate and expanding, then the variety $\MM_0$ is transversally superattracting. 
    \item\label{item: dyn3} If $\RR$ is non-degenerate and expanding, then for every starting graph $G_0\in \GG_k$ and every parameter $\lambda\in \R_+$, the orbit of the point $\xi_0 = \phi_\lambda(G_0)$ converges to $\MM$.
\end{enumerate}
\end{theorem}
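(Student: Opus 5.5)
The plan is to work directly with the coordinates of $\widehat{F}_\lambda$. For $G\in\GG_k$ I take $\widehat\phi_\lambda(G)=(Z_S)_{S\subseteq\{1,\dots,k\}}$, where $Z_S$ is the weighted count of independent sets whose intersection with the labeled vertices is exactly $S$. For the variety $\MM$ I propose the locus of ``decoupled'' configurations. These are the points with $Z_S=\prod_{i\in S}x_i$ in suitable affine coordinates, that is, the image of the Segre-type embedding $(\P^1)^k\to\P^{2^k-1}$, $((a_i:b_i))_i\mapsto(\prod_{i\in S}b_i\prod_{i\notin S}a_i)_S$. This variety has dimension $k$. It models graphs in which the labeled vertices interact independently, and it is the natural limit when the labeled vertices are far apart.

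\textbf{Invariance and (i).} To prove invariance, I write $\widehat F_\lambda$ as a sum over compatible boundary states of the $m$ copies, with weight $\lambda^{-(\text{overcount})}$ at each identified vertex. On a product point every copy factorizes over its labeled vertices. The image therefore factorizes over the connected components of the ``identification graph'' on the labeled vertices of the copies. Each new labeled vertex lies in a distinct tree-like cluster, and the remaining clusters contribute scalar factors. So on $\MM\cong(\P^1)^k$ the map $F_\lambda$ restricts to a product-type map: each new coordinate $(a_i:b_i)$ is a product of the old coordinates over the cluster, together with one-variable Möbius-type factors coming from the gluing. Non-degeneracy means that every vertex is glued boundedly often along the recursion. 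In terms of the restricted map, I expect this to mean that each new coordinate depends on at most one old coordinate, with exponent $1$: if an exponent were $\ge2$, or two old coordinates fed into one new one, the degree of some vertex would double under iteration. Hence $F_\lambda|_\MM$ is a composition of a coordinate map $i\mapsto\sigma(i)$ and monomial-free factors. Passing to an iterate, $\sigma$ becomes idempotent. Coordinates not in the image of $\sigma$ collapse to constants, and the remaining $k_0$ coordinates are fixed. This gives the retraction onto $\MM_0$ consisting of fixed points.

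\textbf{(ii).} Transversal superattraction means that $dF_\lambda$ vanishes on the normal bundle of $\MM$ along $\MM_0$, possibly after iteration. I would write a point near $\MM$ as a product point plus ``correlation'' coordinates, for example the cross-ratios $Z_SZ_T-Z_{S\cup T}Z_{S\cap T}$ suitably normalized. The expanding hypothesis guarantees that after finitely many steps no two new labeled vertices lie in the same copy of the old graph, since otherwise their distance would stay bounded. The new correlation terms then arise only as products of at least two old correlations, which are tied together through the gluing. They are therefore quadratic in the transverse coordinates, which gives superattraction. This step, namely organizing the transverse coordinates so that their vanishing to second order is visible, is the main obstacle. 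I would first try it for $k=2$ and then induct on the cumulant order.

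\textbf{(iii).} For $\lambda>0$ all $Z_S(G_n)>0$, so $\phi_\lambda(G_n)$ is defined. I would show that the normalized correlations decay as $\dist$ between labeled vertices grows. For bounded degree and fixed $\lambda$, one route is a direct bound by the hard-core correlation decay along the paths separating the labeled vertices. A cleaner alternative is to use the product structure from (ii): the correlations of $G_{n+j}$ are multiplicative in those of $G_n$ and bounded below $1$ in ratio, because of positivity of weights. Either route gives $\dist(F^n_\lambda(\xi_0),\MM)\to0$.
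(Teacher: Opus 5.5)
The genuine gap is in (iii); your plan for (ii) has the right idea but is not carried out.

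Your first route for (iii) appeals to hard-core correlation decay along paths in bounded-degree graphs. No such principle holds for all $\lambda>0$. For large $\lambda$ the hard-core model on bounded-degree graphs can have long-range order even when the marked vertices are arbitrarily far apart; two vertices of the even sublattice of the discrete torus $(\Z/2n\Z)^d$, $d\ge 2$, are an example. So bounded degree plus growing distance cannot be the mechanism.

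Your second route only has the local estimate from (ii), $\theta_\lambda(\RR(G))\le C\,\theta_\lambda(G)^2$. This says nothing until $\theta_\lambda(G_n)<1/C$. The asserted uniform ratio $<1$ ``by positivity'' is exactly what has to be proved, and $\theta_\lambda$ is not the quantity that contracts.

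The missing idea is finite order of ramification combined with a Dobrushin-type contraction:
\begin{enumerate}
\item Pass to an iterate $\RR^N$ for which the edges $\Phi_N(j)$ are pairwise disjoint and the periodic labels are fixed with $\#\Phi_N(j)=1$.
\item Then each marked vertex $v$ of $G_n$ is separated from the other marked vertices by $T=\lfloor n/N\rfloor$ nested sets $U_1,\dots,U_T$. These are the remaining marked vertices of the nested copies of $G_{n-tN}$ containing $v$.
\item Each $U_t$ has size at most $M=(k-1)m^N$. This uniform size bound is where non-degeneracy really enters, not through the degree bound.
\item Separation gives the Markov property $\P[\eta_t:\eta_{t+1}\wedge\eta_{T+1}]=\P[\eta_t:\eta_{t+1}]$. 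Hence the conditional law of $v$ given the other marked vertices is a product $A_0A_1\cdots A_T$ of stochastic matrices.
\item Since $\P[\obf_{U_t}:\sigma]\ge 1/(2^M\max\{1,\lambda^M\})$ under any admissible conditioning $\sigma$, each $A_t$ has Dobrushin coefficient at most some $\nu(\lambda,M)<1$.
\item Submultiplicativity of the Dobrushin coefficient then gives correlations of order $\nu^{T}\to 0$, for every $\lambda>0$.
\end{enumerate}
Convergence of $\phi_\lambda(G_n)$ to $\MM$ then follows by taking limit points of the probability vectors $(\P_{G_n,\lambda}[\xbf])_{\xbf}$, which satisfy the no-correlation equations in the limit.

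Your factorization argument for invariance is fine. Your description of $F_\lambda|\MM$ in (i), however, is inaccurate. In the chart $(\obf)\neq 0$ with $e_j=(\ebf_j)/(\obf)$, the map is exactly $e_j\mapsto\lambda^{1-\#\Phi(j)}e_{\Lambda(j)}^{\#\Phi(j)}$.
\begin{itemize}
\item There are no M\"obius factors, and every new coordinate depends on exactly one old one.
\item Non-degeneracy forces exponent $1$ only for periodic labels. A non-periodic label may be critical without any degree blow-up: for instance $k=2$, $\Lambda(1)=\Lambda(2)=1$, $\#\Phi(1)=1$, $\#\Phi(2)=2$ gives $e_2\mapsto\lambda^{-1}e_1^2$.
\item Non-periodic coordinates do not collapse to constants; they become functions of the periodic ones.
\end{itemize}
The retraction holds for a different reason than you give. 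For pre-fixed data the periodic coordinates are mapped by the identity, since the factor is $\lambda^{0}=1$, and all other coordinates depend only on them. Hence $F_\lambda^2=F_\lambda$ on $\MM$.

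In (ii) your heuristic is the correct one. To make it precise:
\begin{itemize}
\item Reduce to single flips at two labels $j,\widetilde j$. Let $\tau_{s\widetilde s}$ denote $\tau$ extended by the values $s,\widetilde s$ at $j,\widetilde j$.
\item Expand $(\tau_{10})'(\widetilde\tau_{01})'-(\tau_{11})'(\widetilde\tau_{00})'$ as a double sum over $(\eta,\widetilde\eta)$ and symmetrize under $\eta\leftrightarrow\widetilde\eta$.
\item Use $\Phi(j)\cap\Phi(\widetilde j)=\emptyset$. Each summand then factors as a difference over the copies in $\Phi(\widetilde j)$ times a difference over the remaining copies.
\item Each of these differences is bounded by the defining equations of $\MM$ at the old point, which gives the quadratic estimate.
\end{itemize}
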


We remark that the statements in Theorem~\ref{thm: dynamics} should be understood to hold outside the indeterminacy set of $F_\lambda$ or of its appropriate iterate. Formal definitions of non-degenerate and expanding graph recursions are provided in Definition~\ref{def: non-deg_and_exp}; see also Lemma~\ref{lem: degeneration and expansion} for various characterizations. We refer the reader to Section~\ref{subsec: invariant variety} for an explicit description of the invariant variety $\MM$ and to (the proof of) Proposition~\ref{prop: pre fixed dynamics} for a description of the subvariety $\MM_0$. 

We note that the variety $\MM$ depends only on the number $k$ of labels, and is independent of a particular graph recursion $\RR$ on $\GG_k$ and parameter $\lambda\in \C^*$; in contrast, $\MM_0$ depends on both $\RR$ and $\lambda$. Furthermore, the variety $\MM$ turns out to be a standard construction in projective geometry, namely, $\MM$ is a \emph{$k$-fold Segre
variety} (see Section~\ref{sss: segre_embeddings}). This allows us to deduce its geometric properties: $\MM$ is a smooth and irreducible projective toric variety (see Lemma~\ref{lem: Segre}), and the same also holds for its subvariety $\MM_0$ when $\RR$ is non-degenerate (see  Proposition~\ref{prop: pre fixed dynamics}). Finally, we point out that the variety $\MM$ has a probabilistic interpretation: for a graph $G\in \GG_k$, the containment $\phi_\lambda(G)\in \MM$ is determined by \emph{absence of correlation} between the labeled vertices in $G$; see Lemma~\ref{lem: inv variety} for a precise statement. This interpretation plays a key role in the proof of part \ref{item: dyn3} of Theorem~\ref{thm: dynamics}; see Section~\ref{sec: decay of correlation}.

Another application of our dynamical framework is the following result.

\begin{theorem}\label{main bounded zeros}
    Let $k\geq 2$ and $\RR$ be a non-degenerate and expanding graph recursion operator on $\GG_k$. Consider a recursive graph sequence $(G_n)_{n\geq 0}$ generated by $\RR$, that is, $G_{n+1}=\RR(G_n)$. If the starting graph $G_0\in \GG_k$ is maximally independent, then the zeros of the independence polynomials $Z_{G_n}$ are uniformly bounded and avoid a uniform cone around $\R_{\geq0}$.
\end{theorem}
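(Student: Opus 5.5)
The plan is to reduce both assertions to a single compactness statement on the Riemann sphere: no zeros in a neighborhood of $[0,\infty]$ in the $\lambda$-plane. The neighborhood of the finite part $\R_{\geq 0}$ is already supplied by Theorem~\ref{main result zeros}. So the new content is a uniform zero-free neighborhood of $\lambda=\infty$ within a sector around $\R_+$. Together with the bounded-region statement, this gives the cone, because a neighborhood of $\infty$ intersected with a thin sector, united with a neighborhood of a compact piece of $\R_{\ge0}$, contains a cone.

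For boundedness we would use the full disc around $\infty$. Concretely, we substitute $\lambda = 1/\mu$ and normalize the coordinates. The coordinate of $\widehat\phi_\lambda(G)$ indexed by $S\subseteq\{1,\dots,k\}$ is $\sum_I \lambda^{|I|}$, where the sum runs over independent sets $I$ with $I\cap\{\text{labels}\}=S$. We multiply all coordinates by $\mu^{\alpha(G)}$, where $\alpha(G)$ is the independence number; this rescaling is harmless projectively. The resulting coordinates are polynomials in $\mu$ whose value at $\mu=0$ counts the maximum independent sets meeting the labels in exactly $S$. We interpret the hypothesis that $G_0$ is maximally independent as guaranteeing that every $S$ is realized by a maximum independent set, i.e.\ every $\alpha_S(G_0)$ equals $\alpha(G_0)$. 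Then all $2^k$ coordinates at $\mu=0$ are positive integers.

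Next we show that this property propagates. The recursion combines independent sets of the $m$ copies compatibly on identified labels. Hence $\alpha_S(\RR(G))$ is a maximum of sums of the $\alpha_{S_j}(G)$ over compatible tuples, and if all $\alpha_{S}(G)$ coincide, so do all $\alpha_S(\RR(G))$. The renormalization maps $F_\lambda$ therefore extend holomorphically in $\mu$ to $\mu=0$ along the relevant orbit. The limit map $F_\infty$ is again a renormalization-type map with positive coefficients: it is the leading-order part, which counts maximum configurations. The key step is then to run the arguments behind Theorem~\ref{thm: dynamics}\ref{item: dyn2}--\ref{item: dyn3} for the family $\{F_{1/\mu}\}$ with $\mu$ in a small disc, including $\mu=0$. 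We need three facts for that family:
\begin{enumerate}[label=(\alph*)]
\item the orbit of $\xi_0(\mu)$ enters a fixed neighborhood of $\MM$ after finitely many steps, uniformly for small $|\mu|$;
\item the superattracting variety $\MM_0$ persists and varies continuously in $\mu$;
\item its basin contains a uniform neighborhood of the positive part of $\MM_0$.
\end{enumerate}
For $\mu\in(0,\epsilon)$, fact (a) is Theorem~\ref{thm: dynamics}\ref{item: dyn3}. For $\mu=0$ we would prove it directly through the decay-of-correlation argument of Section~\ref{sec: decay of correlation}, applied to maximum independent sets, using expansion. Openness of superattracting basins then extends (a) to complex $\mu$ near the real segment.

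Finally, on the Segre variety $\MM$ the coordinate sum factors as $\Sigma=\prod_{i=1}^k(x_i^0+x_i^1)$ in Segre coordinates. This product is bounded away from zero near the positive real points, where all coordinates are positive. It follows that $\Sigma(F^n(\xi_0))\neq 0$ for all $n$ once the orbit is trapped near the positive part of $\MM_0$, and by \eqref{eq: relation_phi_Z} this gives uniformly no zeros near $\mu=0$. The trapping must hold uniformly in $n$, which the superattraction supplies. The main obstacle I expect is fact (a) at and near $\mu=0$. Theorem~\ref{thm: dynamics}\ref{item: dyn3} is stated only for $\lambda\in\R_+$, so convergence to $\MM$ needs a separate zero-temperature correlation-decay argument. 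It also needs uniformity as $\lambda\to\infty$, and this is exactly where the maximal independence of $G_0$ (and $k\ge2$) is used to keep $\xi_0(0)$ away from the indeterminacy set and in the positive region.
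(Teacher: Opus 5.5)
There is a genuine gap, and it comes from a misreading of Definition~\ref{def: maximally}. Maximal independence does \emph{not} mean that every $S\subseteq\{1,\dots,k\}$ is realized by a maximum independent set. Condition~(ii), together with the remark after the definition, gives $\alpha_S(G_0)=\alpha_\emptyset(G_0)+\#S$. So the independence number is attained \emph{only} for $S=\{1,\dots,k\}$, and then by the unique set $I_{(1,\dots,1)}$.

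Consequently, in your normalization every coordinate except the $(1,\dots,1)$-coordinate vanishes at $\mu=0$. Thus $\xi_0(\mu)\to[0:\cdots:0:1]$, which is a corner of the non-negative region, not an interior positive point. Your step (a) at $\mu=0$ then becomes trivial, because the zero-temperature measure is a point mass. The real issue is uniformity, and a zero-temperature Dobrushin argument cannot supply it, since $\varepsilon(\lambda,n)\to1$ as $\lambda\to\infty$ in Lemma~\ref{lem: total var bound}.

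Worse, this corner is fixed by every $F_\lambda$, but the only term of $\widehat F_{1/\mu}$ that survives there is the one with $\xbf=\ibf$, $\eta=\ibf_Y$. That term carries the factor $\mu^{\sum_{e\in E(H)}(\#e-1)}$. So as soon as $H$ actually glues copies together, $[0:\cdots:0:1]$ lies in the indeterminacy set of the $\mu=0$ map, and your picture (a)--(c) does not pass to the limit in these coordinates. Your propagation claim also fails: equal values $\alpha_S(G)=a$ give $\alpha_S(\RR(G))=ma-\sum_{j\in S}(\#\Phi(j)-1)$, which depends on $S$ when there are non-periodic critical labels.

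The missing idea is the $\lambda$-dependent diagonal change of coordinates \eqref{eq: new_coord}, $\llparenthesis\xbf\rrparenthesis=\lambda^{-(x_1+\cdots+x_k)}(\xbf)$. It rescales the shrinking region near $[0:\cdots:0:1]$ to a fixed neighborhood of $[1:\cdots:1]$. In these coordinates the following hold.
\begin{itemize}
\item The two conditions of Definition~\ref{def: maximally} say exactly that all coordinates of $\widehat\phi_\lambda(G_0)$ are monic polynomials in $\lambda$ of the same degree $\#I_{\obf}$. Hence $\phi_\lambda(G_0)\to[1:\cdots:1]$ as $|\lambda|\to\infty$.
\item By \eqref{eq: image_in_new_coord}, $\MM_0$ becomes independent of $\lambda$ and contains the fixed point $[1:\cdots:1]$.
\item $\lambda^{-\#Y}\widehat\F_\lambda$ converges to the monomial map $\llparenthesis\xbf\rrparenthesis'=\prod_{i}\llparenthesis[\xbf\wedge\ibf_Y]_i\rrparenthesis$. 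This map is regular at $[1:\cdots:1]$ and is still a transversally superattracting retraction onto $\MM_0$.
\end{itemize}
With this, no convergence step is needed at all. For $|\lambda|$ large the orbit starts inside a uniform trapping neighborhood of a point of $\MM_0$, as in the claim in the proof of Theorem~\ref{thm: zero-free}. On that neighborhood the term $\lambda^k\llparenthesis\ibf\rrparenthesis$ dominates $\sum_{\xbf}\lambda^{x_1+\cdots+x_k}\llparenthesis\xbf\rrparenthesis$, so $Z_{G_n}(\lambda)\neq0$. One also needs that every $G_n$ is again maximally independent, so that one may pass to a pre-fixed iterate. Your deduction of the cone from boundedness and Theorem~\ref{main result zeros} is fine.
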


Maximally independent graphs in $\GG_k$ are introduced in Definition~\ref{def: maximally}; an example of such a graph is a $(k+2)$-star with $k$ labeled leaves. Example~\ref{ex: chebyshev} shows that the statement of Theorem~\ref{main bounded zeros} is false if the restriction on the starting graph $G_0$ is dropped. This contrasts with Theorem~\ref{main result zeros}, which holds for every starting graph.

\subsection{Motivation from statistical physics}\label{subsec: statistical physics}

Partition functions in statistical
physics encode the equilibrium properties of physical systems. In the discrete setting, the probability that a given state $\sigma$ of the system occurs is proportional to its \emph{weight} given by the \emph{Boltzmann factor} $e^{-\beta H(\sigma)}$, where $H(\sigma)$ is the energy of the system in the respective state and $\beta=\frac{1}{k_{\text{B}}T}$ is the inverse thermodynamic temperature (here, $T$ is the temperature and $k_\text{B}$ is the Boltzmann constant). The \emph{partition function} $Z$ is then defined as the total sum of the weights of all states $\sigma$ of the system:
\[Z=\sum_{\sigma}e^{-\beta H(\sigma)}.\]

The independence polynomial is one of the many possible partition functions, and it arises in the \emph{hard-core gas model}. In this model, each gas particle is assumed to occupy some definite region of space in which no other particle can occur, i.e., each particle has a ``hard-core''. In addition, it is assumed that the interaction energy between different gas particles is negligible. A discrete mathematical model of such a physical system is given by a (possibly infinite) graph $G$, often an induced subgraph of a regular lattice, in which each vertex can be either occupied or empty. Due to the hard-core constraint, two adjacent vertices are not allowed to be occupied simultaneously (equivalently, the interaction energy between two neighboring occupied vertices is infinite), hence the occupied vertices form an independent subset of the vertex set $V(G)$. We thus refer to an independent set $I\subset V(G)$ as a \emph{state} (or a \emph{configuration}) of the \emph{hard-core model on $G$}.

For the hard-core model on a finite graph $G$, the energy $H(I)$ of a state $I$ is determined only by the number of occupied vertices and equals $H(I)=-\mu\cdot \#I$, where $\mu$ is the chemical potential. It is customary to introduce the \emph{fugacity} (or \emph{activity}) \emph {parameter} $\lambda=e^{\beta\mu}$, so that the weight of a state $I$ is given by $e^{-\beta H(I)}=\lambda^{\#I}$. The corresponding partition function is then 
\[Z=\sum_I\lambda^{\#I},\] where 
the sum is taken over all independent sets $I\subset V(G)$; in other words, it coincides with the independence polynomial $Z_G(\lambda)$. In particular, the probability that an independent set $I$ occurs is
\[\P[I]= \lambda^{\#I}/Z_G(\lambda).\]
Note that, when the fugacity parameter $\lambda$ equals $1$, we obtain the ``uniform'' hard-core model, in which an independent set is chosen uniformly at random.

In statistical physics, one typically considers not a single finite graph, but rather a sequence $(G_n)_{n\geq 0}$ of larger and larger graphs converging in some sense to an infinite graph, such as a regular lattice. In this setting, one expects that the \emph{free energy per site}
\[\rho_ n(\lambda)=\frac{\log{Z_{G_n}(\lambda)}}{\#V(G_n)}\] 
has a limit as $n\to \infty$ for all $\lambda \geq 0$. The corresponding limit 
\begin{equation}\label{eq: lim_free_energy}
    \rho(\lambda) = \lim_{n\to \infty} \rho_n(\lambda) = 
\lim_{n\to\infty}\frac{\log{Z_{G_n}(\lambda)}}{\#V(G_n)}
\end{equation}
(if it exists) is called the \emph{limiting free energy per site}. By standard thermodynamic conventions, various physical quantities, such as pressure or density (i.e., fraction of occupied sites), can be derived from this limiting free energy $\rho$. In particular, the density is given by
\[d(\lambda)=\lambda \cdot 
\frac{d}{d\lambda} \rho(\lambda).\]

Of particular interest are the \emph{phase transitions} of the physical system: real parameters $\lambda_0>0$ where the limiting free energy per site $\rho$ fails to be real-analytic at $\lambda_0$. A relationship between zero sets of partition functions and the existence of phase transitions was first described by Lee and Yang in the context of induced finite subgraphs $G_n$ of the cubic lattice $\Z^d$ \cite{YangLee}. Under a mild assumption on the relative size of the boundaries of these subgraphs, Lee and Yang showed that the limiting free energy per site $\rho(\lambda)$ is well-defined and continuous, and moreover, that if the zeros of the partition functions $Z_{G_n}$ all avoid a fixed complex neighborhood of a parameter $\lambda_0 \ge 0$, then $\rho$ is real-analytic near $\lambda_0$. For the convenience of the reader, we give a short proof of this latter statement adapted to our setting.

\begin{lemma}
    Let $(G_n)_{n\geq 0}$ be a sequence of finite graphs, and assume that the free energy per site $\rho(\lambda)$ is well-defined for real parameters $\lambda$ in a neighborhood of a fixed parameter $\lambda_0 \ge 0$. Assume moreover that there exists a complex neighborhood $U$ of $\lambda_0$ where all of the partition functions $Z_{G_n}$ are non-zero. Then $\rho$ is real-analytic near $\lambda_0$.
\end{lemma}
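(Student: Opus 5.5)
The plan is to realize $\rho_n$ as a sequence of holomorphic functions on a fixed complex neighborhood of $\lambda_0$. We then show this sequence is locally uniformly bounded, so that Montel's theorem and Vitali's convergence theorem upgrade the real pointwise convergence to locally uniform holomorphic convergence.

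\emph{Step 1: holomorphic logarithms.} Shrink $U$ to an open disk $D$ centered at $\lambda_0$ with $\overline{D}\subset U$. By hypothesis each $Z_{G_n}$ is a polynomial without zeros on the simply connected set $D$. Hence $\log Z_{G_n}$ admits a holomorphic branch on $D$. Fix the branch that is real on $D\cap\R$ at points $\lambda>0$, which is possible since there $Z_{G_n}(\lambda)\ge 1$. (If $\lambda_0=0$, note $Z_{G_n}(0)=1$, so the same normalization works on all of $D\cap\R$.) Then $\rho_n(\lambda)=\log Z_{G_n}(\lambda)/\#V(G_n)$ is holomorphic on $D$ and agrees with the real free energy per site on $D\cap\R$.

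\emph{Step 2: a uniform bound.} This is the main point. For the real part we use
\[
|Z_{G_n}(\lambda)|\le \sum_I |\lambda|^{\#I}\le (1+|\lambda|)^{\#V(G_n)},
\]
which gives $\mathrm{Re}\,\rho_n\le \log(1+R)$ on $D$, where $R=\sup_D|\lambda|$. An upper bound on the real part alone does not yet control $\rho_n$. To handle this, apply the Borel--Carathéodory inequality on a slightly larger disk $D'\Subset U$, using the normalization at the real center $\lambda_0$. There $\rho_n(\lambda_0)$ is real and lies between $0$ and $\log(1+\lambda_0)$. The inequality bounds $|\rho_n|$ on $D$ in terms of $\sup_{D'}\mathrm{Re}\,\rho_n$ and $|\rho_n(\lambda_0)|$, both of which are uniformly bounded. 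So $\{\rho_n\}$ is uniformly bounded on $D$.

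Alternatively, one can bound the imaginary part directly. Factor $Z_{G_n}(\lambda)=\prod_j(1-\lambda/z_j)$, where the $z_j$ are the zeros, all outside $U$, and there are at most $\#V(G_n)$ of them. Each factor has argument bounded uniformly on $D$, because $1-\lambda/z_j$ stays in a sector determined by the distance from $D$ to $\C\setminus U$. Dividing by $\#V(G_n)$ then bounds $\mathrm{Im}\,\rho_n$.

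\emph{Step 3: conclusion.} By Montel's theorem the family $\{\rho_n\}$ is normal on $D$. Any subsequential limit is holomorphic and equals $\rho$ on $D\cap\R$, which has accumulation points, since we assumed $\rho$ exists on a real neighborhood of $\lambda_0$. By the identity theorem all subsequential limits coincide, so $\rho_n\to\tilde\rho$ locally uniformly on $D$ (Vitali). Hence $\rho=\tilde\rho|_{D\cap\R}$ is the restriction of a holomorphic function, and therefore real-analytic near $\lambda_0$.

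The only delicate point is the uniform bound in Step 2. The factorization argument handles it cleanly, with the zeros' distance from $D$ providing the uniform control on arguments.
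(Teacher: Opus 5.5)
Your argument is correct. It follows the same overall scheme as the paper: holomorphic $\rho_n$ near $\lambda_0$, the bound $\mathrm{Re}\,\rho_n\le\log(1+|\lambda|)$ from $|Z_{G_n}(\lambda)|\le(1+|\lambda|)^{\#V(G_n)}$, normality, and the identity principle. Where you differ is the normality step.

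The paper goes straight from the one-sided bound on $\mathrm{Re}\,\rho_n$ to normality via Montel's fundamental normality test. The $\rho_n$ omit the half-plane $\{\mathrm{Re}\,w>\log(1+R)\}$, so they form a normal family in the extended sense. The possibility of subsequences diverging locally uniformly to $\infty$ is then tacitly excluded by convergence on the real axis.

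You instead upgrade the one-sided bound to a genuine uniform bound $|\rho_n|\le C$ via Borel--Carath\'eodory. This is anchored at the real center, where $0\le\rho_n(\lambda_0)\le\log(1+\lambda_0)$; that is exactly where the lower bound $Z_{G_n}(\lambda_0)\ge 1$ enters. As a result you only need the elementary Montel theorem for locally bounded families, and no divergence to $\infty$ has to be ruled out. Your route is a little longer but more elementary and quantitative. You are also more careful than the paper about shrinking $U$ to a disk and fixing the branch of $\log Z_{G_n}$ that is real on the real axis.

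One caveat concerns your alternative route. As stated, the factorization only bounds $\mathrm{Im}\,\rho_n$, and that plus an upper bound on $\mathrm{Re}\,\rho_n$ does not by itself give boundedness, since $\mathrm{Re}\,\rho_n$ could still tend to $-\infty$. So your closing remark that the factorization "handles it cleanly" overstates what you wrote. You also need a lower bound on $\mathrm{Re}\,\rho_n$, which the same factorization supplies: $|1-\lambda/z_j|\ge\min\{1/2,\delta/(2R)\}$ on $D$, with $\delta=\operatorname{dist}(\overline{D},\C\setminus U)$. Alternatively, feed the bound on $\mathrm{Im}\,\rho_n$ into Borel--Carath\'eodory applied to $i\rho_n$.
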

\begin{proof}
    Since each partition function $Z_{G_n}$ is assumed to be non-zero on $U\subset \C$, the functions
    $$
    \rho_n(\lambda) = \frac{\log Z_{G_n}(\lambda)}{\#V(G_n)}
    $$
    are all well-defined holomorphic functions on $U$. Note that
    $$
    \mathrm{Re} (\rho_n(\lambda)) = \frac{\log |Z_{G_n}(\lambda)|}{\#V(G_n)} \le \frac{\log Z_{G_n}(|\lambda|)}{\#V(G_n)},
    $$
    where the latter holds since the polynomials $Z_{G_n}$ all have non-negative real coefficients. 
    
    Observe that removing edges from a finite graph $G$ can only increase the number of independent sets of $G$ of any fixed size. Hence, for a fixed $\#V(G)$, the maximal value of $Z_{G}(|\lambda|)$ is obtained when the graph $G$ has no edges at all, that is, $Z_G(|\lambda|)\leq (1+|\lambda|)^{\#V(G)}$. It therefore follows that 
    $$
    \mathrm{Re} (\rho_n(\lambda)) \le \frac{\log \left( (1+ |\lambda|)^{\#V(G_n)}\right)}{\#V(G_n)} = \log (1+ |\lambda|).
    $$
    By Montel's fundamental normality test, this implies that the holomorphic functions $\rho_n$ form a normal family on $U$, i.e., every sequence of these functions has a subsequence that converges uniformly on compact subsets of $U$. Since the free energies per site $\rho_n$ are assumed to converge to $\rho$ on the real axis near $\lambda_0$, they must converge on all of $U$ by the Identity Principle. The corresponding limiting map is holomorphic and thus real-analytic on $U\cap \R$. This implies the desired statement. 
\end{proof}

\subsection{Related results}
We now briefly discuss some related results in various settings. 

\subsubsection{Independence polynomial for bounded-degree graphs} The study of zeros of the independence polynomial for graphs with restricted local structure goes back to Chudnovsky and Seymour \cite{CS_clawfree}, who showed that the zeros of $Z_G$ are negative reals for all \emph{claw-free graphs} $G$, i.e., graphs that do not contain an induced $3$-star. In a recent work \cite{JP_zeroes}, Jerrum and Patel determined to what extent this result can be generalized to the setting of $H$-free graphs for different choices of a graph $H$; see also a prior work by Bencs \cite{Bencs_Thesis}. Specifically, they consider \emph{subdivided claws}---trees with a single vertex of degree $3$, and all other vertices of degree $1$ or $2$---which are obtained from a $3$-star by subdividing its edges. In particular, Jerrum and Patel prove that for every fixed subdivided claw $H$ and
any $\Delta\geq 3$, there exists a neighborhood $U=U_{H,\Delta}$ of $\R_{\geq0}$ in $\C$ such that the independence polynomial of every $H$-free graph of maximum vertex degree $\Delta$ has all of its zeros outside of $U$. Moreover, they show that no such result can hold when $H$ is not a subdivided claw, or when the maximum degree assumption is dropped.

In our recursive setting, whether or not the sequence $G_n=\RR^n(G_0)$ consists of $H$-free graphs for some subdivided claw $H$ depends both on the recursion $\RR$ and on the starting graph $G_0$. In particular, for the Sierpi\'nski gasket recursion (see Example~\ref{ex: sierpinksi}), if $G_0$ is a $3$-cycle, then the graphs $G_n$ are all claw-free; see Figure~\ref{fig: sierpinski graphs}. However, when the starting graph $G_0$ is a $3$-star with labeled leaves, then for any subdivided claw $H$ the graphs $G_n$ contain an induced $H$ for all sufficiently large $n$; see Figure~\ref{fig: sierpinski tripod graphs}. In particular, such a graph sequence $(G_n)_{n\geq 0}$ does not lie in the scope of the result from \cite{JP_zeroes}. Similar observations also apply to the $(z^2+i)$-graphs from Example~\ref{ex: dendrite} or, more generally, to the graph recursions induced by a non-Chebyshev Misiurewicz polynomial.

For general bounded-degree graphs, it follows from the results in \cite{BezakovaEtAl,BoerEtAlApprox} that zeros of the independence polynomial are dense outside of an explicit open set $U_\Delta\ni 0$ for graphs with vertex degrees bounded by $\Delta\geq 3$, with $U_\Delta$ shrinking to the origin as $\Delta \rightarrow \infty$. Moreover, it is known that considering only trees does not reduce the set of possible zeros: for every graph $G$, there exists a tree $T$ (with the same maximal vertex degree), the so-called \emph{tree of self-avoiding walks}~\cite{ScottSokal05,Weitz2006}, for which
$$
\{\lambda \; : \; Z_G(\lambda) = 0\} \subset \{\lambda \; : \; Z_T(\lambda) = 0\};
$$
see \cite[Proposition~2.7]{Bencs2018} for a more precise statement. We also point the interested reader to \cite{BoerEtAlTorus} for results concerning the boundedness of zeros of the independence polynomial for sequences of finite graphs converging to a cubic lattice.

\subsubsection{Partition functions on recursive sequences of graphs}
\label{sss: results for rec graphs}

Results regarding partition functions on recursive graphs go back to Ising, who showed in his PhD thesis that the Ising model on one-dimensional paths does not exhibit phase transitions; see \cite{IsingThesis} and \cite{IsingPaper}. We note that one-dimensional paths fall within the scope of the recursion operators we consider in this paper; see Example~\ref{ex: chebyshev}.

There are many examples in more recent literature where the authors exploit induced rational dynamical systems to study zeros of graph polynomials for more intricate graph recursions. One particular setting that has attracted considerable interest is that of orbital and finite \emph{Schreier graphs} of self-similar groups. The finite Schreier graphs frequently have a recursive structure and converge to infinite self-similar orbital Schreier graphs in the sense of local convergence for pointed graphs. The most notable studies employing dynamical systems machinery concern spectral properties of these graphs; see \cite{Bac2023} and references therein. We also refer the reader to \cite{DDN2011, DDN2012} for studies of partition functions of the Ising and dimer models and the corresponding thermodynamic limits on some specific Schreier graph sequences.

Another class of recursive graphs where the dynamical approach has proven particularly successful is that of \emph{hierarchical lattices}. These are sequences $(\Gamma_n)_{n\geq0}\subset \GG_2$ of finite graphs with two labeled vertices obtained iteratively from a single-edge graph $\Gamma_0$ by replacing each edge of $\Gamma_{n}$ with a fixed finite graph $\Gamma=\Gamma_1\in \GG_2$, called the \emph{generating graph}; see Example~\ref{ex: hier_lattices}. For this construction to be well-defined, the generating graph $\Gamma$ needs to be symmetric with respect to its labeled vertices. Figure~\ref{fig: diamond} illustrates the first few elements of such a sequence $(D_n)_{n\geq 0}$ of graphs when the generating graph $D_1$ is a $4$-cycle with two non-adjacent vertices labeled; these are the \emph{diamond hierarchical graphs}. The Ising model on such graphs was studied by Bleher, Lyubich and Roeder \cite{BLRI, BLRII} using dynamics of the Migdal--Kadanoff renormalization map. Subsequently, the Potts model on these graphs was studied in \cite{CRS20}, and on general hierarchical lattices by Chio and Roeder \cite{chio2021chromatic}; see also references therein.

The construction of the diamond hierarchical graphs $(D_n)_{n\geq 0}$ admits a natural duality: instead of replacing each edge of $D_n$ by the $4$-cycle $D_1$ to obtain $D_{n+1}$, one may equivalently replace each edge of the $4$-cycle by a copy of $D_n$. This duality, which holds for general hierarchical lattices, shows that the sequence $(D_n)_{n\geq 0}$ falls within the framework of recursive graphs treated in this paper. In fact, all hierarchical lattices with a bipartite generating graph are encompassed in our framework. However, the corresponding graph recursion operator generating $(D_n)_{n\geq 0}$ is degenerate: the maximal vertex degrees of the graphs $D_n$ grow exponentially and are in particular unbounded. Hence, our main results on zeros of the associated independence polynomials, as well as the results from \cite{JP_zeroes}, do not apply to this sequence.

We discuss one more sequence of recursive graphs extensively studied in statistical physics since the 1970s. For fixed down-degree $d$, let $(T_n)_{n\geq 0}\subset \GG_1$ be the sequence of $d$-ary rooted trees of depth $n$ with labeled roots. We note that such a sequence does not literally fall within the framework discussed here. Nevertheless, these trees can be constructed recursively in the following natural way: starting with the graph $T_0$ consisting of a single labeled vertex, the trees $T_{n+1}$ are defined inductively by taking $d$ copies of the previous tree $T_{n}$ and connecting their $d$ labeled roots to a new vertex, which becomes the labeled root of $T_{n+1}$. In other words, the labeled vertices of the $d$ copies of $T_{n}$ are connected not by identifying them but by inserting a $d$-star, whose central vertex becomes the new labeled root of $T_{n+1}$. 

In unpublished work by Rivera-Letelier and Sombra \cite{Rivera}, the dynamical approach was used to establish that the hard-core model on $(T_n)_{n\geq 0}$ exhibits a unique phase transition, which is of infinite order: the limiting free energy per site $\rho$ is real analytic at all parameters except one critical parameter $\lambda_{\text{cr}}=\lambda_{\text{cr}}(d)\in \R_+$, where it is still $C^\infty$. We point the reader to the master's thesis of van Willigen \cite{Willigen}, where the proof of analyticity of $\rho$ on $\R_+\setminus \{\lambda_{\text{cr}}\}$ as well as of its $C^\infty$-differentiability at $\lambda_{\text{cr}}$ is discussed, based on lectures given by Rivera-Letelier. Finally, we also refer the reader to \cite{Chio2019,Roeder_Cayley} and references therein for results concerning the Ising and Potts models on $(T_n)_{n\geq 0}$.

\subsubsection{Finite order ramification.} Our results are related to claims of Gefen, Aharony, and Mandelbrot~\cite{Mandelbrot1, Mandelbrot2,Mandelbrot3}, who studied the relationship between phase transitions and \emph{order of ramification} for fractal lattices, that is, infinite graphs with a self-similar structure.

\begin{definition}\label{def: finite order of ramification}
    Let $G$ be a countably infinite connected graph. We say that $G$ has \emph{finite order of ramification} if there exists $s\in \N$ such that for every finite connected subgraph $H$ of $G$ there exists a subset $S \subset V(G)$ of cardinality at most $s$ such that every infinite (self-avoiding) path in $G$ that starts in a vertex of $H$ must pass through a vertex of $S$.
\end{definition}

The authors of~\cite{Mandelbrot1, Mandelbrot2,Mandelbrot3} suggest that for infinite graphs $G$ with finite order of ramification, there should be no phase transitions in physical models on $G$ with short-range interactions, like the hard-core, Ising, or Potts models, in the sense that there should be a unique Gibbs measure on $G$ for all physical parameters. A precise result connecting the uniqueness of Gibbs measures for the multivariate hard-core model on an infinite graph and the critical probability for site percolation on that graph was proved by van den Berg and Steif \cite[Theorem~2.3]{vdBergSteif}. We provide below a more restrictive version of this statement adapted to our setting and notation, and refer the reader to \cite{vdBergSteif} for the details.  

\begin{theorem}[{Corollary of \cite[Theorem~2.3(i)]{vdBergSteif}}]\label{thm: unique_Gibbs_measure}
 Let $\lambda\in \R_{\geq 0}$ and $G$ be a countably infinite, locally finite connected graph. Consider the site percolation process on $G$ under which each vertex $v\in V(G)$ is open, independently of the others, with probability $\lambda/(\lambda+1)$ and closed with probability $1/(\lambda+1)$. We denote by $\P_{\lambda/(\lambda+1)}$ the induced probability measure on $2^{V(G)}$, and by $\{\exists\;\text{infinite open path}\}$ the event that there exists an infinite path in $G$ with all of its vertices open. 

 If $\P_{\lambda/(\lambda+1)}[\{\exists\;\text{infinite open path}\}]=0$, then the hard-core model on $G$ with the activity parameter $\lambda$ has a unique Gibbs measure.
\end{theorem}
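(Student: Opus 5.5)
We would obtain the statement by specializing the general result of van den Berg and Steif to the case where all vertex activities equal $\lambda$. For completeness, we would also sketch the underlying \emph{disagreement percolation} argument, so that it is clear the hypothesis on $\P_{\lambda/(\lambda+1)}$ is exactly what is needed.

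\emph{Setup.} Fix an exhaustion $\Lambda_1\subset\Lambda_2\subset\cdots$ of $V(G)$ by finite sets. For a boundary condition $\xi\in\{0,1\}^{V(G)\setminus\Lambda_n}$, let $\mu^\xi_{\Lambda_n}$ denote the hard-core measure on $\Lambda_n$ with activity $\lambda$. Under it, vertices of $\Lambda_n$ adjacent to an occupied vertex of $\xi$ are forced empty. By the DLR equations, every Gibbs measure is an average of such $\mu^\xi_{\Lambda_n}$ over $\xi$. It therefore suffices to show the following: for every finite $A\subset V(G)$ and every pair of boundary conditions $\xi,\eta$, the total variation distance between the marginals of $\mu^\xi_{\Lambda_n}$ and $\mu^\eta_{\Lambda_n}$ on $A$ tends to $0$ uniformly in $\xi,\eta$.

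\emph{Key local property.} Given any configuration on all other sites, a vertex $v$ is occupied with probability either $0$ (if some neighbor is occupied) or $\lambda/(1+\lambda)$ (otherwise). Hence each single-site conditional law is stochastically dominated by a Bernoulli$(\lambda/(1+\lambda))$ variable.

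\emph{Coupling.} Build a coupling of $\mu^\xi_{\Lambda_n}$ and $\mu^\eta_{\Lambda_n}$ together with an independent percolation configuration $\omega\sim\P_{\lambda/(\lambda+1)}$ on $\Lambda_n$. Sites are revealed sequentially, starting next to the boundary, in the style of van den Berg and Maes. At each step choose an unrevealed site adjacent to the current set of disagreement (or to the boundary where $\xi\neq\eta$). Then sample both configurations at that site from their conditional laws, coupled monotonically through a single uniform variable $U_v$, and declare $v$ open iff $U_v\le \lambda/(1+\lambda)$. By the key local property, a disagreement at $v$ forces one of the two configurations to be occupied at $v$, which forces $v$ to be open. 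Once no unrevealed site is adjacent to a disagreement, both conditional laws on the remaining sites coincide, and we couple them identically. Consequently, every disagreement site is joined to $V(G)\setminus\Lambda_n$ by a path of open sites. This gives
\[
d_{TV}\bigl(\mu^\xi_{\Lambda_n}|_A,\mu^\eta_{\Lambda_n}|_A\bigr)\le \P_{\lambda/(\lambda+1)}\bigl[A \leftrightarrow V(G)\setminus\Lambda_n \text{ by an open path}\bigr].
\]

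\emph{Conclusion.} The events on the right decrease in $n$, and since $G$ is locally finite, their intersection is the event that some vertex of $A$ lies on an infinite open path. This event has probability $0$ by hypothesis, so the bound tends to $0$ uniformly in $\xi,\eta$. Hence any two Gibbs measures agree on all cylinder events, and the Gibbs measure is unique.

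The main obstacle is the construction of the coupling: one must check that the sequential exploration is well defined, that the monotone coupling through $U_v$ indeed forces disagreements to lie on open sites, and that the resulting $\omega$ is genuinely distributed as $\P_{\lambda/(\lambda+1)}$. This last point holds because the $U_v$ are independent uniforms regardless of the adaptive order in which sites are revealed. All of this is carried out in \cite{vdBergSteif}, and we would cite it for these details.
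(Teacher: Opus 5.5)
Your proposal matches the paper, which gives no independent argument and simply specializes \cite[Theorem~2.3(i)]{vdBergSteif} to constant activities $\lambda_v\equiv\lambda$; your sketch is the standard van den Berg--Maes disagreement-percolation argument behind that theorem, and it is sound. If you write it out, state two small points explicitly: the domination step in the coupling uses the single-site law conditioned only on the already revealed sites, which is an average of the values $0$ and $\lambda/(1+\lambda)$ and hence still at most $\lambda/(1+\lambda)$; and existence of a Gibbs measure (needed for ``unique'') follows separately from compactness of $\{0,1\}^{V(G)}$ and locality of the hard-core specification.
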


If the graph $G$ from Theorem~\ref{thm: unique_Gibbs_measure} has finite order of ramification, then the condition \[\P_{\lambda/(\lambda+1)}[\{\exists\;\text{infinite open path}\}]=0\] is always satisfied. Indeed, one can find a sequence $(S_n)_{n\geq 0}$ of pairwise-disjoint subsets of $V(G)$ with uniformly bounded cardinality, such that every infinite path in $G$ must pass through a vertex of $S_n$ for all sufficiently large $n$. The desired condition then follows from the second Borel--Cantelli lemma. We get the following immediate corollary.

\begin{corollary}
If $G$ is a countably infinite, locally finite connected graph with finite order of ramification, then the hard-core model on $G$ has a unique Gibbs measure for all parameters $\lambda\in \R_{\geq 0}$.
\end{corollary}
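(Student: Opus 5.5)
The plan is to combine Theorem~\ref{thm: unique_Gibbs_measure} with the observation in the preceding paragraph: it suffices to show that, for every $\lambda \in \R_{\geq 0}$, we have $\P_{\lambda/(\lambda+1)}[\{\exists\;\text{infinite open path}\}]=0$. For $\lambda = 0$ this is trivial, since every vertex is closed almost surely. So I will fix $\lambda>0$ and set $p=\lambda/(\lambda+1)<1$.

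First I will construct the separating sets. Fix a vertex $v_0$ and let $s$ be the constant from Definition~\ref{def: finite order of ramification}. I build inductively finite connected subgraphs $H_0\subset H_1\subset\cdots$ and sets $S_0,S_1,\ldots$ with $\#S_n\le s$. Start with $H_0=\{v_0\}$. Given $H_n$, choose $S_n$ so that every infinite self-avoiding path starting in $H_n$ passes through $S_n$. Then take $H_{n+1}$ to be a finite connected subgraph containing $H_n\cup S_n$; this exists because $G$ is connected, so we may add finite paths joining the vertices of $S_n$ to $H_n$.

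These sets are pairwise disjoint, because $S_m\subset H_{m+1}\subset H_n$ whenever $m<n$, and we want $S_n$ to avoid $H_n$. To arrange $S_n\cap H_n=\emptyset$, I will argue as follows. An infinite self-avoiding path starting in $H_n$ eventually leaves the finite set $H_n$ for good. So it suffices to separate $H_n$ from infinity inside $G\setminus H_n$. The difficulty is that this could require more than $s$ vertices. To avoid this, I will instead use only disjointness of a subsequence. Each $S_n$ is finite, and the $H_n$ exhaust an increasing union. I can therefore pick $n_1<n_2<\cdots$ so that each $S_{n_{j+1}}$ is chosen relative to $H_{n_{j+1}}\supset S_{n_1}\cup\cdots\cup S_{n_j}$. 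Then I replace $S_{n_{j+1}}$ by a separating set for a larger subgraph that already contains the previous ones, minus those previous vertices.

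I expect this disjointness bookkeeping to be the main obstacle. The cleanest fix is to note that passing through $S_m$ at some point does not prevent the path from needing to pass through $S_n$ later. In fact, disjointness is not essential: it suffices that infinitely many of the sets $S_n$ are pairwise disjoint. Since any fixed vertex lies in $S_n$ for only finitely many $n$ (otherwise a single vertex would belong to all separating sets for a growing family), I can extract a disjoint subsequence greedily.

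With pairwise disjoint $S_{n_j}$ of cardinality at most $s$, the events $A_j=\{\text{all vertices of } S_{n_j}\text{ are closed}\}$ are independent, and each has probability at least $(1-p)^s>0$. By the second Borel--Cantelli lemma, almost surely infinitely many of the $A_j$ occur. An infinite open path starting at $v_0$ (or, by connectivity, at any vertex in some $H_n$) must meet every $S_{n_j}$, and so it meets a closed vertex, which is a contradiction. Since $V(G)=\bigcup H_n$, there is no infinite open path almost surely. Theorem~\ref{thm: unique_Gibbs_measure} then gives uniqueness of the Gibbs measure.
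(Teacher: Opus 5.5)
Your overall route matches the paper's: find pairwise disjoint separating sets of size at most $s$, apply the second Borel--Cantelli lemma to the independent events that such a set is entirely closed, and conclude with Theorem~\ref{thm: unique_Gibbs_measure}. The gap is exactly the step you flag as the main obstacle, disjointness, and none of your attempts settles it.

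Deleting earlier vertices from a separating set can destroy the separating property, and you give no reason why it does not do so here. More seriously, the claim that a fixed vertex lies in $S_n$ for only finitely many $n$ does not follow from anything. Definition~\ref{def: finite order of ramification} gives no control over where $S$ sits, and $S$ may even lie inside $H$. On the one-sided ray with vertices $0,1,2,\dots$, if $H_n=\{0,\dots,N\}$, then $S_n=\{N\}\subset H_n$ is an admissible choice. Your rule only requires $H_{n+1}\supseteq H_n\cup S_n$, so the construction may stall with $H_{n+1}=H_n$ and $S_{n+1}=S_n$ from then on. In that case the same vertex lies in every $S_n$, no disjoint subsequence exists, and the $H_n$ do not exhaust $V(G)$, which you also use at the end. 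A vertex lying in every member of a ``growing family'' of separating sets is simply not a contradiction.

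What is missing is a short argument that pushes the separating sets away from a given finite region. Take $H_n=B_{r_n}(v_0)$, the closed graph-distance ball, which is finite and connected.

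Let $\gamma$ be an infinite self-avoiding path starting in $B_r(v_0)$. It has a last vertex $w$ in this finite set, and $w$ is at distance exactly $r$ from $v_0$, because its successor on $\gamma$ lies outside $B_r(v_0)$. Applying the separation property to the tail of $\gamma$ starting at $w$ shows that $\gamma$ meets $S\setminus B_{r-1}(v_0)$.

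Hence $S_n':=S_n\setminus B_{r_n-1}(v_0)$ is still a separating set for $B_{r_n}(v_0)$ of size at most $s$. Choose $r_{n+1}>r_n$ with $S_0'\cup\dots\cup S_n'\subset B_{r_{n+1}-1}(v_0)$. This makes the sets $S_n'$ pairwise disjoint, and the balls exhaust $V(G)$.

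With this in place, the rest of your argument goes through: independence, the lower bound $(1-p)^s$, and Borel--Cantelli. The paper itself simply asserts that such disjoint sets exist.
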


In the current paper we study a different and generally non-equivalent notion of phase transitions, namely the non-analyticity of the limiting free energy per site $\rho$ for sequences $(G_n)_{n\geq0}$ of finite graphs, see \eqref{eq: lim_free_energy}. Motivated by the corollary above, one might naturally ask the following question:

\medskip
\emph{
    Let $G$ be a countably infinite, locally finite connected graph with finite order of ramification and $(G_n)_{n\geq0}$ be an increasing sequence of finite connected induced subgraphs of $G$ whose union equals $G$. Assume moreover that the free energy per site of the graphs $G_n$ converges to a limit $\rho(\lambda)$ for each $\lambda\in \R_+$; see \eqref{eq: lim_free_energy}. Does it follow that the limiting free energy $\rho$ is real-analytic at all $\lambda\in \R_+$?
}

\medskip

The answer to this question is negative:

\begin{example} Consider an arbitrary sequence $(H_j)_{j\geq 0}$ of finite connected graphs with $\#V(H_j)\to \infty$ as $j\to \infty$ and for which the limiting free energy per site exists for all $\lambda\in \R_+$ but is not real-analytic at some positive parameter $\lambda_0$. As an example we can consider the sequence of $d$-ary rooted trees of depth $j$; see also \cite[Proposition~2.7] {SlySun}. Fix a vertex $v(j)\in V(H_j)$ for each $j\geq 0$.  Given an increasing sequence $(j_n)_{n\geq 0}$ of positive integers, we construct recursively a sequence $(G_n)_{n\geq 0}$ of finite graphs as follows: we  set $G_0 = H_{j_0}$ and define $G_{n+1}$ to be the graph obtained from the disjoint union of the graphs $G_n$ and $H_{j_{n+1}}$ by connecting the vertices $v(j_n)$ of $G_n$ and $v(j_{n+1})$ of $H_{j_{n+1}}$ by an edge. The corresponding infinite graph $G$ is then obtained from the disjoint union $\bigsqcup_{n\geq 0} H_{j_{n}}$ by adding an edge between $v(j_{n})$ and $v(j_{n+1})$ for all $n\geq 0$. It is immediate that $G$ is a locally finite connected graph with finite order of ramification.

One also easily checks that, for each fixed $\lambda\in \R_+$ and $n\geq 0$, we have
\[Z_{G_n}(\lambda)\leq  \prod_{i=0}^{n}Z_{H_{j_i}}(\lambda)\leq Z_{G_n}(\lambda) \cdot (1+\lambda)^{n+1},\]
and thus 
\[\log Z_{G_n}(\lambda)= \sum_{i=0}^{n}\log Z_{H_{j_i}}(\lambda) + O(n).\]
It follows that
\[\frac{\log Z_{G_n}(\lambda)}{\#V(G_n)}= \sum_{i=0}^{n}\frac{\log Z_{H_{j_i}}(\lambda)}{\#V(H_{j_i})} \cdot \frac{\#V(H_{j_i})}{\#V(G_n)} + O\left(\frac{n}{\#V(G_n)}\right),\]
where 
\[\#V(G_n)=\sum_{i=0}^{n}\#V(H_{j_i}).\]
Using the Silverman--Toeplitz theorem, we conclude that
\[
\lim_{n\to \infty}\frac{\log Z_{G_n}(\lambda)}{\#V(G_n)}=\lim_{n\to \infty}\frac{\log Z_{H_{j_n}}(\lambda)}{\#V(H_{j_n})},
\]
provided that the sequence $(j_n)_{n\geq 0}$ increases sufficiently fast so that $n/\#V(G_n)\to 0$ as $n\to \infty$. In other words, the limiting free energy per site for the sequence $(G_n)_{n\geq 0}$ exists and coincides with the limiting free energy per site for the sequence $(H_j)_{j\geq 0}$, and in particular is not real-analytic at $\lambda_0$. 
\end{example}

It would be interesting to know under which conditions on the convergence of the sequence $(G_n)_{n\geq0}$, or perhaps on the self-similarity of the limiting graph $G$, the finite order of ramification of $G$ does imply the analyticity of the corresponding limiting free energy at all $\lambda\in \R_+$, or even stronger, the existence of a neighborhood of $\R_+$ that is free of the zeros of the independence polynomials $Z_{G_n}$. The recursivity of graph sequences that we introduce in this paper can be viewed as a stringent notion of both convergence and self-similarity that is sufficient for this purpose.

The notion of finite order of ramification plays an important role in our paper: an adaptation of Definition~\ref{def: finite order of ramification} to the setting of sequences $(G_n)_{n\geq 0}$ of finite graphs in $\GG_k$ (see Definition~\ref{def: finite ram order}) will be used to show that for all parameters $\lambda\in \R_+$ there occurs decay of correlation between the labeled vertices in $G_n$ (see Definition~\ref{def: decay of correlation} and Proposition~\ref{prop: decay of correlation}). In particular, this result applies to the recursive sequences $(G_n)_{n\geq 0}$ generated by a non-degenerate and expanding graph recursion operator $\RR$, which ultimately leads to part~\ref{item: dyn3} of Theorem~\ref{thm: dynamics}. We note that our proof actually implies that correlations between the labeled vertices of $G_n$ decay to $0$ exponentially fast in the generation $n$ (see Remark~\ref{rem: correlation_decay}), but since the distances between the labeled vertices in $G_n$ also increase exponentially fast, this does not immediately imply that the correlations decay exponentially fast in terms of the distance between the labeled vertices. However, since the invariant variety $\MM$ from Theorem~\ref{thm: dynamics} signifies absence of correlations and $\MM_0$ is transversally superattracting (see Lemma~\ref{lem: inv variety} and Theorem~\ref{thm: superattraction}),  it follows that the correlations between labeled vertices do decay exponentially fast in terms of the distance between the vertices. For more general results regarding the relation between zero-freeness and exponential decay of correlation for the hard-core model, we refer the reader to the recent papers \cite{Regts2023, PRR2026}. 

\subsection{Open questions} We conclude the introduction with a discussion of open questions and topics for future study that arise most naturally from the results and methods of this paper.

\subsubsection{Equidistribution of zeros.}

From the dynamical systems perspective, the most pressing question that is not addressed in this paper concerns the equidistribution of zeros. Let $\RR$ be a graph recursion operator, not necessarily non-degenerate or expanding, and let $G_0$ be a starting graph. As usual we write $G_n = \RR^n(G_0)$ for $n\in \N$. We formulate the question for the independence polynomials $Z_{G_n}$; it may be naturally extended to many other partition functions on $G_n$.

\begin{question}
    Under which conditions on the graph recursion $\RR$ and the starting graph $G_0$ does the sequence of probability measures $\nu_n$ defined by the normalized sums 
    \[
    \nu_n = \frac{1}{\deg(Z_{G_n})} \sum_{\lambda\in \C: \; Z_{G_n}(\lambda) = 0} \delta_\lambda
    \]
    of the Dirac masses $\delta_\lambda$ at the zeros of $Z_{G_n}$ (counted with multiplicity) 
    converge weakly to a probability measure $\mu$? If the limiting measure $\mu$ exists, how do its properties depend on the graph recursion and starting graph?
\end{question}

The equidistribution of zeros and properties of the limiting measures have been successfully studied and exploited for various partition functions on specific recursive graph sequences, such as (diamond) hierarchical lattices  \cite{BLRI, BLRII, chio2021chromatic} and $d$-ary rooted trees \cite{Chio2019,Rivera}, using dynamics of the renormalization map induced by a given graph recursion operator. We emphasize that the equidistribution of zeros is interesting not only from a dynamics point of view, but also for describing phase transitions in the physical context; see, for example, the works of Shrock and Tsai \cite{ShrockTsai} and Salas and Sokal \cite{SalasSokal} on the Potts model.

In the abstract dynamical context, the above question translates to the following.

\begin{question}
    Let $F_\lambda: \P^N \dashrightarrow \P^N$ be a one-parameter family of rational maps, let $\phi: \C \rightarrow \P^N$ be a  rational map, and let $\Sigma: \C^{N+1} \rightarrow \C$ be a homogeneous polynomial. Under which conditions on $F_\lambda$, $\phi$, and $\Sigma$, do the probability measures $\nu_n$, given by the normalized sums of the Dirac masses at the solutions of the equation
    \[
    (\Sigma \circ F_\lambda^n \circ \phi)(\lambda) = 0,
    \]
    equidistribute toward a probability measure $\mu$?
\end{question}

Here, the composition $\Sigma \circ F_\lambda^n \circ \phi$ is understood after lifting of $F_\lambda^n \circ \phi$ to $\C^{N+1}$. Since $\Sigma$ is homogeneous, whether the value $(\Sigma \circ F_\lambda^n \circ \phi)(\lambda)$ vanishes does not depend on the specific choice of the lift. In the most basic and classical setting, one simply considers a single rational map $f\colon \widehat{\C}\to \widehat{\C}$ and solutions to the equations $f^n(z)=w$ for a fixed value $w\in \widehat{\C}$; this corresponds to the case where $N=1$, the maps $F_\lambda=f$ do not depend on the parameter $\lambda$, $\phi\colon \C\to\widehat{\C}$ is the canonical inclusion map, and $\Sigma$ is a homogeneous linear map. The probability measures $\nu_n$ are then given by
\[
\nu_n = \frac{1}{\mathrm{deg}(f)^n} \sum_{z\in \widehat{\C}: \; f^n(z) = w} \delta_z.
\]
It was proved for polynomials by Brolin \cite{Brolin}, and for rational functions by Lyubich \cite{Lyubich83} and independently by Freire--Lopes--Ma\~n\'e \cite{FLM} that, except for at most two values of $w$, the measures $\nu_n$ converge to the unique probability measure of maximal entropy, which is independent of $w$. This result was generalized to holomorphic self-maps of projective space in arbitrary dimensions by Forn{\ae}ss and Sibony \cite{FS94}, with a more precise description of the exceptional set due to Briend and Duval \cite{BD2009}. 

Despite much work in this direction, the more general rational setting is far from fully understood. It is related to the theory of activity/bifurcation  currents and parameter space dynamics. We mention important contributions in this area due to Dinh--Sibony \cite{DinhSibony}, Guedj \cite{guedj2003}, DeMarco \cite{DeMarco2001, DeMarco2003}, Bassanelli--Berteloot \cite{BB2007}, and Dujardin--Favre \cite{DujardinFavre}.

\subsubsection{More general recursions.}

There exist many recursive sequences of graphs that are outside the framework discussed in this paper. Two important examples of such graphs are $d$-ary rooted trees and Hanoi graphs. Our definition of graph recursions $\RR\colon\GG_k\to \GG_k$ can be naturally generalized to include these and many other examples: when constructing $G_{n+1}=\RR(G_n)$, instead of connecting the different copies of $G_n$ by identifying their labeled vertices, we can connect them via some auxiliary \emph{connecting graphs}; see Remark~\ref{rem: general_rec} and our earlier paper \cite{HP2024} for more details. 

\begin{question}
    Do the main results in this paper hold when allowing arbitrary connecting graphs? In particular, do the zeros of the independence polynomials $Z_{G_n}$ still avoid a uniform neighborhood of $\R_{\geq 0}$ for non-degenerate and expanding graph recursion operators $\RR$ in this framework?
\end{question}

Note that even though the algebraic subvariety $\MM\subset \P^{2^k-1}$ from Theorem~\ref{thm: dynamics} will still be invariant under the corresponding renormalization map $F_\lambda\colon \P^{2^k-1} \dashrightarrow  \P^{2^k-1}$ in this more general framework, the dynamics of $F_\lambda$ on $\MM$ need not be pre-periodic even when the recursion operator $\RR$ is non-degenerate; see \cite{HP2024}. Furthermore, in this framework, degenerate recursion operators can produce sequences of connected graphs with uniformly bounded vertex degrees, such as $d$-ary rooted trees. Similarly, non-expanding recursion operators can produce sequences
$(G_n)_{n\geq0}\subset \GG_k$ of graphs where the distances between the labeled vertices in $G_n$ diverge, though the divergence rate will be linear in $n$, rather than exponential as for expanding recursion operators. These observations suggest that the scope of Theorem~\ref{main result zeros} may be extended, and motivate the following question.

\begin{question}
  Is it possible to characterize for which recursion operators $\RR\colon \GG_k\to \GG_k$ as in the more general framework from \cite{HP2024}, the hard-core model on the recursive sequences $(G_n)_{n\geq 0}$, $G_{n+1}=\RR(G_n)$, has a phase transition?
\end{question}

\subsubsection{Other partition functions.}

Recursive sequences of graphs make for natural models to study phase transitions of other partition functions arising in statistical physics, such as the Ising model, the Potts model, and many others. A natural question is to what extent the results obtained in this paper are model-independent.

Let us consider the $2$-state Ising model as an example. Given a finite graph $G$, up to a non-zero factor, we can write the corresponding partition function as
\[
Z_G(\lambda, b) = \sum_{U \subset V(G)} \lambda^{\#U} \cdot b^{\#\partial U},
\]
where $\partial U$ denotes the set of edges with one endpoint in $U$ and the other in $V(G)\setminus U$. Here, $\lambda$ is the magnetic field-like parameter, which plays an analogous role to the fugacity parameter in the hard-core model. The temperature-like parameter $b$ can be determined from the coupling constant that describes the interaction between neighboring particles. The seminal Lee--Yang theorem \cite{LY1952} asserts that in the ferromagnetic case, i.e., when $b \in (0,1)$, the zeros of $Z_G$ in the $\lambda$-plane---now known as the Lee--Yang zeros---lie on the unit circle. Moreover, it follows from \cite[Theorems 1.2 and 1.4]{BBGP22} that for every fixed $b\in \R_+\setminus\{1\}$, there exists $\Delta(b)\geq 2$ such that the union of the Lee--Yang zeros over all graphs with vertex degrees bounded by 
$\Delta(b)$ is dense in the unit circle.

It was also proved in joint work of the second author with Regts~\cite{PR2020} that for bounded-degree graphs, the Lee--Yang zeros avoid a neighborhood $U_{\Delta,b}$ of $\lambda = 1$ for each fixed $b\in (b_c,1)$, where the critical parameter $b_c\in (0,1)$ depends only on the upper bound $\Delta$ on the vertex degrees of the graphs. Moreover, $U_{\Delta,b}$ shrinks to $1$ as $b\to b_c$. We also refer the reader to the related paper~\cite{Chio2019} on the distribution of the Lee--Yang zeros for the $d$-ary rooted trees and Cayley trees of finite depths.

Given a non-degenerate and expanding graph recursion operator, which provides sequences of graphs with uniformly bounded vertex degrees, it may still be the case that the Lee--Yang zeros avoid a neighborhood of $\lambda = 1$ even for $0<b<b_c$.

\begin{question}
    Let $\RR$ be a non-degenerate and expanding graph recursion operator, and consider the sequence $(G_n)_{n \ge 0}$, $G_{n+1}=\RR(G_n)$, obtained from a starting graph $G_0$. Is it true that, in the ferromagnetic Ising model, for all fixed $b\in(0,1)$ the Lee--Yang zeros of $Z_{G_n}$ avoid a uniform neighborhood of $\lambda=1$?
\end{question}

\subsubsection{Random recursive graphs}

Another topic for future study, raised by Aleksey Kostenko, concerns the independence polynomial on randomly generated recursive sequences of graphs. Let us formulate an interpretation that stays closest to the setting of this paper.

Suppose that we consider $j\in \N$ different recursion operators $\RR_i$, $i\in\{1,\dots,j\}$, all acting on graphs $G \in \GG_k$ with the same number $k$ of distinct labeled vertices. Given a starting graph $G_0\in \GG_k$, we recursively define $G_{n+1} = \RR_i(G_n)$, where the recursion operators $\RR_i$ are chosen independently at each recursion step according to some probability vector $p = (p_1, \ldots, p_j)$ on $\{\RR_1,\dots ,\RR_j\}$. We are then interested in the behavior of the zeros of the independence polynomial for almost every sequence of graphs $(G_n)_{n\ge 0}$.

\begin{question}
Suppose that each of the recursion operators $\RR_i$ is expanding and non-degenerate. Does it follow that the zeros of the independence polynomials $Z_{G_n}$ almost surely avoid a uniform neighborhood of $\R_{\geq 0}$?
\end{question}

Our dynamical setting still provides a useful framework for studying this problem. Specifically, denoting by $F_{\lambda,i}\colon \P^{2^k-1}\dashrightarrow \P^{2^k-1}$ the renormalization map associated with the graph recursion $\RR_i$, we are interested in the behavior of the orbit of $\phi_\lambda(G_0) \in \P^{2^k-1}$ given by compositions of rational maps in $\mathcal{F}_\lambda=\{F_{\lambda,1}, \ldots, F_{\lambda,j}\}$, chosen independently at each iteration step according to the probability vector $p$.

Recall that the algebraic variety $\MM\subset \P^{2^k-1}$ from Theorem~\ref{thm: dynamics} depends only on the number $k$ of labeled vertices, and hence it is independent of the chosen collection $\{\RR_1,\dots ,\RR_j\}$ of recursion operators on $\GG_k$ and is invariant under each of the maps $F_{\lambda,i}\in \mathcal{F}_\lambda$. By the assumption that each of the recursion operators $\RR_i$ is expanding, it follows that the distances between labeled vertices in $G_n$ diverge almost surely. However, simple examples show that the maximal vertex degrees of the graphs $G_n$ do not need to remain uniformly bounded, and may, in fact, diverge with full probability, in stark contrast to the deterministic setting. It would therefore be interesting to see whether the arguments from Section~\ref{sec: decay of correlation} can be adapted to prove that for all parameters $\lambda\in \R_+$ the orbit of $\phi_\lambda(G_0)$ still converges to $\MM$ with full probability.

We also point out that the subvariety $\MM_0$ from Theorem~\ref{thm: dynamics} as well as the corresponding retraction map from $\MM$ to $\MM_0$ depend both on $\lambda$ and on the graph recursion operator, which now varies from step to step. It is therefore not clear whether the arguments in Section~\ref{section: zeros} generalize to the random setting, as these rely crucially on the fixed-point structure of $\MM_0$, which is lost when the recursion operator varies randomly.

\subsection{Organization of the paper} In Section~\ref{sec: preliminaries} we review some basic terminology and set up notation. We also show some auxiliary statements about the hard-core model on finite graphs in Section~\ref{subsec: auxiliary statements}. The formal graph recursions that we consider will be defined in Section~\ref{sec: graph recursion}. The induced action on projective space is considered in Section~\ref{sec: renormalization map}. In this section we introduce the invariant variety $\MM$, and prove that the subvariety $\MM_0$ is transversally superattracting for non-degenerate expanding recursion operators, Theorem \ref{thm: superattraction}. In Section~\ref{sec: decay of correlation} we prove decay of correlation, i.e., for physical parameters $\lambda> 0$ and non-degenerate expanding graph recursions, the corresponding orbits in projective space always converge to the invariant variety. Our main results on the lack of zeros near the non-negative real axis and the boundedness of zeros will be treated in Section~\ref{section: zeros}, and are quick consequences of the results in Sections~\ref{sec: renormalization map} and~\ref{sec: decay of correlation}.

\subsection*{Acknowledgments} The first author was partially supported by the Marie Sk\l{}odowska-Curie Postdoctoral Fellowship under the \emph{EU’s Horizon Europe} Programme (Grant No.\ 101068362). The research is partly financed by the Dutch Research Council (NWO) under the \emph{Open Competitie ENW M22-2} Programme (File No.\ OCENW.M.22.155). The authors would also like to thank Daniel
Meyer, Nguyen-Bac Dang, Tyler Helmuth, Guus Regts, and Palina Salanevich for various valuable discussions and comments.

The results presented here are related to the preprint \cite{HP2024}, which was made public several years ago by the authors but will not be published in its current form. In comparison, the arguments in this paper are more precise and accessible, but some of the statements are less general. The authors used suggestions by the LLM Claude (Anthropic) to improve the grammatical and mathematical precision of the final version of this paper.

\section{Notation, conventions, and preliminaries}\label{sec: preliminaries}

We denote the positive integers by $\N$, the integers by $\Z$, the real numbers by $\R$, the complex numbers by $\C$, the Riemann sphere by $\widehat{\C}:=\C\cup\{\infty\}$, and the $n$-dimensional complex projective space by $\P^n$. We also use the notations $\R_+$ and $\R_{\ge 0}$ for the positive and non-negative real numbers, respectively. The multiplicative group of nonzero complex numbers is denoted by $\C^*=\C\setminus\{0\}$. 

As standard, the symbol $\sqcup$ stands for the disjoint union of sets. The cardinality of a set $X$ is denoted by $\#X$, the power set of $X$  by $2^X$, and the identity map on $X$ by $\operatorname{id}_X$. If $f: X\to Y$ is a map between two sets and $A\subset X$, then $f|A$ stands for the restriction of $f$ to~$A$. When $X=Y$, that is, when $f$ is a self-map, we denote by $f^n$ the $n$-th iterate of $f$ for each $n\in \N$. We then also set $f^0=\operatorname{id}_X$ for convenience.

In this paper, unless otherwise stated, by a \emph{graph} $G$ we always mean an (undirected) multi-graph, that is, $G$ is given by a pair $(V, E)$, where $V$ is a finite \emph{vertex set} and $E$ is a finite \emph{edge multi-set} of unordered pairs of distinct elements in $V$. We emphasize that we allow \emph{multiple edges}, that is, distinct edges that connect the same pair of distinct vertices, but we do not allow \emph{loops}, that is, edges that connect a vertex to itself. 

Similarly, by a \emph{hypergraph} we mean a pair $(V,E)$, where $V$ is a finite \emph{vertex set} and $E$ is a finite (\emph{hyper})\emph{edge multi-set} of nonempty subsets of $V$. Again, a single subset of $V$ may occur multiple times in $E$. An edge $e\in E$ is called a \emph{$d$-edge} if $d=\#e$. Given a (hyper)graph $G$, we write $V(G)$ and $E(G)$ for the vertex set and edge multi-set of $G$, respectively. 

Let $G$ be a graph. As usual, a graph $G'$ is a \emph{subgraph} of $G$ if $V(G')\subset V(G)$ and $E(G')\subset E(G)$. We say that the subgraph $G'$ is \emph{induced} if $E(G')$ contains all edges of $E(G)$ that connect vertices in $V(G')$. Given a subset $X\subset V(G)$, we denote by $G[X]$ the induced subgraph of $G$ with the vertex set $X$. The graph $G$ is called \emph{$H$-free} for a graph $H$, if $G$ does not contain any induced subgraph that is isomorphic to $H$.

The \emph{vertex degree} of $v\in V(G)$ is the number of edges \emph{incident} to $v$, that is, the number of pairs $\{v,w\}\in E(G)$. A vertex $v$ is called a \emph{leaf} if it has degree $1$. We denote by $N_G[v]$ the \emph{closed neighborhood} of $v$ in $G$, that is, the subset of $V(G)$ consisting of the vertex $v$ and all vertices of $G$ adjacent to $v$.

Suppose $v,w\in V(G)$ are two distinct vertices. A \emph{path in $G$ from $v$ to $w$} is a sequence $v_0=v,v_1, \dots, v_n=w$ of distinct vertices such that each consecutive pair $v_i, v_{i+1}$ is connected by an edge. The number $n\in \N$ is called the \emph{length} of the path;  we say that the path \emph{connects} $v$ and $w$ and \emph{passes through} the vertices $v_0,v_1, \dots, v_n$. The \emph{graph distance} between $v$ and $w$ (in $G$) is the length of the shortest path connecting them; when no such path exists the distance is set to be $\infty$. The graph $G$ is \emph{connected} if for every pair of distinct vertices there exists a path in $G$ connecting them. An \emph{$n$-cycle} ($n\geq 2$) is a connected graph with $n$ vertices, each of degree $2$. 

The graph $G$ is called a \emph{tree} if it is connected and has no cycles. A tree with a single non-leaf vertex of degree $d\geq 2$ is called a $d$-star or a \emph{$d$-pod}.

For $k \in \mathbb N$, we write $\GG_k$ for the set of graphs with $k$ distinct marked vertices labeled $1, \ldots, k$. Furthermore, given $G\in \GG_k$, we denote by $P(G) \subset V(G)$ the corresponding subset of marked vertices in~$G$. 

\subsection{Independence polynomial}\label{subsec: indep-poly}
Let $G=(V,E)$ be a graph. We call any map $\sigma: V\to \{0,1\}$ a (\emph{vertex}) \emph{assignment on $G$}. We say that such $\sigma$ is \emph{independent} if $\sigma^{-1}(1):=\sigma^{-1}(\{1\}) \subset V$ forms an \emph{independent set} in $G$, that is, if no two vertices in $\sigma^{-1}(1)$ form an edge of $G$. We denote by $Z_G(\lambda)$ the (\emph{univariate}) \emph{independence polynomial} of $G$, that is, 
$$
Z_G(\lambda)=\sum_{\sigma} \lambda^{\#\sigma^{-1}(1)},
$$
where the sum is taken over all independent vertex assignments $\sigma$ on $G$.

Let $X\subset V$ be a vertex subset of $G$. We refer to any map $\tau: X\to \{0,1\}$ as a (\emph{vertex}) \emph{assignment on~$X$}, and write
$Z^\tau_G(\lambda)$ for the associated \emph{$\tau$-conditioned} independence polynomial of $G$, that is, 
\begin{equation}\label{eq: cond_indep_poly}
Z_G^\tau(\lambda)=\sum_{\sigma\sim\tau} \lambda^{\#\sigma^{-1}(1)},
\end{equation}
where the sum is taken over all independent vertex assignments $\sigma$ on $G$ with $\sigma|X=\tau$. Note that then
\begin{equation*}
Z_G(\lambda)=\sum_{\tau: X\to \{0,1\}} Z^\tau_G(\lambda).
\end{equation*}
We allow the possibility $X=\emptyset$, in which case $\tau$ is called the \emph{empty assignment} and $Z^\tau_G(\lambda)=Z_G(\lambda)$. We denote by $\obf_X$ and $\ibf_X$ the constant vertex assignments on $X$ that assign the values $0$ and $1$ to each vertex, respectively. By a \emph{partial} (\emph{vertex}) \emph{assignment} on $X$ we will mean a vertex assignment on a subset of $X$.

Given two vertex assignments $\tau: X\to \{0,1\}$ and $\sigma: Y\to \{0,1\}$ on two disjoint subsets $X,Y\subset V$, we write $\tau\wedge\sigma$ for the induced vertex assignment on $X\sqcup Y$ with $\tau\wedge\sigma|X=\tau$ and $\tau\wedge\sigma|Y=\sigma$. Furthermore, we denote by $\overline{\tau}$ the vertex assignment on $X$ given by $\overline{\tau}(x)= 1 - \tau(x)$ for all $x\in X$; we call such $\overline{\tau}$ the \emph{opposite} vertex assignment for $\tau$. 

We record below some well-known properties of the independence polynomial. 
\begin{enumerate}[label={(P\arabic*)},font=\normalfont]
    \item\label{prop:ind-poly-1}  The independence polynomial of a disjoint union $\bigsqcup_{i=1}^{m} G_i$ of graphs $G_1, \dots, G_m$ is simply the product of the individual independence polynomials:
    $$
        Z_{\bigsqcup_{i=1}^{m}  G_i}(\lambda) = \prod^m_{i=1} Z_{G_i}(\lambda).
    $$
    \item\label{prop:ind-poly-2}  Given a vertex $v$ of a graph $G=(V,E)$, suppose $G-v:=G[V\setminus\{v\}]$ and $G-N[v]:=G[V\setminus N_G[v]]$ are the induced subgraphs of $G$ obtained by removing the vertex $v$ and the closed neighborhood of $v$ from $G$, respectively. Then we have:
    $$
        Z_G^{0_v}(\lambda) = Z_{G-v}(\lambda) \quad \text{and} \quad Z_G^{1_v}(\lambda) = \lambda \cdot Z_{G-N[v]}(\lambda),
    $$
    where $0_v=\obf_{\{v\}}$ and $1_v=\ibf_{\{v\}}$ are the vertex assignments on $\{v\}$ mapping $v$ to $0$ and $1$, respectively. 
\end{enumerate}

\subsection{Probability interpretation}\label{subsec: probability interpretation}

Given a graph $G$ and a positive real parameter $\lambda$, we define a probability space $(\Omega_G, \mathcal{F}_G, \P_{G,\lambda})$ as follows:
\begin{itemize}
    \item the sample space $\Omega_G:=2^{V(G)}$ is the set of all subsets of the vertex set of $G$;
    \item the event space $\mathcal{F}_G:=2^{\Omega_G}$ is the set of all subsets of $\Omega_G$;
    \item the probability measure $\P_{G,\lambda}: \mathcal{F}_G \to  [0,1]$ is specified by assigning values to each elementary event $\{I\}\in \mathcal{F}_G$, $I\subset V(G)$: 
    \begin{equation}\label{eq: prob_of_state}
        \P_{G,\lambda}[\{I\}] = \begin{cases}
        \frac{\lambda^{\#I}}{Z_G(\lambda)}, & \text{if $I$ is an independent set in $G$};\\
        0, & \text{otherwise}. 
        \end{cases}
    \end{equation}
\end{itemize}
Recall from Section~\ref{subsec: statistical physics} that $\P_{G,\lambda}[\{I\}]$ is exactly the probability that the state $I\subset V(G)$ occurs in the hard-core model on $G$ in statistical physics. As long as the parameter $\lambda$ is fixed, we will simply write $\P_{G}$ in place of $\P_{G,\lambda}$ to simplify the notation.  

Let $X\subset V(G)$ be a vertex subset and $\tau: X\to \{0,1\}$ be a vertex assignment. Given $I\subset V(G)$, we say that $I$ \emph{agrees with $\tau$}, denoted $I \sim \tau$, whenever $I\cap X = \tau^{-1}(1)$. In other words, $I$ agrees with $\tau$ if, for each $x\in X$, we have $x\in I$ if and only if $\tau(x)=1$. With a slight abuse of notation, we will treat the vertex assignment $\tau$ as the following event in $\mathcal{F}_G$:
\[\{I\subset V(G): \,  I \sim \tau\}.\]
Then 
\[
\P_G[\tau] = \sum_{I\subset V(G):\, I \sim \tau} \P_G[\{I\}] = \frac{Z_G^\tau(\lambda)}{Z_G(\lambda)}.
\]
We will say that the assignment $\tau$ is \emph{admissible} (\emph{for $G$}) if $\P_G[\tau]>0$, that is, if no two vertices in $\tau^{-1}(1)\subset X$ form an edge of $G$.

Given two disjoint subsets $X,W \subset V(G)$ and vertex assignments $\tau: X \rightarrow \{0,1\}$ and $\sigma: W \rightarrow \{0,1\}$, the conditional probability $\P_G[\tau : \sigma]$ is defined by
\[
\P_G[\tau : \sigma] := \frac{\P_G[\tau \wedge \sigma]}{\P_G[\sigma]} = \frac{Z_G^{\tau\wedge\sigma}(\lambda)}{Z_G^{\sigma}(\lambda)},
\]
provided that $\sigma$ is admissible for $G$. Suppose now that a subset $Y\subset V(G)$ is disjoint from both $X$ and $W$. Then the \emph{law of total probability} (for conditional probabilities) implies that
\begin{equation}
\P_G[\tau : \sigma] = \sum_{\eta} \P_G[\tau : \eta \wedge \sigma] \, \P_G[\eta : \sigma], \tag{P3}\label{prop:ind-poly-3}
\end{equation}
where the sum ranges over all assignments $\eta$ on $Y$ such that $\P_G[\eta\wedge \sigma]\neq 0$.

\subsection{Auxiliary statements}\label{subsec: auxiliary statements} 
Let us recall first that a matrix $A=(A_1\dots A_s)\in \R^{r\times s}$ is called (\emph{column}) \emph{stochastic} if each of its columns $A_i$ is a probability vector, that is, $A_i$ has non-negative real entries that sum to $1$. The \emph{Dobrushin contraction coefficient $\delta(A)$} for the total variation distance, sometimes also called the \emph{Dobrushin ergodicity coefficient}, of the matrix $A$ is defined by
\[
\delta(A) := \frac{1}{2}\max_{i,j}{\|A_i-A_j\|_1},
\]
where $\|\cdot \|_1$ denotes the $\ell_1$-norm. In Section~\ref{sec: decay of correlation}, we will use the fact that the Dobrushin contraction coefficient is submultiplicative. Since we were able to locate this statement in the literature only for square matrices (see, for example, \cite[Section~4.3]{Seneta}), we include a proof of the general case here for completeness. 

\begin{lemma} \label{lem: Dobrushin contraction}
    Suppose $A$ and $B$ are two stochastic matrices whose dimensions are such that the product $AB$ is well-defined. Then 
    $\delta(AB)\leq \delta(A)\delta(B)$.
\end{lemma}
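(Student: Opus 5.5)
The plan is to rewrite $\delta$ as an operator norm and then use submultiplicativity of operator norms. Let $A\in\R^{r\times s}$ and $B\in\R^{s\times t}$ be stochastic, so $AB\in\R^{r\times t}$ is stochastic and its columns are $(AB)_i = AB_i=\sum_l B_{li}A_l$. Write $H_s=\{x\in\R^s:\sum_l x_l=0\}$ for the zero-sum hyperplane.

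The key intermediate claim is
\[
\delta(A)=\sup_{0\neq x\in H_s}\frac{\|Ax\|_1}{\|x\|_1},
\]
that is, $\delta(A)$ is the $\ell_1$ operator norm of $A$ restricted to $H_s$.

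The lower bound is immediate. Take $x=\tfrac12(e_i-e_j)$ with $i\ne j$. Then $\|x\|_1=1$ and $\|Ax\|_1=\tfrac12\|A_i-A_j\|_1$, and maximizing over $i,j$ gives the bound.

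For the upper bound, fix $x\in H_s\setminus\{0\}$ and split it into positive and negative parts, $x=x^+-x^-$. Since $\sum_l x_l=0$, both parts have the same total mass $c=\tfrac12\|x\|_1$. Set $p=x^+/c$ and $q=x^-/c$; these are probability vectors. Then
\[
Ax=c(Ap-Aq)=c\sum_{i,j}p_iq_j(A_i-A_j),
\]
because $\sum_j q_j=\sum_i p_i=1$. The triangle inequality now gives
\[
\|Ax\|_1\le c\,\max_{i,j}\|A_i-A_j\|_1=\tfrac12\|x\|_1\cdot 2\delta(A)=\|x\|_1\,\delta(A).
\]
This convex-combination step is the only point requiring any care.

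For the main inequality, first note that a stochastic $A$ maps $H_s$ into $H_r$, since the column sums equal $1$ and so preserve the coordinate sum. Apply the claim to $AB$ and take $x\in H_t$. Then $Bx\in H_s$, and applying the claim twice gives
\[
\|ABx\|_1\le\delta(A)\|Bx\|_1\le\delta(A)\delta(B)\|x\|_1.
\]
If $Bx=0$, the first inequality holds trivially. Taking the supremum over $x\in H_t\setminus\{0\}$ yields $\delta(AB)\le\delta(A)\delta(B)$.

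Two degenerate cases need a brief remark. If $t=1$, then $\delta(AB)=0$ and there is nothing to prove. If $s=1$, then $B$ has identical columns $(1)$, so $\delta(B)=0$, and $AB$ also has identical columns, so $\delta(AB)=0$.
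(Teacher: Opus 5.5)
Your proof is correct and is essentially the paper's argument. The key step is the same: you split a zero-sum vector into normalized positive and negative parts and write $Ax$ as a convex combination of column differences $A_i-A_j$, which is exactly what the paper does for $x=B_1-B_2$ after reducing to the case where $B$ has two columns. Your version only repackages this as the $\ell_1$ operator norm of $A$ on the zero-sum hyperplane, so that submultiplicativity becomes immediate.
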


We remark that the proof below only requires $B$ to be stochastic.

\begin{proof}
     It is sufficient to consider the case when the matrix $B$, and hence also $C:=AB$, has only two columns. We denote the number of rows and columns of $A$ by $r$ and $s$, respectively. Let us also denote the columns of $A$ by $A_1,\dots, A_s$, the columns of $C$ by $C_1$ and $C_2$, and the entries of $B$ by $b_{i,j}$. We then aim to obtain an upper estimate for
    \[
    \|C_1-C_2\|_1 = \|\sum_{j=1}^s (b_{j,1} - b_{j,2})A_j  \|_1.
    \]    
    If $\delta(B)=0$, then $\delta(C)=0$ and the statement trivially holds. So we assume in the following that $\delta(B)>0$. 
     
     We may renumber the row indices of $B$ and correspondingly the column indices of $A$ so that
     \[\text{$b_{j,1} \ge b_{j,2}$\;
     for $j = 1 ,\ldots ,j_0$ \quad and \quad 
     $b_{j,1} < b_{j,2}$ \; for $j = j_0 + 1 ,\ldots, s$.}\] 
     Since 
    \[
    1=\sum_{j=1}^s b_{j,1}=\sum_{j=1}^{s}b_{j,2}
    \]
    and
    \[
    2\delta(B)=\sum_{j=1}^s|b_{j,1}-b_{j,2}|=\sum_{j=1}^{j_0}(b_{j,1}-b_{j,2})+\sum_{j=j_0+1}^{s}(b_{j,2}-b_{j,1}),
    \]
    it follows that 
    \[
     \delta(B)=\sum_{j=1}^{j_0}(b_{j,1}-b_{j,2})=\sum_{j=j_0+1}^{s}(b_{j,2}-b_{j,1}).
    \] 

    Let us set 
    \[\text{$x_j:=\frac{1}{\delta(B)}(b_{j,1}-b_{j,2})$\;
     for $j = 1 ,\ldots ,j_0$ \quad and \quad 
    $y_i:=\frac{1}{\delta(B)}(b_{i,2}-b_{i,1})$ \; for $i = j_0 + 1 ,\ldots, s$},\]
    so that
    \[1=\sum_{j=1}^{j_0}x_j=\sum_{i=j_0+1}^{s}y_i.\]
    Then 
    \begin{align*}
     \frac{1}{\delta(B)}\|C_1-C_2\|_1 = &\|\sum_{j=1}^{j_0} x_jA_j- \sum_{i=j_0+1}^s y_iA_i \|_1=\|\sum_{\substack{{j=1,\dots,j_0,}\\{i=j_0+1,\dots, s}}}x_jy_i(A_j-A_i) \|_1\leq\\
     &\sum_{\substack{{j=1,\dots,j_0,}\\{i=j_0+1,\dots, s}}}x_jy_i\|A_j-A_i\|_1 \leq \sum_{\substack{{j=1,\dots,j_0,}\\{i=j_0+1,\dots, s}}}x_jy_i 2\delta(A) = 2\delta(A), 
    \end{align*}
    which establishes the desired inequality $\delta(C)\leq \delta(A)\delta(B)$.
\end{proof}

We now collect several auxiliary statements that describe how conditioning behaves in the hard-core model under certain structural assumptions on the underlying graph. 

\begin{lemma}\label{lem: probability lower bound} 
    Given $\lambda \in \R_+$ and $n, \Delta \in \N$, there exists a constant $c(\lambda, \Delta, n)>0$ such that the following holds: 
    
    For every graph $G$ and every non-empty subset $X\subset V(G)$ whose vertices have degree at most $\Delta$, we have
    \[
    \P_{G,\lambda}[\tau] \geq c(\lambda, \Delta, \#X)
    \]
    for all admissible assignments $\tau$ on $X$.
\end{lemma}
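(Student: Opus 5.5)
The plan is to compare the probability of an arbitrary admissible assignment $\tau$ on $X$ with that of a reference event, using a weight-preserving map on independent sets that loses only a bounded multiplicative factor. Set $N:=\bigcup_{x\in X} N_G[x]$. Since every vertex of $X$ has degree at most $\Delta$, we have $\#N\le \#X(\Delta+1)$.

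Fix an admissible $\tau$ on $X$ and let $A:=\tau^{-1}(1)$, which is independent in $G$ by admissibility. Define a map $\Phi$ from all independent sets $I$ of $G$ to independent sets agreeing with $\tau$ by
\[
\Phi(I):=\bigl(I\setminus N\bigr)\cup A .
\]
First we check that $\Phi(I)$ is independent. The set $I\setminus N$ is independent. The set $A$ is independent. Every neighbor of a vertex of $A$ lies in $N$, so no vertex of $I\setminus N$ is adjacent to $A$. Next we check that $\Phi(I)\sim\tau$. Since $X\subset N$, we get $\Phi(I)\cap X=A=\tau^{-1}(1)$.

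We then compare weights. From $\#(I\setminus N)\ge \#I-\#N$ and $\#A\ge 0$ we obtain
\[
\lambda^{\#\Phi(I)}\ \ge\ \lambda^{\#I}\cdot \min(1,\lambda)^{\#N}\cdot \min(1,\lambda)^{\#X}.
\]
The first factor accounts for the vertices of $N$ that are removed. The second covers $\lambda^{\#A}$ with $\#A\le\#X$, since $\lambda^{\#A}\ge\min(1,\lambda)^{\#X}$.

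The only point needing care is that $\Phi$ is not injective, and this is controlled by fibre size. The image $\Phi(I)$ determines $I\setminus N$, because $A\subset N$ gives $I\setminus N=\Phi(I)\setminus N$. Hence each fibre is contained in the set of choices of $I\cap N$, so each fibre has size at most $2^{\#N}$. Summing over independent $I$ gives
\[
Z_G(\lambda)\ \le\ 2^{\#N}\,\min(1,\lambda)^{-\#N-\#X}\, Z_G^\tau(\lambda).
\]
Using $\#N\le \#X(\Delta+1)$, we conclude that
\[
\P_{G,\lambda}[\tau]=Z^\tau_G(\lambda)/Z_G(\lambda)\ \ge\ c(\lambda,\Delta,\#X):=\bigl(2^{-(\Delta+1)}\min(1,\lambda)^{\Delta+2}\bigr)^{\#X}.
\]
This constant depends only on $\lambda$, $\Delta$ and $\#X$, as the statement requires. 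Since the bound decreases as $\#X$ grows, it also serves for every $\#X\le n$.
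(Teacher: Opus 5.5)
There is a genuine error in your weight comparison for $\lambda>1$. You have $\#\Phi(I)=\#I-\#(I\cap N)+\#A$. The deleted vertices cost a factor $\lambda^{-\#(I\cap N)}$, and when $\lambda>1$ this can be as small as $\lambda^{-\#N}$. It is therefore not bounded below by $\min(1,\lambda)^{\#N}=1$.

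The inequality $\#(I\setminus N)\ge\#I-\#N$ that you cite only gives $\lambda^{\#(I\setminus N)}\ge\lambda^{\#I}\lambda^{-\#N}$ when $\lambda\ge 1$. When $\lambda<1$, your displayed inequality does hold, but for a different reason: it needs the upper bound $\#(I\setminus N)\le\#I$.

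As a result, your final constant is false for $\lambda>1$. There it equals $2^{-(\Delta+1)\#X}$, independent of $\lambda$. To see this fails, let $G$ be a star with centre $x$ and two leaves, $X=\{x\}$, $\Delta=2$, and $\tau(x)=1$. Then $\P_{G,\lambda}[\tau]=\lambda/(\lambda+(1+\lambda)^2)$. This tends to $0$ as $\lambda\to\infty$, and already at $\lambda=100$ it is below $1/8$.

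The repair is easy. Replace $\min(1,\lambda)^{\#N}$ by $\max(1,\lambda)^{-\#N}$, which gives $c=(2\max\{1,\lambda\})^{-(\Delta+1)\#X}\min\{1,\lambda\}^{\#X}$.

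A cleaner route folds the weights into the fibre count. Every $I$ in the fibre over $J$ has the form $(J\setminus N)\cup S$ with $S\subset N$. Hence $\sum_{I\in\Phi^{-1}(J)}\lambda^{\#I}\le\lambda^{\#J-\#A}(1+\lambda)^{\#N}$. Summing over $J$ gives $Z_G(\lambda)\le\lambda^{-\#A}(1+\lambda)^{\#N}Z^\tau_G(\lambda)$. Therefore $\P_{G,\lambda}[\tau]\ge\min\{1,\lambda^{\#X}\}(1+\lambda)^{-\#X(\Delta+1)}$, which is exactly the paper's constant.

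With this fix, your argument is a valid direct counting alternative to the paper's proof. The paper instead conditions on the configuration $\sigma$ outside $N$ and notes that $\P_{G,\lambda}[\tau:\sigma]=\P_{G[S],\lambda}[\tau]$ for some $S\subset N$ containing $X$. It bounds this ratio by the same two elementary estimates and then averages over $\sigma$ using the law of total probability. Both proofs use the same locality of $\tau$: yours through a many-to-one map on independent sets, the paper's through conditioning.
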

\begin{proof}
    Consider the subset $Y:=\bigcup_{v\in X} N_G[v]$. We can then write
    \[
    \P_{G,\lambda}[\tau] = \sum_{\sigma} \P_{G,\lambda}[\tau : \sigma] \P_{G,\lambda}[\sigma],
    \]
    where we sum over all possible admissible assignments $\sigma$ on $V(G) \setminus Y$. Observe now that for the independence polynomial, considering independent sets in $G$ that agree with an assignment $\sigma$ on $V(G) \setminus Y$ is equivalent to considering independent sets in an induced subgraph $G[S]$ for a certain subset $S=S(\sigma)$ of $Y$. More precisely, let $S \subset Y$ consist of those vertices in $Y$ that are not adjacent to any vertex in $\sigma^{-1}(1)$. Then $X \subset S$ and 
    \[
    \P_{G,\lambda}[\tau : \sigma] = \P_{G[S],\lambda}[\tau].
    \]
   
    Since 
    \[Z^\tau_{G[S]}(\lambda) \geq \lambda^{\#\tau^{-1}(1)}\geq \min\{1,\lambda^{\#X}\} \quad \text{and} \quad Z_{G[S]}(\lambda) \leq (1+\lambda)^{\#S}\leq (1+\lambda)^{\#X(\Delta+1)},\]
    we get that 
    \[\P_{G[S],\lambda}[\tau]\geq c := \frac{ \min\{1,\lambda^{\#X}\}}{(1+\lambda)^{\#X(\Delta+1)}}>0,\]
    and thus
    \[
    \P_{G,\lambda}[\tau] = \sum_{\sigma} \P_{G,\lambda}[\tau : \sigma] \P_{G,\lambda}[\sigma] \geq   \sum_{\sigma} c  \, \P_{G,\lambda}[\sigma] = c,
    \]
    with $c=c(\lambda,\Delta,\#X)$ as desired. This finishes the proof.   
\end{proof}  

The following two lemmas will play a key role in the proof of Theorem~\ref{thm: decay of correlation}.

\begin{lemma}\label{lem: conditioning wrt Y}
    Let $G$ be a graph, and let $\tau$, $\eta$, and $\sigma$ be vertex assignments on pairwise disjoint subsets $X\neq \emptyset$, $Y$, and $W\neq \emptyset$ of $V(G)$, respectively.  Suppose that every path from a vertex in $X$ to a vertex in $W$ passes through a vertex in $Y$. Then, for all $\lambda\in \R_+$, we have
    \[\P_{G,\lambda}[\tau: \eta\wedge \sigma] = \P_{G,\lambda}[\tau: \eta],\]
    provided that $\P_{G,\lambda}[\eta\wedge \sigma]\neq 0$.
\end{lemma}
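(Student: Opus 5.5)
The plan is to prove the statement by a factorization of the conditioned partition functions: once $\eta$ is fixed on the separating set $Y$, the graph splits into a part containing $X$ and a part containing $W$, and the weights of the two parts multiply. Since $\P_{G,\lambda}[\eta\wedge\sigma]\neq 0$, the assignment $\eta$ is admissible as well, so both conditional probabilities are defined. By the formulas of Section~\ref{subsec: probability interpretation},
\[
\P_{G,\lambda}[\tau:\eta\wedge\sigma]=\frac{Z_G^{\tau\wedge\eta\wedge\sigma}(\lambda)}{Z_G^{\eta\wedge\sigma}(\lambda)}, \qquad \P_{G,\lambda}[\tau:\eta]=\frac{Z_G^{\tau\wedge\eta}(\lambda)}{Z_G^{\eta}(\lambda)}.
\]
After cross-multiplying, the goal becomes the identity
\[
Z_G^{\tau\wedge\eta\wedge\sigma}\, Z_G^{\eta}=Z_G^{\tau\wedge\eta}\, Z_G^{\eta\wedge\sigma}.
\]

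\textbf{Step 1: split off the part of $G$ reachable from $X$.} Consider the graph $G-Y$ obtained by deleting $Y$. Let $A$ be the union of the vertex sets of the connected components of $G-Y$ that meet $X$, and set $B:=V(G)\setminus(Y\cup A)$. By the separation hypothesis, $W\subset B$. Moreover, there is no edge between $A$ and $B$, since any such edge would join two vertices of $G-Y$ lying in different components.

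\textbf{Step 2: factorize each conditioned polynomial.} Fix an assignment $\eta'$ on $Y$. An independent assignment on $G$ agreeing with $\eta'$ is the same thing as a pair consisting of an independent set $I_A$ in $G[A]$ and an independent set $I_B$ in $G[B]$, each avoiding the neighbours of $\eta'^{-1}(1)$. This uses the absence of $A$–$B$ edges, together with the fact that $\eta'$ is itself independent, which holds in the cases we need by admissibility. The weight of such an assignment is $\lambda^{\#\eta'^{-1}(1)}\lambda^{\#I_A}\lambda^{\#I_B}$. Summing, and additionally restricting $I_A$ to agree with $\tau$ and/or $I_B$ to agree with $\sigma$, gives
\[
Z_G^{\alpha\wedge\eta\wedge\beta}=\lambda^{\#\eta^{-1}(1)}\,a(\alpha)\,b(\beta).
\]
Here $\alpha$ is either $\tau$ or the empty assignment on $X$, and $\beta$ is either $\sigma$ or the empty assignment on $W$. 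The factor $a(\alpha)$ is the weighted count of independent sets in $G[A\setminus N_G(\eta^{-1}(1))]$ agreeing with $\alpha$, and $b(\beta)$ is the analogous count on the $B$ side.

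\textbf{Step 3: conclude.} Substituting these four factorizations, both sides of the cross-multiplied identity equal $\lambda^{2\#\eta^{-1}(1)}\,a(\tau)\,a(\emptyset)\,b(\sigma)\,b(\emptyset)$. Since the denominators are nonzero by admissibility, this gives the claim.

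The only delicate point is bookkeeping, chiefly defining $A$ correctly so that $X\subset A$ and $W\subset B$. No real obstacle is expected. The edge cases are harmless: if $Y=\emptyset$, the argument says that $X$ and $W$ lie in different components, and it goes through unchanged.
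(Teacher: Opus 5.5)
Your proof is correct, and it uses the same decomposition as the paper: your $A\cup Y$ is exactly the vertex set of the subgraph $H$ used there. Where you differ is in how the conclusion is reached.

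The paper fixes an assignment $\widetilde{\sigma}$ on the \emph{entire} complement $V(G)\setminus V(H)$. It observes that $Z_G^{\tau\wedge\eta\wedge\widetilde{\sigma}}=Z_H^{\tau\wedge\eta}\lambda^{\#\widetilde{\sigma}^{-1}(1)}$, and similarly without $\tau$. Hence $\P_{G,\lambda}[\tau:\eta\wedge\widetilde{\sigma}]=\P_{H,\lambda}[\tau:\eta]$ for every admissible $\widetilde{\sigma}$. It then averages over $\widetilde{\sigma}$ with the law of total probability, so that both $\P_{G,\lambda}[\tau:\eta]$ and $\P_{G,\lambda}[\tau:\eta\wedge\sigma]$ equal $\P_{H,\lambda}[\tau:\eta]$.

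You instead sum out the $B$-side in closed form, obtaining the product formula $Z_G^{\alpha\wedge\eta\wedge\beta}=\lambda^{\#\eta^{-1}(1)}a(\alpha)b(\beta)$, and finish by cross-multiplying. This is a little more direct and proves slightly more. The relation $Z_G^{\tau\wedge\eta\wedge\sigma}Z_G^{\eta}=Z_G^{\tau\wedge\eta}Z_G^{\eta\wedge\sigma}$ holds as an identity of polynomials in $\lambda$, trivially when $\eta^{-1}(1)$ is not independent since both sides vanish. It therefore holds for all complex $\lambda$, which is exactly the shape of the algebraic no-correlation relations \eqref{eq: no correlation}.

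The paper's route makes explicit that the common value is a conditional probability on the smaller graph $H$. That is the form invoked later in the proof of Proposition~\ref{prop: attracting for graphs}. Your factorization gives this too, since $a(\tau)/a(\emptyset)=\P_{H,\lambda}[\tau:\eta]$.

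One piece of bookkeeping should be stated explicitly. $a(\tau)$ should be read as the weighted sum over independent $I_A\subset A\setminus N_G(\eta^{-1}(1))$ with $I_A\cap X=\tau^{-1}(1)$. It equals $0$ if $\tau$ puts a $1$ on a neighbor of $\eta^{-1}(1)$, and this does not affect the argument.
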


\begin{proof}
    Let $H$ be the induced subgraph of $G$ that contains $X\sqcup Y$ as well as all vertices of $G$ that can be reached by a path starting in $X$ and not passing through any vertex of $Y$. Note that then $W\subset V(G) \setminus V(H)$. The desired statement will follow directly from the following claim using the law of total probability.

    \begin{claim}
    Suppose $\widetilde{\sigma}$ is an arbitrary assignment on $V(G) \setminus V(H)$ such that $\eta\wedge \widetilde{\sigma}$ is admissible for $G$. Then 
    \[\P_{G,\lambda}[\tau: \eta\wedge \widetilde{\sigma}] = \P_{H,\lambda}[\tau: \eta] = \P_{G,\lambda}[\tau: \eta].\]    
    \end{claim}
    
    Indeed, we have 
    \[
    Z_{G}^{\tau\wedge \eta\wedge \widetilde{\sigma}}(\lambda)=Z_{H}^{\tau\wedge \eta}(\lambda)\cdot \lambda^{\#\widetilde{\sigma}^{-1}(1)} \quad \text{and} \quad 
    Z_{G}^{\eta\wedge \widetilde{\sigma}}(\lambda)=Z_{H}^{\eta}(\lambda)\cdot \lambda^{\#\widetilde{\sigma}^{-1}(1)}, 
    \]
    which imply the first equality. The second equality follows immediately from the following chain of identities (based on the law of total probability):
    \begin{align*}
    \P_{G,\lambda}[\tau: \eta]&=\sum_{\widetilde{\sigma}}\P_{G,\lambda}[\tau\wedge \widetilde{\sigma}: \eta] = \sum_{\widetilde{\sigma}}\P_{G,\lambda}[\tau: \widetilde{\sigma} \wedge \eta] \, \P_{G,\lambda}[\widetilde{\sigma}: \eta]  \\
    &=\sum_{\widetilde{\sigma}}\,\P_{H,\lambda}[\tau:\eta] \P_{G,\lambda}[\widetilde{\sigma}: \eta] = \P_{H,\lambda}[\tau:\eta] \cdot \sum_{\widetilde{\sigma}}\P_{G,\lambda}[\widetilde{\sigma}: \eta] \\
    &= \P_{H,\lambda}[\tau:\eta],
    \end{align*}
    where all the sums are taken over all possible vertex assignments $\widetilde{\sigma}$ on $V(G) \setminus V(H)$ such that $\widetilde{\sigma}\wedge \eta$ is admissible for $G$. 
\end{proof}

Let $G$ be a graph and $\lambda$ a fixed positive parameter. Given two disjoint non-empty subsets $X,W\subset V(G)$, we may consider a matrix $A=A(X,W)$ whose entries are given by the conditional probabilities $\P_{G,\lambda}[\tau:\sigma]$, with rows corresponding to admissible assignments $\tau$ on $X$ and columns corresponding to admissible assignments $\sigma$ on $W$. Clearly, the matrix $A$ is stochastic. The following lemma implies that the Dobrushin contraction coefficient $\delta(A)$ is bounded above by a constant $\varepsilon(\lambda, \#X)\in (0,1)$.

\begin{lemma}\label{lem: total var bound} 
    Given $\lambda \in \R_+$ and $n \in \N$, there exists a constant $\varepsilon(\lambda, n)\in (0,1)$ such that the following holds: 
    
    For every graph $G$, for all disjoint non-empty subsets $X,W\subset V(G)$, and for all admissible assignments $\sigma,\widetilde{\sigma}$ on $W$, we have
    \[\frac{1}{2}\; \sum_{\tau: X\to \{0,1\}} |\P_{G,\lambda}[\tau : \sigma]-\P_{G,\lambda}[\tau : \widetilde\sigma]| \leq \varepsilon(\lambda, \#X),\]
    where the sum is taken over all assignments $\tau$ on $X$. 
\end{lemma}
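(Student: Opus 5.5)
The plan is to exhibit a single assignment on $X$, namely the all-zero assignment $\obf_X$, that receives conditional probability bounded below by a constant $c(\lambda,\#X)>0$ under \emph{every} admissible conditioning $\sigma$ on $W$. Once that is in place, the bound follows from the standard overlap identity for total variation. For any two probability vectors $p,q$ on the same finite set,
\[
\tfrac12\sum_\tau |p_\tau-q_\tau| \;=\; 1-\sum_\tau \min\{p_\tau,q_\tau\}.
\]
Taking $p_\tau=\P_{G,\lambda}[\tau:\sigma]$ and $q_\tau=\P_{G,\lambda}[\tau:\widetilde\sigma]$, the right-hand side is at most $1-\min\{p_{\obf_X},q_{\obf_X}\}\le 1-c$. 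So we may set $\varepsilon(\lambda,n):=1-(1+\lambda)^{-n}\in(0,1)$. Note that, unlike Lemma~\ref{lem: probability lower bound}, no degree bound is needed here, and this is exactly why the zero assignment is the right choice.

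\textbf{Step 1: reduce to an unconditioned probability in an induced subgraph.} As in the proof of Lemma~\ref{lem: probability lower bound}, conditioning on an admissible $\sigma$ on $W$ amounts to restricting to an induced subgraph. Let $S\subset V(G)\setminus W$ consist of the vertices not adjacent to any vertex of $\sigma^{-1}(1)$, and put $H=G[S]$ and $X'=X\cap S$. The vertices of $X\setminus S$ are forced to be $0$. Hence
\[
\P_{G,\lambda}[\obf_X:\sigma]=\P_{H,\lambda}[\obf_{X'}],
\]
because the factor $\lambda^{\#\sigma^{-1}(1)}$ cancels between numerator and denominator.

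\textbf{Step 2: lower-bound the zero assignment.} By \ref{prop:ind-poly-2}, iterated over the vertices of $X'$, we have $Z^{\obf_{X'}}_H(\lambda)=Z_{H-X'}(\lambda)$. Every independent set $I$ of $H$ decomposes uniquely as $(I\setminus X')\sqcup(I\cap X')$, where the first part is independent in $H-X'$ and the second is some subset of $X'$. This gives the injection bound
\[
Z_H(\lambda)\le Z_{H-X'}(\lambda)\,(1+\lambda)^{\#X'}\le Z_{H-X'}(\lambda)\,(1+\lambda)^{\#X}.
\]
Therefore $\P_{H,\lambda}[\obf_{X'}]\ge(1+\lambda)^{-\#X}$ uniformly in $G$, $W$ and $\sigma$.

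\textbf{Main obstacle.} The only delicate point is the bookkeeping in Step 1, namely checking that the conditional law really is the hard-core measure on $G[S]$, with $X\setminus S$ frozen at $0$. This requires $\sigma$ to be admissible, which holds by hypothesis, and the computation is the same as in Lemma~\ref{lem: probability lower bound}. Once that is verified, Steps 2 and 3 are immediate.
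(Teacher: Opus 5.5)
Your proof is correct and is essentially the paper's argument: a lower bound on $\P_{G,\lambda}[\obf_X:\sigma]$ that is uniform in $\sigma$, combined with the overlap identity $|a-b|=a+b-2\min\{a,b\}$. The only difference is that the paper gets the lower bound directly from $Z^{\tau\wedge\sigma}_G(\lambda)\le\lambda^{\#\tau^{-1}(1)}Z^{\obf_X\wedge\sigma}_G(\lambda)$, weakened to $1/(2^{\#X}\max\{1,\lambda^{\#X}\})$. So your reduction to $G[S]$ is unnecessary but harmless, and your constant $(1+\lambda)^{-\#X}$ is slightly sharper.
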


\begin{proof}
Recall that $\obf_X$ denotes the constant assignment on $X$ that assigns the value $0$ to each vertex. 
Then, for every assignment $\tau$ on $X$ and every admissible assignment $\sigma$ on $W$, we have 
    \[
    Z^{\tau\wedge\sigma}_{G}(\lambda)\leq \lambda^{\#\tau^{-1}(1)}\cdot Z^{\obf_X\wedge\sigma}_{G}(\lambda)\leq \max\{1,\lambda^{\#X}\}\cdot Z^{\obf_X\wedge\sigma}_{G}(\lambda),
    \]
   and thus
   \[
   \P_{G,\lambda}[\obf_X:\sigma]=   \frac{Z^{\obf_X\wedge\sigma}_{G}(\lambda)}{Z^{\sigma}_{G}(\lambda)} = \frac{Z^{\obf_X\wedge\sigma}_{G}(\lambda)}{\sum_{\tau}Z^{\tau\wedge\sigma}_{G}(\lambda)} \geq \frac{1}{2^{\#X}\max\{1,\lambda^{\#X}\}},
   \]
   where the sum is taken over all possible assignments $\tau$ on $X$. 

   Suppose now that  $\sigma,\widetilde{\sigma}$ are arbitrary admissible assignments on $W$. Since $|a-b|=a+b-2\min\{a,b\}$, we obtain
   \begin{align*} 
   &\sum_{\tau: X\to \{0,1\}} |\P_{G,\lambda}[\tau : \sigma]-\P_{G,\lambda}[\tau : \widetilde{\sigma}]|=\\
   &\sum_{\tau: X\to \{0,1\}} \P_{G,\lambda}[\tau : \sigma]+\sum_{\tau: X\to \{0,1\}} \P_{G,\lambda}[\tau : \widetilde{\sigma}]-2\sum_{\tau: X\to \{0,1\}}\min\{\P_{G,\lambda}[\tau :  \sigma],\, \P_{G,\lambda}[\tau : \widetilde{\sigma}]\}\leq\\
   &1+1-2\min\{\P_{G,\lambda}[\obf_X :  \sigma],\, \P_{G,\lambda}[\obf_X : \widetilde{\sigma}]\}\leq 2\left(1-\frac{1}{2^{\#X}\max\{1,\lambda^{\#X}\}}\right),
   \end{align*}
    which implies the desired statement with
    \[
    \varepsilon(\lambda,n):=1-\frac{1}{2^{n}\max\{1,\lambda^{n}\}}\in (0,1).
    \] 
\end{proof}

\section{Graph recursion operator}\label{sec: graph recursion}

Fix positive integers $k$ and $m\geq 2$. We start by formally introducing a \emph{graph recursion operator} $\RR=\RR_{(H,\Phi)}$
on the set $\GG_k$ of graphs with $k$ (distinct) vertices labeled $1,\dots,k$. The operator $\RR$ depends on a pair $(H,\Phi)$ that is specified as follows:
\begin{itemize}
    \item $H$ is a hypergraph with the vertex set $V(H) = \{1,\dots,m\}$ and edge multi-set $E(H)$, whose edges $e\in E(H)$ each have a label $\ell(e)\in \{1,\dots, k\}$ such that the following holds: for each $j\in\{1,\dots,k\}$, the edges of $H$ labeled $j$ form a partition of $V(H)$.
    \item  $\Phi: \{1,\dots,k\}\to E(H)$ is an injective map.
\end{itemize} 
We call the edge-labeled hypergraph $H$ a \emph{gluing scheme}, the map $\Phi$ a \emph{labeling map}, and the pair $(H,\Phi)$ a (\emph{reduced}) \emph{gluing data} (with parameters $k,m$).

\begin{definition}[Graph recursion $\RR$]\label{def:graph recursion} Let $(H, \Phi)$ be a gluing data with parameters $k,m\in \N$ as above, and suppose that we are given a graph $G\in\GG_k$. We define a new marked graph $\RR(G)=\RR_{(H,\Phi)}(G)\in \GG_k$ as follows:
\begin{enumerate}[label=(\Roman*)]
    \item First, we take $m$ disjoint copies $G(1),\dots, G(m)$ of $G$, where each copy $G(i)$ is associated with the vertex $i$ of the gluing scheme $H$.

    \item Then for each edge $e = \{i_1, \ldots, i_s\} \in E(H)$ with label $\ell(e)\in\{1,\dots,k\}$, we identify the vertices labeled $\ell(e)$ in the copies $G(i_1),\dots,G(i_s)$.

    \item Finally, we assign $k$ marked vertices in the resulting graph according to the labeling map $\Phi$: the label $j \in \{1, \ldots, k\}$ is assigned to the vertex that corresponds to the edge $\Phi(j) \in E(H)$.
\end{enumerate}
We call the induced operator $$
\RR = \RR_{(H, \Phi)}: \GG_k \rightarrow \GG_k, \; G\mapsto \RR(G),
$$
the \emph{graph recursion} with parameters $k,m$ associated with the gluing data $(H,\Phi)$.
\end{definition}

\begin{rem}\label{rem: general_rec}
    In \cite{HP2024}, we considered more general gluing data and graph recursions. There, roughly speaking, the $m$ copies of the given graph $G\in \GG_k$ are joined by some auxiliary \emph{connecting graphs} $\Sigma_e$ for every edge $e\in E(H)$. Additionally, each label $j\in\{1,\dots,k\}$ is assigned to a distinguished marked vertex within the graph $\Sigma_{\Phi(j)}$. The graph recursion considered in this paper corresponds to the special case where each $\Sigma_e$ consists of a single vertex.
\end{rem}

Given a gluing data $(H, \Phi)$ with parameters $k,m$ and an arbitrary \emph{starting graph} $G_0\in \GG_k$, we define a sequence $(G_n)_{n\geq 0}$ of marked graphs, each having $k$ distinct marked vertices labeled $1,\dots, k$, by $G_{n+1} := \RR_{(H, \Phi)}(G_n)$. For every $n\in \N$, the $n$-th iterate of the operator $\RR_{(H,\Phi)}$ determines a gluing data $(H_n,\Phi_n)$ with parameters $k$ and $m^n$ such that $G_n=\RR_{(H_n,\Phi_n)}(G_0)$. We refer to $(H_n,\Phi_n)$ as the \emph{$n$-th iterate} of $(H,\Phi)$.

\begin{example}[Sierpi\'nski gluing data]\label{ex: sierpinksi}
    We now describe the \emph{Sierpi\'nski gluing data} $(H,\Phi)$ with parameters $k=3$ and $m=3$ governing the recursive construction of the Sierpi\'nski tripod graphs from Figure~\ref{fig: sierpinski tripod graphs} and the Sierpi\'nski gasket graphs from Figure~\ref{fig: sierpinski graphs}. The gluing scheme is given by the hypergraph $H$ with 
    \[V(H)=\{1,2,3\} \quad \text{and} \quad E(H)=\lbrace \{1\},\{2\},\{3\},\{1,2\},\{1,3\},\{2,3\}\rbrace.\]
    For each $j \in \{1,2,3\}$, we label the $1$-edge $\{j\}$ and the $2$-edge $\{1,2,3\}\setminus \{j\}$ with the label $j$. We note that these two edges form a partition of $V(H)=\{1,2,3\}$. The labeling map $\Phi$ is defined by $\Phi(j) = \{j\}$ for each $j\in \{1,2,3\}$. We remark that, since the elements of $E(H)$ are pairwise distinct, an edge of $H$ is already determined by the respective subset of $V(H)$, and thus it is not necessary to specify the label of the edge $\{j\}$.
    When we take the starting graph to be a $3$-star (i.e., a \emph{tripod}) and iterate $\RR_{(H,\Phi)}$, we obtain the Sierpi\'nski tripod graphs from Figure~\ref{fig: sierpinski tripod graphs}; when we take the starting graph to be a $3$-cycle, we obtain the Sierpi\'nski gasket graphs from Figure~\ref{fig: sierpinski graphs}.   
\end{example}

\begin{figure}
\begin{tikzpicture}[every node/.style={font=\small}]

\def\h{0.866}
\def\x{1}

\begin{scope}
\draw[black, thick] (0,0) -- (\x,0);
\draw[black, thick] (0,0) -- (\x/2,\x*\h);
\draw[black, thick] (\x/2,\x*\h) -- (\x,0);

\filldraw[black] (0,0) circle (2.4pt) node[below left] {$\mathbf{1}$};
\filldraw[black] (\x,0) circle (2.4pt) node[below right] {$\mathbf{2}$};
\filldraw[black] (\x/2,\x*\h) circle (2.4pt) node[above=2pt] {$\mathbf{3}$};

\node[font=\normalsize] at (\x/2,0) [below=15pt] {$S_0$};
\end{scope}

\begin{scope}[xshift=4cm]
\draw[black, thick] (0,0) -- (2*\x,0);
\draw[black, thick] (0,0) -- (\x,2*\x*\h);
\draw[black, thick] (\x,2*\x*\h) -- (2*\x,0);

\draw[black, thick] (\x/2,\h) -- (\x,0);
\draw[black, thick] (\x/2,\h) -- (3/2*\x,\h);
\draw[black, thick] (\x,0) -- (3/2*\x,\h);

\filldraw[black] (0,0) circle (2.4pt) node[below left] {$\mathbf{1}/1$};
\filldraw[black] (2*\x,0) circle (2.4pt) node[below right] {$\mathbf{2}/2$};
\filldraw[black] (\x,2*\x*\h) circle (2.4pt) node[above] {$\mathbf{3}/3$};

\filldraw[black] (\x/2,\h) circle (1.6pt) node[left=1.5pt] {$2$};
\filldraw[black] (\x,0) circle (1.6pt) node[below=1.5pt] {$3$};;
\filldraw[black] (3/2*\x,\h) circle (1.6pt) node[right=1.5pt] {$1$};;

\node[font=\normalsize] at (\x,0) [below=15pt] {$S_1$};
\end{scope}

\begin{scope}[xshift=10cm]
\draw[black, thick] (0,0) -- (4*\x,0);
\draw[black, thick] (0,0) -- (2*\x,4*\x*\h);
\draw[black, thick] (2*\x,4*\x*\h) -- (4*\x,0);

\draw[black, thick] (2*\x,0) -- (\x,2*\x*\h);
\draw[black, thick] (2*\x,0) -- (3*\x,2*\x*\h);
\draw[black, thick] (\x,2*\x*\h) -- (3*\x,2*\x*\h);

\begin{scope}[xshift=0cm]
\draw[black, thick] (\x/2,\h) -- (\x,0);
\draw[black, thick] (\x/2,\h) -- (3/2*\x,\h);
\draw[black, thick] (\x,0) -- (3/2*\x,\h);

\filldraw[fill = gray] (\x/2,\h) circle (1.6pt);
\filldraw[fill = gray] (\x,0) circle (1.6pt);
\filldraw[fill = gray] (3/2*\x,\h) circle (1.6pt);
\end{scope}

\begin{scope}[xshift=2*\x cm]
\draw[black, thick] (\x/2,\h) -- (\x,0);
\draw[black, thick] (\x/2,\h) -- (3/2*\x,\h);
\draw[black, thick] (\x,0) -- (3/2*\x,\h);

\filldraw[fill = gray] (\x/2,\h) circle (1.6pt);
\filldraw[fill = gray] (\x,0) circle (1.6pt);
\filldraw[fill = gray] (3/2*\x,\h) circle (1.6pt);
\end{scope}

\begin{scope}[xshift=\x cm, yshift=2*\h cm]
\draw[black, thick] (\x/2,\h) -- (\x,0);
\draw[black, thick] (\x/2,\h) -- (3/2*\x,\h);
\draw[black, thick] (\x,0) -- (3/2*\x,\h);

\filldraw[fill = gray] (\x/2,\h) circle (1.6pt);
\filldraw[fill = gray] (\x,0) circle (1.6pt);
\filldraw[fill = gray] (3/2*\x,\h) circle (1.6pt);
\end{scope}

\filldraw[black] (2*\x,0) circle (1.6pt) node[below=1.5pt] {$3$};
\filldraw[black] (\x,2*\x*\h) circle (1.6pt) node[left=1.5pt] {$2$};
\filldraw[black] (3*\x,2*\x*\h) circle (1.6pt) node[right=1.5pt] {$1$};

\filldraw[black] (0,0) circle (2.4pt) node[below left] {$\mathbf{1}/1$};
\filldraw[black] (4*\x,0) circle (2.4pt) node[below right] {$\mathbf{2}/2$};
\filldraw[black] (2*\x,4*\x*\h) circle (2.4pt) node[above] {$\mathbf{3}/3$};

\node[font=\normalsize] at (2*\x,0) [below=15pt] {$S_2$};
\end{scope}

\end{tikzpicture}
    \caption{Illustration of the recursion for the Sierpi\'{n}ski gasket graphs $(S_{n})_{n=0,1,2}$. The thicker black vertices with their bold labels correspond to the marked vertices in each graph, labeled $1,2,3$. The black vertices all together with their normal font labels correspond to the marked vertices of the copies of the graph from the previous step.}
    \label{fig: sierpinski graphs}
\end{figure}

\begin{definition}[Label dynamics]\label{def: label dynamics}
    A gluing data $(H,\Phi)$ with parameters $k,m$ induces a dynamical system $\Lambda=\Lambda_{(H,\Phi)}: \{1,\ldots, k\}\to \{1,\ldots, k\}$ on the labels: namely, we map each label $j\in \{1,\dots, k\}$ to the label $\Lambda(j):=\ell(\Phi(j))$ of the edge $\Phi(j)\in E(H)$. A label $j$ is called \emph{periodic} if $\Lambda^{p}(j)=j$ for some $p\geq 1$. Evidently, every label is eventually mapped to a periodic one under iteration of $\Lambda$.
    
    The number $\#\Phi(j)$ is called the (\emph{local}) \emph{degree} of a label $j$.   
    We say that $j$ is \emph{critical} if $\#\Phi(j)\geq 2$. 
\end{definition}

\begin{definition}[Non-degenerate and expanding gluing data]\label{def: non-deg_and_exp} We say that a gluing data $(H,\Phi)$ with parameters $k,m$ and the associated  graph recursion $\RR=\RR_{(H,\Phi)}$ are:
\begin{itemize}
    \item \emph{non-degenerate} if the induced dynamical system $\Lambda=\Lambda_{(H,\Phi)}$ has no periodic critical labels, and \emph{degenerate} in the opposite case;

    \item \emph{expanding} if there exists $n\in \N$ such that the edges $\Phi_n(1), \ldots, \Phi_n(k)$ form pairwise disjoint subsets of $V(H_n)$, where $(H_n, \Phi_n)$ is the $n$-th iterate of $(H, \Phi)$. 
\end{itemize}
\end{definition}

\begin{example}
    For the Sierpi\'nski gluing data $(H,\Phi)$, we have $\Lambda(j) = j$ for all $j \in \{1,2,3\}$, and each label has degree $1$. In particular, this data is non-degenerate. At the same time, since $\Phi(j)=\{j\}$ for each label $j$, we have that the data is also expanding. 
\end{example}

The proof of the following lemma, which provides different equivalent characterizations for the two classes of gluing data above, is straightforward and is left to the reader.

\begin{lemma}\label{lem: degeneration and expansion}
Let $\RR=\RR_{(H,\Phi)}$ be a graph recursion with parameters $k$ and $m$, and $\Lambda=\Lambda_{(H,\Phi)}$ be the corresponding dynamical system on the labels. Suppose $N\in \N$ is an iterate such that $\Lambda^N(j)=j$ for each periodic label $j\in\{1,\dots,k\}$ and $\Lambda^{2N}(j)=\Lambda^N(j)$ for each non-periodic label $j$, and let $(H_N, \Phi_N)$ be the gluing data of $\RR^N$. Then the following conditions are equivalent:
\begin{itemize}
    \item the recursion $\RR$ is non-degenerate;
    \item for each periodic label $j\in\{1,\dots,k\}$, we have $\#\Phi_N(j)=1$;
    \item for every starting graph $G_0 \in \GG_k$, the vertex degrees in the graphs $G_n=\RR^n(G_0)$ are uniformly bounded (in $n$);
    \item for some starting graph $G_0 \in \GG_k$ where each vertex with a periodic label is non-isolated, the vertex degrees in the graphs $G_n=\RR^n(G_0)$ are uniformly bounded (in $n$).
\end{itemize}
Similarly, the following conditions are equivalent: 
\begin{itemize}
    \item the recursion $\RR$ is expanding;
    \item for each pair of distinct periodic labels $j,j'\in\{1,\dots,k\}$ the corresponding edges $\Phi_N(j)$ and $\Phi_N(j')$ form disjoint subsets of $V(H_N)$;
    \item for every starting graph $G_0 \in \GG_k$ and for all distinct labels $j,j'\in\{1,\dots,k\}$ the graph distance between the vertices labeled $j$ and $j'$ in $G_n=\RR^n(G_0)$ goes to $\infty$ as $n\to \infty$;
    \item for some starting graph $G_0 \in \GG_k$ where all vertices with periodic labels lie in the same connected component, we have that for all distinct periodic labels $j,j'\in\{1,\dots,k\}$ the graph distance between the vertices labeled $j$ and $j'$ in $G_n=\RR^n(G_0)$ goes to $\infty$ as $n\to \infty$.
\end{itemize}
\end{lemma}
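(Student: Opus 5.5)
The plan is to reduce everything to the combinatorics of the iterated gluing data $(H_n,\Phi_n)$, which records how $G_n$ is assembled from $m^n$ copies of $G_0$. The basic observation is that a vertex of $G_n=\RR_{(H_n,\Phi_n)}(G_0)$ is either a non-marked vertex of one copy $G_0(i)$, or corresponds to an edge $e\in E(H_n)$ with label $\ell(e)$. In the second case its degree equals $\sum_{i\in e}\deg_{G_0}(v_{\ell(e)})$, i.e. $\#e\cdot \deg_{G_0}(v_{\ell(e)})$. Similarly, the marked vertex labeled $j$ in $G_n$ corresponds to $\Phi_n(j)$. By induction on $n$, this is the vertex obtained by identifying copies of the vertex labeled $\Lambda^n(j)$ of $G_0$ over the $\#\Phi_n(j)$ copies in $\Phi_n(j)$. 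I will also verify $\#\Phi_{n+1}(j)=\#\Phi(j)\cdot\#\Phi_n(\Lambda(j))$, or the analogous product formula along the label orbit, by unwinding Definition~\ref{def:graph recursion} for $\RR^{n+1}=\RR^n\circ\RR$. This gives $\#\Phi_n(j)=\prod_{t=0}^{n-1}\#\Phi(\Lambda^t(j))$.

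\emph{Non-degeneracy.} By the product formula, $\#\Phi_n(j)$ stays bounded in $n$ if and only if no critical label appears periodically along the orbit of $j$. Since a periodic orbit is visited infinitely often, this is exactly the condition that no periodic label is critical. For periodic $j$ it is also equivalent to $\#\Phi_N(j)=1$, because $\Phi_N(j)$ multiplies the degrees over a whole number of periods. For the degree bound I will argue as follows. Every edge $e\in E(H_n)$ either is an image $\Phi_n$-type edge inherited from an edge of $H$ through earlier generations, or is created at some generation $t$. Tracking this shows $\#e\le \max_j \#\Phi_s(j)\cdot m$ for some $s$, so $\#e$ is uniformly bounded exactly when the $\#\Phi_n(j)$ are. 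Hence degrees are bounded for every $G_0$. Conversely, if some periodic $j$ is critical, then the degree of the marked vertex $j$ in $G_{pN}$ is $\#\Phi_{pN}(j)\deg(v_j)$, which grows exponentially as soon as $v_j$ is non-isolated. This closes the cycle of implications.

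\emph{Expansion.} I will set up an analogous induction for distances, using the principle that a path in $G_{n+1}$ between marked vertices $j$ and $j'$ must cross the copies through identified vertices. This yields a recursive lower bound: $d_{n+1}(j,j')\ge \min$ of sums of $d_n(a,b)$ along chains in $H$. If $\Phi_N(j)\cap\Phi_N(j')=\emptyset$ for all distinct periodic labels, then after $N$ steps any path between distinct marked vertices passes through at least two marked vertices of some copy carrying distinct labels. Iterating, the distance at least doubles every $N$ steps once the non-periodic labels have landed on periodic ones. Non-periodic labels reach the periodic part after $N$ steps by the choice of $N$, and distinct labels with equal image must be in disjoint edges, since the edges with one label partition $V(H)$ and $\Phi$ is injective. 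Conversely, if $\Phi_N(j)\cap\Phi_N(j')\ni i$ for periodic $j\neq j'$, then in $G_{pN}$ the marked vertices $j,j'$ both lie in a common copy of $G_{(p-1)N}$, labeled there $j,j'$. Thus their distance is non-increasing along $pN$, and hence bounded whenever they are connected in $G_0$. This gives the last two equivalences, and the first equivalence with the $N$-characterization follows similarly.

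The main obstacle I expect is the careful bookkeeping showing that all hyperedge sizes in $H_n$, not just the marked ones, are controlled by the $\#\Phi_s(j)$. I would handle it by a precise description of $E(H_{n+1})$: it consists of edges of $H_n$ copied into each of $m$ blocks, together with merged edges formed along $\Phi_n$.
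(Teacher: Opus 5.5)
Your proposal is correct and is the straightforward argument the paper intends; the paper leaves this proof to the reader. The product formula $\#\Phi_n(j)=\prod_{t=0}^{n-1}\#\Phi(\Lambda^t(j))$ together with the block description of $E(H_{n+1})$ controls all degrees, and the dichotomy between marked vertices lying in disjoint copies versus a common copy controls distances, exactly the picture the paper uses later in the proof of Lemma~\ref{lem: recursion implies finite ramification}.

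The one step to state more carefully is the doubling, which does not work with step $N$. At level $N$ only periodic pairs are guaranteed disjoint. However, your own observation that distinct labels with equal image lie in disjoint edges shows that the second expansion condition makes $\Phi_{2N}(1),\dots,\Phi_{2N}(k)$ pairwise disjoint. Hence $\min_{j\neq j'} d_{G_{n+2N}}(j,j')\ge 2\min_{j\neq j'} d_{G_n}(j,j')$, because a path between two marked vertices must cross a copy containing each endpoint, and these two copies are distinct.
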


\begin{rem}\label{rem: expansion}
    The definitions imply that, if a gluing data $(H,\Phi)$ is expanding, then the edges $\Phi_n(1), \ldots, \Phi_n(k)$ form pairwise disjoint subsets of $V(H_n)$ for  \emph{all} sufficiently large $n\in \N$. In fact, this holds for all $n \ge N+1$, where $N$ is as in the statement of Lemma~\ref{lem: degeneration and expansion}. 
\end{rem}

We can easily see that the limiting free energy per site is well defined for sequences of recursive graphs.

\begin{lemma}\label{lem: limit_energy}
    Let $\RR=\RR_{(H,\Phi)}$ be an arbitrary graph recursion with parameters $k, m$, and let $G_0 \in \GG_k$ be an arbitrary starting graph. Suppose $(G_n)_{n \ge 0}\subset \GG_k$ is the sequence of graphs 
    defined recursively by $G_{n+1} = \RR(G_n)$. Then the free energy per site $\rho_n(\lambda)=\frac{\log{Z_{G_n}(\lambda)}}{\# V(G_n)}$ for the sequence $(G_n)_{n\geq 0}$ converges for all $\lambda\in\mathbb R_+$, i.e., the limiting free energy per site $\rho(\lambda) = \lim_{n\to\infty } \rho_n(\lambda)$ is well defined. 
\end{lemma}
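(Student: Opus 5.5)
We need to prove that $\rho_n(\lambda)=\log Z_{G_n}(\lambda)/\#V(G_n)$ converges for every $\lambda>0$. The approach is to squeeze $Z_{G_{n+1}}$ between powers of $Z_{G_n}$, up to multiplicative errors controlled by the $k$ labeled vertices, and then show these errors are negligible once divided by $\#V(G_n)$.

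\textbf{Vertex count.} $G_{n+1}$ is built from $m$ copies of $G_n$ by identifying only labeled vertices. Hence
\[
m\,\#V(G_n)-mk\le \#V(G_{n+1})\le m\,\#V(G_n).
\]
Write $v_n=\#V(G_n)$. If $v_0>k$ then $v_n$ grows like $m^n$. Precisely, $v_n/m^n$ is non-increasing, and from $v_{n+1}-\tfrac{mk}{m-1}\ge m\bigl(v_n-\tfrac{mk}{m-1}\bigr)$ it has a positive limit as soon as some $v_n$ exceeds $mk/(m-1)$. In the degenerate situation where $v_n$ stays bounded, the sequence of graphs is eventually periodic up to isomorphism. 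I expect that case can be dispatched separately, either by checking that it is trivial or by noting that $\#V(G_n)$ being bounded forces a stabilizing structure. So I assume $v_n\asymp m^n$.

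\textbf{Bounds on $Z$.} Fix $\lambda>0$.

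For the upper bound, delete in each copy $G_n(i)$ those labeled vertices that were glued to an earlier copy. Each vertex of $G_{n+1}$ then belongs to exactly one piece, every piece is an induced subgraph of a copy of $G_n$, and independent sets of $G_{n+1}$ restrict to independent sets of the pieces. Removing a vertex can only decrease $Z$, so $Z_{G_{n+1}}(\lambda)\le Z_{G_n}(\lambda)^m$.

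For the lower bound, use~\ref{prop:ind-poly-2}. Consider independent sets of $G_{n+1}$ that contain none of the at most $mk$ glued vertices. These are exactly products of independent sets in the graphs $G_n(i)$ with the labeled vertices removed. Deleting $k$ vertices from $G_n$ reduces $Z$ by a factor of at most $(1+\lambda)^k$, since $Z_G\le(1+\lambda)Z_{G-v}$ by~\ref{prop:ind-poly-2} together with $Z_{G-N[v]}\le Z_{G-v}$. Therefore
\[
Z_{G_{n+1}}(\lambda)\ge Z_{G_n}(\lambda)^m (1+\lambda)^{-mk}.
\]

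\textbf{Cauchy argument.} Set $a_n=\log Z_{G_n}(\lambda)/m^n$. The two bounds give $|a_{n+1}-a_n|\le k\log(1+\lambda)/m^{n}$. This is summable, so $a_n$ converges. Since $v_n/m^n$ converges to a positive limit, $\rho_n=a_n\cdot m^n/v_n$ converges as well.

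The only delicate point is the bookkeeping in the vertex-count step: one must ensure that $v_n/m^n$ has a positive limit, which means treating separately the case where $v_n$ stays bounded. That case is where I expect the main obstacle, though it should be handled by the observation that bounded $v_n$ forces eventual periodicity of the graphs, making the limit trivial to identify.
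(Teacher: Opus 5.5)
Your argument in the main case is correct and is essentially the paper's route: normalize by $m^n$, use $Z_{G_{n+1}}(\lambda)\le Z_{G_n}(\lambda)^m$, and divide by $\#V(G_n)/m^n$, which has a positive limit. Your lower bound $Z_{G_{n+1}}(\lambda)\ge Z_{G_n}(\lambda)^m(1+\lambda)^{-mk}$ is correct but not needed. The upper bound already makes $\log Z_{G_n}(\lambda)/m^n$ non-increasing, and it is non-negative because $Z_{G_n}(\lambda)\ge 1$. That is all the paper uses; your extra bound only adds a convergence rate $O(m^{-n})$.

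The gap is the bounded case, which you defer. Your sketch there does not work as stated: an eventually periodic sequence $\rho_n$ need not converge. Moreover, your inequality $\#V(G_{n+1})\ge m\,\#V(G_n)-mk$ is too crude to say when this case occurs. For the same reason, your claim that $\#V(G_0)>k$ forces growth like $m^n$ does not follow from it.

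The missing ingredient is the exact count. $G_{n+1}$ consists of the $m(\#V(G_n)-k)$ unlabeled vertices of the copies plus one vertex per hyperedge of $H$. Hence $\#V(G_{n+1})-k=m(\#V(G_n)-k)+(\#E(H)-k)$, with $\#E(H)\ge k$ since each label class covers $V(H)$. It follows that $\#V(G_n)/m^n\to \#V(G_0)-C/(m-1)$, where $C=mk-\#E(H)=\sum_{e\in E(H)}(\#e-1)$. This limit vanishes only if $\#V(G_0)=k$ and $\#E(H)=k$.

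In that exceptional case, each label has a single edge, equal to all of $V(H)$, and $\Lambda$ is a permutation. Also $G_0$ has only labeled vertices. Then $G_{n+1}$ is $G_n$ with all edge multiplicities multiplied by $m$ and labels permuted. Since $Z$ ignores multiplicities and labels, $\rho_n=\rho_0$ for all $n$. With this case settled explicitly, your proof is complete.
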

\begin{proof}
    Since $G_{n+1}$ is constructed by taking $m$ copies of $G_n$ and identifying a fixed number of vertices, we have that
    $$
    \# V(G_{n+1}) = m \cdot \# V(G_n) - C,
    $$
    where the constant integer $C\geq 0$ depends only on $\RR$ and not on $G_n$. In fact, \[C=\sum_{e\in E(H)} (\#e-1)\leq k(m-1).\] It follows that
    \[
    \alpha_n:=\frac{\# V(G_n)}{m^n}=\#V(G_0)-C\cdot \sum_{i=1}^{n}\frac{1}{m^i}
    \]
    converges to 
    \[\alpha:=\#V(G_0)-\frac{C}{m-1}\geq 0  \quad \mathrm{as} \quad n \rightarrow \infty.
    \] 
    We note that $\alpha>0$, unless $k=\#V(G_0)$ and the gluing scheme $H$ has a single edge $\{1,\dots, m\}$ labeled $j$ for each $j\in \{1,\dots, k\}$. In the latter case, the sequence  $(G_n)_{n \ge 0}$ is constant (up to edge multiplicities) and the statement is trivial. Hence, we may assume that $\alpha>0$ in the following.

    Now observe that by identifying some of the vertices in a given graph, the number of independent sets of a fixed size can only decrease. It follows that
    $$
    Z_{G_{n+1}}(\lambda) \le \big(Z_{G_n}(\lambda)\big)^m  \quad \text{for all} \quad \lambda\in \R_+. 
    $$
    Hence, for each fixed $\lambda\in\R_+$, the sequence $$
    \beta_n:=\frac{\log Z_{G_n}(\lambda)}{m^n}
    $$
    is decreasing in $n$, and thus converges to some $\beta(\lambda)\geq0$. It follows that the free energy per site $\rho_n(\lambda)=\frac{\beta_n}{\alpha_n}$ converges to $\frac{\beta(\lambda)}{\alpha}$ as $n\to\infty$, which finishes the proof. 
\end{proof}

We close this section by providing several examples of gluing data on graphs and discussing their properties.

\begin{figure}[t]
\begin{tikzpicture}[scale=0.8]

\begin{scope}
\draw[black, thick] (0,-4) -- (0,-2.37);
\filldraw[black] (0,-2.37) circle (3pt) node[above=2pt] {$\mathbf{1}$};
\filldraw[black] (0,-4) circle (3pt) node[below=17pt] {$D_0$} node[below=1pt] {$\mathbf{2}$};
\end{scope}

\begin{scope}[xshift=3cm]
\draw[black, thick] (0,-4) -- (0.4,-2.42);
\draw[black, thick] (0,-4) -- (-0.4,-2.42);
\draw[black, thick] (0,-0.83) -- (0.4,-2.42);
\draw[black, thick] (0,-0.83) -- (-0.4,-2.42);

\filldraw[black] (0,-4) circle (3pt) node[below=17pt] {$D_1$} node[below] {$\mathbf{2}/1$};
\filldraw[black] (0,-0.83) circle (3pt) node[above] {$\mathbf{1}/1$};
\filldraw[black] (-0.4,-2.42) circle (2pt) node[left=1.5pt] {$2$};
\filldraw[black] (0.4,-2.42) circle (2pt) node[right=1.5pt] {$2$};
\end{scope}

\begin{scope}[xshift=7cm]
\draw[black, thick] (0,-4) -- (0.4,-2.42);
\draw[black, thick] (0,-4) -- (-0.4,-2.42);
\draw[black, thick]  (0,-4) -- (1.01,-2.75);
\draw[black, thick]  (0,-4) -- (-1.01,0.-2.75);

\draw[black, thick] (0,1.65) -- (0.4,0.06);
\draw[black, thick] (0,1.65) -- (-0.4,0.06);
\draw[black, thick] (0,1.65) -- (1.01,0.4);
\draw[black, thick] (0,1.65) -- (-1.01,0.4);

\draw[black, thick] (1.41,-1.17) -- (0.4,-2.42);
\draw[black, thick] (-1.41,-1.17) -- (-0.4,-2.42);
\draw[black, thick] (1.41,-1.17) -- (1.01,-2.75);
\draw[black, thick] (-1.41,-1.17) -- (-1.01,0.-2.75);
\draw[black, thick] (1.41,-1.17) -- (0.4,0.06);
\draw[black, thick] (-1.41,-1.17) -- (-0.4,0.06);
\draw[black, thick] (1.41,-1.17) -- (1.01,0.4);
\draw[black, thick] (-1.41,-1.17) -- (-1.01,0.4);

\filldraw[black] (0,-4) circle (3pt) node[below=17pt] {$D_2$} node[below] {$\mathbf{2}/1$};
\filldraw[black] (0,1.65) circle (3pt) node[above] {$\mathbf{1}/1$};
\filldraw[black] (-1.41,-1.17) circle (2pt) node[left=1.5pt] {$2$};
\filldraw[black] (1.41,-1.17) circle (2pt) node[right=1.5pt] {$2$};
\filldraw[fill=gray] (-0.4,-2.42) circle (2pt);
\filldraw[fill=gray] (0.4,-2.42) circle (2pt);
\filldraw[fill=gray]  (0.4,0.06) circle (2pt);
\filldraw[fill=gray]  (-0.4,0.06) circle (2pt);
\filldraw[fill=gray]  (-1.01,0.-2.75) circle (2pt);
\filldraw[fill=gray]  (1.01,-2.75) circle (2pt);
\filldraw[fill=gray]  (-1.01,0.-2.75) circle (2pt);
\filldraw[fill=gray]  (1.01,-2.75) circle (2pt);
\filldraw[fill=gray]  (-1.01,0.4) circle (2pt);
\filldraw[fill=gray]  (1.01,0.4) circle (2pt);
\end{scope}

\begin{scope}[xshift=5cm]
\draw[black, thick] (6,0) -- (6.125, 1.625);
\draw[black, thick] (6,0) -- (6.775, 1.375);
\draw[black, thick] (6,0) -- (7.375,0.875);
\draw[black, thick] (6,0) -- (7.625,0.125);

\draw[black, thick] (6,0) -- (6.125, -1.625);
\draw[black, thick] (6,0) -- (6.775, -1.375);
\draw[black, thick] (6,0) -- (7.375,-0.875);
\draw[black, thick] (6,0) -- (7.625,-0.125);

\draw[black, thick] (7,3) -- (6.125, 1.625);
\draw[black, thick] (7,3) -- (6.775, 1.375);
\draw[black, thick] (9,1) -- (7.375,0.875);
\draw[black, thick] (9,1) -- (7.625,0.125);

\draw[black, thick] (7,-3) -- (6.125, -1.625);
\draw[black, thick] (7,-3) -- (6.775, -1.375);
\draw[black, thick] (9,-1) -- (7.375,-0.875);
\draw[black, thick] (9,-1) -- (7.625,-0.125);

\draw[black,thick] (9,-1) -- (9.875, -2.375);
\draw[black,thick] (9,-1) -- (9.225, -2.625);
\draw[black,thick] (7,-3) -- (8.625, -3.125);
\draw[black,thick] (7,-3) -- (8.375, -3.875);

\draw[black,thick] (10,-4) -- (9.875, -2.375);
\draw[black,thick] (10,-4) -- (9.225, -2.625);
\draw[black,thick] (10,-4) -- (8.625, -3.125);
\draw[black,thick] (10,-4) -- (8.375, -3.875);

\draw[black,thick] (9,1) -- (9.875, 2.375);
\draw[black,thick] (9,1) -- (9.225, 2.625);
\draw[black,thick] (7,3) -- (8.635, 3.125);
\draw[black,thick] (7,3) -- (8.375, 3.875);

\draw[black,thick] (10,4) -- (9.875, 2.375);
\draw[black,thick] (10,4) -- (9.225, 2.625);
\draw[black,thick] (10,4) -- (8.625, 3.125);
\draw[black,thick] (10,4) -- (8.375, 3.875);

\draw[black, thick] (6,0) -- (6.125, 1.625);
\draw[black, thick] (6,0) -- (6.775, 1.375);
\draw[black, thick] (6,0) -- (7.375,0.875);
\draw[black, thick] (6,0) -- (7.625,0.125);

\draw[black, thick] (14,0) -- (13.875, 1.625);
\draw[black, thick] (14,0) -- (13.225, 1.375);
\draw[black, thick] (14,0) -- (12.625,0.875);
\draw[black, thick] (14,0) -- (12.375,0.125);

\draw[black, thick] (13,3) -- (13.875, 1.625);
\draw[black, thick] (13,3) -- (13.225, 1.375);
\draw[black, thick] (11,1) -- (12.625,0.875);
\draw[black, thick] (11,1) -- (12.375,0.125);

\draw[black,thick] (11,1) -- (10.125, 2.375);
\draw[black,thick] (11,1) -- (10.775, 2.625);
\draw[black,thick] (13,3) -- (11.375, 3.125);
\draw[black,thick] (13,3) -- (11.625, 3.875);

\draw[black,thick] (10,4) -- (10.125, 2.375);
\draw[black,thick] (10,4) -- (10.775, 2.625);
\draw[black,thick] (10,4) -- (11.375, 3.125);
\draw[black,thick] (10,4) -- (11.625, 3.875);

\draw[black, thick] (14,0) -- (13.875, -1.625);
\draw[black, thick] (14,0) -- (13.225, -1.375);
\draw[black, thick] (14,0) -- (12.625, -0.875);
\draw[black, thick] (14,0) -- (12.375, -0.125);

\draw[black, thick] (13,-3) -- (13.875, -1.625);
\draw[black, thick] (13,-3) -- (13.225, -1.375);
\draw[black, thick] (11,-1) -- (12.625, -0.875);
\draw[black, thick] (11,-1) -- (12.375, -0.125);

\draw[black,thick] (11,-1) -- (10.125, - 2.375);
\draw[black,thick] (11,-1) -- (10.775, -2.625);
\draw[black,thick] (13,-3) -- (11.375, -3.125);
\draw[black,thick] (13,-3) -- (11.625, -3.875);

\draw[black,thick] (10,-4) -- (10.125, -2.375);
\draw[black,thick] (10,-4) -- (10.775, -2.625);
\draw[black,thick] (10,-4) -- (11.375, -3.125);
\draw[black,thick] (10,-4) -- (11.625, -3.875);

\filldraw[black] (6,0) circle (2pt) node[left=1.5pt] {$2$};
\filldraw[fill=gray] (6.125, 1.625) circle (2pt);
\filldraw[fill=gray] (6.775, 1.375) circle (2pt);
\filldraw[fill=gray] (7.375,0.875) circle (2pt);
\filldraw[fill=gray] (7.625,0.125) circle (2pt);

\filldraw[fill=gray] (6.125, -1.625) circle (2pt);
\filldraw[fill=gray] (6.775, -1.375) circle (2pt);
\filldraw[fill=gray] (7.375,-0.875) circle (2pt);
\filldraw[fill=gray] (7.625,-0.125) circle (2pt);

\filldraw[fill=gray] (7,-3) circle (2pt);
\filldraw[fill=gray] (7,3) circle (2pt);
\filldraw[fill=gray] (9,-1) circle (2pt);
\filldraw[fill=gray] (9,1) circle (2pt);

\filldraw[fill=gray] (9.875, -2.375) circle (2pt);
\filldraw[fill=gray] (9.225, -2.625) circle (2pt);
\filldraw[fill=gray] (8.635, -3.125) circle (2pt);
\filldraw[fill=gray] (8.375, -3.875) circle (2pt);

\filldraw[fill=gray] (9.875, 2.375) circle (2pt);
\filldraw[fill=gray] (9.225, 2.625) circle (2pt);
\filldraw[fill=gray] (8.635, 3.125) circle (2pt);
\filldraw[fill=gray] (8.375, 3.875) circle (2pt);

\filldraw[black] (10,-4) circle (3pt) node[below=17pt] {$D_3$} node[below] {$\mathbf{2}/1$};
\filldraw[black] (10,4) circle (3pt) node[above] {$\mathbf{1}/1$};

\filldraw[fill=gray] (10.125, -2.375) circle (2pt);
\filldraw[fill=gray] (10.775, -2.625) circle (2pt);
\filldraw[fill=gray] (11.375, -3.125) circle (2pt);
\filldraw[fill=gray] (11.625, -3.875) circle (2pt);

\filldraw[fill=gray] (10.125, 2.375) circle (2pt);
\filldraw[fill=gray] (10.775, 2.625) circle (2pt);
\filldraw[fill=gray] (11.375, 3.125) circle (2pt);
\filldraw[fill=gray] (11.625, 3.875) circle (2pt);

\filldraw[black] (14,0) circle (2pt) node[right=1.5pt] {$2$};
\filldraw[fill=gray] (13,-3) circle (2pt);
\filldraw[fill=gray] (13,3) circle (2pt);
\filldraw[fill=gray] (11,-1) circle (2pt);
\filldraw[fill=gray] (11,1) circle (2pt);

\filldraw[fill=gray] (13.875, 1.625) circle (2pt);
\filldraw[fill=gray] (13.225, 1.375) circle (2pt);
\filldraw[fill=gray] (12.625,0.875) circle (2pt);
\filldraw[fill=gray] (12.375,0.125) circle (2pt);

\filldraw[fill=gray] (13.875, -1.625) circle (2pt);
\filldraw[fill=gray] (13.225, -1.375) circle (2pt);
\filldraw[fill=gray] (12.625,-0.875) circle (2pt);
\filldraw[fill=gray] (12.375,-0.125) circle (2pt);
\end{scope}
\end{tikzpicture}
    \caption{Illustration of the recursion for the diamond hierarchical graphs $(D_{n})_{n=0,1,2,3}$. The thicker black vertices with their bold labels correspond to the marked vertices in each graph, labeled $1,2$. The black vertices all together with their normal font labels correspond to the marked vertices of the copies of the graph from the previous step.}
    \label{fig: diamond}
\end{figure}

\begin{example}[Diamond hierarchical gluing data]\label{ex: diamon_lattice}
    For the diamond hierarchical graphs $D_n$ (see Figure~\ref{fig: diamond}), the corresponding gluing data $(H,\Phi)$ has parameters $k=2$ and $m=4$. The gluing scheme $H$ is a $4$-cycle, whose edges are alternately labeled $1$ and $2$, and the labeling map $\Phi$ assigns the labels $1$ and $2$ to the two ``opposite'' edges in $H$ labeled $1$. The resulting recursion $\RR_{(H,\Phi)}$ is expanding and degenerate. In fact, the distance between the two marked vertices of $D_n$ is $2^n$, and the vertex degrees of both of these vertices are $2^{n}$ as well.
\end{example}

\begin{example}[More hierarchical lattices]\label{ex: hier_lattices} 
   Further examples of hierarchical lattices were considered in \cite{chio2021chromatic}. Formally, a \emph{hierarchical lattice} is a sequence $(\Gamma_n)_{n\geq 0}$ of graphs in $\GG_2$,  constructed recursively in the following way. Start with a single-edge graph $\Gamma_0$ (with two vertices) and fix a graph $\Gamma_1=\Gamma\in \GG_2$ that is symmetric with respect to its marked vertices, called the \emph{generating graph}. For each $n\in \N$, define the graph $\Gamma_{n}$ by replacing each edge of $\Gamma$ with a copy of $\Gamma_{n-1}$, using the two marked vertices in $\Gamma_{n-1}$ as if they were the endpoints of that edge. 
   We also mark the two vertices of $\Gamma_n$ corresponding to the marked vertices of $\Gamma$, preserving their original labels. Equivalently, we may also construct $\Gamma_n$ by replacing each edge of $\Gamma_{n-1}$ with $\Gamma$. It is easy to see that such sequences $(\Gamma_n)_{n\geq 0}$ fit into our framework of recursive graphs whenever the generating graph $\Gamma$ is bipartite, i.e. its vertex set can be partitioned into two independent subsets. In this case, the corresponding gluing data is non-degenerate if and only if both marked vertices of $\Gamma$ have vertex degree $1$. Furthermore, the gluing data is expanding if and only if there is no edge in $\Gamma$ connecting the marked vertices.  
\end{example}

\begin{figure}
\begin{tikzpicture}[scale=0.8]

\def\y{0.5}
\def\x{0.866}
\def\h{1}

\newcommand{\YtreeUp}[2]{
  \draw[black, thick] (#1,#2) -- (#1,#2-\h);
  \draw[black, thick] (#1,#2) -- (#1+\x,#2+\y);
  \draw[black, thick] (#1,#2) -- (#1-\x,#2+\y);
  \filldraw[fill=gray] (#1,#2) circle (1.6pt);
  \filldraw[fill=gray] (#1,#2-\h) circle (1.6pt);
  \filldraw[fill=gray] (#1+\x,#2+\y) circle (1.6pt);
  \filldraw[fill=gray] (#1-\x,#2+\y) circle (1.6pt);
}

\newcommand{\YtreeDown}[2]{
  \draw[black, thick] (#1,#2) -- (#1,#2+\h);
  \draw[black, thick] (#1,#2) -- (#1+\x,#2-\y);
  \draw[black, thick] (#1,#2) -- (#1-\x,#2-\y);
  \filldraw[fill=gray] (#1,#2) circle (1.6pt);
  \filldraw[fill=gray] (#1,#2+\h) circle (1.6pt);
  \filldraw[fill=gray] (#1+\x,#2-\y) circle (1.6pt);
  \filldraw[fill=gray] (#1-\x,#2-\y) circle (1.6pt);
}

\begin{scope}
  \YtreeUp{0}{0}

  \filldraw[black] (0,0) circle (1.6pt);
  \filldraw[black] (-\x,\y) circle (2.4pt) node[above] {$\mathbf{2}$};
  \filldraw[black] (\x,\y) circle (2.4pt) node[above] {$\mathbf{1}$};
  \filldraw[black] (0,-\h) circle (2.4pt) node[below] {$\mathbf{3}$};
\end{scope}

\begin{scope}[xshift=3.5*\x cm, yshift=-\y cm]
  \YtreeDown{0}{0}
  \YtreeUp{0}{2*\h}

  \filldraw[black] (0,\h) circle (1.6pt) node[right] {$1$}; 
  \filldraw[black] (\x,-\y) circle (1.6pt) node[below] {$3$};
  \filldraw[black] (-\x,-\y) circle (2.4pt) node[below] {$\mathbf{3}/2$};
  \filldraw[black] (-\x,\y+2*\h) circle (2.4pt) node[above] {$\mathbf{2}/3$};
  \filldraw[black] (\x,\y+2*\h) circle (2.4pt) node[above] {$\mathbf{1}/2$};
\end{scope}

\begin{scope}[xshift=8.5*\x cm, yshift=\h cm]
  \YtreeDown{0}{0}
  \YtreeUp{0}{2*\h}
  \YtreeUp{2*\x}{-\h}
  \YtreeDown{-2*\x}{3*\h}

  \filldraw[black] (0,\h) circle (1.6pt) node[right] {$1$}; 
  \filldraw[black] (2*\x,-2*\h) circle (1.6pt) node[below] {$3$};
  \filldraw[black] (-\x,-\y) circle (2.4pt) node[below] {$\mathbf{3}/2$};
  \filldraw[black] (-2*\x,4*\h) circle (2.4pt) node[left] {$\mathbf{2}/3$};
  \filldraw[black] (\x,\y+2*\h) circle (2.4pt) node[above] {$\mathbf{1}/2$};
\end{scope}

\begin{scope}[xshift=17*\x cm, yshift=\h cm]
  \YtreeDown{0}{0}
  \YtreeUp{0}{2*\h}
  \YtreeUp{2*\x}{-\h}
  \YtreeUp{-2*\x}{-\h}
  \YtreeDown{-2*\x}{3*\h}
  \YtreeDown{4*\x}{0}
  \YtreeDown{2*\x}{3*\h}\
  \YtreeUp{-4*\x}{2*\h}

  \filldraw[black] (0,\h) circle (1.6pt) node[right] {$1$}; 
  \filldraw[black] (2*\x,-2*\h) circle (1.6pt) node[below] {$3$};
  \filldraw[black] (-3*\x,-\y) circle (2.4pt) node[above] {$\mathbf{3}/2$};
  \filldraw[black] (-2*\x,4*\h) circle (2.4pt) node[left] {$\mathbf{2}/3$};
  \filldraw[black] (3*\x,\y+2*\h) circle (2.4pt) node[below] {$\mathbf{1}/2$};
\end{scope}

\node at (0,-2) [below] {$T_0$};
\node at (3.5*\x,-2) [below] {$T_1$};
\node at (8.5*\x,-2) [below] {$T_2$};
\node at (17*\x,-2) [below] {$T_3$};

\end{tikzpicture}
    \caption{Illustration of the recursion for the $(z^2+i)$-graphs $(T_n)_{n=0,1,2,3}$. The thicker black vertices with their bold labels correspond to the marked vertices in each graph, labeled $1,2,3$. The black vertices all together with their normal font labels correspond to the marked vertices of the copies of the graph from the previous step.}
    \label{fig: dendrite}
\end{figure}

\begin{example}[$(z^2+i)$-gluing data]\label{ex: dendrite}
   Our final example is a graph recursion induced by the dynamics of the quadratic polynomial $z^2+i$ on the complex plane. The corresponding gluing data $(H,\Phi)$ has parameters $k=3$ and $m=2$. The gluing scheme $H$ is a multi-graph with the vertex set $V(H)=\{1,2\}$ and edge multi-set $E(H)$ given by the union of the following three partitions of $V(H)$:
   \[\lbrace\{1,2\}\rbrace, \; \lbrace\{1\},\{2\}\rbrace, \; \text{and }\; \lbrace\{1\},\{2\}\rbrace.\]
   We label the edges in the three partitions above by $1$, $2$, and $3$, respectively. The labeling map $\Phi$ is now defined as follows: we set $\Phi(1)$ to be the edge $\{1\}$ labeled $2$, $\Phi(2)$ to be the edge $\{1\}$ labeled $3$, and $\Phi(3)$ to be the edge $\{2\}$ labeled $2$. Figure~\ref{fig: dendrite} illustrates the iteration of $\RR_{(H,\Phi)}$ when we choose the initial graph $T_0$ to be the tripod graph (i.e., a $3$-star) with labeled 
   leaves. The graph $T_0$ corresponds to the \emph{Hubbard tree} of the polynomial $p(z)=z^2+i$ (see, for example, \cite[Definition~4.1]{DH_Orsay} for the definition), while the trees $T_n$ ($n\geq 1$) correspond to the iterated preimages of the Hubbard tree under $p$. We call the gluing data $(H,\Phi)$ the \emph{$(z^2+i)$-gluing data}. Note that $(H,\Phi)$ is non-degenerate and expanding. We remark that, similarly, we may define a gluing data for every \emph{Misiurewicz polynomial}, that is, for a polynomial whose finite critical points are strictly preperiodic. All such gluing data will be non-degenerate and expanding as well.
\end{example}

\section{Renormalization map and the invariant variety}\label{sec: renormalization map} 

In Section~\ref{subsec: induced dynamics}, we introduce a one-parameter family of rational dynamical systems $F_\lambda: \P^{2^k-1}\dashrightarrow  \P^{2^k-1}$, where $\lambda\in\C^*$, induced by a graph recursion $\RR_{(H,\Phi)}: \GG_k\to \GG_k$. In Section~\ref{subsec: invariant variety}, we define a (canonical) invariant subvariety $\MM\subset \P^{2^k-1}$, and in Sections~\ref{subsec: dynamics on variety} and \ref{subsec: dynamics near variety}, we study the dynamics of $F_\lambda$ in terms of the gluing data $(H,\Phi)$, respectively on and near the variety $\MM$.

\subsection{Renormalization map for the independence polynomial on recursive graphs} \label{subsec: induced dynamics}
Let us fix a gluing
data $(H,\Phi)$ with parameters $k$ and $m$, and consider the associated graph recursion $\RR=\RR_{(H,\Phi)}$. Suppose $G\in \GG_k$, and let $P=P(G)\subset V(G)$ be the corresponding subset of marked vertices in $G$. From now on, to simplify notation, we identify a binary $k$-tuple $\xbf = (x_1, \dots, x_k)\in \{0,1\}^k$ with the vertex assignment $\tau_\xbf: P \to \{0,1\}$ on $P$ that assigns the value $x_j$ to the marked vertex labeled $j\in\{1,\dots,k\}$. In particular, we write 
$Z_G^\xbf(\lambda)=Z^{\tau_\xbf}_G(\lambda)$ for the associated $\xbf$-conditioned independence polynomial (see Section~\ref{subsec: indep-poly}), so that 
\begin{equation}\label{eq: sum of conditioned}
Z_G(\lambda)=\sum_{\xbf\in\{0,1\}^k} Z_G^\xbf(\lambda).
\end{equation}

For each parameter $\lambda\in\C$, let us consider the map $\widehat{\phi}_\lambda: \GG_k \to \C^{2^k}$ defined by 
$$G\mapsto \big(Z^{(0,\ldots,0)}_G(\lambda), \ldots, Z^{(1,\dots,1)}_G(\lambda)\big),$$
where the superscripts run over all possible binary $k$-tuples $\xbf\in\{0,1\}^k$ in lexicographical order. (The specific choice of ordering on $\{0,1\}^k$ is immaterial as long as it is consistent.) We denote by $\phi_\lambda: \GG_k \dashrightarrow\P^{2^k-1}$ the induced map to the respective complex projective space, that is,
$$
\phi_\lambda(G) = [Z_G^{(0, \ldots, 0)}(\lambda): \cdots : Z_G^{(1,\ldots, 1)}(\lambda)].
$$
It is conceivable that for some graph $G$ and some isolated values of $\lambda \in \C$, the coordinates $Z_G^\xbf(\lambda)$ vanish for all $\xbf\in \{0,1\}^k$, in which case $\phi_\lambda(G)$ is not defined as a point of $\P^{2^k-1}$. However, it will turn out that we do not need to be concerned about this issue.

Let us consider the image $\RR(G)\in \GG_k$ of the graph $G$ under the graph recursion $\RR=\RR_{(H,\Phi)}$. It turns out that, while it is typically not possible to express the independence polynomial $Z_{\RR(G)}(\lambda)$ as a function of $Z_G(\lambda)$, there is in fact a rational self-map $F_\lambda$ of $\P^{2^k-1}$ that expresses $\phi_\lambda(\RR(G))$ in terms of $\phi_\lambda(G)$. The values of the orbit $\big(F_\lambda^n(\phi_\lambda(G))\big)_{n\geq 0}$ are in general not sufficient to recover the values $Z_{\RR^n(G)}(\lambda)$ of the corresponding independence polynomials, but by the homogeneity of the sum in equation~\eqref{eq: sum of conditioned} they are sufficient to determine whether these independence polynomials vanish at $\lambda$.

We now describe the rational map $F_\lambda$. Recall that the graph $\RR(G)$ is constructed by first taking $m$ disjoint copies $G(1), \ldots, G(m)$ of $G$, then identifying some of the labeled vertices of these copies according to the gluing scheme $H$, and finally assigning $k$ marked vertices in the resulting graph according to the labeling map $\Phi$; see Definition~\ref{def:graph recursion}. Let $Y\subset V(\RR(G))\setminus P(\RR(G))$ denote the set of vertices of $\RR(G)$ that are induced by the edges in $E(H)\setminus \Phi(\{1,\dots, k\})$. In other words, $Y$ is the set of all vertices of $\RR(G)$ that correspond to labeled vertices in the copies $G(i)$ but are no longer marked in $\RR(G)$. 

We note that for each binary $k$-tuple $\xbf\in \{0,1\}^k$ we can write 
$$
Z_{\RR(G)}^\xbf(\lambda)= \sum_\eta Z_{\RR(G)}^{\xbf \wedge \eta}(\lambda), 
$$
where the sum is taken over all possible vertex assignments $\eta$ on $Y$. For each $i\in \{1,\dots, m\}$, the vertex assignment $\xbf \wedge \eta$ induces a vertex assignment $[\xbf \wedge \eta]_i$ on the set $P(G(i))$ of marked vertices in the copy $G(i)$, so that the vertices that get identified in $\RR(G)$ receive identical values. On a more intuitive level, the assignment $\xbf \wedge \eta$ on $P(\RR(G))\sqcup Y$ ``splits'' into $m$ vertex assignments $[\xbf \wedge \eta]_1, \dots, [\xbf \wedge \eta]_m$ on the marked vertices of the copies $G(1), \dots, G(m)$. In what follows, with a slight abuse of notation, we will also regard each $[\xbf \wedge \eta]_i$ as a vertex assignment on $P\subset V(G)$.

\begin{lemma}[$\xbf$-conditioned independence polynomial of $\RR(G)$]\label{lem:x_prime} Suppose $\lambda\in \C^*$. 
For each $\xbf\in \{0,1\}^k$ and $\eta: Y\to \{0,1\}$, 
let 
$$
\|\xbf \wedge \eta\|_H  := \sum_e (\#e-1) = \sum_{i=1}^m
\#[\xbf\wedge\eta]_i^{-1}(1)- \#(\xbf\wedge \eta)^{-1}(1),
$$
where the first sum is taken over all edges $e\in E(H)$ that correspond to the vertices of $\RR(G)$ receiving value~$1$ from the assignment $\xbf\wedge \eta$.  
Then 
\begin{equation}\label{eq: x_and_y_prime_conditioned}
Z_{\RR(G)}^{\xbf\wedge \eta}(\lambda) = \lambda^{-\|\xbf\wedge \eta\|_H} \prod_{i=1}^m Z_{G(i)}^{[\xbf\wedge \eta]_i}(\lambda),
\end{equation}
and hence
\begin{equation}\label{eq: x_prime_conditioned}
Z_{\RR(G)}^{\xbf}(\lambda) =\sum_{\eta: Y\to \{0,1\}} \lambda^{-\|\xbf\wedge \eta\|_H} \prod_{i=1}^m Z_{G}^{[\xbf\wedge \eta]_i}(\lambda).
\end{equation}
\end{lemma}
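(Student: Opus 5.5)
The plan is to prove \eqref{eq: x_and_y_prime_conditioned} by a weight-preserving bijection between independent assignments, and then to obtain \eqref{eq: x_prime_conditioned} by summing over $\eta$.

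Fix $\xbf\in\{0,1\}^k$ and $\eta\colon Y\to\{0,1\}$. First observe that $P(\RR(G))\sqcup Y$ is exactly the set of vertices of $\RR(G)$ coming from labeled vertices of the copies $G(i)$, i.e.\ the vertices indexed by $E(H)$. Every other vertex of $\RR(G)$ lies in exactly one copy $G(i)$ and is unmarked there. Moreover, every edge of $\RR(G)$ comes from an edge of exactly one copy $G(i)$.

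Next I set up the bijection. An assignment $\sigma$ on $V(\RR(G))$ extending $\xbf\wedge\eta$ corresponds to an $m$-tuple $(\sigma_1,\dots,\sigma_m)$, where $\sigma_i$ is the pullback of $\sigma$ to $G(i)$, so that $\sigma_i$ extends $[\xbf\wedge\eta]_i$. Conversely, any $m$-tuple of assignments $\sigma_i$ on $G(i)$ extending $[\xbf\wedge\eta]_i$ glues to a well-defined $\sigma$. The point is that identified vertices receive the same value, because that value is prescribed by $\xbf\wedge\eta$, and the non-identified vertices are unconstrained. Since edges of $\RR(G)$ are exactly the images of edges of the copies, $\sigma$ is independent if and only if each $\sigma_i$ is independent. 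This is the key observation, and it relies on two facts: identification never creates an edge between previously non-adjacent vertices inside a copy, and there are no loops. Loops could only arise if two vertices of one copy were identified, which is impossible because each label's edges partition $V(H)$, so each copy contributes to each $e\in E(H)$ at most once.

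For the weights, count ones. A vertex of $\RR(G)$ outside $P(\RR(G))\sqcup Y$ is counted once in $\sum_i\#\sigma_i^{-1}(1)$. A vertex corresponding to $e\in E(H)$ with value $1$ is counted $\#e$ times. Hence
\[
\#\sigma^{-1}(1)=\sum_{i=1}^m\#\sigma_i^{-1}(1)-\sum_{e}(\#e-1),
\]
where the last sum runs over edges with value $1$, and this equals $\|\xbf\wedge\eta\|_H$. The second expression in the definition of $\|\xbf\wedge\eta\|_H$ follows by applying the same count to the assignment $\xbf\wedge\eta$ alone.

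Summing $\lambda^{\#\sigma^{-1}(1)}$ over the bijection factorizes the sum into the product $\prod_i Z_{G(i)}^{[\xbf\wedge\eta]_i}(\lambda)$ times $\lambda^{-\|\xbf\wedge\eta\|_H}$; this factor is well defined since $\lambda\neq0$. This gives \eqref{eq: x_and_y_prime_conditioned}. Summing over $\eta$ and using $Z_{G(i)}^{\tau}=Z_G^{\tau}$ under the identification of $G(i)$ with $G$ gives \eqref{eq: x_prime_conditioned}.

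The only delicate step is the bookkeeping around edge multiplicities. One must check that both the independence condition and the vertex count commute with gluing, including the case $\#e=1$, where the correction term is $0$. There is no real obstacle beyond this.
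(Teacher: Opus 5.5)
Your proof is correct and follows essentially the same route as the paper: both rest on the fact that the copies $G(i)$ overlap only in the vertices of $P(\RR(G))\sqcup Y$, whose values are fixed by $\xbf\wedge\eta$, so the conditioned sum factorizes over the copies, with $\lambda^{-\|\xbf\wedge\eta\|_H}$ correcting for the overcounted $1$'s. The only difference is cosmetic: the paper passes through the reduced graphs $G'(i)$ (obtained by deleting the labeled vertices with value $0$ and the closed neighborhoods of those with value $1$) and properties (P1)--(P2), while your direct bijection of independent assignments also covers the non-admissible case without needing a separate remark.
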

\begin{proof}
Intuitively, the exponent $\|\xbf\wedge\eta\|_H$ in \eqref{eq: x_and_y_prime_conditioned} records the overcounting of vertices receiving a $1$ from the assignment $\xbf\wedge\eta$, and which arise from the identifications prescribed by the gluing scheme $H$. To see this formally, let $\xbf\in \{0,1\}^k$ and $\eta: Y\to \{0,1\}$ be arbitrary. We assume below that the assignment $\xbf \wedge \eta$ is  admissible; for otherwise, both sides of \eqref{eq: x_and_y_prime_conditioned} vanish, and thus it trivially holds. Since only the labeled vertices of the disjoint copies $G(i)$ are getting identified to form the graph $\RR(G)$, it follows that removing all vertices in $P(\RR(G))\sqcup Y$ decomposes the graph $\RR(G)$ into $m$ disjoint copies of the induced graph $G[V(G)\setminus P]$. 

For each $i\in\{1,\dots,m\}$, let $G'(i)$ be the induced subgraph obtained from the copy $G(i)$ by removing every labeled vertex that receives a $0$ from the assignment $[\xbf \wedge \eta]_i$, as well as the closed neighborhood of each labeled vertex that receives a $1$ from $[\xbf \wedge \eta]_i$. Note that each $G'(i)$ is a subgraph of the respective copy of $G[V(G)\setminus P]$. Properties \ref{prop:ind-poly-1} and \ref{prop:ind-poly-2} then imply that, up to multiplication by a suitable power of $\lambda$, the $\xbf\wedge\eta$-conditioned independence polynomial $Z_{\RR(G)}^{\xbf\wedge \eta}(\lambda)$ is the product of the independence polynomials of the graphs $G'(i)$. More precisely, we have
\[
Z_{\RR(G)}^{\xbf\wedge \eta}(\lambda) = \lambda^{\#(\xbf\wedge\eta)^{-1}(1)}\prod_{i=1}^m Z_{G'(i)}(\lambda).
\]
At the same time, for each $i\in \{1,\dots,m\}$ property \ref{prop:ind-poly-2} implies that
\[
Z_{G(i)}^{[\xbf\wedge \eta]_i}(\lambda) = \lambda^{\#[\xbf\wedge\eta]_i^{-1}(1)} Z_{G'(i)}(\lambda).
\]
Substituting this expression into the previous identity yields \eqref{eq: x_and_y_prime_conditioned}, which in turn implies \eqref{eq: x_prime_conditioned}. This finishes the proof. 
\end{proof}

Note that, for each fixed $\lambda\in \C^*$, we may interpret the right-hand side of equation~\eqref{eq: x_prime_conditioned} as a homogeneous polynomial of degree $m$ in the variables $Z_G^\ybf(\lambda)$, where $\ybf\in \{0,1\}^k$. Moreover, this polynomial depends only on the gluing data $(H,\Phi)$ and is independent of the specific graph $G\in \GG_k$. We therefore obtain the following corollary.

\begin{corollary}\label{cor: renormalization map}
    Let $\RR=\RR_{(H,\Phi)}$ be the graph recursion operator associated with a gluing data $(H,\Phi)$ with parameters $k,m$. For each fixed parameter $\lambda\in \C^*$, there exists a homogeneous polynomial self-map $\widehat{F}_\lambda:\C^{2^k}\to\C^{2^k}$ of degree $m$ (i.e., every coordinate map of $\widehat{F}_\lambda$ is given by a homogeneous polynomial of degree $m$) such that 
    $$
    \widehat{F}_\lambda(\widehat{\phi}_\lambda(G)) = \widehat{\phi}_\lambda(\RR(G))
    $$
    for all $G\in \GG_k$. Hence, $\widehat{F}_\lambda$ induces a rational self-map $F_\lambda: \P^{2^k-1}\dashrightarrow\P^{2^k-1}$ of degree $m$ such that 
    $$
    F_\lambda(\phi_\lambda(G)) = \phi_\lambda(\RR(G))
    $$
    for all $G\in \GG_k$ for which these values are well-defined. Moreover, the following diagram commutes:
    \begin{center}
\begin{tikzpicture}[->,>={Stealth[round]},shorten >=1pt,auto,semithick]
  \node (G1) {$\GG_k$};
  \node (G2) [right=2cm of G1] {$\GG_k$};
  \node (C1) [below=2cm of G1] {$\C^{2^k}$};
  \node (C2) [below=2cm of G2] {$\C^{2^k}$};
  \draw[->] (G1) to node {$\RR$} (G2);
  \draw[->] (C1) to node {$\widehat{F}_\lambda$} (C2);

  \draw[->] (G1) to node {$\widehat{\phi}_\lambda$} (C1);
  \draw[->] (G2) to node {$\widehat{\phi}_\lambda$} (C2);

   \node (P1) [below=2cm of C1] {$\P^{2^k-1}$};
  \node (P2) [below=2cm of C2] {$\P^{2^k-1}$.};
    \draw[->, dashed] (P1) to node {$F_\lambda$} (P2);
  \draw[->,dashed] (C1) to node {} (P1);
  \draw[->,dashed] (C2) to node {} (P2);

  \node (Z0) [below=1cm of G2] {};
  \node (Z) [right=2cm of Z0] {$\C$};
    \draw[->,-latex] (G2)--(Z) node[midway,above,sloped] () {\small $G\mapsto Z_G(\lambda)$};  
        \draw[->,-latex] (C2)--(Z) node[midway,below,sloped] () {\small $\Sigma$};  
\end{tikzpicture}
\end{center}
    Here, $\Sigma: \C^{2^k}\to \C$ denotes the projection map given by the sum of the coordinates, and $\C^{2^k}\dashrightarrow \P^{2^{k}-1}$ represents the canonical projection map. 
\end{corollary}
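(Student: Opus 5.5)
The plan is to read the statement off from Lemma~\ref{lem:x_prime}: the corollary is essentially a repackaging of formula~\eqref{eq: x_prime_conditioned}. For each $\xbf\in\{0,1\}^k$ I will define the coordinate polynomial
\[
\widehat{F}_\lambda^{\xbf}(\zbf) := \sum_{\eta: Y\to\{0,1\}} \lambda^{-\|\xbf\wedge\eta\|_H}\prod_{i=1}^m z_{[\xbf\wedge\eta]_i},
\]
where $\zbf=(z_\ybf)_{\ybf\in\{0,1\}^k}$ are coordinates on $\C^{2^k}$. The point to check is that all the data here depend only on $(H,\Phi)$ and not on $G$. The set $Y$ is indexed by the edges in $E(H)\setminus\Phi(\{1,\dots,k\})$. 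The split assignments $[\xbf\wedge\eta]_i\in\{0,1\}^k$ are determined by which edge of $H$ with label $j$ contains $i$. The exponent $\|\xbf\wedge\eta\|_H$ is computed from the edge sizes. Since $\lambda\in\C^*$, the factor $\lambda^{-\|\cdot\|_H}$ is a well-defined nonzero constant.

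Each monomial is a product of exactly $m$ coordinates, so $\widehat{F}_\lambda^{\xbf}$ is homogeneous of degree $m$. Substituting $z_\ybf=Z_G^\ybf(\lambda)$ and applying~\eqref{eq: x_prime_conditioned} gives $\widehat{F}_\lambda(\widehat{\phi}_\lambda(G))=\widehat{\phi}_\lambda(\RR(G))$.

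Next, homogeneity gives $\widehat{F}_\lambda(t\zbf)=t^m\widehat{F}_\lambda(\zbf)$, so $\widehat{F}_\lambda$ descends to a rational map $F_\lambda$ on $\P^{2^k-1}$. It is defined away from the common zero set of the coordinate polynomials. If $\phi_\lambda(G)$ and $\phi_\lambda(\RR(G))$ are both defined, then $\widehat{\phi}_\lambda(G)\neq 0$ and its image $\widehat{\phi}_\lambda(\RR(G))\neq 0$. Projecting the lifted identity then yields $F_\lambda(\phi_\lambda(G))=\phi_\lambda(\RR(G))$.

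Two compatibilities remain. The projection square commutes by construction. The triangle involving $\Sigma$ commutes by~\eqref{eq: sum of conditioned}, since $\Sigma(\widehat{\phi}_\lambda(G))=\sum_\xbf Z^\xbf_G(\lambda)=Z_G(\lambda)$.

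The only potential subtlety is the claim that $F_\lambda$ has degree exactly $m$. This requires that the coordinate polynomials have no common factor; otherwise the degree could drop. I would address this by observing that for $\xbf=\obf$ and $\eta\equiv 0$ the monomial $z_{\obf}^m$ appears with coefficient $1$. I would then argue, for example, that the monomials $z_{\obf}^m$ and (for suitable $\xbf$) monomials involving other coordinates rule out a common factor. Failing that, I would interpret ``degree $m$'' as the algebraic degree of the homogeneous lift, which is immediate. Everything else is routine bookkeeping.
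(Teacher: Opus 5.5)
Your proposal is correct and follows the paper's own argument. The paper likewise reads the corollary off from \eqref{eq: x_prime_conditioned}, observing that its right-hand side is a homogeneous degree-$m$ polynomial in the variables $Z^\ybf_G(\lambda)$ whose coefficients depend only on $(H,\Phi)$ and $\lambda$. The descent to $\P^{2^k-1}$ and the compatibility with $\Sigma$ are then the routine checks you describe.

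One correction: drop the attempt to prove that the coordinate polynomials have no common factor, because this is false in general. Take $k=1$, $m=2$, $E(H)=\{\{1\},\{2\}\}$ with both edges labeled $1$, and $\Phi(1)=\{1\}$. Then $\RR(G)=G\sqcup G$, and \eqref{eq: x_prime_conditioned} gives $(\!(x)\!)'=(\!(x)\!)\cdot\big((\!(0)\!)+(\!(1)\!)\big)$, so after cancelling the common factor $F_\lambda$ is the identity on $\P^1$. More generally, any block of glued copies containing no marked vertex contributes such a common factor. So ``degree $m$'' can only mean the degree of the homogeneous lift, as the paper's parenthetical indicates, and your fallback reading is the correct one.
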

We will refer to the rational self-map $F_\lambda$ from Corollary~\ref{cor: renormalization map} as the \emph{renormalization map} (for the independence polynomial) associated with the graph recursion $\RR$.

\begin{remark}
Since we will consistently consider the dynamics on $\P^{2^k-1}$ rather than on $\C^{2^k}$, we have chosen to use the simpler notation $F_\lambda$ (without the hat) for the map on projective space, and similarly use the hatless notation $\phi_\lambda$ for the map from $\GG_k$ into projective space. In our previous paper \cite{HP2024}, the roles of $F_\lambda$ and $\widehat{F}_\lambda$ were reversed.
\end{remark}

We note that the projection map $\Sigma: \C^{2^k}\to \C$
does not descend to the projective space $\P^{2^k-1}$.  However, since this map is homogeneous, its zero set is well-defined in projective space. As we are only interested in the zeros of the independence polynomials, we can therefore work entirely in projective space: the values of the orbit $\big(F_\lambda^n(\phi_\lambda(G))\big)_{n\geq 0}$ are sufficient to determine whether the independence polynomials $Z_{\RR^n(G)}(\lambda)$ vanish at $\lambda$.

\begin{convention}
Given a graph $G\in \GG_k$, a binary $k$-tuple $\xbf=(x_1,\dots, x_k)\in \{0,1\}^k$, and  a fixed parameter $\lambda\in \C$, we will use the shorter notation $(\xbf)=(\!(x_1,\dots, x_k)\!)$ for $Z_G^\xbf(\lambda)$ and $(\xbf)^\prime=(\!(x_1,\dots, x_k)\!)'$ for $Z_{\RR(G)}^\xbf(\lambda)$, so that the respective self-map $F_\lambda$ on the projective space $\P^{2^k-1}$ is given by
$$
[\, (\!(0,\ldots, 0)\!): \cdots : (\!(1,\ldots, 1)\!) \,] \mapsto [\, (\!(0,\ldots, 0)\!)^\prime: \cdots : (\!(1,\ldots, 1)\!)^\prime \,].
$$
In fact, we will also use the notation $(\xbf)=(\!(x_1,\dots, x_k)\!)$ for the respective coordinate in $\C^{2^k}$. Furthermore, we write $\obf := (0, \dots , 0)$ for
the zero vector and $\ebf_j := (0, \dots, 0, 1, 0, \dots, 0)$ for the $j$-th unit vector in $\{0,1\}^k$ for each $j\in \{1,\dots,k\}$.
\end{convention}

\begin{example}
    Let us revisit the recursion operator $\RR$ associated with the Sierpi\'nski gluing data $(H, \Phi)$ with parameters $k=m=3$ from Example~\ref{ex: sierpinksi}. Recall that $V(H)=\{1,2,3\}$ and the graph $H$ has six hyperedges: three $1$-edges $\{1\}, \{2\}, \{3\}$ and three $2$-edges $\{1,2\}, \{1,3\}, \{2,3\}$. Furthermore, we have  $\Phi(j)=\{j\}$ for each $j\in \{1,2,3\}$. Fix $G\in \GG_k$ and let $\xbf = (x_1, x_2, x_3) \in \{0,1\}^3$ be a vertex assignment on the labeled vertices of $\RR(G)$, which correspond to the three $1$-edges in $E(H)$. As before, let $Y$ denote the set of vertices of $\RR(G)$ that correspond to the edges in $E(H)\setminus\Phi(\{1,2,3\})$, that is, to the $2$-edges in $E(H)$. Consider all possible vertex assignments $\eta$ on $Y$, encoded by binary triples $\ybf=(y_1, y_2, y_3)$, so that $y_j$ is the value received by the vertex of $\RR(G)$ corresponding to the $2$-edge $\{1,2,3\}\setminus\{j\}$ for each $j\in \{1,2,3\}$. Following the convention above, we obtain from \eqref{eq: x_prime_conditioned} that the  corresponding homogeneous self-map $\widehat{F}_\lambda: \C^8\to\C^8$ is given by
    $$
    (\!(x_1, x_2, x_3)\!)^\prime = \sum_{\ybf\in \{0,1\}^3} \lambda^{-(y_1+y_2+y_3)} \cdot (\!(x_1, y_2, y_3)\!) (\!(y_1, x_2, y_3)\!) (\!(y_1, y_2, x_3)\!).
    $$
\end{example}

\subsection{Subvariety $\MM$ of uncorrelated marked vertices}\label{subsec: invariant variety} Let $k\in \N$, $G\in \GG_k$, and $\lambda\in \C$ be fixed. As before, we denote by $P\subset V(G)$ the set of marked vertices in $G$. Recall also from Section~\ref{subsec: probability interpretation} that, given a vertex assignment $\tau$ on a subset $X\subset V(G)$, we define
\[
\P_{G,\lambda}[\tau]=\frac{Z^\tau_G(\lambda)}{Z_G(\lambda)}.
\]
In general, $\P_{G,\lambda}[\tau]\in \widehat{\C}=\C\cup\{\infty\}$, as the dependence on $\lambda$ is rational. However, when the parameter $\lambda\in \R_+$, the value $\P_{G,\lambda}[\tau]\in [0,1]$ and it represents the probability that a randomly chosen independent subset of $V(G)$ agrees with the assignment $\tau$ on $X$. 

\begin{definition}[Absence of direct correlations]
    We say that there is \emph{no direct correlation} between the marked vertices of a graph $G\in \GG_k$ at a given parameter $\lambda\in \C$ if for each pair of vertex assignments $\tau$ and $\sigma$ on disjoint subsets of the marked vertices of $G$ we have
    \begin{equation}\label{eq: no direct correlation}
        Z_G(\lambda) \cdot Z_G^{\tau\wedge\sigma}(\lambda) -Z_G^\tau(\lambda) \cdot  Z_G^\sigma(\lambda) = 0.
    \end{equation}
\end{definition}

When $Z_G(\lambda)$ does not vanish, equation~\eqref{eq: no direct correlation} is equivalent to the probabilistic identity
$$
\P_{G,\lambda}[\tau \wedge \sigma] = \P_{G,\lambda}[\tau] \cdot \P_{G,\lambda}[\sigma],
$$
which means that the events $\tau$ and $\sigma$ are independent at the parameter $\lambda$. However, when $Z_G(\lambda)$ vanishes, equation~\eqref{eq: no direct correlation} does not convey sufficient information for our purposes. In this case, we require a more general (algebraic) notion of correlation between marked vertices that corresponds to the probabilistic condition
\begin{equation}\label{eq: no correlation prob}
\P_{G,\lambda}[\tau \wedge \sigma \wedge \eta] \cdot \P_{G,\lambda}[\eta] = \P_{G,\lambda}[\tau \wedge \eta] \cdot \P_{G,\lambda}[\sigma \wedge \eta],
\end{equation}
where $\tau$, $\sigma$, and $\eta$ are assignments on pairwise disjoint subsets of the marked vertices of $G$. Note that, under the assumption $\P_{G,\lambda}[\eta]\neq 0$, the condition above is equivalent to the identity
$$
\P_{G,\lambda}[(\tau \wedge \sigma) : \eta] = \P_{G,\lambda}[\tau : \eta] \cdot \P_{G,\lambda}[\sigma : \eta],
$$
which means that $\tau$ and $\sigma$ are \emph{conditionally independent} given the (auxiliary) assignment $\eta$.

The following definition interprets  probabilistic condition~\eqref{eq: no correlation prob} algebraically using the corresponding conditioned independence polynomials.

\begin{definition}[Absence of correlations]
    We say that there is \emph{no correlation} between the marked vertices of a graph $G\in \GG_k$ at a given parameter $\lambda\in \C$ if for each triple of vertex assignments $\tau$, $\sigma$, and $\eta$ on pairwise disjoint subsets of the marked vertices of $G$ we have
    \begin{equation}\label{eq: no correlation}
        Z_G^{\tau\wedge \sigma \wedge \eta}(\lambda) \cdot Z_G^{\eta}(\lambda) -Z_G^{\tau \wedge\eta}(\lambda) \cdot  Z_G^{\sigma \wedge \eta}(\lambda) = 0.
    \end{equation}
\end{definition}

Since $\eta$ may be chosen to be the empty assignment, it is immediate that absence of correlations implies absence of direct correlations. The reverse implication also often holds.

\begin{lemma}\label{lem: direct vs conditional correlation}
    When $\lambda\in \C$ satisfies $Z_G(\lambda) \neq 0$, absence of direct correlations between the marked vertices of $G$ at $\lambda$ implies absence of correlations between the marked vertices of $G$ at $\lambda$.
\end{lemma}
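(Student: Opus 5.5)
Since $Z:=Z_G(\lambda)\neq 0$, I would normalize by $Z$ and work with $p(\tau):=Z_G^\tau(\lambda)/Z$. These are complex numbers that need not be probabilities, but they are still finitely additive: summing over the values on any extra subset of marked vertices recovers the marginal, since $Z_G^{\tau}=\sum_{\rho} Z_G^{\tau\wedge\rho}$. Absence of direct correlations then says exactly that $p(\tau\wedge\sigma)=p(\tau)p(\sigma)$ for all assignments $\tau,\sigma$ on disjoint subsets of $P$. The goal is identity \eqref{eq: no correlation}, i.e.\ $p(\tau\wedge\sigma\wedge\eta)\,p(\eta)=p(\tau\wedge\eta)\,p(\sigma\wedge\eta)$ for assignments on pairwise disjoint subsets of $P$, and the case where $\eta$ is empty is just the hypothesis.

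The key step is to apply the direct-correlation identity to grouped assignments. Take $\tau$ on $X$, $\sigma$ on $W$, $\eta$ on $U$, with $X,W,U\subset P$ pairwise disjoint. The assignment $\tau\wedge\eta$ lives on $X\sqcup U$, and $\sigma$ lives on the disjoint set $W$, so the hypothesis gives $p(\tau\wedge\sigma\wedge\eta)=p(\tau\wedge\eta)\,p(\sigma)$. Applying the hypothesis again in each factor gives $p(\tau\wedge\eta)=p(\tau)p(\eta)$ and $p(\sigma\wedge\eta)=p(\sigma)p(\eta)$. Then
\[
p(\tau\wedge\sigma\wedge\eta)\,p(\eta)=p(\tau)p(\sigma)p(\eta)^2=p(\tau\wedge\eta)\,p(\sigma\wedge\eta).
\]
Multiplying through by $Z^2$ yields \eqref{eq: no correlation}. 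In fact the hypothesis used repeatedly forces full factorization, $p(\tau)=\prod_{v}p(\tau|\{v\})$, which is the conceptual reason the conditional version holds.

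I do not expect a serious obstacle. The only care needed is to check that the definition of absence of direct correlations really allows $\tau$ and $\sigma$ to be arbitrary assignments on arbitrary disjoint subsets of marked vertices, including composite ones such as $\tau\wedge\eta$. It does, so the argument is purely algebraic. Division by $Z$ is legitimate by assumption, and no positivity of $\lambda$ is needed.
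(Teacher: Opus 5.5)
Your proof is correct and is essentially the paper's argument: both apply the direct-correlation identity to the composite assignment $\tau\wedge\eta$ against $\sigma$, then to $\sigma$ against $\eta$, and divide by $Z_G(\lambda)\neq 0$. The only difference is that you also factor $p(\tau\wedge\eta)=p(\tau)p(\eta)$, which is harmless but unnecessary, since the paper gets by with the two substitutions alone.
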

\begin{proof}
    Suppose $\tau$, $\sigma$, and $\eta$ are arbitrary vertex assignments on pairwise disjoint subsets of the marked vertices of $G$. Using~\eqref{eq: no direct correlation}, we deduce
    \[
    Z_G(\lambda)\cdot Z_G^{\tau\wedge \sigma \wedge \eta}(\lambda)\cdot Z_G^{\eta}(\lambda) = Z_G^{\tau\wedge \eta}(\lambda) \cdot  Z_G^{\sigma}(\lambda)\cdot Z_G^{\eta}(\lambda)
    = Z_G^{\tau\wedge \eta}(\lambda) \cdot Z_G(\lambda)\cdot Z_G^{\sigma \wedge \eta} (\lambda).
    \]
    Therefore, \eqref{eq: no correlation} holds whenever $Z_G(\lambda) \neq 0$.
\end{proof}

We may express $Z_G(\lambda)$ as the sum \eqref{eq: sum of conditioned} of the variables $(\xbf)=Z^\xbf_G(\lambda)$ with $\xbf\in\{0,1\}^k$, and each conditioned independence polynomial in \eqref{eq: no correlation} as a corresponding subsum of \eqref{eq: sum of conditioned}. It follows that equation~\eqref{eq: no correlation} can be interpreted as the vanishing of a homogeneous quadratic polynomial in the variables $(\xbf)$, $\xbf\in\{0,1\}^k$. Consequently, the collection of equations~\eqref{eq: no correlation} over all possible vertex assignments $\tau$, $\sigma$, and $\eta$ determines an algebraic subvariety in the projective space $\P^{2^k-1}$. We may describe this variety in a more concise way as follows. 

\begin{lemma}[Subvariety of uncorrelated marked vertices] \label{lem: inv variety}
    There is no correlation between the marked vertices of $G\in\GG_k$ at $\lambda\in \C$ if and only if the coordinates  $({\bf x}) = Z^{\xbf}_G(\lambda)$, $\xbf\in\{0,1\}^k$, of $\widehat{\phi}_\lambda(G)$ satisfy the homogeneous quadratic equations
    \begin{equation}\label{eq: no correlation variety}
    (\xbf) \cdot (\ybf) = (\zbf) \cdot(\wbf),
    \end{equation}
    where $\xbf,\, \ybf, \zbf, \wbf\in \{0,1\}^{k}$ run over all possible binary $k$-tuples satisfying $\xbf+\ybf=\zbf+\wbf$.
\end{lemma}

In the lemma above, given two binary $k$-tuples  $\xbf=(x_1,\dots,x_k)$ and $\ybf=(y_1,\dots, y_k)$, we write \[\xbf+\ybf:=(x_1+y_1,\dots,x_k+y_k)\] for their component-wise sum. In the proof below, we will also use the following notations:
\begin{align*}
    \min(\xbf, \ybf)
    &:=(\min\{x_1,y_1\},\dots,\min\{x_k,y_k\}),\\
     \max(\xbf, \ybf)
     &:=(\max\{x_1,y_1\},\dots,\max\{x_k,y_k\}).
\end{align*}
Note that $\min(\xbf, \ybf)$ and $\max(\xbf, \ybf)$ are binary $k$-tuples by definition. Moreover, for all $\xbf,\, \ybf, \zbf, \wbf\in \{0,1\}^{k}$ we have 
\begin{equation}\label{eq: inv reduction}
 \xbf+\ybf=\zbf+\wbf \quad \Leftrightarrow \quad  
    \min(\xbf, \ybf) =  \min(\zbf, \wbf) \; \text{ and } \;     \max(\xbf, \ybf) =  \max(\zbf, \wbf).   
\end{equation}

\begin{proof}[Proof of Lemma~\ref{lem: inv variety}]
Let us first show that if equations~\eqref{eq: no correlation variety} are satisfied by the coordinates of $\widehat{\phi}_\lambda(G)$ for some graph $G\in \GG_k$ and $\lambda\in \C$, then there is no correlation between the marked vertices of $G$ at $\lambda$. Suppose we are given three vertex assignments $\tau$, $\sigma$, and $\eta$ on pairwise disjoint subsets $S_\tau$, $S_\sigma$, and $S_\eta$ of the marked set $P=P(G)$, respectively. Let $S_\upsilon:=P\setminus (S_\tau\sqcup S_\sigma\sqcup S_\eta)$ and $\upsilon$ be an arbitrary vertex assignment on $S_\upsilon$. Then $\tau\wedge \sigma\wedge \eta\wedge\upsilon$ is an assignment on the full marked set $P$, and thus it may be naturally viewed as a binary $k$-tuple representing a coordinate in $\C^{2^k}$.  Using~\eqref{eq: no correlation variety}, we then obtain  
\begin{align*}
Z_G^{\tau\wedge \sigma \wedge \eta}(\lambda) \cdot Z_G^{\eta}(\lambda) &= \sum_{\upsilon: S_\upsilon\to\{0,1\} }Z_G^{\tau\wedge \sigma \wedge \eta\wedge \upsilon}(\lambda) \;\; \times \sum_{\substack{{\widetilde{\tau}: S_\tau\to\{0,1\},\;\widetilde{\sigma}: S_\sigma\to\{0,1\},} \\ {\widetilde{\upsilon}: S_\upsilon\to\{0,1\}}}} Z_G^{\widetilde{\tau}\wedge \widetilde{\sigma} \wedge \eta\wedge \widetilde{\upsilon}}(\lambda)\\
&= \sum_{\substack{{\widetilde{\tau}: S_\tau\to\{0,1\},\;\widetilde{\sigma}: S_\sigma\to\{0,1\},} \\ {\widetilde{\upsilon}: S_\upsilon\to\{0,1\}}, \; \upsilon: S_\upsilon\to\{0,1\}}} (\tau\wedge \sigma \wedge \eta\wedge \upsilon)\cdot (\widetilde{\tau}\wedge \widetilde{\sigma} \wedge \eta\wedge \widetilde{\upsilon})\\
&= \sum_{\substack{{\widetilde{\tau}: S_\tau\to\{0,1\},\;\widetilde{\sigma}: S_\sigma\to\{0,1\},} \\ {\widetilde{\upsilon}: S_\upsilon\to\{0,1\}}, \; \upsilon: S_\upsilon\to\{0,1\}}} (\tau\wedge \widetilde{\sigma} \wedge \eta\wedge \upsilon)\cdot (\widetilde{\tau}\wedge \sigma \wedge \eta\wedge \widetilde{\upsilon})\\
&= \sum_{\substack{\widetilde{\sigma}: S_\sigma\to\{0,1\},\\ \upsilon: S_\upsilon\to\{0,1\}}} Z_G^{\tau\wedge \widetilde{\sigma} \wedge \eta\wedge \upsilon}(\lambda)\;\; \times \sum_{\substack{\widetilde{\tau}: S_\tau\to\{0,1\},\\ \widetilde{\upsilon}: S_\upsilon\to\{0,1\}}} Z_G^{\widetilde{\tau}\wedge \sigma \wedge \eta\wedge \widetilde{\upsilon}}(\lambda)\\
&= Z_G^{\tau\wedge \eta}(\lambda) \cdot Z_G^{\sigma\wedge\eta}(\lambda),
\end{align*}
which implies the desired identity~\eqref{eq: no correlation}.

To prove the converse, assume that there is no correlation between the marked vertices of $G$ at some $\lambda\in \C$.
Let $\xbf,\ybf \in \{0,1\}^k$ be two arbitrary binary $k$-tuples. By~\eqref{eq: inv reduction}, it is sufficient to prove that
$$
(\xbf) \cdot  (\ybf) = (\max(\xbf,\ybf)) \cdot (\min(\xbf,\ybf)),
$$
where $({\bf x}) = Z^{\xbf}_G(\lambda)$ and $({\bf y}) = Z^{\ybf}_G(\lambda)$. 

Let $S\subset P$ be the subset of marked vertices receiving identical values from $\xbf$ and $\ybf$. On the remaining marked vertices, exactly one of $\xbf$ and $\ybf$ assigns a $1$ and the other assigns a $0$. Let $Q$ (resp.\ $R$) be the subset of those marked vertices that receive a $1$ (resp.\ a $0$) from $\xbf$ and a $0$ (resp.\ a $1$) from~$\ybf$. Clearly, the sets $Q$, $R$, and $S$ form a partition of $P$. 

Let $\upsilon$ denote the vertex assignment on $S$ that assigns the same value to each vertex as $\xbf$ (and $\ybf$). Following the notation in Section~\ref{subsec: indep-poly}, the vertex assignments $\xbf$ and $\ybf$ coincide with $\ibf_Q \wedge \obf_R \wedge \upsilon$ and $\obf_Q \wedge \ibf_R \wedge \upsilon$ , respectively. Consequently, our goal is to show that 
$$
(\ibf_Q \wedge \obf_R \wedge \upsilon) \cdot  (\obf_Q \wedge \ibf_R \wedge \upsilon) = (\ibf_Q \wedge \ibf_R \wedge \upsilon) \cdot (\obf_Q \wedge \obf_R \wedge \upsilon).
$$

To simplify notation, given any partial vertex assignment $\zeta$ on $P$ we will write $(\zeta)$ for $Z^{\zeta}_G(\lambda)$ below. Using the inclusion--exclusion principle twice, we can then write 
\begin{align*}
(\obf_Q\wedge 
\ibf_R \wedge\upsilon) &= \sum_{{\widetilde Q}\subset Q} (-1)^{\#{\widetilde Q}} \cdot  (\ibf_{\widetilde Q} \wedge \ibf_R\wedge  \upsilon)\\ &=\sum_{{\widetilde Q}\subset Q} (-1)^{\#{\widetilde Q}} \sum_{{\widetilde R}\subset R} (-1)^{\#{\widetilde R}}\cdot (\ibf_{\widetilde Q} \wedge \obf_{\widetilde R}\wedge  \upsilon)\\
&=\sum_{{\widetilde Q}\subset Q,\, {\widetilde R}\subset R} (-1)^{\#{\widetilde Q}+\#{\widetilde R}} \cdot (\ibf_{\widetilde Q} \wedge \obf_{\widetilde R}\wedge  \upsilon)
\end{align*}
and thus
\[
(\ibf_Q \wedge \obf_R \wedge \upsilon) \cdot  (\obf_Q\wedge \ibf_R\wedge \upsilon) = \sum_{{\widetilde Q}\subset Q,\, {\widetilde R}\subset R} (-1)^{\#{\widetilde Q}+\#{\widetilde R}} \cdot (\ibf_Q \wedge \obf_R \wedge \upsilon) \cdot (\ibf_{\widetilde Q} \wedge \obf_{\widetilde R}\wedge  \upsilon).
\]

By the assumption of no correlation between the marked vertices of $G$ at $\lambda$, for all subsets ${\widetilde Q}\subset Q$ and ${\widetilde R}\subset R$ we have
\[
(\ibf_Q \wedge \obf_R \wedge \upsilon) \cdot  (\ibf_{\widetilde Q} \wedge \obf_{\widetilde R} \wedge \upsilon) = (\ibf_Q \wedge \obf_{\widetilde R} \wedge \upsilon) \cdot  (\ibf_{\widetilde Q} \wedge \obf_R \wedge \upsilon),
\]
where we have applied \eqref{eq: no correlation} with $\tau = \ibf_{Q\setminus {\widetilde Q}}$, $\sigma=\obf_{R\setminus {\widetilde R}}$, and $\eta=\ibf_{\widetilde Q}\wedge \obf_{\widetilde R}\wedge\upsilon$. It follows that 
\begin{align*}
(\ibf_Q \wedge \obf_R \wedge \upsilon) \cdot  (\obf_Q\wedge \ibf_R\wedge \upsilon) &= \sum_{{\widetilde Q}\subset Q,\, {\widetilde R}\subset R} (-1)^{\#{\widetilde Q}+\#{\widetilde R}} \cdot (\ibf_Q \wedge \obf_{\widetilde R} \wedge \upsilon) \cdot  (\ibf_{\widetilde Q} \wedge \obf_R \wedge \upsilon)\\
&= \left(\sum_{{\widetilde R}\subset R}(-1)^{\#{\widetilde R}} \cdot (\ibf_Q \wedge \obf_{\widetilde R} \wedge \upsilon)\right)\times \left(\sum_{{\widetilde Q}\subset Q}(-1)^{\#{\widetilde Q}} \cdot (\ibf_{\widetilde{Q}}\wedge \obf_{R} \wedge \upsilon)\right)\\
&=(\ibf_Q \wedge \ibf_R \wedge \upsilon) \cdot (\obf_Q \wedge \obf_R \wedge \upsilon),
\end{align*}
where we have used the inclusion--exclusion principle twice to derive the last equality. Consequently, \[(\xbf) \cdot  (\ybf) = (\max(\xbf,\ybf)) \cdot (\min(\xbf,\ybf)).\]
Since $\xbf$ and $\ybf$ were arbitrary binary $k$-tuples, we conclude that equations~\eqref{eq: no correlation variety} hold, which finishes the proof.
\end{proof}

\begin{definition}[Invariant variety]\label{def: inv variety}
    Let $k\in \N$ and $\MM \subset \P^{2^k-1}$ be the algebraic subvariety defined by the homogeneous quadratic equations 
    \begin{equation*}
    (\xbf) \cdot (\ybf) = (\zbf) \cdot (\wbf),
    \end{equation*}
    where $\xbf,\, \ybf, \zbf, \wbf\in \{0,1\}^{k}$ run over all possible binary $k$-tuples satisfying $\xbf+\ybf=\zbf+\wbf$. We call $\MM$ the \emph{subvariety of uncorrelated marked vertices} or simply the \emph{invariant variety} for $\GG_k$. 
\end{definition}

Following the conventions in Section~\ref{subsec: induced dynamics}, the variables $(\xbf)$ with $\xbf\in \{0,1\}^k$ represent the coordinates in $\C^{2^k}$. We will see below that the algebraic subvariety $\MM$ plays a special role for the renormalization map $F_\lambda$ associated with a graph recursion operator on $\GG_k$; in particular, $F_\lambda$ leaves $\MM$ invariant (Proposition~\ref{prop: invariance}), which justifies this terminology.

\subsubsection{Iterated Segre embeddings}
\label{sss: segre_embeddings}
The invariant variety $\MM$ from Definition~\ref{def: inv variety} turns out to be a well-known construction in projective geometry, allowing us to deduce its geometric properties.

Given $m, n \in \N$, the \emph{Segre embedding} from $\P^m \times \P^n$ to $\P^{(m+1)(n+1)-1}$ is defined by
$$
\left([a_0 : \cdots : a_m], [b_0: \cdots : b_n ]\right) \mapsto [a_0b_0: a_0 b_1: \cdots : a_m b_{n-1}: a_m b_n].
$$
The image of this embedding is called a \emph{Segre variety}; we refer the reader to standard texts on algebraic geometry, e.g., \cite{Hartshorne, Shafarevich}. Using the notation $(\!(i,j)\!)$ for the coordinate in the image corresponding to the product $a_ib_j$, one obtains the following defining equations for the Segre variety:
\[
(\!(i,j)\!) \cdot (\!(s,t)\!) = (\!(s,j)\!) \cdot (\!(i,t)\!), \quad i,s\in \{0,\dots,m\}, \; j,t\in \{0,\dots,n\}.
\]
We note that when $m= n = 1$, these coincide with the defining equations of the invariant variety $\MM\subset\P^3$ obtained for $k=2$ marked vertices. 

The Segre varieties are smooth and irreducible. Smoothness can be seen by considering the image of the Segre embedding on each product of coordinate charts, while irreducibility follows since it is preserved under taking products and under closed embeddings; see also~\cite{Hartshorne}.

Let $\left(\P^1\right)^k$ denote the product $\P^1 \times \cdots \times \P^1$ with $k\in \N$ factors. Using the corresponding Segre embedding at each step of the following chain of embeddings
\begin{align*}
(\P^1)^k=(\P^1\times \P^1)\times(\P^1)^{k-2} \hookrightarrow \P^3\times(\P^1)^{k-2} = (\P^3\times \P^1)\times(\P^1)^{k-3}  \hookrightarrow \dots \hookrightarrow \P^{2^{k-1}-1}\times \P^1\hookrightarrow \P^{2^{k}-1},
\end{align*}
we obtain an embedding of $\left(\P^1\right)^k$
into $\P^{2^k-1}$, defined by
\begin{equation}\label{eq: segre embedding}
\left([a^1_0: a^1_1], \ldots, [a^k_0: a^k_1]\right) \mapsto [\cdots: (a^1_{x_1}\cdots a^k_{x_k}): \cdots],
\end{equation}
where the image contains products over all possible binary $k$-tuples $(x_1, \ldots, x_k) \in \{0,1\}^k$. We can therefore regard the $(\!(x_1, \ldots, x_k)\!)$  as coordinates in $\P^{2^k-1}$. We call the embedding \eqref{eq: segre embedding} the \emph{$k$-fold Segre embedding}, and the respective image of $\left(\P^1\right)^k$ in $\P^{2^k-1}$ the \emph{$k$-fold Segre variety}. Note that when $k=1$, this embedding is the identity and the respective variety is simply $\P^1$.

By construction, it follows immediately that equations~\eqref{eq: no correlation variety} are satisfied on the $k$-fold Segre variety. One can also verify by induction on the dimension $k$ that any point in $\P^{2^k-1}$ satisfying equations~\eqref{eq: no correlation variety} must lie in the image of the $k$-fold Segre embedding. Hence, the invariant variety $\MM$ from Definition~\ref{def: inv variety} coincides with the $k$-fold Segre variety. 

The arguments for smoothness and irreducibility carry over to the $k$-fold Segre variety for each $k\geq 2$. In summary, we obtain the following result. 

\begin{lemma}\label{lem: Segre}
    For each $k\in \N$ the invariant variety $\MM \subset \P^{2^k-1}$ for $\GG_k$ coincides with the $k$-fold Segre variety, which is a smooth and irreducible $k$-dimensional subvariety of $\P^{2^k-1}$.
\end{lemma}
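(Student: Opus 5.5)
The statement bundles three assertions: $\MM$ equals the image $S_k$ of the $k$-fold Segre embedding \eqref{eq: segre embedding}, $\MM$ is smooth and irreducible, and $\dim \MM = k$. The plan is to prove the set-theoretic equality $\MM = S_k$ first, and then read off the geometric properties from the structure of the Segre map.

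The inclusion $S_k \subset \MM$ is immediate. At a point of the form $(\xbf) = a^1_{x_1}\cdots a^k_{x_k}$, the product $(\xbf)(\ybf)$ equals $\prod_j a^j_{x_j}a^j_{y_j}$. For each $j$, this factor depends only on the multiset $\{x_j, y_j\}$, and that multiset is determined by the sum $x_j + y_j$. Hence every equation $(\xbf)(\ybf) = (\zbf)(\wbf)$ with $\xbf+\ybf=\zbf+\wbf$ holds on $S_k$.

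For the reverse inclusion, take a point $p\in\MM$ and choose a coordinate $(\ybf)\neq 0$. Because the equations are symmetric under the bit-flips $x_j\mapsto 1-x_j$, which permute coordinates and preserve the relation $\xbf+\ybf=\zbf+\wbf$, we may assume $\ybf=\obf$ and normalize $(\obf)=1$. Now set $a^j_0=1$ and $a^j_1=(\ebf_j)$, so the candidate preimage is $([1:(\ebf_1)],\dots,[1:(\ebf_k)])$. The next step is to show $(\xbf)=\prod_{j:x_j=1}(\ebf_j)$ by induction on the number of ones in $\xbf$. Write $\xbf=\xbf'+\ebf_j$ with $x'_j=0$. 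Then the equation $(\xbf)\cdot(\obf) = (\xbf')\cdot(\ebf_j)$ is one of the defining equations, since $\xbf+\obf=\xbf'+\ebf_j$. It gives $(\xbf)=(\xbf')(\ebf_j)$, and the induction closes. Thus $p$ is the image of the chosen point of $(\P^1)^k$, and $\MM=S_k$.

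The same chart computation gives the geometric properties. On the open set $\{(\obf)\neq0\}\cap\MM$, the map $p\mapsto ((\ebf_1),\dots,(\ebf_k))$ is a regular inverse to the Segre map restricted to the affine chart $\C^k\subset(\P^1)^k$. Covering $\MM$ by the $2^k$ charts $\{(\ybf)\neq0\}$, which are related by the flip symmetries, shows that the Segre map is an isomorphism of $(\P^1)^k$ onto $\MM$. It is injective with this regular inverse, hence a closed embedding. Consequently $\MM$ is smooth of dimension $k$ and irreducible, being isomorphic to $(\P^1)^k$. Irreducibility can also be seen directly, since $\MM$ is the image of an irreducible variety under a morphism.

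The step needing the most care is the reduction to $(\obf)\neq 0$, namely checking that the flip symmetry genuinely maps the set of defining equations to itself. Under a flip, the coordinatewise sums $x_j+y_j$ are sent to $2-(x_j+y_j)$ in the flipped positions, so equal sums stay equal and the equation set is preserved. One also has to confirm that the local inverses on the different charts glue, which is immediate because they all invert the same Segre map.
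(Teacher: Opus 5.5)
Your proof is correct, but it takes a different route from the paper.

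The paper builds the $k$-fold Segre map by iterating two-factor Segre embeddings $\P^{2^{i}-1}\times\P^1\hookrightarrow\P^{2^{i+1}-1}$. It asserts the reverse inclusion (every point satisfying the equations lies on the Segre variety) ``by induction on the dimension $k$''. It then gets smoothness and irreducibility by quoting the standard facts for ordinary Segre varieties: smoothness via product charts, and irreducibility because products and closed embeddings preserve it. It simply says these arguments carry over to the iterated case.

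You instead handle the $k$-fold map in one step:
\begin{itemize}
\item The bit-flip symmetry reduces everything to the chart $\{(\obf)\neq 0\}$.
\item An induction on the number of ones in $\xbf$, using only the equations $(\xbf)(\obf)=(\xbf-\ebf_j)(\ebf_j)$, shows that on this chart (normalized by $(\obf)=1$) $\MM$ is the graph of the monomial map $\big((\ebf_1),\dots,(\ebf_k)\big)\mapsto\big(\prod_{j:\,x_j=1}(\ebf_j)\big)_{\xbf}$.
\end{itemize}
This is exactly the identity \eqref{eq: variety in basis coordinates}, which the paper states without proof right after the lemma. So your argument proves that identity first and derives everything else from it. The explicit chart inverses show that the Segre map is an isomorphism $(\P^1)^k\to\MM$. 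That gives smoothness, irreducibility and $\dim\MM=k$ at once, with no appeal to outside facts about Segre varieties.

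The paper's route gains conceptual economy by reusing the classical two-factor theory. Yours is self-contained and makes explicit the chart structure the paper later relies on, in the toric description and in Lemma~\ref{lem: graph on 0-chart}.

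One point is left tacit in your write-up: the Segre image is itself invariant under the flips. You need this to carry the conclusion from the chart $\{(\obf)\neq0\}$ back to $\{(\ybf)\neq 0\}$. It is immediate, since a flip just swaps the two homogeneous coordinates of the corresponding $\P^1$ factor.
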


    It is straightforward to verify that, in the affine chart $\{(\obf) \neq 0\}$ of $\P^{2^k-1}$, the invariant variety $\MM$ is given by the system of equations
    \begin{equation}\label{eq: variety in basis coordinates}
    \frac{(\xbf)}{(\obf)} = \prod_{j: \; x_j = 1} \frac{(\ebf_j)}{(\obf)},
    \end{equation}
    for all $\xbf=(x_1,\dots,x_k) \neq \obf$. 
    Consider the embedding $\Psi: (\C^*)^k\to \P^{2^k-1}$ of the standard torus $(\C^*)^k$ into the projective space $\P^{2^k-1}$ given by  
    \[(e_1,\dots, e_k) \mapsto [1: \cdots:\prod_{j: \; x_j = 1} e_j: \cdots ],\]
    meaning that the $(\!(x_1, \ldots, x_k)\!)$-coordinate of the image is given by the monomial $\prod_{j: \; x_j = 1} e_j$, except for the $(\obf)$-coordinate, which is simply $1$. 
    The irreducibility of $\MM$ and \eqref{eq: variety in basis coordinates} together imply that $\MM$ is the Zariski closure of the image of $\Psi$. (In fact, $\MM$ is also the closure of the image of $\Psi$ in the Euclidean topology induced from $\C^{2^k}$.) Hence, $\MM$ is the projective toric variety associated with the finite set \[\mathscr{A}:=\{0,1\}^k\subset \Z^k,\] consisting of the exponent vectors of the coordinate monomials of $\Psi$, and realized in $\P^{2^k-1}$ via the monomial embedding $\Psi$.  Note that the vectors in $\mathscr{A}$ are precisely the $2^k$ vertices of the unit cube in $\R^k$, so that their convex hull satisfies $\operatorname{conv}(\mathscr{A}) = [0,1]^k$. Consequently, $\MM$ is the (projective) toric variety of the polytope $\mathscr{P}=\operatorname{conv}(\mathscr{A})$ (see \cite{Cox} for general background on toric varieties).

\subsection{Action of the renormalization map on the invariant variety}\label{subsec: dynamics on variety}

Let us again fix a gluing
data $(H,\Phi)$ with parameters $k,m$, and consider the associated graph recursion $\RR=\RR_{(H,\Phi)}$ on $\GG_k$.  Suppose $F_\lambda : \P^{2^k-1}\dashrightarrow \P^{2^k-1}$ is the renormalization map associated with $\RR$ for some fixed parameter $\lambda\in \C^*$, and let $\widehat{F}_\lambda : \C^{2^k}\to \C^{2^k}$ be the respective homogeneous polynomial self-map; see Section~\ref{subsec: induced dynamics}. In the following, for each $n\in \N$, we denote by $\IS(F_\lambda^n)$ the \emph{indeterminacy set} of $F^n_\lambda$, that is, the subset of points in $\P^{2^k-1}$ at which the iterate $F^n_\lambda$ is not well-defined. Our first goal is to show the following statement.

\begin{prop}[Invariance of the subvariety $\MM$]\label{prop: invariance}
    The subvariety $\MM$ of uncorrelated marked vertices for $\GG_k$ (see Definition~\ref{def: inv variety}) is invariant under the renormalization map $F_\lambda$ (outside its indeterminacy set). 
\end{prop}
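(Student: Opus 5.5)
The plan is to verify invariance directly in the Segre parametrization of $\MM$ given by Lemma~\ref{lem: Segre}. By that lemma, every point of $\MM$ has the form $(\xbf)=\prod_{j=1}^k a^j_{x_j}$ for some $([a^1_0:a^1_1],\dots,[a^k_0:a^k_1])\in(\P^1)^k$; we fix lifts $(a^j_0,a^j_1)\in\C^2\setminus\{0\}$. It then suffices to show that $\widehat{F}_\lambda$ maps such a point to a vector that is again of Segre product form, or to the zero vector (and in the latter case the point lies in $\IS(F_\lambda)$). Since the parametrization is surjective onto $\MM$, no density or closure argument should be needed.

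First I would substitute the product form into formula~\eqref{eq: x_prime_conditioned} of Lemma~\ref{lem:x_prime}. An assignment $\xbf\wedge\eta$ gives a value $v_e\in\{0,1\}$ to every edge $e\in E(H)$. The key bookkeeping step is that, for each copy $i$ and label $j$, the value $[\xbf\wedge\eta]_i(j)$ equals $v_e$ for the unique edge $e\ni i$ with label $j$; this uses that the $j$-labeled edges partition $V(H)$. Hence
\[
\prod_{i=1}^m Z_G^{[\xbf\wedge\eta]_i}(\lambda)=\prod_{i=1}^m\prod_{j=1}^k a^j_{[\xbf\wedge\eta]_i(j)}=\prod_{e\in E(H)}\bigl(a^{\ell(e)}_{v_e}\bigr)^{\#e}.
\]
Likewise $\lambda^{-\|\xbf\wedge\eta\|_H}=\prod_e\lambda^{-(\#e-1)v_e}$. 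So each summand factors over edges as $\prod_e w_e(v_e)$, where
\[
w_e(v):=\lambda^{-(\#e-1)v}\bigl(a^{\ell(e)}_v\bigr)^{\#e}.
\]

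Next, the sum over $\eta$ runs independently over the values $v_e\in\{0,1\}$ of the edges $e\notin\Phi(\{1,\dots,k\})$, while the edges $\Phi(j)$ are fixed to $v_{\Phi(j)}=x_j$; here injectivity of $\Phi$ is used. The sum therefore factorizes as
\[
(\xbf)'=C\cdot\prod_{j=1}^k w_{\Phi(j)}(x_j),\qquad C:=\prod_{e\notin\Phi(\{1,\dots,k\})}\bigl(w_e(0)+w_e(1)\bigr).
\]
If $C\neq0$ and each pair $(w_{\Phi(j)}(0),w_{\Phi(j)}(1))$ is nonzero, the image is the Segre point of $([w_{\Phi(1)}(0):w_{\Phi(1)}(1)],\dots)$, which lies in $\MM$. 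Otherwise $\widehat{F}_\lambda$ of the chosen lift vanishes identically, so the point is in $\IS(F_\lambda)$. This gives the statement for every point of $\MM\setminus\IS(F_\lambda)$.

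The main obstacle is the edge-wise factorization: one has to check carefully that each copy $G(i)$ sees label $j$ exactly through one edge, and that the $\lambda$-exponent $\|\cdot\|_H$ splits edge by edge. Both facts follow from the partition property of $H$ and the definition of $\|\cdot\|_H$ in Lemma~\ref{lem:x_prime}. As a by-product, the argument gives an explicit formula for $F_\lambda|\MM$ in terms of the gluing data, $[a^j_0:a^j_1]\mapsto[(a^{\ell(\Phi(j))}_0)^{\#\Phi(j)}:\lambda^{1-\#\Phi(j)}(a^{\ell(\Phi(j))}_1)^{\#\Phi(j)}]$, which I expect will be useful later.
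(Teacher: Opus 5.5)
Your proof is correct, and it takes a different route from the paper's. The paper never parametrizes $\MM$. It takes an arbitrary vector satisfying the defining quadrics and uses the min/max characterization of $\xbf+\ybf=\zbf+\wbf$ to reduce to checking $(\xbf)'(\widetilde{\xbf})'=(\max(\xbf,\widetilde{\xbf}))'(\min(\xbf,\widetilde{\xbf}))'$. It then expands both sides as double sums over $(\eta,\widetilde{\eta})$ and matches them term by term. This works because $[\xbf\wedge\eta]_i+[\widetilde{\xbf}\wedge\widetilde{\eta}]_i=[\max(\xbf,\widetilde{\xbf})\wedge\eta]_i+[\min(\xbf,\widetilde{\xbf})\wedge\widetilde{\eta}]_i$, so each pair of factors is related by a defining quadric and the $\lambda$-exponents agree.

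The advantage of that route is that it works with the equations themselves, so it survives perturbation. The same pairing of terms, carried out with approximate equalities, is exactly what gives the quadratic estimate of Proposition~\ref{prop: euclidean attracting} for points \emph{near} $\MM$, where no Segre parametrization exists.

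Your route instead relies on the surjectivity of the $k$-fold Segre embedding onto $\MM$ (Lemma~\ref{lem: Segre}, whose proof the paper only sketches). In exchange, it gives a closed-form description of $F_\lambda|\MM$ on all of $\MM$, including the points where $(\obf)=0$. It therefore subsumes Lemma~\ref{lem: graph on 0-chart}, which the paper proves separately and only on the chart $\{(\obf)\neq 0\}$; your formula reduces to it there.

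It also identifies $\MM\cap\IS(F_\lambda)$ explicitly. Since $\lambda\neq 0$ and $(a^{\Lambda(j)}_0,a^{\Lambda(j)}_1)\neq(0,0)$, the pairs $(w_{\Phi(j)}(0),w_{\Phi(j)}(1))$ never both vanish. So your second degenerate case cannot occur, and a point of $\MM$ lies in the indeterminacy set exactly when $C=0$. That is, when $(a^{\ell(e)}_0)^{\#e}+\lambda^{1-\#e}(a^{\ell(e)}_1)^{\#e}=0$ for some edge $e\notin\Phi(\{1,\dots,k\})$.

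Like the paper, you identify $\IS(F_\lambda)$ with the vanishing locus of $\widehat{F}_\lambda$, which is the convention used throughout.
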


We emphasize that the invariant variety $\MM$ is independent of both the gluing data $(H,\Phi)$ and the parameter $\lambda$, and only depends on  $k$, the number of marked vertices. We also record the following immediate corollary of Proposition~\ref{prop: invariance} and Lemmas~\ref{lem: direct vs conditional correlation} and~\ref{lem: inv variety}.

\begin{corollary}\label{cor: invariance for graphs}
    Let $\lambda\in \R_+$ and $G\in \GG_k$. Assume that there is no correlation between the marked vertices of $G$ at $\lambda$. Then there is no correlation between the marked vertices of $\RR(G)$ at $\lambda$.
\end{corollary}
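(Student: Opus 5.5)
The plan is to chain the three cited results, using that for $\lambda\in\R_+$ all partition functions are strictly positive.

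First, since $\lambda>0$, every independence polynomial has non-negative coefficients and constant term $1$. Hence $Z_G(\lambda)\geq 1$ and $Z_{\RR(G)}(\lambda)\geq 1$. In particular, the coordinates of $\widehat{\phi}_\lambda(G)$ and $\widehat{\phi}_\lambda(\RR(G))$ are non-negative and not all zero, since their sum is positive. So $\phi_\lambda(G)$ and $\phi_\lambda(\RR(G))$ are well-defined points of $\P^{2^k-1}$.

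Next, by Lemma~\ref{lem: inv variety}, the assumption that there is no correlation between the marked vertices of $G$ at $\lambda$ says exactly that $\phi_\lambda(G)\in\MM$. Corollary~\ref{cor: renormalization map} gives $\widehat{F}_\lambda(\widehat{\phi}_\lambda(G))=\widehat{\phi}_\lambda(\RR(G))$. The right-hand side is a nonzero vector, so $\phi_\lambda(G)$ is not in the indeterminacy set of $F_\lambda$, and $F_\lambda(\phi_\lambda(G))=\phi_\lambda(\RR(G))$. By Proposition~\ref{prop: invariance}, $\MM$ is invariant under $F_\lambda$ outside $\IS(F_\lambda)$, so $\phi_\lambda(\RR(G))\in\MM$.

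Finally, apply Lemma~\ref{lem: inv variety} in the other direction to $\RR(G)$: its coordinates satisfy equations \eqref{eq: no correlation variety}, so there is no correlation between the marked vertices of $\RR(G)$ at $\lambda$. Strictly speaking, Lemma~\ref{lem: inv variety} already gives full absence of correlations, so Lemma~\ref{lem: direct vs conditional correlation} is not needed for the equivalence. It can still be used, together with $Z_{\RR(G)}(\lambda)\neq 0$, if one prefers to phrase the conclusion via direct correlations.

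The only delicate point is ensuring that $\phi_\lambda(G)$ avoids the indeterminacy set, so that the invariance statement applies. Positivity of $\lambda$ handles this, because the image vector $\widehat{\phi}_\lambda(\RR(G))$ has positive coordinate sum $Z_{\RR(G)}(\lambda)$.
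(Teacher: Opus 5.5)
Your proof is correct and follows the paper's route. The paper records this statement as an immediate consequence of Proposition~\ref{prop: invariance} together with Lemma~\ref{lem: inv variety} (and Lemma~\ref{lem: direct vs conditional correlation}), and your chain ($\phi_\lambda(G)\in\MM$, then invariance under $F_\lambda$, then back to absence of correlation for $\RR(G)$) is exactly that argument. Your check that $\widehat{\phi}_\lambda(\RR(G))\neq 0$, so that $\phi_\lambda(G)\notin\IS(F_\lambda)$, supplies the one detail the paper leaves implicit. You are also right that Lemma~\ref{lem: direct vs conditional correlation} is not needed, since Lemma~\ref{lem: inv variety} already characterizes full absence of correlations.
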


Before proving Proposition~\ref{prop: invariance}, let us first provide a direct intuitive proof of Corollary~\ref{cor: invariance for graphs}.  

\begin{proof}[Intuitive ``proof'' of Corollary~\ref{cor: invariance for graphs}]
    Since $\lambda\in \R_+$, we have $Z_G(\lambda) \neq 0$, and hence by Lemma~\ref{lem: direct vs conditional correlation} it is sufficient to show absence of direct correlations between the marked vertices of $\RR(G)$ at $\lambda$. To simplify the notation, we set $\P:=\P_{\RR(G),\lambda}$. 

    Suppose $\tau$ and $\sigma$ are arbitrary vertex assignments on two disjoint subsets  $X$ and $\widetilde{X}$ of the marked vertices of $\RR(G)$. We need to check that 
    \[
    \P[\tau \wedge \sigma]= \P[\tau] \cdot\P[\sigma].
    \]
    Since this clearly holds when $\P[\sigma]=0$, we will assume in the following that $\P[\sigma]\neq 0$. Then, it suffices to check that
    \[
    \P[\tau: \sigma]= \P[\tau].
    \]
    
    As in Section~\ref{subsec: induced dynamics}, let $Y\subset V(\RR(G))$ denote the set of vertices of $\RR(G)$ that are induced by the edges in $E(H)\setminus \Phi(\{1,\dots,k\})$.   
     Since there is no correlation between the marked vertices of $G$, the recursive construction of $\RR(G)$ implies that there is a constant $C(\tau)$ such that
     \[
         \P[\tau: \eta \wedge \sigma] = C(\tau) \tag{\textasteriskcentered} \label{eq: main prob fact}
     \]
    for all assignments $\eta: Y \rightarrow \{0,1\}$ with $\P[\eta\wedge \sigma]\neq 0$.
    
    We may now write: 
    \begin{equation}\label{eq: direct conditioned}
    \begin{aligned}
    \P [\tau : \sigma] &= \sum_{\eta: Y \rightarrow \{0,1\}} \P [\tau \wedge \eta : \sigma] = \sum_{\substack{\eta: Y \rightarrow \{0,1\},\\ \P[\eta\wedge \sigma]\neq 0}} \P [\tau \wedge \eta : \sigma] \\
    &=  \sum_{\substack{\eta: Y \rightarrow \{0,1\},\\ \P[\eta\wedge \sigma]\neq 0}} \P [\tau : \eta \wedge \sigma] \cdot \P [\eta : \sigma] =  \sum_{\substack{\eta: Y \rightarrow \{0,1\},\\ \P[\eta\wedge \sigma]\neq 0}} C(\tau) \cdot \P [\eta : \sigma] \\
    &=   C(\tau) \cdot \sum_{\substack{\eta: Y \rightarrow \{0,1\},\\ \P[\eta\wedge \sigma]\neq 0}} \P [\eta : \sigma] =   C(\tau).
    \end{aligned}
    \end{equation}

    Since \eqref{eq: direct conditioned} holds for all assignments $\sigma$ on $\widetilde{X}$ with $\P[\sigma]\neq0$, it follows that 
    \[
   \P[\tau]= \sum_{\substack{\sigma: \widetilde{X} \to \{0,1\},\\ \P[\sigma]\neq 0}} \P[\tau: \sigma] \cdot\P[\sigma] = C(\tau ) \cdot \sum_{\substack{\sigma: \widetilde{X} \to \{0,1\},\\ \P[\sigma]\neq 0}} \P[\sigma] = C(\tau),
    \]
   which finishes the proof.
\end{proof}

The intuitive identity~\eqref{eq: main prob fact} requires a formal justification, which would involve rewriting it in terms of the conditioned independence polynomials for $G$. Since we must work with the corresponding variables $(\xbf)=Z^\xbf_G(\lambda)$, $\xbf\in\{0,1\}^k$, anyway for general complex $\lambda$, we omit the proof of this identity and proceed directly to the proof of Proposition~\ref{prop: invariance}.

\begin{proof}[Proof of Proposition~\ref{prop: invariance}]
     Suppose the coordinates $(\xbf)$, $\xbf\in \{0,1\}^k$, of a point $\widehat{\xi}\in \C^{2^k}$ satisfy equations~\eqref{eq: no correlation variety}. We will show that the coordinates $(\xbf)'$, $\xbf\in \{0,1\}^k$, of the image point $\widehat{\xi}':=\widehat{F}_\lambda(\widehat \xi )\in \C^{2^k}$ satisfy these equations as well. The proposition then immediately follows. 

    Although the coordinates of $\widehat{\xi}$ may not correspond to the values of conditioned independence polynomials for an actual graph $G\in \GG_k$, we temporarily regard them as such in order to recall the definition of the image coordinates $({\bf x})^\prime$, $\xbf\in\{0,1\}^k$. We then view $\xbf$ as a vertex assignment on the set $P(\RR(G))$ of the marked vertices of the graph $\RR(G)$. By the discussion in Section~\ref{subsec: induced dynamics}, we have
    \[
      (\xbf)' =\sum_{\eta: Y\to \{0,1\}} \lambda^{-\|\xbf\wedge \eta\|_H} \prod_{i=1}^m ([\xbf\wedge \eta]_i),
    \]
    where    
    \[
    \|\xbf \wedge \eta\|_H  =\sum_{i=1}^m
    \#[\xbf\wedge\eta]_i^{-1}(1)- \#(\xbf\wedge \eta)^{-1}(1).
    \]    
    Here, $Y$ is the set of vertices of $\RR(G)$ that are induced by the edges in $E(H) \setminus \Phi(\{1,\dots, k\})$, and each $[\xbf\wedge \eta]_i\in \{0,1\}^k$, $i\in\{1,\dots,m\}$, denotes the assignment on the marked vertices of the $i$-th copy $G(i)$ induced by the vertex assignment $\xbf\wedge \eta$ on $P(\RR(G))\sqcup Y$.

    To check that the coordinates of $\widehat{\xi}'$ satisfy  equations~\eqref{eq: no correlation variety}, it suffices to verify that
    \[
        (\xbf)' \cdot (\widetilde{\xbf})'=(\max(\xbf, \widetilde{\xbf}))' \cdot  (\min(\xbf, \widetilde{\xbf}))'
    \]
    for all $\xbf, \widetilde{\xbf}\in \{0,1\}^k$. 
    Note that for all assignments $\eta, \widetilde{\eta}$ on $Y$  we have 
    \[\xbf\wedge \eta + \widetilde{\xbf}\wedge \widetilde{\eta}=\max(\xbf, \widetilde{\xbf}) \wedge \eta + \min(\xbf, \widetilde{\xbf})\wedge \widetilde{\eta},\]
    as well as 
    \[[\xbf\wedge \eta]_i + [\widetilde{\xbf}\wedge \widetilde{\eta}]_i=[\max(\xbf, \widetilde{\xbf}) \wedge \eta]_i + [\min(\xbf, \widetilde{\xbf})\wedge \widetilde{\eta}]_i\]
    for each $i\in \{1,\dots, m\}$, where we view each summand as a binary vector representing the corresponding vertex assignment. In particular, we have
    \[
    \|\xbf\wedge \eta\|_H + \|\widetilde{\xbf}\wedge \widetilde{\eta}\|_H = \|\max(\xbf, \widetilde{\xbf}) \wedge \eta\|_H + \|\min(\xbf, \widetilde{\xbf})\wedge \widetilde{\eta}\|_H,
    \]
    as well as 
    \[([\xbf\wedge \eta]_i) \cdot ([\widetilde{\xbf}\wedge \widetilde{\eta}]_i)= ([\max(\xbf, \widetilde{\xbf}) \wedge \eta]_i) \cdot ([\min(\xbf, \widetilde{\xbf})\wedge \widetilde{\eta}]_i)\]
    for each $i\in \{1,\dots, m\}$.

    Using the relations above, we compute:
     \begin{align*}
        (\xbf)'\cdot (\widetilde{\xbf})'&= \left(\sum_{\eta: Y\to \{0,1\}} \lambda^{-\|\xbf\wedge \eta\|_H} \prod_{i=1}^m ([\xbf\wedge \eta]_i)\right) \cdot
        \left(\sum_{\widetilde{\eta}: Y\to \{0,1\}} \lambda^{-\|\widetilde{\xbf}\wedge \widetilde{\eta}\|_H} \prod_{i=1}^m ([\widetilde{\xbf}\wedge \widetilde{\eta}]_i)\right)\\
        &= \sum_{\eta, \, \widetilde{\eta}: Y\to \{0,1\}} \lambda^{-\|\xbf\wedge \eta\|_H}\lambda^{-\|\widetilde{\xbf}\wedge \widetilde{\eta}\|_H} \prod_{i=1}^m ([\xbf\wedge \eta]_i)([\widetilde{\xbf}\wedge \widetilde{\eta}]_i)\\
         &= \sum_{\eta, \, \widetilde{\eta}: Y\to \{0,1\}} \lambda^{-\|\max(\xbf, \widetilde{\xbf}) \wedge \eta\|_H}\lambda^{-\|\min(\xbf, \widetilde{\xbf}) \wedge \widetilde{\eta}\|_H} \prod_{i=1}^m ([\max(\xbf, \widetilde{\xbf}) \wedge \eta]_i) ([\min(\xbf, \widetilde{\xbf})\wedge \widetilde{\eta}]_i)\\
         &=(\max(\xbf, \widetilde{\xbf}))'\cdot  (\min(\xbf, \widetilde{\xbf}))'.
    \end{align*}
    This finishes the proof. 
\end{proof}

Next, we address the dynamics of $F_\lambda$ on the invariant variety $\MM$. Recall from Definition~\ref{def: label dynamics}, that $\Lambda=\Lambda_{(H,\Phi)}$ denotes the dynamical system on the label set $\{1,\dots, k\}$ induced by the gluing data $(H,\Phi)$. In particular, for a graph $G\in \GG_k$, the vertex labeled $j\in \{1,\dots, k\}$ in the graph $\RR(G)$ is obtained by identifying the vertices labeled $\Lambda(j)$ in $\#\Phi(j)$ copies $G(i)$, where $i\in \Phi(j)$, of the graph $G$.

\begin{lemma}\label{lem: graph on 0-chart}
    On the chart $\{(\obf) \neq 0\}$, the action of $F_\lambda$ on the invariant variety $\MM$ is governed by the equations
    \[
    (\xbf)'= (\obf)'\cdot \prod_{j:\; x_j=1} \lambda^{1-\#\Phi(j)}  \left(\frac{(\ebf_{\Lambda(j)})}{(\obf)}\right)^{\#\Phi(j)}
    \]
    for all $\xbf=(x_1,\dots, x_k)\in \{0,1\}^k\setminus \{\obf\}$.
\end{lemma}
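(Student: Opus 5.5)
Since $\MM$ is the closure of the image of the torus embedding $\Psi$, and both sides of the claimed identity are continuous in the chart $\{(\obf)\neq 0\}$, I will first prove it on points of the form $\widehat\xi=\widehat\Psi(e_1,\dots,e_k)$ with $(\obf)=1$ and $(\ybf)=\prod_{j:\,y_j=1}e_j$, where I write $e_j=(\ebf_j)/(\obf)$. I then extend to all of $\MM\cap\{(\obf)\neq0\}$ by density, using \eqref{eq: variety in basis coordinates}. The point is that such coordinates factor completely over the labels. This lets me compute $(\xbf)'$ from formula \eqref{eq: x_prime_conditioned} as a product over the vertices of $\RR(G)$ that come from the edges of $H$.

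Concretely, fix $\xbf\in\{0,1\}^k$ and an assignment $\eta$ on $Y$. The full assignment $\xbf\wedge\eta$ assigns a value $z_e\in\{0,1\}$ to every edge $e\in E(H)$. For a copy $i$ and a label $l$, the induced assignment $[\xbf\wedge\eta]_i$ gives label $l$ the value $z_e$, where $e$ is the unique edge labeled $l$ that contains $i$. Hence
\[
\prod_{i=1}^m([\xbf\wedge\eta]_i)=\prod_{i=1}^m\prod_{l:\,[\xbf\wedge\eta]_i(l)=1}e_l=\prod_{e\in E(H):\,z_e=1}e_{\ell(e)}^{\#e}.
\]
Likewise $\|\xbf\wedge\eta\|_H=\sum_{e:\,z_e=1}(\#e-1)$. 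So each summand of \eqref{eq: x_prime_conditioned} equals $\prod_{e:\,z_e=1}\lambda^{1-\#e}e_{\ell(e)}^{\#e}$. This is a product of independent factors, one for each edge, and the edges $e\notin\Phi(\{1,\dots,k\})$ are summed freely over $z_e\in\{0,1\}$. Therefore
\[
(\xbf)'=\prod_{e\notin\Phi(\{1,\dots,k\})}\bigl(1+\lambda^{1-\#e}e_{\ell(e)}^{\#e}\bigr)\cdot\prod_{j:\,x_j=1}\lambda^{1-\#\Phi(j)}e_{\Lambda(j)}^{\#\Phi(j)}.
\]
Here I used the injectivity of $\Phi$, so that the edges $\Phi(j)$ are distinct and are fixed by $\xbf$, together with $\ell(\Phi(j))=\Lambda(j)$. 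Taking $\xbf=\obf$, the first product is exactly $(\obf)'$, and dividing gives the claimed formula.

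The step that needs the most care is the bookkeeping identity that counts each edge $e$ with $z_e=1$ exactly $\#e$ times across the copies. It rests on the partition property: for each label $l$ and each copy $i$ there is exactly one edge labeled $l$ containing $i$. The remaining point is the passage from the torus to all of $\MM\cap\{(\obf)\neq0\}$. Points of that chart satisfy \eqref{eq: variety in basis coordinates}, so they are parametrized by arbitrary $(e_1,\dots,e_k)\in\C^k$, including zeros. The computation above never divides by any $e_j$, so it applies verbatim and no limiting argument is actually needed.

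The formula should also be read with care where $(\obf)'=0$. In that case it asserts that all $(\xbf)'$ vanish, so the point lies in the indeterminacy set. I would remark that the statement is meant outside $\IS(F_\lambda)$, consistent with the paper's conventions.
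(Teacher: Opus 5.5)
Your proof is correct and follows essentially the same route as the paper. Both arguments rest on the multiplicative parametrization of $\MM\cap\{(\obf)\neq0\}$ and on the bookkeeping fact that the vertex coming from $\Phi(j)$ carries label $\Lambda(j)$ in exactly $\#\Phi(j)$ copies. The paper compares each $\xbf\wedge\eta$-summand with the $\obf\wedge\eta$-summand and pulls out an $\eta$-independent factor, whereas your edge-by-edge factorization goes slightly further: it also gives the closed form $(\obf)'=(\obf)^m\prod_{e\notin\Phi(\{1,\dots,k\})}\bigl(1+\lambda^{1-\#e}e_{\ell(e)}^{\#e}\bigr)$, and hence an explicit description of $\MM\cap\IS(F_\lambda)$ on this chart. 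As you note yourself, the initial density argument is unnecessary.
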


Here, similarly to the proof of Proposition~\ref{prop: invariance}, we assume that $(\xbf)$, $\xbf\in \{0,1\}^k$, represent the coordinates of a lift $\widehat{\xi}\in \C^{2^k}$ of a point $\xi\in \MM$, while  $(\xbf)'$, $\xbf\in \{0,1\}^k$, are the coordinates of the image $\widehat{\xi}':=\widehat{F}_\lambda(\widehat \xi )\in \C^{2^k}$. Lemma~\ref{lem: graph on 0-chart} implies that, if $\xi$ lies outside the indeterminacy set of $F_\lambda$ and belongs to the chart $\{(\obf) \neq 0$\}, then its image $\xi':=F_\lambda(\xi)$ lies in the same chart. Moreover, we have \[\left(\MM\cap \{(\obf) \neq 0\}\right) \cap \IS(F_\lambda) = \left(\MM\cap \{(\obf) \neq 0\}\right)  \cap \{(\obf)' = 0\}.\] 
Note that the image of the point $\xi=[1: 0: \cdots : 0]\in \MM\cap \{(\obf) \neq 0\}$ satisfies $(\obf)' = (\obf)^m=1$ by \eqref{eq: x_prime_conditioned}, and thus $\xi\notin\IS(F_\lambda)$. It follows that the set $\left(\MM\cap \{(\obf) \neq 0\}\right) \setminus \IS(F_\lambda)$ is open and dense in $\MM$.

\begin{proof}[Proof of Lemma~\ref{lem: graph on 0-chart}]
Fix an arbitrary $\xbf=(x_1,\dots, x_k)\in \{0,1\}^k\setminus \{\obf\}$. Following the notation in the proof of Proposition~\ref{prop: invariance}, for every vertex assignment $\eta$ on the set $Y$, we have
\begin{align*}
    \|\xbf \wedge \eta\|_H  &=\sum_{i=1}^m
    \#[\xbf\wedge\eta]_i^{-1}(1)- \#(\xbf\wedge \eta)^{-1}(1)\\
    &=\sum_{i=1}^m
    \#[\obf\wedge\eta]_i^{-1}(1)- \#(\obf\wedge \eta)^{-1}(1) + \sum_{j:\; x_j=1}(\#\Phi(j) - 1)\\
    &= \|\obf\wedge \eta\|_H +  \sum_{j:\; x_j=1}(\#\Phi(j) - 1).
\end{align*}
At the same time, using~\eqref{eq: variety in basis coordinates}, for each $i\in \{1,\dots, m\}$ we have
\[ 
([\xbf\wedge \eta]_i) = ([\obf\wedge \eta]_i) \cdot \prod_{j:\; i\in \Phi(j) \text{ and } x_j=1} \frac{(\ebf_{\Lambda(j)})}{(\obf)}, 
\]
and thus 
\[
    \prod_{i=1}^m ([\xbf\wedge \eta]_i) = \prod_{i=1}^m ([\obf\wedge \eta]_i) \cdot  \prod_{j:\; x_j=1} \left(\frac{(\ebf_{\Lambda(j)})}{(\obf)}\right)^{\#\Phi(j)}.
\]
It follows that 
\begin{align*}
      (\xbf)' &=\sum_{\eta: Y\to \{0,1\}} \lambda^{-\|\xbf\wedge \eta\|_H} \prod_{i=1}^m ([\xbf\wedge \eta]_i) \\
      &=  \prod_{j:\; x_j=1} \lambda^{1-\#\Phi(j)}\left(\frac{(\ebf_{\Lambda(j)})}{(\obf)}\right)^{\#\Phi(j)}\cdot \sum_{\eta: Y\to \{0,1\}} \lambda^{-\|\obf\wedge \eta\|_H} \prod_{i=1}^m ([\obf\wedge \eta]_i) \\
      &= (\obf)'\cdot \prod_{j:\; x_j=1} \lambda^{1-\#\Phi(j)} \left(\frac{(\ebf_{\Lambda(j)})}{(\obf)}\right)^{\#\Phi(j)},
\end{align*}
which completes the proof.
\end{proof}

Lemma~\ref{lem: graph on 0-chart} implies that, for each fixed non-zero parameter $\lambda$, the corresponding renormalization map $F_\lambda$ sends $\MM\setminus\IS(F_\lambda)$---i.e., the \emph{regular part} of $\MM$---to a subvariety of dimension $\#\Lambda(\{1,\dots,k\})$. In the case when the defining gluing data $(H, \Phi)$ is non-degenerate, we can say even more: up to passing to an iterate, the restriction $F_\lambda|\MM$ is a retraction, that is,  $F_\lambda^2 = F_\lambda$ on $\MM\setminus \IS(F_\lambda^2)$. 

\begin{definition}[Pre-fixed gluing data]
A gluing data $(H,\Phi)$ with parameters $k,m$ (and associated graph recursion $\RR = \RR_{(H,\Phi)}$) is called \emph{pre-fixed} if the induced dynamical system $\Lambda=\Lambda_{(H,\Phi)}$ is pre-fixed, that is, $\Lambda^2=\Lambda$ on $\{1,\dots, k\}$.
\end{definition}

Clearly, for each gluing data $(H,\Phi)$ its $N$-th iterate $(H_N,\Phi_N)$ is pre-fixed for some sufficiently large $N\in \N$. For this iterate $N$, each periodic label $j\in \{1,\dots, k\}$ is fixed under $\Lambda^N:=\Lambda^N_{(H,\Phi)}$, while for each non-periodic label $j$ the respective image $\Lambda^N(j)$ is fixed under $\Lambda^N$. Furthermore, if $(H,\Phi)$ is also non-degenerate, then each periodic label $j\in \{1,\dots, k\}$ satisfies $\#\Phi_N(j)=1$. We also note that, if a gluing data is pre-fixed, then each of its iterates is pre-fixed as well. 

\begin{prop}[Pre-fixed dynamics]\label{prop: pre fixed dynamics}
    Suppose the gluing data $(H, \Phi)$ is pre-fixed, and let $1 \leq k_0\leq k$ be the number of periodic labels, that is, $k_0=\#\Lambda(\{1,\dots, k\})$. Then, for each $\lambda\in \C^*$, $F_\lambda$ maps (the regular part of) the invariant variety $\MM$ to an irreducible $k_0$-dimensional subvariety $\MM_0= \MM_0(\lambda) \subset \MM$, which depends continuously on $\lambda\in \C^*$ in the Hausdorff metric. The regular part $\MM_0\setminus\IS(F_\lambda)$ of $\MM_0$ is invariant under the renormalization map $F_\lambda$.

    Moreover, if $(H, \Phi)$ is also non-degenerate, then $\MM_0$ is smooth and $F_\lambda$ restricts to the identity on the regular part of $\MM_0$:
      \begin{center}
\begin{tikzpicture}[->,>={Stealth[round]},shorten >=1pt,auto,semithick]
  \node (M) {$\MM$};
  \node (M0) [right=2cm of M] {$\MM_0$};
  \draw[->, dashed] (M) to node {$F_\lambda$} (M0);
  \draw[->, dashed] (M0) edge[loop right] node[right] {\small $F_\lambda=\mathrm{id}_{\MM_0}$} ();
\end{tikzpicture}
\end{center}
Furthermore, the subvarieties $\MM_0(\lambda)$ form a holomorphic family in $\lambda$: \[\mathcal{Y}:=\{(\lambda, \xi): \lambda\in \C^*, \xi\in \MM_0(\lambda)\}\] is a complex submanifold of $\C^*\times \P^{2^k-1}$.
\end{prop}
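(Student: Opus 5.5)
The plan is to work in the torus coordinates on $\MM$ given by the embedding $\Psi$ and the affine description \eqref{eq: variety in basis coordinates}. Equivalently, identify $\MM$ with $(\P^1)^k$ through the $k$-fold Segre embedding (Lemma~\ref{lem: Segre}), writing $e_j=(\ebf_j)/(\obf)$ for the affine coordinate of the $j$-th factor. Set $d_j:=\#\Phi(j)$. On the chart $\{(\obf)\neq0\}$, Lemma~\ref{lem: graph on 0-chart} says that $F_\lambda|\MM$ acts factorwise by
\[
e_j\;\mapsto\; e_j' = \lambda^{1-d_j}\, e_{\Lambda(j)}^{\,d_j}, \qquad j\in\{1,\dots,k\}.
\]
Each rule is a rational map of $\P^1$ (it sends $0\mapsto 0$ and $\infty\mapsto\infty$), so this formula defines a holomorphic map $f_\lambda\colon(\P^1)^k\to(\P^1)^k$ that depends holomorphically on $\lambda\in\C^*$. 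Since $F_\lambda$ agrees with $f_\lambda$ on the open dense set $(\MM\cap\{(\obf)\neq0\})\setminus\IS(F_\lambda)$, and both maps are continuous on the regular part, the regular part of $\MM$ is mapped into $\MM_0:=f_\lambda\big((\P^1)^k\big)$. I take this as the definition of $\MM_0$.

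Next I describe $\MM_0$ explicitly using the pre-fixed hypothesis. Let $p$ run over the $k_0$ fixed labels. The new coordinate $e_j'$ depends only on $e_{\Lambda(j)}$, and $\Lambda(j)$ is fixed. Hence $f_\lambda$ factors as
\[
(\P^1)^k \;\to\; (\P^1)^{k_0} \;\xrightarrow{\;\psi_\lambda\;} (\P^1)^k,
\]
where the first map is the projection onto the fixed coordinates $(e_p)_p$. The map $\psi_\lambda$ sends $s=(s_p)_p$ to the point with coordinates $\lambda^{1-d_j}s_{\Lambda(j)}^{d_j}$. This map is finite-to-one, since the $p$-th coordinate is $\lambda^{1-d_p}s_p^{d_p}$. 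Consequently $\MM_0=\psi_\lambda\big((\P^1)^{k_0}\big)$ is the image of an irreducible compact variety under a finite holomorphic map, so it is an irreducible $k_0$-dimensional subvariety of $\MM$. Hausdorff continuity in $\lambda$ follows because $\psi_\lambda$ depends jointly continuously on $(\lambda,s)$ and the parameter space $(\P^1)^{k_0}$ is compact. For invariance, the equality $\Lambda^2=\Lambda$ gives $f_\lambda(\MM_0)\subset f_\lambda\big((\P^1)^k\big)=\MM_0$. Transporting this back to $F_\lambda$ on the regular part of $\MM_0$ again uses the agreement of $F_\lambda$ and $f_\lambda$ off $\IS(F_\lambda)$, together with continuity.

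Now add non-degeneracy. Then $d_p=1$ for every fixed label $p$, so $e_p'=e_p$. The set $\MM_0$ becomes the graph
\[
\big\{\, e_j=\lambda^{1-d_j}e_{\Lambda(j)}^{\,d_j}\ \text{ for all non-fixed } j \,\big\}
\]
of a holomorphic map $(\P^1)^{k_0}\to(\P^1)^{k-k_0}$. This graph is biholomorphic to $(\P^1)^{k_0}$, hence smooth. A direct substitution shows $f_\lambda=\mathrm{id}$ on $\MM_0$: the fixed coordinates are unchanged, and for non-fixed $j$ we get $e_j''=\lambda^{1-d_j}(e_{\Lambda(j)}')^{d_j}=\lambda^{1-d_j}e_{\Lambda(j)}^{d_j}=e_j'$. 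Finally, $\mathcal Y$ is the graph of the holomorphic map
\[
(\lambda,(e_p)_p)\;\mapsto\;\big(\lambda^{1-d_j}e_{\Lambda(j)}^{\,d_j}\big)_{j}
\]
on $\C^*\times(\P^1)^{k_0}$, composed with the Segre embedding. It is therefore a complex submanifold of $\C^*\times\P^{2^k-1}$.

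The main obstacle I expect is the bookkeeping at the boundary of the torus. Specifically, one must check that the coordinate formula from Lemma~\ref{lem: graph on 0-chart}, which was derived only on $\{(\obf)\neq0\}$, really extends to the regular points of $\MM$ outside that chart, and that these points still land in $\MM_0$. I would handle this by the density argument above. An alternative is to rederive the formula in the chart $\{(\xbf)\neq0\}$ for each $\xbf$ by the symmetry $0\leftrightarrow1$ in each factor of $(\P^1)^k$, which the computation in Lemma~\ref{lem: graph on 0-chart} should respect up to the powers of $\lambda$.
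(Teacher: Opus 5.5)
Your proposal is correct, but it takes a different route from the paper.

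\textbf{The paper's route.} The paper defines $\MM_0(\lambda)$ as the Zariski closure of the monomial image $\Psi_{0,\lambda}\big((\C^*)^{k_0}\big)$. In the non-degenerate case it proves smoothness at $\lambda=1$ by adapting the toric smoothness criterion: the exponent set $\mathscr{A}_0$ spans a hyperbox, and at each vertex the nearest points of $\mathscr{A}_0$ along the edges give a lattice basis. It then obtains $\mathcal{Y}$ by transporting $\MM_0(1)$ with the diagonal automorphisms $D_\lambda$.

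\textbf{Your route.} You use the product structure $\MM\cong(\P^1)^k$ instead. The factorwise rule $e_j\mapsto\lambda^{1-d_j}e_{\Lambda(j)}^{d_j}$ is holomorphic on all of $(\P^1)^k$. This gives you several things at once.
\begin{enumerate}
\item $\MM_0=\psi_\lambda\big((\P^1)^{k_0}\big)$ is compact by construction, and it coincides with the paper's Zariski closure.
\item Hausdorff continuity follows directly from joint continuity of $\psi_\lambda$ and compactness of the parameter space.
\item In the non-degenerate case, smoothness of $\MM_0$ and the submanifold property of $\mathcal{Y}$ reduce to the fact that graphs of holomorphic maps are closed submanifolds. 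This is considerably shorter than the toric argument.
\item Your density-and-continuity argument gives slightly more than the paper states. $F_\lambda$ coincides with $f_\lambda$ on all of $\MM\setminus\IS(F_\lambda)$, not just on the chart $\{(\obf)\neq0\}$. So the restriction of $F_\lambda$ to $\MM$ extends holomorphically across $\MM\cap\IS(F_\lambda)$.
\end{enumerate}
What the paper's route buys is consistency with its toric description of $\MM$ and of $\MM_0$ as a possibly non-normal monomial variety. That is the language it uses for the degenerate examples with cusps, which your $\psi_\lambda$ also covers, e.g.\ $s\mapsto(s^2,s^3)$.

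\textbf{A misattribution.} The inclusion $f_\lambda(\MM_0)\subset f_\lambda\big((\P^1)^k\big)=\MM_0$ is trivial and does not use $\Lambda^2=\Lambda$. Pre-fixedness is what you actually need in two other places:
\begin{itemize}
\item the factorization through $(\P^1)^{k_0}$, which gives the dimension count;
\item the computation $f_\lambda^2=f_\lambda$, which gives the identity property once $d_p=1$ for the fixed labels.
\end{itemize}
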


We remark that, while the invariant variety $\MM\subset \P^{2^{k}-1}$ is universal, in the sense that it depends only on the number $k$ of marked vertices, its subvariety $\MM_0=\MM_0(\lambda)$ from the proposition above depends more subtly both on the gluing data $(H,\Phi)$ and on the parameter $\lambda$.

\begin{proof}
    Up to renaming, we may assume that the labels $1,\dots, k_0$ are periodic. Since $(H,\Phi)$ is pre-fixed, we then have $\Lambda(j)=j$ for each label $j\leq k_0$, and $\Lambda(j)\in \{1,\dots, k_0\}$ for all $j>k_0$.

    Consider the monomial map $\Psi_{0, \lambda}: (\C^*)^{k_0}\to \P^{2^k-1}$ defined by  
    \[(e_1,\dots, e_{k_0}) \mapsto [1: \cdots:\prod_{j:\; x_j=1} \lambda^{1-\#\Phi(j)}  \,(e_{\Lambda(j)})^{\#\Phi(j)}: \cdots ],\]
    meaning that the $(\!(x_1, \ldots, x_k)\!)$-coordinate of the image is given by the monomial 
    \begin{equation}\label{eq: x-coordinate of Psi_0}
    \prod_{j:\; x_j=1} \lambda^{1-\#\Phi(j)} \, (e_{\Lambda(j)})^{\#\Phi(j)},
    \end{equation}
    except for the $(\obf)$-coordinate, which equals $1$. In particular, for each periodic label $j=1,\dots, k_0$, the $(\ebf_j)$-coordinate of the image equals \[\lambda^{1-\#\Phi(j)} \, (e_{j})^{\#\Phi(j)}.\]

    Let $Y_{\lambda}$ be the image of $\Psi_{0, \lambda}$ and $\MM_0=\MM_0(\lambda)$ be its Zariski closure. Since $\Psi_{0,\lambda}$ is a regular map with finite fibers for each $\lambda\in \C^*$, irreducibility and dimension are preserved, that is, $\MM_0$ is an irreducible $k_0$-dimensional variety in $\P^{2^k-1}$. By \eqref{eq: variety in basis coordinates}, the set $Y_{\lambda}$, and thus its Zariski closure $\MM_0$, is contained in the invariant variety $\MM$. Moreover, by Lemma~\ref{lem: graph on 0-chart}, $F_\lambda$ maps the regular part of $\MM\cap \{(\obf) \neq 0\}$ into $Y_{\lambda}\subset \MM_0$. Since $\left(\MM\cap \{(\obf) \neq 0\}\right) \setminus \IS(F_\lambda)$ is open and dense in $\MM$, it follows that $\MM_0$ contains the image $F_\lambda(\MM \setminus \IS(F_\lambda))$. In particular, the regular part $\MM_0 \setminus \IS(F_\lambda)$ is invariant under $F_\lambda$. Finally, since $\Psi_{0,\lambda}$ is given by the monomials \eqref{eq: x-coordinate of Psi_0}, the subvariety $\MM_0=\MM_0(\lambda)$ is the closure of the image $Y_\lambda=\Psi_{0,\lambda}((\C^*)^{k_0})$ also in the Euclidean topology induced from $\C^{2^k}$, and thus varies continuously with $\lambda$. 
    
    Now suppose that $(H, \Phi)$ is non-degenerate. Then for each periodic label $j=1,\dots, k_0$, we have $\#\Phi(j)=1$, and thus the $(\ebf_j)$-coordinate of $\Psi_{0,\lambda}\big((e_1,\dots, e_{k_0})\big)$ equals $e_j$. (In particular, $\Psi_{0,\lambda}$ is an embedding in this case.) By Lemma~\ref{lem: graph on 0-chart}, $F_\lambda$ fixes each point in the open dense subset $Y_{\lambda}\setminus \IS(F_\lambda)$ of  $\MM_0$. We conclude that $F_\lambda$ fixes all points of $\MM_0\setminus \IS(F_\lambda)$, i.e., it restricts to the identity on the regular part of $\MM_0$. It remains to show that $\MM_0$ is smooth. 
    
    The definition of $\Psi_{0,\lambda}$ implies that it is sufficient to prove smoothness of $\MM_0=\MM_0(\lambda)$ for $\lambda = 1$. In this case, $\MM_0$ is the projective toric variety associated with the finite set $\mathscr{A}_0\subset \Z^{k_0}$, consisting of the exponent vectors of the coordinate monomials of $\Psi_{0,1}$, and realized in $\P^{2^k-1}$ via the monomial embedding $\Psi_{0,1}$. More explicitly, for each coordinate $(\xbf)$, $\xbf=(x_1,\dots, x_k)\in \{0,1\}^k$, the $i$-th entry of the corresponding exponent vector $v_{(\xbf)}\in \mathscr{A}_0\subset \Z^{k_0}$ is given by 
    \begin{equation}\label{eq: exponents}
    \sum_{j:\; \Lambda(j)=i \text{ and } x_j=1} \#\Phi(j),
    \end{equation}
    where the sum is understood to be zero if empty. It is straightforward to verify that the convex hull $\mathscr{P}_0:=\operatorname{conv}(\mathscr{A}_0)$ is the $k_0$-dimensional hyperbox in $\R^{k_0}$ with opposite vertices \[v_{(\obf)}=(0,0, \dots, 0) \text{\; and  \;} 
    v_{(\!(1,\dots,1)\!)}=\left(\sum_{j:\; \Lambda(j)=1} \#\Phi(j), \sum_{j:\; \Lambda(j)=2} \#\Phi(j),  \dots, \sum_{j:\; \Lambda(j)=k_0} \#\Phi(j)\right).\]
    Note that, since $\mathscr{P}_0$ may contain $\Z^{k_0}$-lattice points that are not in $\mathscr{A}_0$,  the variety $\MM_0$ is not necessarily the projective toric variety associated with the polytope $\mathscr{P}_0=\operatorname{conv}(\mathscr{A}_0)$ itself. Nevertheless, we may still utilize the idea of the smoothness criterion for projective toric varieties from \cite[Theorem~2.4.3]{Cox}.

    Namely, by \cite[Theorem~2.1.9]{Cox}, the vertices of the polytope $\mathscr{P}_0$ determine an affine covering 
    \[\MM_0=\bigcup_{\xbf\in \Xbf_0} \left(\MM_0\cap \{(\xbf) \neq 0\}\right) ,\]
    of the variety $\MM_0$, where 
    \[\Xbf_0:=\{\xbf\in\{0,1\}^k: \,v_{(\xbf)}\text{ is a vertex of $\mathscr{P}_0$}\}.\]
    Now, for each vertex $v_{(\xbf)}$ of $\mathscr{P}_0$ with $\xbf=(x_1,\dots,x_k)\in \Xbf_0\subset \{0,1\}^k$ and each standard basis vector $\xi_i$ of $\R^{k_0}$ with $i\in \{1,\dots, k_0\}$, one of the two $\Z^{k_0}$-lattice points 
    $v_{(\xbf)}\pm\xi_i$ must be in $\mathscr{A}_0$. Indeed, by~\eqref{eq: exponents}, we have
    \[v_{(\xbf+\ebf_i)}=v_{(\xbf)}+\xi_i \quad \text{if $x_i=0$}\]
    and 
    \[v_{(\xbf-\ebf_i)}=v_{(\xbf)}-\xi_i \quad \text{if $x_i=1$}.\]
    Combinatorially, this means that the corresponding point $w_{(\xbf),i}=v_{(\xbf)}\pm\xi_i$ is the first lattice point of $\mathscr{A}_0$ different from $v_{(\xbf)}$ encountered as one traverses the edge of $\mathscr{P}_0$ parallel to $\xi_i$ starting at $v_{(\xbf)}$. Hence, the vectors $w_{(\xbf),i}-v_{(\xbf)}$, $i\in\{1,\dots, k_0\}$, form a basis of the $\Z^{k_0}$-lattice. It follows that the Jacobian 
     of the affine parametrization of $\MM_0\cap \{(\xbf) \neq 0\}$ induced by $\Psi_{0,1}$ has full rank at every point of this chart, and thus $\MM_0\cap \{(\xbf) \neq 0\}$ is smooth; compare the smoothness criterion from \cite[Theorem~2.4.3]{Cox}. Since these charts cover $\MM_0$, the variety $\MM_0$ is smooth. 
     
     Finally, the set $\mathcal{Y}=\{(\lambda, \xi): \lambda\in \C^*, \xi\in \MM_0(\lambda)\}\subset \C^*\times \P^{2^k-1}$ is the image of $\C^*\times \MM_0(1)$ under the biholomorphism $(\lambda,\xi)\mapsto(\lambda, D_\lambda(\xi))$, where $D_\lambda$ is the diagonal automorphism of $\P^{2^k-1}$ for which the $(\xbf)$-coordinate is multiplied by $\prod_{j:\; x_j=1} \lambda^{1-\#\Phi(j)}$. Hence $\mathcal{Y}$ is a complex submanifold of $\C^*\times \P^{2^k-1}$. This finishes the proof of the proposition. 
\end{proof}

\begin{example}\label{ex: M0}\mbox{}
    \begin{enumerate}[label=(\roman*)]
    \item\label{item: ex_M0_i} The Sierpi\'{n}ski gluing data $(H,\Phi)$ with parameters $k = m = 3$ from Example~\ref{ex: sierpinksi} is non-degenerate and pre-fixed. In fact, the induced dynamical system $\Lambda_{(H,\Phi)}$ fixes each label. It follows that the subvariety $\MM_0=\MM_0(\lambda)$ from Proposition~\ref{prop: pre fixed dynamics} is simply the corresponding invariant variety $\MM\subset \P^{2^k-1}$ for $\GG_k$ itself. In particular, $\MM_0$ is smooth and $F_\lambda$ acts as the identity on the regular part of $\MM$ for each $\lambda\in \C^*$.
    
    \item\label{item: ex_M0_ii} The diamond hierarchical gluing data $(H,\Phi)$ with parameters $k=2$ and $m=4$ from Example~\ref{ex: diamon_lattice} is degenerate and pre-fixed. 
    The induced dynamical system $\Lambda_{(H,\Phi)}$ sends both labels to $1$, so that $k_0 = 1$ and the subvariety $\MM_0=\MM_0(\lambda)$ from Proposition~\ref{prop: pre fixed dynamics} is a curve in the corresponding invariant variety $\MM\subset \P^{2^k-1}$ for $\GG_k$. In fact, on the regular part of the chart $\MM\cap \{(\obf)\neq 0\}$ the renormalization map $F_\lambda$ is given by:
    \[[1:s:t:st]\mapsto [1:\frac{1}{\lambda}\,s^2:\frac{1}{\lambda}\,s^2:\frac{1}{\lambda^2}\,s^4],\]
    and thus the Zariski closure $\MM_0$ of the image is a smooth curve.

    \item\label{item: ex_M0_iii} The ($z^2+i$)-gluing data $(H,\Phi)$ with parameters $k=3$ and $m=2$ from Example~\ref{ex: dendrite} is non-degenerate but not pre-fixed. The induced dynamical system $\Lambda_{(H,\Phi)}$ interchanges the labels $2$ and $3$, while the label $1$ is pre-periodic. Thus the regular part of the $3$-dimensional variety $\MM \subset \P^7$ is mapped onto the smooth $2$-dimensional subvariety $\MM_0$, and regular points of $\MM_0$ are fixed under the action of the second iterate $F_\lambda^2$.
    \end{enumerate}
\end{example}

\begin{remark} For degenerate gluing data, the subvariety $\MM_0$ from Proposition~\ref{prop: pre fixed dynamics} may or may not be smooth. We consider two examples, both for $k=2$ marked vertices. We will only provide the map $\Lambda=\Lambda_{(H,\Phi)}$ on labels and their local degrees, leaving it to the reader to supply the corresponding gluing data $(H,\Phi)$. In both cases, we take $\Lambda(1) = \Lambda(2) = 1$, so that $k_0 = 1$ and $\MM_0$ is a curve in $\MM\subset \P^3$, and we let $\# \Phi(1) = 2$, so that the gluing data $(H,\Phi)$ is degenerate.  

When $\# \Phi(2) = 1$, the renormalization map $F_\lambda$ defined on the regular part of the chart $\MM\cap \{(\obf) \neq 0\}$ is given by 
\[
[1:s:t:st] \mapsto [1:\frac{1}{\lambda}s^2 : s: \frac{1}{\lambda}s^3], 
\]
and hence the Zariski closure $\MM_0$ of the image is a smooth curve. (See also Example~\ref{ex: M0}\ref{item: ex_M0_ii} for the case when $\#\Phi(2)=2$, in which case $\MM_0$ is also a smooth curve.) However, when $\# \Phi(2) = 3$, the respective map $F_\lambda$ on the regular
part of the same chart of $\MM$ is given by
\[
[1:s:t:st] \mapsto [1 : \frac{1}{\lambda}s^2 : \frac{1}{\lambda^2}s^3 : \frac{1}{\lambda^3}s^5].
\]
In this case, the Zariski closure $\MM_0$ of the image is a curve with cusp singularities at $[1:0:0:0]$ and $[0:0:0:1]$.
\end{remark}

\subsection{Dynamics near the invariant variety}\label{subsec: dynamics near variety}

Our goal in this subsection is to discuss the behavior of the renormalization map $F_\lambda$ near the invariant variety $\MM\supset \MM_0=\MM_0(\lambda)$. First, given a self-map on a manifold, we define what it means for a submanifold to be \emph{transversally superattracting}. We introduce this concept in the smooth setting and afterwards discuss equivalent characterizations in terms of the dynamical behavior of nearby orbits, both in the general case and in the case $\MM_0 \subset \MM\subset \P^{2^k-1}$ of our interest.

\begin{definition}[Transversal superattraction]
    Let $\mathcal{X}$ be a smooth manifold and $\mathcal{Y} \subset \mathcal{X}$ be a smooth submanifold. Suppose $F:U(\mathcal{Y})\to \mathcal{X}$ is a smooth map defined on a neighborhood $U(\mathcal{Y})$ of $\mathcal{Y}$ that fixes the submanifold $\mathcal{Y}$ pointwise. We say that $F$ is \emph{transversally superattracting} on $\mathcal{Y}$ (or simply that $\mathcal{Y}$ is transversally superattracting, when $F$ is understood) if at every point $p \in \mathcal{Y}$, the characteristic polynomial $\det(z\cdot \mathrm{Id}-DF(p))$ of the differential $DF(p)$ has the form $(z-1)^{\dim \mathcal{Y}}z^{\dim\mathcal{X} - \dim\mathcal{Y}}$. 
\end{definition}

    Note that, since $F$ fixes $\mathcal{Y}$ pointwise, $DF(p)$ restricts to the identity on the tangent space $T_p(\mathcal{Y})$, and the condition above simply means that all eigenvalues of $DF(p)$ corresponding to (generalized) eigenvectors that are not tangent to $\mathcal{Y}$ are zero. 
    We record the following standard lemma providing a characterization of transversal superattraction in terms of superexponentially fast convergence of orbits starting close to a point of $\mathcal{Y}$. 

\begin{lemma}\label{lem: trans_superattr}
     Let $\mathcal{X}$ be a smooth manifold, $\mathcal{Y}\subset \mathcal{X}$ be a smooth submanifold, and let $F:U(\mathcal{Y})\to \mathcal{X}$ be a smooth map defined on a neighborhood $U(\mathcal{Y})$ of $\mathcal{Y}$ that fixes the submanifold $\mathcal{Y}$ pointwise. 
     
     Suppose $F$ is transversally superattracting on $\mathcal{Y}$, and consider an arbitrary point $p_0 \in \mathcal{Y}$ and a coordinate neighborhood $U(p_0)$. Then there exists a sub-neighborhood $U'(p_0)\subset U(p_0)$ such that for every starting point $p\in U'(p_0)$, the orbit $(F^n(p))_{n\geq 0}$ is well-defined, remains in $U(p_0)$, and converges to a point $p_\infty=p_\infty(p)\in\mathcal{Y}$ superexponentially fast with respect to the Euclidean metric on $U(p_0)$. 

     Conversely, the map $F$ is transversally superattracting on $\mathcal{Y}$ if the following holds for each point $p_0\in \mathcal{Y}$: there exists some coordinate neighborhood $U(p_0)$ of $p_0$ and a sub-neighborhood $U'(p_0)\subset U(p_0)$ such that the orbits of points $p\in U'(p_0)$ are well-defined, 
     remain in $U(p_0)$, and converge to $\mathcal{Y}$ superexponentially fast with respect to the Euclidean metric on $U(p_0)$. 
\end{lemma}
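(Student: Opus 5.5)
The approach is to straighten $\mathcal{Y}$ locally and read off the transversal part of $DF$ as a block of the differential. Near $p_0$, choose smooth coordinates $(u,v)\in\R^d\times\R^s$, with $d=\dim\mathcal{Y}$ and $s=\dim\mathcal{X}-\dim\mathcal{Y}$, in which $\mathcal{Y}=\{v=0\}$ and $p_0=(0,0)$.

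Since $F(u,0)=(u,0)$, Taylor expansion in $v$ gives
\[
F(u,v)=\bigl(u+A(u)v+O(|v|^2),\;B(u)v+O(|v|^2)\bigr),
\]
with $A,B$ smooth and the $O$-terms uniform on a compact neighborhood of $p_0$. Hence $DF(u,0)=\begin{pmatrix} \mathrm{Id} & A(u)\\ 0 & B(u)\end{pmatrix}$, and its characteristic polynomial is $(z-1)^d\det(z\,\mathrm{Id}-B(u))$. So transversal superattraction at $(u,0)$ is equivalent to $B(u)$ being nilpotent, i.e. $B(u)^s=0$ by Cayley--Hamilton.

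For the forward direction, I would pass to the iterate $F^s$ and write $F^j(u,v)=(u_j,v_j)$. Since $F$ is $C^1$ and fixes $\mathcal{Y}$, we have $|v_j|\le C|v|$ and $|u_j-u|\le C|v|$ for $j\le s$. Composing the expansions then gives
\[
v_s=B(u_{s-1})\cdots B(u_0)\,v+O(|v|^2)=B(u)^s v+O(|v|)\cdot|v|+O(|v|^2)=O(|v|^2),
\]
where the middle step uses that $B$ is Lipschitz and $u_j=u+O(|v|)$. So there is a constant $K$ with $|v_{(n+1)s}|\le K|v_{ns}|^2$ as long as the orbit stays in the compact neighborhood. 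By induction, $|v_{ns}|\le K^{-1}(K|v|)^{2^n}$. The intermediate iterates are controlled by the Lipschitz bound, and the $u$-displacements satisfy $|u_{j+1}-u_j|\le C|v_j|$, which are summable with sum $O(|v|)$.

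Choosing $U'(p_0)$ as a small polydisc with $|u|<\delta$ and $K|v|<\delta$ small, I would show by induction that the orbit never leaves $U(p_0)$. It follows that $u_j$ is Cauchy with limit $u_\infty$, and both $|u_j-u_\infty|$ and $|v_j|$ decay like $(K\delta)^{2^{j/s}}$, i.e. superexponentially. This gives convergence to $p_\infty=(u_\infty,0)\in\mathcal{Y}$.

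For the converse, I would argue by contradiction: suppose $B(u^*)$ has an eigenvalue $\mu\neq0$ at some point $(u^*,0)$ of $\mathcal{Y}$ near $p_0$. Since $F^n$ fixes $\{v=0\}$, differentiating in $v$ at $v=0$ gives exactly $\partial_v v_n(u^*,0)=B(u^*)^n$. If the convergence is uniform on $U'(p_0)$, in the sense $\sup_{p\in U'}|v_n(p)|\le c_n$ with $c_n^{1/n}\to0$, then the difference quotient / mean value estimate (or a Cauchy-type estimate) on a ball of fixed radius $r$ gives $\|B(u^*)^n\|\lesssim c_n/r$. That forces spectral radius $0$, a contradiction.

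The main obstacle I expect is this converse direction. It needs the superexponential convergence to be understood uniformly on $U'(p_0)$, or else a Baire-category argument to extract uniformity on a smaller open set. The first thing I would try is to make the uniform reading explicit, since the forward direction produces exactly such a uniform bound. A second, smaller point is bookkeeping: the $O(|v|^2)$ remainders must be uniform, which is why everything is restricted to a compact coordinate neighborhood chosen at the outset.
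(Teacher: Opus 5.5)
The paper records this lemma as standard and gives no proof, so your argument has to stand on its own. The forward direction does. Straightening $\mathcal{Y}$ gives the block-triangular form of $DF$ along $\mathcal{Y}$. Nilpotency of $B(u)$ at \emph{every} nearby point of $\mathcal{Y}$ then yields the quadratic estimate for $F^s$, and the drift in $u$ is summable. Together these give uniform doubly exponential convergence on $U'(p_0)$. Passing between your straightened chart and the given chart $U(p_0)$ is harmless, since the two Euclidean metrics are bi-Lipschitz near $p_0$. Reading the converse hypothesis uniformly on $U'(p_0)$ is also reasonable. It is what the forward direction produces, and it is what Proposition~\ref{prop: euclidean attracting} supplies where the converse is used, in the proof of Theorem~\ref{thm: superattraction}.

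The gap is in the converse, at the step $\|B(u^*)^n\|\lesssim c_n/r$ on a ball of \emph{fixed} radius $r$.
\begin{enumerate}
\item For real smooth maps, a sup bound on $v_n$ gives no bound on its derivative at the centre. The mean value theorem bounds increments by derivatives, not conversely. The difference quotient at scale $r$ differs from $\partial_v v_n(u^*,0)=B(u^*)^n$ by a second-order term of size up to $\|D^2(F^n)\|\,r$, which in general grows exponentially in $n$.
\item For fixed $r$, nothing keeps $F^n$ of that ball inside your straightened chart, where $v_n$ is defined and comparable to the distance to $\mathcal{Y}$. This affects the Cauchy-estimate version as well.
\end{enumerate}

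The missing idea is to work at a scale $t=t_n$ that shrinks with $n$, and let the superexponential decay of $c_n$ absorb the exponential losses. Concretely:
\begin{itemize}
\item Apply the hypothesis at $p_0=(u^*,0)$ itself. Take a compact convex piece $K$ of the straightened chart containing the $R$-ball about $p_0$, with $\|DF\|\le L$ (where $L\ge1$) and $\|D^2F\|\le L_2$ on $K$.
\item If $L^n t<R$, the first $n$ iterates of the $t$-ball about $p_0$ stay in $K$. The chain rule then gives $\|D^2(F^n)\|\le M_n:=nL_2L^{2n}$ on that ball.
\item Taylor's formula gives $\|B(u^*)^n\|\,t\le c_n+M_nt^2$, up to the constant comparing $|v_n|$ with the distance to $\mathcal{Y}$.
\item Choose $t=\sqrt{c_n/(M_n+1)}$; it satisfies $L^nt\le L^n\sqrt{c_n}\to0$. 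This yields $\|B(u^*)^n\|\le 2\sqrt{c_n(M_n+1)}$.
\item Hence $\lim_n\|B(u^*)^n\|^{1/n}\le L\cdot\lim_n c_n^{1/(2n)}=0$, so $B(u^*)$ is nilpotent.
\end{itemize}
In the holomorphic setting, which is the only one the paper needs, Cauchy's estimate at the same shrinking scale $t\asymp RL^{-n}$ gives $\|B(u^*)^n\|\lesssim c_nL^n/R$ directly, with the same conclusion.
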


In the lemma above, ``superexponentially fast'' means faster than any exponential rate. More formally, for all $\rho\in (0,1)$, there exists a constant $C_\rho>0$ such that $\dist(F^n(p),p_\infty)<C_\rho \rho^n$ for all $n\geq 0$ in the forward direction and $\dist(F^n(p),\mathcal{Y})<C_\rho \rho^n$ for all $n\geq 0$ in the converse.

\medskip

Let us now fix a non-degenerate and pre-fixed recursion operator $\RR$ on $\GG_k$ with $k\geq1$, and consider the associated renormalization map $F_\lambda: \P^{2^k-1}\dashrightarrow \P^{2^k-1}$ for some fixed $\lambda\in \C^*$. We recall from Proposition~\ref{prop: pre fixed dynamics} that in this setting, away from the indeterminacy set $\IS(F_\lambda)$, the map $F_\lambda$ is a holomorphic retraction from $\MM$ to the smooth submanifold $\MM_0=\MM_0(\lambda)$. We then say that $F_\lambda$ is \emph{transversally superattracting on $\MM_0$} if this is the case away from $\MM_0\cap \IS(F_\lambda)$. If 
the recursion operator $\RR$ is not pre-fixed, we say that $F_\lambda$ is transversally superattracting on $\MM_0$ if some iterate $F_\lambda^N$ has this property, where $\RR^N$ is pre-fixed and $\MM_0$ is the corresponding subvariety of $\MM$ from Proposition~\ref{prop: pre fixed dynamics}.

Our goal is to prove the following result. 

\begin{theorem}\label{thm: superattraction}
    Suppose that a recursion operator $\RR$ on $\GG_k$ with $k\geq 1$ is non-degenerate, pre-fixed, and expanding, and fix $\lambda\in \C^*$.  Then the renormalization map $F_\lambda: \P^{2^k-1}\dashrightarrow \P^{2^k-1}$ is transversally superattracting on $\MM_0=\MM_0(\lambda)$.
\end{theorem}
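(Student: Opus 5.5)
The plan is to verify the eigenvalue condition through a chain of inclusions of tangent spaces at a regular point $p\in\MM_0\setminus \IS(F_\lambda)$.

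\emph{Reduction to an iterate.} Since $F_\lambda$ fixes $\MM_0$ pointwise, $DF^n_\lambda(p)=DF_\lambda(p)^n$. The characteristic polynomial of $DF_\lambda(p)$ on the quotient $T_p\P^{2^k-1}/T_p\MM_0$ is $z^{\mathrm{codim}}$ as soon as this holds for some power. Iterates of a non-degenerate, pre-fixed, expanding recursion keep these three properties, and $\MM_0$ is unchanged (the retraction $F^n_\lambda|\MM$ has the same image). So by Remark~\ref{rem: expansion} I may replace $\RR$ by an iterate and assume the sets $\Phi(1),\dots,\Phi(k)$ are pairwise disjoint. Non-degeneracy with pre-fixedness gives $\#\Phi(j)=1$ for periodic $j$; I will not need more.

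\emph{The chain of inclusions.} I will show
\[
DF_\lambda(p)\big(T_p\P^{2^k-1}\big)\subset T_p\MM \quad\text{and}\quad DF_\lambda(p)\big(T_p\MM\big)\subset T_p\MM_0,
\]
with $DF_\lambda(p)$ equal to the identity on $T_p\MM_0$. Then $DF_\lambda(p)^2$ maps everything into $T_p\MM_0$, so $DF_\lambda(p)$ is nilpotent on $T_p\P^{2^k-1}/T_p\MM_0$, which is the required form of the characteristic polynomial.

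The second inclusion and the identity statement are immediate from Proposition~\ref{prop: pre fixed dynamics}: $F_\lambda$ maps the regular part of $\MM$ into $\MM_0$ and restricts to the identity there.

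\emph{The first inclusion (main step).} Lift $p$ to $\widehat p\in\C^{2^k}$ and perturb it to $\widehat p+\epsilon v$. By \eqref{eq: x_prime_conditioned}, each output coordinate is multilinear in the $m$ copies. Hence the first-order term is a sum over $i_0\in\{1,\dots,m\}$ of expressions in which copy $i_0$ carries the arbitrary vector $v$ and all other copies carry the point $\widehat p\in\MM$. It therefore suffices to show that each such "one copy arbitrary, the others uncorrelated" expression $\widehat W_{i_0}$ has derivative tangent to $\MM$ at $\widehat F_\lambda(\widehat p)$. Concretely, I want the linearizations of the quadrics $(\xbf)'(\widetilde\xbf)'-(\max(\xbf,\widetilde\xbf))'(\min(\xbf,\widetilde\xbf))'$ to vanish on it.

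I will imitate the computation in the proof of Proposition~\ref{prop: invariance}. Expand $(\xbf)'(\widetilde\xbf)'$ as a double sum over $\eta,\widetilde\eta$. In every copy $i\neq i_0$, use \eqref{eq: no correlation variety} to swap the assignments $[\xbf\wedge\eta]_i$ and $[\widetilde\xbf\wedge\widetilde\eta]_i$ into max/min form. In copy $i_0$ no swapping is allowed, so I must arrange that its assignments are left untouched. This is where disjointness enters: copy $i_0$ touches at most one output label $j_0$, namely the one with $i_0\in\Phi(j_0)$.

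\begin{itemize}
\item I reindex the summation variables $\eta,\widetilde\eta$ so that the pair of assignments seen by copy $i_0$ is preserved.
\item At the coordinate $j_0$, I keep $\xbf$ and $\widetilde\xbf$ on whichever side matches copy $i_0$.
\item All other identifications happen in copies lying in $\MM$, which factor (conditional independence), so the exchange there is harmless.
\end{itemize}

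Heuristically, copies in $\MM$ carry product weights on their labels and cannot transmit correlation. The perturbed copy can therefore only change the marginal of the single marked vertex $j_0$, which is a direction tangent to $\MM$.

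\emph{Expected obstacle.} The main obstacle is making this bookkeeping rigorous for general hypergraph gluing schemes. In particular, the $\lambda^{-\|\cdot\|_H}$ weights must stay additive under the partial swap, as in the proof of Proposition~\ref{prop: invariance}. The equality $\#\Phi(j_0)=1$ when $j_0$ is periodic also matters here. I would first treat the case where every $\Phi(j)$ is a singleton, and then handle preperiodic labels, where copy $i_0$ may touch $j_0$ through a hyperedge of size $>1$; there the swap must be done simultaneously in all copies of $\Phi(j_0)$.
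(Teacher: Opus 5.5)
Your proof is correct in structure and takes a genuinely different route from the paper. The paper never linearizes. It proves the quantitative estimate of Proposition~\ref{prop: euclidean attracting}: if all defining quadrics of $\MM$ are at most $\epsilon$ at $\widehat\xi$, then they are at most $C\epsilon^2$ at $\widehat F_\lambda(\widehat\xi)$, for \emph{every} $\widehat\xi\in\C^{2^k}$. From this it deduces superexponential convergence of nearby orbits to $\MM$, hence to $\MM_0$ via the retraction, and concludes with the converse half of Lemma~\ref{lem: trans_superattr}. Your inclusion $DF_\lambda(p)(T_p\P^{2^k-1})\subset T_p\MM$ is the infinitesimal version of that estimate. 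The filtration $T_p\P^{2^k-1}\supset T_p\MM\supset T_p\MM_0$ then checks the eigenvalue definition directly, and even shows that the normal part of $DF_\lambda(p)$ squares to zero. This bypasses orbit convergence, the comparison of the quadrics with the distance to $\MM$, and Lemma~\ref{lem: trans_superattr}. Your reduction to an iterate is also fine, since nilpotence on the quotient passes from $DF_\lambda(p)^n$ to $DF_\lambda(p)$ and $\MM_0$ is unchanged. What the paper's route buys in exchange is a global estimate, valid away from fixed points, with constants polynomial in $|1/\lambda|$ and the coordinates.

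Your main step is only sketched, but it goes through, and more easily than you expect. The quickest way is to make your heuristic literal.
\begin{itemize}
\item Every point of the cone over $\MM$ is a rank-one tensor, so write $\widehat p=\alpha_1\otimes\cdots\otimes\alpha_k$.
\item In \eqref{eq: x_prime_conditioned}, the weight $\lambda^{-\|\xbf\wedge\eta\|_H}\prod_{i\neq i_0}\widehat p([\xbf\wedge\eta]_i)$ then factors over the vertices of $\RR(G)$ indexed by $E(H)$.
\item By $(\star)$, the remaining factor $v([\xbf\wedge\eta]_{i_0})$ involves only $Y$-vertices and at most one output vertex $j_0$.
\item Summing over $\eta$ gives $\widehat W_{i_0}=\beta_1\otimes\cdots\otimes\gamma\otimes\cdots\otimes\beta_k$, with $\gamma$ in slot $j_0$. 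If $i_0$ lies in no $\Phi(j)$, then $\widehat W_{i_0}$ is simply proportional to $\widehat F_\lambda(\widehat p)$.
\item Meanwhile $\widehat F_\lambda(\widehat p)=c\,\beta_1\otimes\cdots\otimes\beta_k$, with all factors nonzero because $p\notin\IS(F_\lambda)$.
\end{itemize}
So $\widehat W_{i_0}$ is a velocity vector of a curve in the cone, and no Jacobian-rank argument for the quadrics is needed.

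If you prefer your quadric bookkeeping, first telescope to single swaps $(\tau_{10})'(\widetilde\tau_{01})'-(\tau_{11})'(\widetilde\tau_{00})'$, as the paper does.
\begin{itemize}
\item If $i_0\notin\Phi(\widetilde j)$, the linearized terms cancel termwise.
\item If $i_0\in\Phi(\widetilde j)$, pair the term with $v$ in the first factor of $(\tau_{10})'(\widetilde\tau_{01})'$ against the term with $v$ in the second factor of $(\tau_{11})'(\widetilde\tau_{00})'$, after substituting $\eta\leftrightarrow\widetilde\eta$. This is exactly where disjointness is used.
\end{itemize}
In neither version do non-degeneracy or $\#\Phi(j_0)$ play any role. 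Non-degeneracy enters only through Proposition~\ref{prop: pre fixed dynamics} in your second inclusion, so your planned separate treatment of preperiodic labels is unnecessary.
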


Note that for $k=1$, we have $\MM=\MM_0(\lambda)$ and the statement is vacuous, so we will assume that $k\geq 2$ in the following. To prove the theorem, we will establish superexponential convergence of orbits in some coordinate neighborhood $U(p_0)$ of each point $p_0\in \MM_0$. We note that superexponential convergence does not depend on the choice of metric in $U(p_0)$, as long as the metrics are bi-Lipschitz equivalent near $p_0$; this is the case, for instance, for the Euclidean metrics of any two coordinate charts containing $p_0$. Alternatively, we may also use the Fubini--Study metric on $\P^{2^k-1}$. However, in our setting, it is natural to consider instead superexponential convergence to zero of the expression
\[
\max_{\substack{\xbf, \ybf, \zbf, \wbf\in \{0,1\}^k:\\ \xbf + \ybf = \zbf + \wbf}}|({\xbf})({\ybf}) - ({\zbf})({\wbf})|,
\]
in suitably chosen affine coordinates on a neighborhood $U(p_0)$ of $p_0$. We note that, since this expression arises from the defining quadratic equations of the smooth manifold $\MM$ (see Definition~\ref{def: inv variety}), in $U(p_0)$ this maximum is comparable to the Euclidean distance to $\MM$. Moreover, since $F_\lambda$ is a holomorphic retraction from $M$ to $\MM_0$, along orbits staying in the neighborhood $U(p_0)$, superexponentially fast convergence to $\MM$, measured by the defining equations, is equivalent to superexponentially fast convergence to $\MM_0$ in terms of the Euclidean metric.

By the discussion above, Theorem~\ref{thm: superattraction} is a straightforward consequence of Proposition~\ref{prop: euclidean attracting} below, whose proof is perhaps the most technical of those presented in this paper. At the heart of it however lies a clear idea that can perhaps be better understood in the language of graphs and probabilities. For this reason, we first present a more
intuitive proof for superexponentially fast convergence to $\MM_0$ in the case of $\lambda\in \R_+$ and coordinates that correspond to a graph $G\in \GG_k$. In this setting, it is natural to work in the chart $\sum_{\xbf\in \{0,1\}^k} (\xbf)=1$, and consider 
the following expression 
\[
\theta_\lambda(G):=\max_{\tau, \sigma} |\P_{G,\lambda}[\tau:\sigma] - \P_{G,\lambda}[\tau]|,
\]
where the maximum is taken over all assignments $\tau$ and $\sigma$ on disjoint subsets of $P(G)$. Note that $\theta_\lambda(G)$ is well-defined as long as the marked vertices in $G$ are all non-adjacent. Since we work with expanding operators $\RR$, this is the case for all sufficiently large $n$ in any recursive sequence $G_n=\RR^n(G_0)$ with $G_0\in \GG_k$. Moreover, as $\RR$ is also non-degenerate, the vertex degrees in the graphs $G_n$ are uniformly bounded (see Lemma~\ref{lem: degeneration and expansion}). By Lemma~\ref{lem: probability lower bound}, this implies that, for all sufficiently large $n$, the probabilities $\P_{G_n,\lambda}[\tau]$ are uniformly bounded away from $0$ for all assignments $\tau$ on a subset of $P(G_n)$. Since, by Lemmas~\ref{lem: direct vs conditional correlation} and~\ref{lem: inv variety}, the quantity $\theta_\lambda(G)$ vanishes precisely when $\phi_\lambda(G)\in \MM$, it follows that in order to prove superexponentially fast convergence of $\phi_\lambda(G_n)$ to $\MM_0$ it is sufficient to show that $\theta_\lambda(G_n)$ converges to $0$ as $n\to \infty$ superexponentially fast. The latter immediately follows from the statement below.

\begin{prop}\label{prop: attracting for graphs}
     Let $(H,\Phi)$ be a gluing data with parameters $k\geq 2, m \geq 2$ that satisfies the following condition: 
    \begin{enumerate}[label=($\star$),font=\normalfont]
        \item\label{cond: superattraction} for each pair $j,j'\in \{1,\dots, k\}$ of distinct labels the corresponding edges $\Phi(j)$ and $\Phi(j')$ of   the gluing scheme $H$ form disjoint subsets of $V(H)$.
    \end{enumerate}
    Let $\RR=\RR_{(H,\Phi)}$, and fix $\lambda\in \R_+$ and $\Delta\in \N$. Then there are constants $\epsilon_0 >0$ and $C>0$ such that for any graph $G\in \GG_k$ whose marked vertices are pairwise non-adjacent and have degrees bounded by $\Delta$ the following implication holds: if $\theta_\lambda(G)\leq \epsilon$ for some $0<\epsilon<\epsilon_0$, then $\theta_\lambda(\RR(G))\leq C \epsilon^2$.
\end{prop}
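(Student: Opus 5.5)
Let $G'=\RR(G)$ with marked set $P'=P(G')$ and let $Y\subset V(G')$ be the set of non-marked identified vertices, as in Section~\ref{subsec: induced dynamics}. Under \ref{cond: superattraction}, each copy $G(i)$ contains the marked vertex of $G'$ labeled $j$ for at most one $j$, namely the $j$ with $i\in\Phi(j)$, and that vertex is its copy of the vertex labeled $\Lambda(j)$. The idea is that a correlation between marked vertices of $G'$ has to pass through at least two copies, and each passage costs a factor $\theta_\lambda(G)\le\epsilon$. Hence the correlation is quadratic in $\epsilon$.

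\textbf{Setup.} Fix $\tau,\sigma$ on disjoint subsets $X,\widetilde X\subset P'$. Since $G'$ has bounded degree at its marked vertices, Lemma~\ref{lem: probability lower bound} gives $\P_{G'}[\sigma]\ge c>0$. So it suffices to bound
\[
\bigl|\P_{G'}[\tau\wedge\sigma]-\P_{G'}[\tau]\P_{G'}[\sigma]\bigr| \le C\epsilon^2 .
\]
We will check that the constants depend only on $(H,\Phi),\lambda,\Delta$. Two facts hold in $G'$:
\begin{itemize}
\item[(a)] Conditioned on a full assignment $\eta$ on $Y$, the copies decouple (Lemma~\ref{lem: conditioning wrt Y}, or directly Lemma~\ref{lem:x_prime}).
\item[(b)] Inside each copy, the law of the marked vertices of $G(i)$ is the $G$-law, tilted by the weight $\lambda^{-\|\cdot\|_H}$.
\end{itemize}
Using the formula of Lemma~\ref{lem:x_prime}, we write the joint law of $P'\cup Y$ in $G'$ as proportional to
\[
\lambda^{-\|\xbf\wedge\eta\|_H}\prod_i \P_G\bigl[[\xbf\wedge\eta]_i\bigr].
\]

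\textbf{Expansion.} We perturb from the product model. Let $\widetilde\P_G$ denote the product measure on $P$ with the same one-vertex marginals as $\P_G$. Since $\theta_\lambda(G)\le\epsilon$ and all probabilities are bounded below (Lemma~\ref{lem: probability lower bound} for $G$, valid since marked vertices are non-adjacent with degree $\le\Delta$), we can write
\[
\P_G[\mathbf{y}]=\widetilde\P_G[\mathbf{y}]\,\bigl(1+\delta(\mathbf{y})\bigr),\qquad |\delta|\le C_1\epsilon .
\]
This is a telescoping over chains of conditionings. Substitute into the product over $i$ and expand:
\[
\prod_i(1+\delta_i)=1+\sum_i\delta_i+O(\epsilon^2).
\]
The zeroth-order term gives a measure for which the marked vertices of $G'$ are uncorrelated. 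By the invariance computation of Proposition~\ref{prop: invariance}, the image of a point of $\MM$ lies in $\MM$, so this term satisfies the factorization exactly.

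\textbf{First-order terms.} This is the key step. The term $\delta_i$ involves only the marked coordinates of the single copy $G(i)$. Under \ref{cond: superattraction}, at most one marked vertex of $G'$, say $v_j$, lies in $G(i)$. So the term $\delta_i$ is a function of $v_j$ (if present) and of $Y$-coordinates. In the zeroth-order model the $Y$ and $P'$ coordinates are jointly of product form. We must therefore show that a first-order perturbation localized at a single copy cannot create a covariance between $\tau$ and $\sigma$ to first order. Computing the covariance to first order, we expect it to equal
\[
\sum_i \operatorname{Cov}_0\bigl(\mathbf 1_\tau\mathbf 1_\sigma,\delta_i\bigr)-\operatorname{Cov}_0(\mathbf 1_\tau,\delta_i)\,\P_0[\sigma]-\P_0[\tau]\,\operatorname{Cov}_0(\mathbf 1_\sigma,\delta_i).
\]
Under the product measure $\P_0$, each such expression vanishes whenever $\delta_i$ depends on at most one marked vertex of $G'$ together with independent $Y$-variables. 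For a product measure, a covariance of the form above between two functions of disjoint coordinates and a function touching at most one of the sides is zero. This is the identity $E[fgh]-E[fh]E[g]-E[f]E[gh]+E[f]E[g]E[h]=0$, which holds when $g$ is independent of $(f,h)$.

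\textbf{Main obstacle.} The zeroth-order model is only a product after the tilt $\lambda^{-\|\cdot\|_H}$ and the normalization. The $Y$-variables shared between copies are also not literally independent of everything. The fix is to perform the expansion conditioned on $\eta$: there, (a) gives exact independence across copies. The remaining difficulty is to handle the $\eta$-average, where the law of $\eta$ itself is perturbed at first order. We will control this by the same multilinear identity, treating $\eta$ as an extra independent layer. The normalization constants contribute only factors $1+O(\epsilon)$, which multiply an already $O(\epsilon)$ covariance, so they do not spoil the $\epsilon^2$ bound. Finally, choosing $\epsilon_0$ small keeps all expansions valid and yields $\theta_\lambda(\RR(G))\le C\epsilon^2$.
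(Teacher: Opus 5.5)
Your argument is correct, and it takes a genuinely different route from the paper.

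\textbf{How the two proofs differ.} The paper reduces to $\tau$ on a single marked vertex $v$ with label $j$. It lets $Y$ be the other marked vertices of the copies $G(i)$, $i\in\Phi(j)$, and uses separation to get $\P_{\RR(G)}[\tau:\eta\wedge\sigma]=\P_{\RR(G)_v}[\tau:\eta]$. It then writes the correlation as
\[
\sum_\eta\bigl(\P_{\RR(G)_v}[\tau:\eta]-\P_{\RR(G)_v}[\tau]\bigr)\bigl(\P_{\RR(G)}[\eta:\sigma]-\P_{\RR(G)}[\eta]\bigr),
\]
with each factor $O(\epsilon)$ by Lemma~\ref{lemma: effect of gluing}. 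In words: the correlation must cross the separating set $Y$, and each of the two legs costs a factor $\epsilon$.

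You instead expand around the product (Segre) point. You show the first-order joint cumulant vanishes because, by \ref{cond: superattraction}, each $\delta_i$ sees at most one marked vertex of $\RR(G)$. This is in effect a probabilistic version of the algebraic computation in Proposition~\ref{prop: euclidean attracting}.

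Your route is self-contained: it bypasses Lemma~\ref{lemma: effect of gluing}, which the paper only outlines via Lipschitz continuity of $F^S_\lambda$ near $\MM(\R_{\ge 0})$. It also gives explicit constants and handles multi-vertex $\tau$ directly. The paper's route is more conceptual and matches the separating-set argument of Section~\ref{sec: decay of correlation}.

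\textbf{The ``main obstacle'' paragraph.} This worry is unfounded, and you can drop the conditioning-on-$\eta$ detour. The zeroth-order law is \emph{exactly} a product over all identified vertices $S=P'\sqcup Y$. Write $\widetilde\P_G[\ybf]=\prod_l p_l(y_l)$ and let $e_s\in E(H)$ be the edge inducing $s\in S$. Then
\[
\lambda^{-\|z\|_H}\prod_{i=1}^m\widetilde\P_G\bigl[[z]_i\bigr]=\prod_{s\in S}\lambda^{-(\#e_s-1)z_s}\,p_{\ell(e_s)}(z_s)^{\#e_s}.
\]
So the tilt preserves the product structure, and Proposition~\ref{prop: invariance} is not even needed.

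The normalization is then handled exactly. Set $D=\sum_i\delta_i+R$, $f=\mathbf 1_\tau$, $g=\mathbf 1_\sigma$, so that $\P[A]=E_0[\mathbf 1_A(1+D)]/E_0[1+D]$. One gets
\[
E_0[1+D]^2\bigl(\P[\tau\wedge\sigma]-\P[\tau]\,\P[\sigma]\bigr)=\kappa_0(f,g,D)+E_0[D]\,E_0[fgD]-E_0[fD]\,E_0[gD],
\]
where $\kappa_0(f,g,D)=E_0[fgD]-E_0[f]E_0[gD]-E_0[fD]E_0[g]+E_0[f]E_0[g]E_0[D]$ is linear in $D$.
\begin{itemize}
\item The $\delta_i$-parts of $\kappa_0$ vanish by your identity.
\item The $R$-part of $\kappa_0$ is $O(\epsilon^2)$.
\item The last two terms are products of two $O(\epsilon)$ quantities.
\end{itemize}
So the first-order normalization corrections sit inside the vanishing cumulant. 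They do not ``multiply an already $O(\epsilon)$ covariance''; that reasoning would only give $O(\epsilon)$.

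With this fix and your lower bounds, the proof is complete. Those bounds are $\P_G[y_l]\ge c(\lambda,\Delta,1)$ for the telescoping, and degree at most $m\Delta$ at the marked vertices of $\RR(G)$, giving $\P_{\RR(G)}[\sigma]\ge c'$.
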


Note that condition~\ref{cond: superattraction} is satisfied for an iterate of every expanding gluing data; see Remark~\ref{rem: expansion}. It also implies that if the marked vertices of $G$ are pairwise non-adjacent, then so are the marked vertices of $\RR(G)$; in particular, the quantity $\theta_\lambda(G)$ is well-defined.

To prove the proposition, we need the following auxiliary statement.

\begin{lemma}\label{lemma: effect of gluing}
    Fix $\lambda\in\R_+$ and $\Delta, k, m\in \N$. Then there are constants $\epsilon_0 > 0$ and $C_0>0$ such that the following holds for any recursion operator $\RR=\RR_{(H,\Phi)}$ with parameters $k$ and $m'\leq m$ and a graph $G\in \GG_k$ whose marked vertices are non-adjacent and have degrees bounded by $\Delta$:

    If $\theta_\lambda(G)\leq\epsilon$ for some $0<\epsilon<\epsilon_0$, then 
    \[
    \theta^S_\lambda(\RR(G))=\max_{\tau, \sigma} |\P_{\RR(G),\lambda}[\tau:\sigma] - \P_{\RR(G),\lambda}[\tau]| \leq C_0 \epsilon, 
    \]
    where the maximum is taken over all assignments $\tau$ and $\sigma$ on disjoint subsets of the set $S$ of vertices of $\RR(G)$ induced by $E(H)$. 
\end{lemma}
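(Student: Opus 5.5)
The plan is to write down the exact joint law of the assignment on $S$ in $\RR(G)$ via Lemma~\ref{lem:x_prime}. Then compare it with the law obtained when $G$ has exactly uncorrelated marked vertices, for which the law on $S$ turns out to be a product measure, so that $\theta^S_\lambda$ vanishes. The estimate $\theta^S_\lambda(\RR(G))\le C_0\epsilon$ should then follow from a perturbation argument with multiplicative errors of size $1+O(\epsilon)$.

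\textbf{Preliminary observations.} Vertices of $S$ correspond to the edges $e\in E(H)$. Any edge of $\RR(G)$ lies inside a single copy $G(i)$, and the marked vertices of $G$ are pairwise non-adjacent. Hence $S$ is an independent set in $\RR(G)$, and every assignment $\zeta$ on $S$ is admissible. Likewise every $\ybf\in\{0,1\}^k$ is admissible for $G$. By Lemma~\ref{lem: probability lower bound}, applied to $X=P(G)$ (all of whose vertices have degree at most $\Delta$), we get $\P_G[\tau]\ge c=c(\lambda,\Delta,k)>0$ for every assignment $\tau$ on a subset of $P(G)$.

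\textbf{Key steps.} First, by~\eqref{eq: x_and_y_prime_conditioned}, for any $\zeta\colon S\to\{0,1\}$,
\[
\P_{\RR(G)}[\zeta]\;\propto\; w(\zeta):=\lambda^{-\|\zeta\|_H}\prod_{i=1}^{m'}\P_G\big[[\zeta]_i\big],
\]
where we have divided by $Z_G^{m'}$.

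Second, fix $\ybf\in\{0,1\}^k$. We expand
\[
\P_G[\ybf]=\prod_{j=1}^k \P_G\big[y_j : (y_1,\dots,y_{j-1})\big].
\]
Since $\theta_\lambda(G)\le\epsilon$ and all probabilities are at least $c$, each factor equals $\P_G[y_j]\,(1+r_j)$ with $|r_j|\le \epsilon/c$. Hence
\[
\P_G[\ybf]=\Big(\prod_j p_j(y_j)\Big)(1+r(\ybf)),\qquad p_j(y):=\P_G[\text{vertex } j \text{ gets } y],\qquad |r(\ybf)|\le C_1\epsilon,
\]
provided $\epsilon_0$ is small enough.

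Third, substituting this into $w$ gives $w(\zeta)=w_0(\zeta)(1+R(\zeta))$ with $|R(\zeta)|\le C_2\epsilon$, where $C_2$ depends only on $k,m,c$. Here
\[
w_0(\zeta)=\prod_{s\in S}\Big(\lambda^{-(\#e_s-1)\zeta(s)}\prod_{i\in e_s}p_{\ell(e_s)}(\zeta(s))\Big),
\]
with $e_s$ the edge of $H$ inducing $s$. This uses that each copy $G(i)$ has its marked vertex with label $j$ identified with exactly one $s$, namely the unique $j$-labelled edge containing $i$, together with the additive formula for $\|\zeta\|_H$. Thus $w_0$ is a product over $s\in S$, so the normalized measure $\mu_0$ it defines has independent coordinates, and $\mu_0[\tau:\sigma]=\mu_0[\tau]$.

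Fourth, every probability $\P_{\RR(G)}[\tau\wedge\sigma]$ and $\P_{\RR(G)}[\sigma]$ is a ratio of subsums of $w$ to the total sum. Each such ratio lies within a factor $(1\pm C_2\epsilon)^{\pm 2}$ of the corresponding $\mu_0$-quantity. Hence
\[
\big|\P_{\RR(G)}[\tau:\sigma]-\mu_0[\tau]\big|\le C_3\epsilon \quad\text{and}\quad \big|\P_{\RR(G)}[\tau]-\mu_0[\tau]\big|\le C_3\epsilon.
\]
The triangle inequality then gives the claim with $C_0=2C_3$.

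\textbf{Uniformity.} The constants depend only on $\lambda,\Delta,k,m$. Indeed $m'\le m$ and $\#E(H)\le km$, so the exponents $\#e_s$, the number of factors, and $\#S$ are bounded, and $c$ depends only on $(\lambda,\Delta,k)$.

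\textbf{Main obstacle.} The delicate step is the second one: converting the additive smallness $\theta_\lambda(G)\le\epsilon$ into uniform multiplicative closeness of $\P_G[\ybf]$ to a product law. This is exactly where the lower bound $c$ from Lemma~\ref{lem: probability lower bound} is essential, and where $\epsilon_0$ must be chosen, say $\epsilon_0<c/(2k)$, so that all the $(1+O(\epsilon))$ factors stay in a fixed compact range.
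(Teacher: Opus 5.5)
Your proof is correct, and it takes a genuinely different route from the paper's.

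The paper only sketches a geometric argument. It introduces the rational map $F^S_\lambda\colon \P^{2^k-1}\dashrightarrow\P^{2^s-1}$ recording the $\eta$-conditioned polynomials on $S$, together with the variety $\MM^S$ of uncorrelated configurations on $S$. It then asserts the analogue $F^S_\lambda(\MM)\subset\MM^S$ of Proposition~\ref{prop: invariance}. It concludes via three ingredients: compactness of $\MM(\R_{\ge 0})$, Lipschitz continuity of $F^S_\lambda$ near that set, and comparability of $\theta_\lambda$ with the distance to $\MM$ (respectively $\MM^S$), with constants depending on $\lambda,\Delta$.

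You instead carry out the ``explicit computation of conditional probabilities'' that the paper mentions but deliberately avoids. Using \eqref{eq: x_and_y_prime_conditioned} and the fact that the label-$j$ edges partition $V(H)$, you show that the law on $S$ equals a product measure $\mu_0$ up to multiplicative errors $1+O(\epsilon)$. The lower bound from Lemma~\ref{lem: probability lower bound} is what converts the additive hypothesis $\theta_\lambda(G)\le\epsilon$ into this multiplicative closeness.

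\textbf{What your route buys.} It is elementary and self-contained, and it gives explicit constants. It makes uniformity over all gluing data with $m'\le m$ transparent. It also exhibits concretely the mechanism behind the inclusion $F^S_\lambda(\MM)\subset\MM^S$ at non-negative real points: the $S$-coordinates become independent when $G$ is uncorrelated.

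\textbf{What the paper's route buys.} It is shorter and ties the estimate directly to the projective-geometric framework. However, its constants are non-explicit, and the comparability step is only sketched. That step is precisely where the degree bound $\Delta$ enters.

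One minor point: in your Step~4 you also need $C_2\epsilon_0<1$ (say $C_2\epsilon_0\le 1/2$) so that the factors $(1\pm C_2\epsilon)^{\pm 2}$ are controlled. Your phrase ``$\epsilon_0$ small enough'' covers this, but it is not implied by the explicit choice $\epsilon_0<c/(2k)$ you mention at the end.
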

\begin{proof}[Proof outline] The statement of the lemma may be proven directly using explicit computations of conditional probabilities. However, we provide instead a more elegant argument which relies on a natural extension of the material discussed in Sections~\ref{subsec: induced dynamics}--\ref{subsec: dynamics on variety}.   

For each $\lambda\in \C^*$, we may partition the independence polynomial $Z_{\RR(G)}(\lambda)$ according to the vertex assignments $\eta$ on the set $S$. Arguing as in Section~\ref{subsec: induced dynamics}, the recursion operator $\RR$ then induces a rational map $F^S_\lambda: \P^{2^k-1}\dashrightarrow\P^{2^s-1}$, with $s=\#S$, where we use the coordinates on $\P^{2^s-1}$ corresponding to the $\eta$-conditioned independence polynomials $Z_{\RR(G)}^\eta(\lambda)$. Similarly to Section~\ref{subsec: invariant variety}, we may introduce a subvariety $\MM^S$ of $\P^{2^s-1}$ defined by the absence of correlations between the vertices in $S$ for $\RR(G)$. Proposition~\ref{prop: invariance} then naturally generalizes to the statement $F^S_\lambda(\MM)\subset \MM^S$. 

Now let $\MM(\R_{\ge 0})$ (resp.\  $\MM^S(\R_{\ge 0})$) denote the compact subset of $\MM$ (resp.\ of $\MM^S$), where all coordinates are non-negative and real. Note that the map $F^S_\lambda$ is well-defined and holomorphic in a neighborhood $U$ of $\MM(\R_{\ge 0})$ for each $\lambda\in \R_+$, and moreover, $F^S_\lambda\big(\MM(\R_{\ge 0})\big)\subset \MM^S(\R_{\ge 0})$. Since $\MM(\R_{\ge 0})$ is compact, $F_\lambda^S$ is Lipschitz in a neighbourhood of $\MM(\R_{\ge 0})$. Recall that in each sufficiently small coordinate neighborhood the quantity $\theta_\lambda(G)$ is comparable to the distance to $\MM$, with constants depending only on $\lambda$ and $\Delta$. The analogous statement also holds for $\theta_\lambda(\RR(G))$ and $\MM^S$. Choosing $\varepsilon_0>0$ small enough so that $\varphi_\lambda(G) \in U$, we get the desired conclusion of the lemma.
\end{proof}

\begin{proof}[Proof of Proposition~\ref{prop: attracting for graphs}] Recall that condition~\ref{cond: superattraction} implies that the marked vertices of $\RR(G)$ are pairwise non-adjacent. Let $v$ be a  marked vertex of $\RR(G)$ with a label $j\in \{1,\dots, k\}$, and let $\tau$ be a vertex assignment on $X:=\{v\}$. We will prove that for every vertex assignment $\sigma$ on a non-empty subset $W$ of $P(\RR(G))\setminus \{v\}$ we have
\[
\left| \P_{\RR(G)}[\tau : \sigma] - \P_{\RR(G)}[\tau] \right| \le C \cdot \epsilon^2,
\]
for some constant $C> 0$, which is sufficient in order to deduce the more general statement in the proposition; compare the final step in the proof of Proposition~\ref{prop: decay of correlation}.

Let $e=\Phi(j)\subset \{1,\dots, m\}$ be the edge of the gluing scheme $H$ corresponding to the vertex $v \in V(\RR(G))$, so that $v$ arises from identifying the vertices labeled $\ell(e)$ in the copies $G(i)$ with  $i \in e$; see Definition~\ref{def:graph recursion}.  We denote by $\RR(G)_v$ the induced subgraph of $\RR(G)$ containing only the vertices arising from the copies $G(i)$ with $i\in e$, and by $Y$ the set of vertices of $\RR(G)_v$ corresponding to the marked vertices of these copies, not including the vertex $v$. Note that, by our assumptions, we then have $X\cap Y = W \cap Y = \emptyset$. Moreover, no edge in $\RR(G)$ connects a
vertex in $Y$ with a vertex in $X$ or $W$. 

Arguing as in Lemma~\ref{lem: conditioning wrt Y}, we can derive that 
\[
 \P_{\RR(G)}[\tau : \eta\wedge \sigma]=\P_{\RR(G)_v}[\tau : \eta] 
\]
for every vertex assignment $\eta$ on $Y$. Consequently, we obtain
$$
\begin{aligned}
\P_{\RR(G)}[\tau : \sigma] = & \sum_{\eta} \P_{\RR(G)}[\tau \wedge \eta: \sigma] =  \sum_{\eta} \P_{\RR(G)}[\tau : \eta \wedge \sigma] \cdot \P_{\RR(G)}[\eta : \sigma]\\
= & \sum_{\eta} \P_{\RR(G)_v}[\tau : \eta] \cdot \P_{\RR(G)}[\eta : \sigma],
\end{aligned}
$$
and similarly
$$
\begin{aligned}
\P_{\RR(G)}[\tau] = & \sum_{\eta} \P_{\RR(G)}[\tau \wedge \eta] =  \sum_{\eta} \P_{\RR(G)}[\tau : \eta] \cdot \P_{\RR(G)}[\eta]\\
= & \sum_{\eta} \P_{\RR(G)_v}[\tau : \eta] \cdot \P_{\RR(G)}[\eta],
\end{aligned}
$$
where all the sums here and below are taken over all possible assignments $\eta$ on $Y$. It follows that 
\begin{equation*}
\begin{aligned}
\P_{\RR(G)}[\tau : \sigma] - \P_{\RR(G)}[\tau] = & \sum_{\eta} \P_{\RR(G)_v}[\tau : \eta] \cdot \left(\P_{\RR(G)}[\eta : \sigma] - \P_{\RR(G)}[\eta]\right)\\
= & \sum_{\eta} \P_{\RR(G)_v}[\tau] \cdot \left(\P_{\RR(G)}[\eta : \sigma] - \P_{\RR(G)}[\eta]\right) \\
+ &
\sum_{\eta} \left( \P_{\RR(G)_v}[\tau : \eta] - \P_{\RR(G)_v}[\tau] \right) \cdot \left(\P_{\RR(G)}[\eta : \sigma] - \P_{\RR(G)}[\eta]\right).
\end{aligned}
\end{equation*}
Since 
\[\sum_{\eta}\P_{\RR(G)}[\eta : \sigma]=\sum_{\eta}\P_{\RR(G)}[\eta]=1,\]
the first sum on the right-hand side of the equation above vanishes, so that 
\begin{equation}\label{eq: punch line}
|\P_{\RR(G)}[\tau : \sigma] - \P_{\RR(G)}[\tau]| \leq 
\sum_{\eta} | \P_{\RR(G)_v}[\tau : \eta] - \P_{\RR(G)_v}[\tau] | \cdot |\P_{\RR(G)}[\eta : \sigma] - \P_{\RR(G)}[\eta]|.
\end{equation}
On the other hand, by Lemma~\ref{lemma: effect of gluing}, there exists an $\epsilon_0>0$ such that each factor in the sum~\eqref{eq: punch line} is of order $O(\epsilon)$ as long as $\theta_\lambda(G)<\epsilon<\epsilon_0$. (Here, for the first term, we realize $\RR(G)_v$ using a gluing scheme with parameters $k, m'=\#e\leq m$.) It follows that the quantity $\theta_\lambda(\RR(G))$ is of order $O(\epsilon^2)$, with the constant depending only on $\lambda, m, k, \Delta$. This finishes the proof.
\end{proof}

We now establish the main technical statement of this subsection. 

\begin{prop}\label{prop: euclidean attracting}
    Let $(H,\Phi)$ be a gluing data with parameters $k\geq 2,m \geq 2$ that satisfies condition~{\normalfont \ref{cond: superattraction}}. Suppose further that $\lambda\in \C^*$ is fixed and $\widehat{F}_\lambda: \C^{2^k}\to \C^{2^k}$ is the homogeneous polynomial self-map that induces the renormalization map $F_\lambda$ associated with the graph recursion $\RR_{(H,\Phi)}$. 
    
    Let $\widehat{\xi}=\big( (\xbf): \xbf\in \{0,1\}^k\big)$ be a point in $\C^{2^k}$ and   $\widehat{\xi}':=\widehat{F}_\lambda(\widehat{\xi})=\big( (\xbf)': \xbf\in \{0,1\}^k\big)$ be its image under $\widehat{F}_\lambda$. Then there exists a constant $C(\widehat\xi, \lambda)$ such that the following is true for all $\epsilon\in \R_+$: 

    If the coordinates of  $\widehat{\xi}$ satisfy
    \begin{equation}\label{eq: no correlation inequality}
    \big|(\xbf) \cdot (\ybf) - (\zbf) \cdot  (\wbf)\big| \leq \epsilon \quad \text{for all \; $\xbf,\, \ybf, \zbf, \wbf\in \{0,1\}^{k}$ with $\xbf+\ybf=\zbf+\wbf$,}
    \end{equation}
    then the coordinates of $\widehat{\xi}'$ satisfy
    \begin{equation}\label{eq: no correlation image inequality}
    \big|(\xbf)' \cdot (\ybf)' - (\zbf)' \cdot (\wbf)'\big| \leq C(\widehat{\xi},\lambda)\cdot \epsilon^2 \quad \text{for all \; $\xbf,\, \ybf, \zbf, \wbf\in \{0,1\}^{k}$ with $\xbf+\ybf=\zbf+\wbf$}.
    \end{equation}
    Moreover, the constant $C(\widehat{\xi},\lambda)$ depends polynomially on $\abs{\frac{1}{\lambda}}$ and on the absolute values of the coordinates of~$\widehat{\xi}$.
\end{prop}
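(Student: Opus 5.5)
The plan is to mimic algebraically the probabilistic ``punch line'' \eqref{eq: punch line} from the proof of Proposition~\ref{prop: attracting for graphs}. By \eqref{eq: inv reduction}, it suffices to bound $(\xbf)'(\widetilde{\xbf})'-(\max(\xbf,\widetilde{\xbf}))'(\min(\xbf,\widetilde{\xbf}))'$. Passing from the pair $(\xbf,\widetilde{\xbf})$ to $(\max,\min)$ can be done by a finite chain of \emph{elementary swaps}. Each swap takes a pair $(\ybf,\widetilde{\ybf})$ with $y_j=1$, $\widetilde y_j=0$ to $(\ybf-\ebf_j,\widetilde{\ybf}+\ebf_j)$ for a single label $j$. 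By telescoping, I therefore reduce to proving
\[
\big|(\ybf)'(\widetilde{\ybf})'-(\ybf-\ebf_j)'(\widetilde{\ybf}+\ebf_j)'\big|\le C\epsilon^2
\]
for every such single swap. The number of swaps is at most $k$, so only a bounded number of such terms appears.

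Fix $j$ and let $e=\Phi(j)\subset V(H)$. By condition~\ref{cond: superattraction}, every other marked vertex of $\RR(G)$ comes from an edge disjoint from $e$. Split the unmarked glued vertices $Y$ into $Y_1$ and $Y_2$:
\begin{itemize}
\item $Y_1$ consists of those arising from edges meeting $e$;
\item $Y_2$ consists of the rest.
\end{itemize}
Then \eqref{eq: x_prime_conditioned} factors as $(\ybf)'=\sum_{\eta_1}A_{y_j}(\eta_1)\,B_{\ybf_{-j}}(\eta_1)$. Here $A_a(\eta_1)$ is the product over copies $i\in e$ of the coordinates $([\cdot]_i)$, with the corresponding power of $\lambda^{-1}$, given value $a$ at $v$ and $\eta_1$ on $Y_1$. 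The factor $B$ collects the copies $i\notin e$ and is summed over $\eta_2$. Writing both products as double sums over $(\eta_1,\widetilde\eta_1)$ and antisymmetrizing in $\eta_1\leftrightarrow\widetilde\eta_1$, the swap difference becomes
\[
\tfrac12\sum_{\eta_1,\widetilde\eta_1}\big[A_1(\eta_1)A_0(\widetilde\eta_1)-A_0(\eta_1)A_1(\widetilde\eta_1)\big]\big[B_{\ybf_{-j}}(\eta_1)B_{\widetilde{\ybf}_{-j}}(\widetilde\eta_1)-B_{\ybf_{-j}}(\widetilde\eta_1)B_{\widetilde{\ybf}_{-j}}(\eta_1)\big].
\]
This is exactly a sum of products of two $2\times2$ ``minors'', the algebraic counterpart of the two correlation factors in \eqref{eq: punch line}.

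The remaining and main step is to show that each minor is $O(\epsilon)$, with constants polynomial in $|1/\lambda|$ and in the coordinates of $\widehat\xi$. For the $A$-minor, both terms are products over $i\in e$ of coordinates. Their total exponents of $\lambda$ agree, by the additivity of $\|\cdot\|_H$ used in the proof of Proposition~\ref{prop: invariance}. The per-copy index pairs have equal componentwise sums, so I telescope over the copies $i\in e$ one at a time. Each step replaces a pair $([\cdot]_i)([\cdot]_i)$ by another pair with the same sum and produces one factor bounded by $\epsilon$ via \eqref{eq: no correlation inequality}, times a monomial in the coordinates. For the $B$-minor I first expand it as a double sum over $(\eta_2,\widetilde\eta_2)$; again the two terms are products over $i\notin e$ whose per-copy index pairs have equal sums, only the $Y_1$-values being exchanged. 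The same telescoping then gives $O(\epsilon)$. This is precisely a quantitative version of the exact identities in the proof of Proposition~\ref{prop: invariance}.

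Multiplying the two $O(\epsilon)$ bounds and summing over the finitely many $\eta_1,\widetilde\eta_1$ and swaps yields \eqref{eq: no correlation image inequality}. The constant $C(\widehat\xi,\lambda)$ is a polynomial in $|1/\lambda|$ and $\max|(\xbf)|$, since all exponents of $\lambda$ that appear are of the form $-\|\cdot\|_H\le 0$. The care needed is in the bookkeeping that each telescoping step really pairs indices with equal sums; I expect this to be the main obstacle, especially for $B$, where the exchanged $Y_1$-values enter several copies simultaneously.
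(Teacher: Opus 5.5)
Your proposal is correct and is essentially the paper's proof: telescoping to single-label flips, expanding the product of two image coordinates as a double sum over $(\eta,\widetilde{\eta})$, antisymmetrizing so that the difference factors into a minor over the copies in $\Phi(j)$ times a minor over the remaining copies, and bounding each minor by $O(\epsilon)$ with the elementary estimate for differences of products. The only differences are cosmetic: you sum out the glued vertices not meeting $\Phi(j)$ before antisymmetrizing, whereas the paper swaps $\eta$ and $\widetilde{\eta}$ on all of $Y$ at once. The bookkeeping you flag for $B$ is harmless, since exchanging $\eta_1\leftrightarrow\widetilde{\eta}_1$ simultaneously in all copies preserves the componentwise index sum in each copy.
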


Before we prove the proposition above, let us first deduce Theorem~\ref{thm: superattraction}.

\begin{proof}[Proof of Theorem~\ref{thm: superattraction}]
Since the recursion operator $\RR$ is assumed to be non-degenerate and pre-fixed, the map $F_\lambda$ is a holomorphic retraction from $\MM$ to $\MM_0$ for each $\lambda\in \C^*$; see Proposition~\ref{prop: pre fixed dynamics}. By definition, it is then sufficient to show that some iterate of $F_\lambda$ is transversally superattracting on $\MM_0$. Hence, by replacing $F_\lambda$ with a sufficiently large iterate, we may assume that condition~\ref{cond: superattraction} is satisfied. 
Given an arbitrary point $\xi \in \MM_0$ outside of the indeterminacy set, we can now choose a lift $\widehat{F}_\lambda$ so that $\widehat{F}_\lambda(\widehat{\xi}) = \widehat{\xi}$. The estimates obtained in Proposition~\ref{prop: euclidean attracting} then imply superexponentially fast convergence of nearby orbits to the invariant variety, and thus to $\MM_0\subset \MM$, because $F_\lambda$ maps $\MM$ into $\MM_0$ and is Lipschitz near $\xi$. The theorem now follows from Lemma~\ref{lem: trans_superattr}.
\end{proof}

\begin{proof}[Proof of Proposition~\ref{prop: euclidean attracting}]
    It is sufficient to verify that
    \begin{equation}\label{eq: no correlation image inequality reduced}
     \big|(\xbf)' \cdot (\widetilde{\xbf})' - (\max(\xbf, \widetilde{\xbf}))' \cdot (\min(\xbf, \widetilde{\xbf}))'\big| \leq C_1(\widehat{\xi},\lambda)\cdot \epsilon^2 \quad \text{for all \; $\xbf,\, \widetilde{\xbf}\in\{0,1\}^k$},
    \end{equation}
    for some constant $C_1(\widehat{\xi},\lambda)$ depending polynomially on $\abs{\frac{1}{\lambda}}$ and on the absolute values of the coordinates of $\widehat{\xi}$. If the binary tuples $\xbf$ and $\widetilde{\xbf}$ differ in at most one entry, then the left-hand side of \eqref{eq: no correlation image inequality reduced} trivially vanishes. Hence, it suffices to show the inequality when these tuples differ in at least two entries.

    To simplify the discussion, it will again be convenient to regard the binary $k$-tuples enumerating the coordinates of $\widehat{\xi}$ and $\widehat{\xi}'$ as assignments on the marked vertices of $G$ and $\RR_{(H,\Phi)}(G)$, respectively, for some graph $G\in \GG_k$. We also introduce the following notation for graphs in $\GG_k$. Given $s\in \{0,1\}$ and $j\in\{1,\dots,k\}$, let $s_j$ denote the assignment on the vertex labeled $j$ that assigns the value $s$ to this vertex. Now suppose $j, \widetilde{j}\in \{1,\dots, k\}$ are two distinct labels, and let $\tau$ be an assignment on the marked vertices with labels different from $j$ and $\widetilde{j}$. Given $s, \widetilde{s} \in \{0,1\}$, we define 
    \[
    \tau_{s\widetilde{s}}:=\tau \wedge s_j \wedge \widetilde{s}_{\widetilde{j}}.
    \]
    Since $\tau_{s\widetilde{s}}$ is an assignment on the set of all marked vertices, we may naturally view it as a binary $k$-tuple, so that $(\tau_{s\widetilde{s}})'$ denotes the corresponding coordinate of $\widehat{\xi}'$.

    \begin{claim}
        Let $j, \widetilde{j} \in \{1, \dots, k\}$ be two distinct labels, and let $\tau,\widetilde{\tau}$ be two assignments on the marked vertices with labels different from $j$ and $\widetilde{j}$. Then
        \[
            \big|(\tau_{10})' \cdot (\widetilde{\tau}_{01})' - (\tau_{11})' \cdot (\widetilde{\tau}_{00})'\big| \leq C_2(\widehat{\xi},\lambda)\cdot \epsilon^2,
        \]
       where $C_2(\widehat{\xi},\lambda)$ is a polynomial in $\abs{\frac{1}{\lambda}}$ and in the absolute values of the coordinates of $\widehat{\xi}$.
    \end{claim}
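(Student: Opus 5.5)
We will mimic the graph-theoretic proof of Proposition~\ref{prop: attracting for graphs}, replacing probabilities by the formal polynomial expressions from Lemma~\ref{lem:x_prime}. Let $v,\widetilde v$ be the marked vertices of $\RR(G)$ with labels $j,\widetilde j$, corresponding to the edges $e=\Phi(j)$ and $\widetilde e=\Phi(\widetilde j)$ of $H$. By condition~\ref{cond: superattraction}, these edges are disjoint subsets of $V(H)$. Let $Y$ denote, as before, the set of vertices of $\RR(G)$ induced by $E(H)\setminus\Phi(\{1,\dots,k\})$. Let $Y_e\subset Y$ be the unmarked vertices coming from marked vertices of the copies $G(i)$ with $i\in e$, and define $Y_{\widetilde e}$ analogously. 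We write $Y_*$ for the remaining vertices of $Y$ together with the other marked vertices.

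Fix an assignment $\eta$ on $Y$. By \eqref{eq: x_and_y_prime_conditioned}, the summand $\lambda^{-\|\cdot\|_H}\prod_i([\cdot]_i)$ for $\tau_{s\widetilde s}\wedge\eta$ factors as a product of three pieces. The first is a factor $A_s(\eta)$ collecting the copies $i\in e$; it depends on $s$ and on $\eta|Y_e$, together with $\eta|Y_*$ and $\tau$ entering only through shared vertices. The second is an analogous factor $B_{\widetilde s}(\eta)$ for $i\in\widetilde e$. The third is a factor $D(\eta)$ for the remaining copies, independent of $s,\widetilde s$. We therefore write
\[
(\tau_{s\widetilde s})'=\sum_\eta A_s(\tau,\eta)\,B_{\widetilde s}(\tau,\eta)\,D(\tau,\eta),
\]
where the powers of $\lambda$ also split, because $\|\cdot\|_H$ is additive over edges. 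The quantity in the claim then becomes a double sum over $(\eta,\widetilde\eta)$:
\[
\sum_{\eta,\widetilde\eta} D D^{\sim}\Big(A_1B_0\widetilde A_0\widetilde B_1-A_1B_1\widetilde A_0\widetilde B_0\Big)
= \sum_{\eta,\widetilde\eta} D D^{\sim} A_1\widetilde A_0\,\big(B_0\widetilde B_1-B_1\widetilde B_0\big).
\]
This expression is not yet small, because $B$ depends on $\eta$ restricted to the vertices shared with the $e$-part. The key step, analogous to \eqref{eq: punch line}, is to symmetrize over swapping $\eta$ and $\widetilde\eta$ on the $\widetilde e$-side coordinates (that is, on $Y_{\widetilde e}$). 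This antisymmetrization converts the sum into
\[
\tfrac12\sum \big(A_1\widetilde A_0-A_1^{\mathrm{sw}}\widetilde A_0^{\mathrm{sw}}\big)\big(B_0\widetilde B_1-B_1\widetilde B_0\big)DD^{\sim},
\]
so that each term is a product of two ``cross-ratio'' defects.

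It then remains to show that each such defect is $O(\epsilon)$. Each defect is a difference $\prod_{i}(\mathbf{a}_i)\prod_i(\mathbf{b}_i)-\prod_i(\mathbf{c}_i)\prod_i(\mathbf{d}_i)$ over the copies $i\in\widetilde e$ (respectively $i\in e$), with $\mathbf a_i+\mathbf b_i=\mathbf c_i+\mathbf d_i$ for each $i$. Telescoping one copy at a time, we replace $(\mathbf a_i)(\mathbf b_i)$ by $(\mathbf c_i)(\mathbf d_i)$ using \eqref{eq: no correlation inequality}. Each step costs $\epsilon$ times a product of at most $2m$ coordinates of $\widehat\xi$. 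Together with the prefactors $\lambda^{-\|\cdot\|_H}$ and the bounded number of terms ($2^{2\#Y}$), this yields $C_2$ as a polynomial in $|1/\lambda|$ and $|(\xbf)|$.

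The main obstacle is the bookkeeping of the factorization when a vertex of $Y$ is shared between the $e$-copies and the $\widetilde e$-copies or the rest. The swap must be a bijection of the index set $(\eta,\widetilde\eta)$ that preserves the $D$-factors and the $\lambda$-exponents. Our first attempt will be to swap exactly the values on vertices of $Y$ that are marked vertices of some copy $i\in\widetilde e$. We then check, using the additivity relations for $\|\cdot\|_H$ and $[\cdot]_i$ established in the proof of Proposition~\ref{prop: invariance}, that the coordinate-wise sums over all copies are preserved. This is the same mechanism that proves invariance of $\MM$, now applied only to the $\widetilde e$-block so as to produce a first-order defect rather than an exact identity.
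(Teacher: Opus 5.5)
Your mechanism is the one the paper uses. You expand the product as a double sum over $(\eta,\widetilde\eta)$ and note that the total $\lambda$-exponent of a pair depends only on $\eta+\widetilde\eta$, so it is common to both products and invariant under any coordinatewise swap. You then antisymmetrize under an involution that flips the sign of the $\widetilde e$-block factor $\beta(\eta,\widetilde\eta):=B_0(\eta)B_1(\widetilde\eta)-B_1(\eta)B_0(\widetilde\eta)$, and bound the resulting product of two defects by \eqref{eq: no correlation inequality} and telescoping.

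The antisymmetrization step, however, is wrong as written. Your displayed formula keeps $DD^{\sim}$ outside the first defect, i.e.\ it assumes the swap on $Y_{\widetilde e}$ leaves the factors of the remaining copies unchanged. You even list ``preserves the $D$-factors'' as a requirement on the swap, and a swap on $Y_{\widetilde e}$ cannot meet it in general. Take the Sierpi\'nski gluing data with $j=1$, $\widetilde j=2$, so $e=\{1\}$, $\widetilde e=\{2\}$, and condition~\ref{cond: superattraction} holds. The vertex of $Y$ coming from the $2$-edge $\{2,3\}$ is a marked vertex of both $G(2)$ and the remaining copy $G(3)$, so swapping it changes $D(\tau,\eta)D(\widetilde\tau,\widetilde\eta)$. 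Relatedly, the $\lambda$-powers do not split block by block when a $Y$-edge meets several blocks; only the pair's total exponent $E$ is meaningful.

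The repair is to move everything except $\beta$ into the first defect. Set $X(\eta,\widetilde\eta):=A_1(\eta)A_0(\widetilde\eta)D(\tau,\eta)D(\widetilde\tau,\widetilde\eta)$ and let $\sigma$ be your swap. Then the difference equals $\tfrac12\sum_{\eta,\widetilde\eta}\lambda^{-E}\bigl(X-X\circ\sigma\bigr)\beta$. Here $X-X\circ\sigma$ is a difference of two products over all copies $i\notin\widetilde e$, and the paired factors have equal coordinate sums copy by copy; this is exactly the check in your last paragraph. So it is $O(\epsilon)$ by the telescoping fact, and with that change your proof goes through.

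The paper instead uses the full swap $(\eta,\widetilde\eta)\mapsto(\widetilde\eta,\eta)$. Under it $\beta$ is still antisymmetric, the exponent is trivially preserved, and the complementary defect is simply $X(\eta,\widetilde\eta)-X(\widetilde\eta,\eta)$. So the sets $Y_e$, $Y_{\widetilde e}$, $Y_*$ and the overlap bookkeeping you identify as the main obstacle never enter. A minor point: under condition~\ref{cond: superattraction} the $A$-factors do not depend on $\tau$, since every other marked vertex of a copy $i\in e$ lies in $Y$.
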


    Before we prove the claim, let us note that it implies the desired inequality~\eqref{eq: no correlation image inequality reduced}. Indeed, we can pass from the pair $(\xbf, \widetilde{\xbf})$ to $(\max(\xbf, \widetilde{\xbf}), \min(\xbf, \widetilde{\xbf}))$ by ``flipping'' the differing entries one at a time.  Using the claim at each step and the triangle inequality, we conclude that \eqref{eq: no correlation image inequality reduced} holds with $C_1(\widehat{\xi}, \lambda)=k\cdot C_2(\widehat{\xi}, \lambda)$.

    \smallskip

    It remains to prove the claim. Following the notation in Section~\ref{subsec: induced dynamics}, we have
    \begin{align*}
      (\tau_{10})'\cdot (\widetilde{\tau}_{01})' &=\left(\sum_{\eta: Y\to \{0,1\}} \lambda^{-\|\tau_{10}\wedge \eta\|_H} \prod_{i=1}^m ([\tau_{10}\wedge \eta]_i)\right)\left(\sum_{\widetilde{\eta}: Y\to \{0,1\}} \lambda^{-\|\widetilde{\tau}_{01}\wedge \widetilde{\eta}\|_H} \prod_{i=1}^m ([\widetilde{\tau}_{01}\wedge \widetilde{\eta}]_i)\right)\\
      &=\sum_{\eta,\widetilde{\eta}: Y\to \{0,1\}} \lambda^{-\|\tau_{10}\wedge \eta\|_H-\|\widetilde{\tau}_{01}\wedge \widetilde{\eta}\|_H} \prod_{i=1}^m ([\tau_{10}\wedge \eta]_i)([\widetilde{\tau}_{01}\wedge \widetilde{\eta}]_i)\\
      &=\sum_{\eta,\widetilde{\eta}: Y\to \{0,1\}} \lambda^{-\|\tau_{10}\wedge \widetilde{\eta}\|_H-\|\widetilde{\tau}_{01}\wedge \eta\|_H} \prod_{i=1}^m ([\tau_{10}\wedge \widetilde{\eta}]_i)([\widetilde{\tau}_{01}\wedge \eta]_i),\\
    \intertext{where the last equality follows by symmetry upon interchanging the roles of $\eta$ and $\widetilde{\eta}$. Similarly, we get}
      (\tau_{11})'\cdot (\widetilde{\tau}_{00})' 
      &=\sum_{\eta,\widetilde{\eta}: Y\to \{0,1\}} \lambda^{-\|\tau_{11}\wedge \eta\|_H-\|\widetilde{\tau}_{00}\wedge \widetilde{\eta}\|_H} \prod_{i=1}^m ([\tau_{11}\wedge \eta]_i)([\widetilde{\tau}_{00}\wedge \widetilde{\eta}]_i)\\
       &=\sum_{\eta,\widetilde{\eta}: Y\to \{0,1\}} \lambda^{-\|\tau_{11}\wedge \widetilde{\eta}\|_H-\|\widetilde{\tau}_{00}\wedge \eta\|_H} \prod_{i=1}^m ([\tau_{11}\wedge \widetilde{\eta}]_i)([\widetilde{\tau}_{00}\wedge \eta]_i).
    \end{align*}
    Since for all assignments $\eta, \widetilde{\eta}$ on $Y$ we have 
    \begin{align*}
    &\|\tau_{10}\wedge \eta\|_H + \|\widetilde{\tau}_{01}\wedge \widetilde{\eta}\|_H = \|\tau_{10}\wedge \widetilde{\eta}\|_H + \|\widetilde{\tau}_{01}\wedge \eta\|_H=\\
       &\|\tau_{11}\wedge \eta\|_H + \|\widetilde{\tau}_{00}\wedge \widetilde{\eta}\|_H = \|\tau_{11}\wedge \widetilde{\eta}\|_H + \|\widetilde{\tau}_{00}\wedge \eta\|_H \geq 0,
   \end{align*} 
    we may write
    \begin{equation}\label{eq: double difference}
        \begin{aligned}
      &2\big((\tau_{10})'\cdot(\widetilde{\tau}_{01})'-(\tau_{11})'\cdot(\widetilde{\tau}_{00})'\big)=\\ 
      &\sum_{\eta,\widetilde{\eta}: Y\to \{0,1\}} \lambda^{-\|\tau_{10}\wedge \eta\|_H-\|\widetilde{\tau}_{01}\wedge \widetilde{\eta}\|_H} \left(\prod_{i=1}^m ([\tau_{10}\wedge \eta]_i)([\widetilde{\tau}_{01}\wedge \widetilde{\eta}]_i)-\prod_{i=1}^m ([\tau_{11}\wedge \eta]_i)([\widetilde{\tau}_{00}\wedge \widetilde{\eta}]_i)\right)+\\
      &\sum_{\eta,\widetilde{\eta}: Y\to \{0,1\}} \lambda^{-\|\tau_{10}\wedge \eta\|_H-\|\widetilde{\tau}_{01}\wedge \widetilde{\eta}\|_H} \left(\prod_{i=1}^m ([\tau_{10}\wedge \widetilde{\eta}]_i)([\widetilde{\tau}_{01}\wedge \eta]_i)-\prod_{i=1}^m ([\tau_{11}\wedge \widetilde{\eta}]_i)([\widetilde{\tau}_{00}\wedge \eta]_i)\right).\\
    \end{aligned}
    \end{equation}

    We now separately consider each product in the above identity. Since the gluing data $(H,\Phi)$ satisfies condition~\ref{cond: superattraction}, we may write 
    \begin{equation}\label{eq: product in copies}
    \begin{aligned}
      \prod_{i=1}^m ([\tau_{10}\wedge \eta]_i)&= \prod_{i\in \Phi(j)}([\tau_{10}\wedge \eta]_i)\prod_{i\in \Phi(\widetilde{j})}([\tau_{10}\wedge \eta]_i)\prod_{i\notin\Phi(j)\sqcup\Phi(\widetilde{j})}([\tau_{10}\wedge \eta]_i)\\
      &=\prod_{i\in \Phi(j)}(1_{\Lambda(j)}\wedge\eta(i))\prod_{i\in \Phi(\widetilde{j})}(0_{\Lambda(\widetilde{j})}\wedge\eta(i))\prod_{i\notin\Phi(j)\sqcup\Phi(\widetilde{j})}([\tau_{10}\wedge \eta]_i).\\
    \end{aligned}
    \end{equation}
    Here, $\Lambda=\Lambda_{(H,\Phi)}: \{1,\dots, k\}\to \{1,\dots, k\}$ denotes the dynamics on labels induced by the gluing data $(H,\Phi)$ (see Definition~\ref{def: label dynamics}), and $\eta(i)$ denotes the partial assignment on the marked vertices of the $i$-th copy of the graph $G$ induced by the assignment $\eta: Y\to \{0,1\}$.  

    Note that 
    \[[\tau_{10}\wedge \eta]_i= [\tau_{11}\wedge \eta]_i \quad \text{ and } \quad [\widetilde{\tau}_{01}\wedge \widetilde{\eta}]_i= [\widetilde{\tau}_{00}\wedge \widetilde{\eta}]_i\]
    for each $i\notin\Phi(j)\sqcup \Phi(\widetilde{j})$, and thus 
    \[\prod_{i\notin\Phi(j)\sqcup\Phi(\widetilde{j})}([\tau_{10}\wedge \eta]_i)([\widetilde{\tau}_{01}\wedge \widetilde{\eta}]_i) =\prod_{i\notin\Phi(j)\sqcup\Phi(\widetilde{j})}([\tau_{11}\wedge \eta]_i)([\widetilde{\tau}_{00}\wedge \widetilde{\eta}]_i).\]
    Using \eqref{eq: product in copies} and analogous identities for the other three involved products, we obtain 
    \begin{align*}
        &\prod_{i=1}^m ([\tau_{10}\wedge \eta]_i)([\widetilde{\tau}_{01}\wedge \widetilde{\eta}]_i)-\prod_{i=1}^m ([\tau_{11}\wedge \eta]_i)([\widetilde{\tau}_{00}\wedge \widetilde{\eta}]_i)=\\
        &\prod_{i\in \Phi(j)}(1_{\Lambda(j)}\wedge\eta(i))(0_{\Lambda(j)}\wedge\widetilde{\eta}(i))\; \times \prod_{i\notin\Phi(j)\sqcup\Phi(\widetilde{j})}([\tau_{11}\wedge \eta]_i)([\widetilde{\tau}_{00}\wedge \widetilde{\eta}]_i) \\ 
        & \quad \times \left( \prod_{i\in \Phi(\widetilde{j})}(0_{\Lambda(\widetilde{j})}\wedge\eta(i))(1_{\Lambda(\widetilde{j})}\wedge\widetilde{\eta}(i)) - \prod_{i\in \Phi(\widetilde{j})}(1_{\Lambda(\widetilde{j})}\wedge\eta(i))(0_{\Lambda(\widetilde{j})}\wedge\widetilde{\eta}(i))\right).
    \end{align*}
    Interchanging the roles of $\eta$ and $\widetilde{\eta}$, we get
    \begin{align*}
        &\prod_{i=1}^m ([\tau_{10}\wedge \widetilde{\eta}]_i)([\widetilde{\tau}_{01}\wedge \eta]_i)-\prod_{i=1}^m ([\tau_{11}\wedge \widetilde{\eta}]_i)([\widetilde{\tau}_{00}\wedge \eta]_i)=\\
        & \prod_{i\in \Phi(j)}(1_{\Lambda(j)}\wedge\widetilde{\eta}(i))(0_{\Lambda(j)}\wedge\eta(i))\; \times \prod_{i\notin\Phi(j)\sqcup\Phi(\widetilde{j})}([\tau_{11}\wedge \widetilde{\eta}]_i)([\widetilde{\tau}_{00}\wedge \eta]_i) \\ 
        & \quad \times \left( \prod_{i\in \Phi(\widetilde{j})}(0_{\Lambda(\widetilde{j})}\wedge\widetilde{\eta}(i))(1_{\Lambda(\widetilde{j})}\wedge\eta(i)) - \prod_{i\in \Phi(\widetilde{j})}(1_{\Lambda(\widetilde{j})}\wedge\widetilde{\eta}(i))(0_{\Lambda(\widetilde{j})}\wedge\eta(i))\right).
    \end{align*}
    Observing that the bracketed factors in the two identities above are additive inverses, we substitute these two identities into~\eqref{eq: double difference} and obtain:

    \begin{align*}
      &2\big((\tau_{10})'\cdot(\widetilde{\tau}_{01})'-(\tau_{11})'\cdot(\widetilde{\tau}_{00})'\big)=\\ 
      &\sum_{\eta,\widetilde{\eta}: Y\to \{0,1\}} \lambda^{-\|\tau_{10}\wedge \eta\|_H-\|\widetilde{\tau}_{01}\wedge \widetilde{\eta}\|_H} \left( \prod_{i\in \Phi(\widetilde{j})}(0_{\Lambda(\widetilde{j})}\wedge\eta(i))(1_{\Lambda(\widetilde{j})}\wedge\widetilde{\eta}(i)) - \prod_{i\in \Phi(\widetilde{j})}(1_{\Lambda(\widetilde{j})}\wedge\eta(i))(0_{\Lambda(\widetilde{j})}\wedge\widetilde{\eta}(i))\right)\\
      &\times \quad \left(\; \prod_{i\in \Phi(j)}(1_{\Lambda(j)}\wedge\eta(i))(0_{\Lambda(j)}\wedge\widetilde{\eta}(i)) \; \times \prod_{i\notin\Phi(j)\sqcup\Phi(\widetilde{j})}([\tau_{11}\wedge \eta]_i)([\widetilde{\tau}_{00}\wedge \widetilde{\eta}]_i) - \right. \\&\left.  \quad \quad\quad \prod_{i\in \Phi(j)}(1_{\Lambda(j)}\wedge\widetilde{\eta}(i))(0_{\Lambda(j)}\wedge\eta(i)) \; \times \prod_{i\notin\Phi(j)\sqcup\Phi(\widetilde{j})}([\tau_{11}\wedge \widetilde{\eta}]_i)([\widetilde{\tau}_{00}\wedge \eta]_i)\; \right).
        \end{align*}

    We now require the following elementary fact. 

    \begin{fact}
        Suppose $a_1,\dots, a_n, b_1, \dots, b_n\in \C$ satisfy $|a_i-b_i|\leq \epsilon$ for each $i\in\{1,\dots,n\}$. Then 
        \[|a_1\cdots a_n-b_1\cdots b_n|\leq \epsilon \cdot Q(|a_1|,\dots, |a_n|, |b_1|, \dots, |b_n|), \]
        where $Q$ is a polynomial of degree $n-1$. 
    \end{fact}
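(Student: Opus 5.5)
The plan is a telescoping argument. We rewrite the difference of products as a sum of $n$ terms, in each of which exactly one factor $a_i$ has been replaced by $b_i$. Concretely, we use the identity
\[
a_1\cdots a_n - b_1\cdots b_n = \sum_{i=1}^n b_1\cdots b_{i-1}\,(a_i-b_i)\,a_{i+1}\cdots a_n,
\]
which is verified by noting that consecutive terms cancel. Writing $c_i := b_1\cdots b_{i-1}a_i\cdots a_n$, the $i$-th summand equals $c_i - c_{i+1}$. Hence the sum telescopes to $c_1 - c_{n+1}$, which is exactly $a_1\cdots a_n - b_1\cdots b_n$.

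Next we apply the triangle inequality to this identity together with the hypothesis $|a_i-b_i|\le\epsilon$. This gives
\[
|a_1\cdots a_n-b_1\cdots b_n|\le \epsilon\sum_{i=1}^n |b_1|\cdots|b_{i-1}|\,|a_{i+1}|\cdots|a_n|.
\]
We then set $Q(|a_1|,\dots,|a_n|,|b_1|,\dots,|b_n|)$ to be this sum. Each summand is a monomial of degree $n-1$ with nonnegative coefficients, so $Q$ is a polynomial of degree $n-1$ in the stated variables, as required. In fact $Q$ does not even involve $|a_1|$ or $|b_n|$.

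There is no real obstacle here. The only points to check are the telescoping bookkeeping and the degree count. For $n=1$ the sum is the empty-product constant $1$, so $Q$ has degree $0$ and the statement holds trivially.
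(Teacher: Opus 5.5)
Your telescoping argument is correct and complete. The identity, the triangle-inequality bound, and the resulting homogeneous degree-$(n-1)$ polynomial $Q$ with nonnegative coefficients (depending only on $n$) are exactly what the application in the proof of Proposition~\ref{prop: euclidean attracting} needs; the paper states this as an elementary fact without proof, and your argument is the standard justification it implicitly relies on.
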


    Note that for each $i\in \Phi(j)$, using~\eqref{eq: no correlation inequality},  we have:
    \[\big|(1_{\Lambda(j)}\wedge\eta(i))(0_{\Lambda(j)}\wedge\widetilde{\eta}(i)) - (1_{\Lambda(j)}\wedge\widetilde{\eta}(i))(0_{\Lambda(j)}\wedge\eta(i))\big|\leq\epsilon. \]
    Similarly, for each $i\in \Phi(\widetilde{j})$, we have
    \[\big|(0_{\Lambda(\widetilde{j})}\wedge\eta(i))(1_{\Lambda(\widetilde{j})}\wedge\widetilde{\eta}(i)) - (1_{\Lambda(\widetilde{j})}\wedge\eta(i))(0_{\Lambda(\widetilde{j})}\wedge\widetilde{\eta}(i))\big|\leq\epsilon,\]
    as well as 
    \[\big|([\tau_{11}\wedge \eta]_i)([\widetilde{\tau}_{00}\wedge \widetilde{\eta}]_i)-
    ([\tau_{11}\wedge \widetilde{\eta}]_i)([\widetilde{\tau}_{00}\wedge \eta]_i) \big| \leq \epsilon\]
    for each $i\notin\Phi(j)\sqcup\Phi(\widetilde{j})$.

    The claim now follows by applying the triangle inequality to the resulting expression for the difference $2\big((\tau_{10})'\cdot (\widetilde{\tau}_{01})'-(\tau_{11})'\cdot (\widetilde{\tau}_{00})'\big)$ and then applying the above fact to each difference of products appearing in that expression.
\end{proof}

\begin{remark}
    We note that when the gluing data is expanding but degenerate, the estimates obtained in Proposition~\ref{prop: euclidean attracting} do not imply that orbits starting sufficiently close to $\MM$ converge to $\MM$, even when the orbits are well-defined. Let us clarify the reason for this.

    Since the gluing data is assumed to be expanding, we may consider a sufficiently large iterate of $\RR$ for which the conditions in Proposition~\ref{prop: euclidean attracting} are satisfied. Suppose the orbit of a point $\xi \in \P^{2^k-1}$ avoids the indeterminacy set. We can consider a lift $\widehat{\xi} \in \C^{2^k}$ of norm $1$, and likewise a lift $\widehat{F(\xi)}$ of norm $1$. However, if $\xi$ is close to the indeterminacy set of $F$, then the norm of $\widehat{\xi}^\prime$ can be arbitrarily close to zero, using the notation in the proof of Proposition~\ref{prop: euclidean attracting}. Thus 
    \[
    \widehat{F(\xi)} = z \cdot \widehat{\xi}^\prime
    \]
    for a complex number $z$ with arbitrarily large absolute value, and the upper estimates on defining equations for the point $\widehat{\xi}^\prime$ do not give any information on the defining equations for the point $\widehat{F(\xi)}$, which in turn correspond to the distance of $F(\xi)$ to $\MM$ in the Fubini--Study metric.

    We note that if one can somehow conclude that the orbit of points remains bounded away from the indeterminacy set, for example when $\lambda>0$ and one only considers points in $\P^{2^k-1}$ with non-negative coordinates, then indeed there exists a neighborhood of $\MM$ in which orbits converge to $\MM_0$ superexponentially fast. The non-degenerate setting, where points sufficiently close to $\MM\setminus I(F)$ converge superexponentially fast to fixed points in $\MM_0$, is another setting in which the estimates from  Proposition~\ref{prop: euclidean attracting} are sufficient.
\end{remark}

\section{Decay of correlation}\label{sec: decay of correlation}

In this section we study \emph{correlation properties} of the probability measure $\P_{G,\lambda}$ (see \eqref{eq: prob_of_state}) for positive parameters $\lambda$ and sequences $(G_n)_{n\geq0}$ of graphs with $k\geq 2$ marked vertices. More precisely, we show that the maximal correlation between the marked vertices of $G_n$ decays to $0$ as $n\to\infty$, provided that the sequence $(G_n)_{n\geq0}$ has \emph{finite order of ramification} between the marked vertices (see Definition~\ref{def: finite ram order} and Proposition~\ref{prop: decay of correlation}). In particular, this condition is satisfied when the graph sequence is recursively generated by a non-degenerate and expanding gluing data (Theorem~\ref{thm: decay of correlation}). This allows us to derive important consequences for the dynamics of the associated renormalization map $F_\lambda$ (see Corollary~\ref{cor: conv_for_positive}), which will be used in the next section to establish the absence of zeros of the independence polynomials of $(G_n)_{n\geq 0}$ near  the positive real axis (see Theorem~\ref{thm: zero-free}).

We begin by formalizing the notion of decay of correlation between the marked vertices in a graph sequence in $\GG_k$.

\begin{definition}\label{def: decay of correlation}
    Let $(G_n)_{n\geq 0}$ be an arbitrary sequence of graphs, each with $k \ge 2$ marked vertices. We say that \emph{decay of correlation occurs between the marked vertices of the graphs $G_n$} if, for each $\lambda\in \R_+$, 
    \[ \theta_\lambda(G_n)=\max_{\tau,\sigma}\big|\P_{G_n,\lambda}[\tau: \sigma]- \P_{G_n,\lambda}[\tau]\big| \to 0 \text{ as } n\to \infty, 
    \]    
    where the maximum is taken over all assignments $\tau$ and $\sigma$ on disjoint subsets of the marked vertices of~$G_n$. 
\end{definition}

In the above definition, we implicitly assume that, for all sufficiently large $n$,  $\P_{G_n,\lambda}[\sigma]>0$ for every partial assignment $\sigma$ on the marked vertices of $G_n$, so that the conditional probability $\P_{G_n,\lambda}[\tau: \sigma]$ is well-defined. Equivalently, this means that no two marked vertices of $G_n$ are adjacent for all sufficiently large $n$.

We record the following straightforward consequence of decay of correlation, valid for arbitrary sequences in $\GG_k$ and requiring no bound on the vertex degrees.

\begin{lemma}\label{lem: conv_to_M}
    Let $\lambda\in \R_+$, and let $(G_n)_{n\geq 0}$ be a sequence of graphs in $\GG_k$ such that decay of correlation occurs between the marked vertices of the graphs $G_n$. Then the sequence of points $\phi_\lambda(G_n)$, $n\geq0$, converges to the invariant variety $\MM$ for $\GG_k$.
\end{lemma}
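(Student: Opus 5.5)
We work with the normalized lift $\widehat\xi_n\in\C^{2^k}$ of $\phi_\lambda(G_n)$ whose coordinates are $(\xbf)=\P_{G_n,\lambda}[\tau_\xbf]$, i.e.\ $Z^\xbf_{G_n}(\lambda)/Z_{G_n}(\lambda)$. These coordinates are non-negative reals summing to $1$, since $\lambda>0$ gives $Z_{G_n}(\lambda)>0$. So $\widehat\xi_n$ lies in the compact standard simplex $\Delta\subset\R_{\ge0}^{2^k}$, and its norm is bounded above and below uniformly in $n$. Consequently the Fubini--Study distance from $\phi_\lambda(G_n)$ to $\MM$ is controlled by the values of the defining quadratic polynomials at $\widehat\xi_n$. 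More precisely: a point of $\Delta$ at which all quadrics $(\xbf)(\ybf)-(\zbf)(\wbf)$ with $\xbf+\ybf=\zbf+\wbf$ vanish lies in (the cone over) $\MM$. Hence, by compactness of $\Delta$ and continuity, for every $\delta>0$ there is $\eta>0$ such that if all quadrics are at most $\eta$ in absolute value at a point of $\Delta$, then that point is within $\delta$ of $\MM$. It therefore suffices to show that
\[
Q_n:=\max_{\xbf+\ybf=\zbf+\wbf}\big|(\xbf)(\ybf)-(\zbf)(\wbf)\big|\to 0
\]
along the normalized lifts.

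To estimate $Q_n$, we first use \eqref{eq: inv reduction}: it suffices to bound $|(\xbf)(\ybf)-(\max(\xbf,\ybf))(\min(\xbf,\ybf))|$. As in the proof of Proposition~\ref{prop: euclidean attracting}, we pass from $(\xbf,\ybf)$ to $(\max,\min)$ by flipping one differing coordinate pair at a time. Up to the triangle inequality with at most $k$ steps, this reduces to terms of the form $|(\tau_{10})(\widetilde\tau_{01})-(\tau_{11})(\widetilde\tau_{00})|$.

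Rather than reworking that reduction, a more direct route is to mirror the proof of Lemma~\ref{lem: inv variety} quantitatively. Each coordinate is a probability $\P[\zeta]$ for a full assignment $\zeta$ on $P(G_n)$. Decay of correlation gives
\[
\big|\P[\tau\wedge\sigma]-\P[\tau]\P[\sigma]\big|=\P[\sigma]\,\big|\P[\tau:\sigma]-\P[\tau]\big|\le\theta_\lambda(G_n)
\]
whenever $\P[\sigma]>0$, and the inequality holds trivially when $\P[\sigma]=0$. Iterating this over the $k$ marked vertices yields
\[
\Big|\P[\zeta]-\prod_{j}\P[\zeta_j]\Big|\le k\,\theta_\lambda(G_n)
\]
for every full assignment $\zeta$, where $\zeta_j$ is the restriction of $\zeta$ to the vertex labeled $j$. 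This telescoping step uses only the fact that all probabilities are bounded by $1$. Thus $\widehat\xi_n$ lies within $O(\theta_\lambda(G_n))$ in $\ell^\infty$ of the point $\big(\prod_j \P[\zeta_j]\big)_\zeta$, which is the image of $([\P[0_j]:\P[1_j]])_j$ under the Segre embedding \eqref{eq: segre embedding} and hence lies on the cone over $\MM$ by Lemma~\ref{lem: Segre}.

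Because both points lie in a bounded region (the second also has coordinate sum $1$), this gives Euclidean distance $O(\theta_\lambda(G_n))\to0$ from the normalized lift to the cone over $\MM$. That in turn gives Fubini--Study convergence of $\phi_\lambda(G_n)$ to $\MM$, since the lifts have norms bounded below by $2^{-k/2}$. This approach makes the quadric estimate and the compactness argument unnecessary.

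The main point to handle carefully is the telescoping estimate. We will condition successively on initial segments $\sigma=\zeta_1\wedge\dots\wedge\zeta_{i-1}$ and write
\[
\P[\zeta_1\wedge\dots\wedge\zeta_i]-\P[\zeta_1\wedge\dots\wedge\zeta_{i-1}]\,\P[\zeta_i]
\]
in terms of $\P[\zeta_i:\sigma]-\P[\zeta_i]$. The step must be handled separately when $\P[\sigma]=0$, but in that case both terms vanish. No degree bound is needed, which matches the lemma's claim.
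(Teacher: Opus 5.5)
Your proposal is correct, but your final argument (the ``more direct route'') is genuinely different from the paper's.

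The paper argues qualitatively. By compactness of $\P^{2^k-1}$ it takes a limit point $\xi$ of $\phi_\lambda(G_n)$ with a probability-vector lift $\widehat\xi$. Decay of correlation then shows that $\widehat\xi$ satisfies the no-direct-correlation identities \eqref{eq: no direct correlation}, with conditioned polynomials read as coordinate subsums. The paper then invokes the proofs of Lemma~\ref{lem: direct vs conditional correlation} (legitimate because the coordinate sum is $1\neq 0$) and Lemma~\ref{lem: inv variety} (the inclusion--exclusion argument) to conclude that the quadrics defining $\MM$ vanish at $\widehat\xi$.

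You instead compare the probability vector directly with the product vector $\big(\prod_j \P[\zeta_j]\big)_\zeta$. The telescoping bound $|\P[\zeta]-\prod_j\P[\zeta_j]|\le (k-1)\,\theta_\lambda(G_n)$ controls the difference. The product vector lies on the affine cone over $\MM$; this needs only the easy inclusion of the Segre variety in $\MM$, which can even be checked directly from Definition~\ref{def: inv variety}.

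What each route buys:
\begin{itemize}
\item Your route bypasses both the conditional-correlation machinery and the inclusion--exclusion step.
\item It yields a quantitative estimate $\dist(\phi_\lambda(G_n),\MM)=O(\theta_\lambda(G_n))$ in the Fubini--Study metric. Combined with Remark~\ref{rem: correlation_decay}, this shows that in the setting of Theorem~\ref{thm: decay of correlation} the convergence to $\MM$ is exponentially fast in $n$.
\item The paper's argument gives only qualitative convergence, but it is shorter given the algebraic characterization of $\MM$ already established.
\end{itemize}

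The compactness estimate for the quadrics and the max/min flipping reduction in your first two paragraphs are not used by your final argument, so you can drop them.
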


\begin{proof} Since $\P^{2^k-1}$ is compact, it is sufficient to prove that any limit point of the sequence $(\phi_\lambda(G_n))_{n\geq0}$ lies in the invariant variety $\MM$. 

Let $\xi$ be a limit point of a subsequence $\xi_r=\phi_\lambda(H_r)$ with $H_r=G_{n_r}$, $r\geq 0$. We will work in the chart 
\[\Sigma_1:=\big\lbrace\sum_{\xbf\in \{0,1\}^k} (\xbf)=1\big\rbrace\]
on $\P^{2^k-1}$. Note that since $\lambda>0$, each $\xi_r$ has a lift $\widehat{\xi}_r\in \Sigma_1$ with non-negative coordinates. In fact, $\widehat{\xi}_r$ is the probability vector with entries $\P_{H_r,\lambda}[\xbf]$, $\xbf\in \{0,1\}^k$. It follows that the limit point $\xi$ has a lift $\widehat{\xi}\in \Sigma_1$, which is a probability vector as well. 

Let $\tau$ and $\sigma$ be arbitrary assignments on disjoint subsets of the marked vertices of~$H_r$. Then for all sufficiently large $r$ we have $\P_{H_r,\lambda}[\sigma]>0$, and thus 
\[
\big|\P_{H_r,\lambda}[\tau\wedge \sigma]- \P_{H_r,\lambda}[\tau]\cdot\P_{H_r,\lambda}[\sigma]\big|=\P_{H_r,\lambda}[\sigma]\cdot \big|\P_{H_r,\lambda}[\tau: \sigma]- \P_{H_r,\lambda}[\tau]\big| \leq  \big|\P_{H_r,\lambda}[\tau: \sigma]- \P_{H_r,\lambda}[\tau]\big|.
\]
By hypothesis, decay of correlation occurs between the marked vertices of the graphs $G_n$, and thus
 \[ \max_{\tau,\sigma}\big|\P_{H_r,\lambda}[\tau\wedge \sigma]- \P_{H_r,\lambda}[\tau]\cdot\P_{H_r,\lambda}[\sigma]\big| \to 0 \text{ as } r\to \infty,
    \]    
 where the maximum is taken over all assignments $\tau$ and $\sigma$ on disjoint subsets of the marked vertices of~$H_r$. It follows that the coordinates of $\widehat{\xi}$ satisfy equations~\eqref{eq: no direct correlation}, where we interpret the conditioned independence polynomials as the sum of corresponding coordinates of $\widehat{\xi}$. By the proof of Lemma~\ref{lem: direct vs conditional correlation}, the coordinates of $\widehat{\xi}$ then also satisfy equations~\eqref{eq: no correlation}. Finally, the proof of Lemma~\ref{lem: inv variety} implies that the defining equations of $\MM$ vanish at $\widehat{\xi}$, which finishes the proof of the lemma.\end{proof}

The main result of this section is the following.

\begin{theorem}\label{thm: decay of correlation}
     Let $(H,\Phi)$ be a non-degenerate and expanding gluing data with parameters $k\geq 2, m\geq 2$, and let $\RR=\RR_{(H,\Phi)}$ be the associated graph recursion on $\GG_k$. Suppose $(G_n)_{n\geq 0}$, with $G_{n+1}=\RR(G_n)$, is a recursive graph sequence with a starting graph $G_0\in \GG_k$. Then decay of correlation occurs between the marked vertices of the graphs $G_n$.
\end{theorem}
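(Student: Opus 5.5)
The plan is to verify a finite-ramification property for $(G_n)$ and then turn it into a Markov-chain contraction argument, using Lemma~\ref{lem: conditioning wrt Y}, Lemma~\ref{lem: total var bound} and the submultiplicativity of Lemma~\ref{lem: Dobrushin contraction}. First I would pass to an iterate $\RR^N$ that is pre-fixed and satisfies condition~\ref{cond: superattraction}; this is possible by Remark~\ref{rem: expansion}. Decay of correlation along each of the $N$ residue classes $G_{r+tN}$ implies it for the whole sequence, so this is harmless. By Lemma~\ref{lem: degeneration and expansion} all vertex degrees in $G_n$ are bounded by some $\Delta$. By expansion, the marked vertices of $G_n$ are pairwise non-adjacent for large $n$, so $\theta_\lambda(G_n)$ is defined.

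\emph{Step 1: nested separating sets.} Fix a marked vertex $v$ of $G_n$ with label $j$. Write $G_n=\RR^{N}(G_{n-N})=\RR^{2N}(G_{n-2N})=\cdots$. At each level $t$, let $B_t$ be the union of the copies of $G_{n-tN}$ inside $G_n$ that contain $v$. Let $Y_t$ be the set of marked vertices of these copies other than $v$.
\begin{itemize}
\item If $\Lambda(j)$ is periodic, non-degeneracy says $\#\Phi_N(\Lambda(j))=1$, so from level $2$ on $v$ lies in a single copy.
\item Hence $\#Y_t\le C(k,\Delta)$ uniformly in $n$ and $t$.
\item Every path from $v$ to a vertex outside $B_t$ must pass through $Y_t$.
\end{itemize}
The distances between the marked vertices of $G_{n-tN}$ diverge by Lemma~\ref{lem: degeneration and expansion}. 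So I can pass to a subsequence of levels $t_1<t_2<\dots<t_{T}$ for which the sets $Y_{t_i}$ are pairwise disjoint and the other marked vertices of $G_n$ lie outside $B_{t_1}$. Here $T=T(n)\to\infty$ as $n\to\infty$. This construction, in particular getting disjointness and $T\to\infty$ uniformly, is the step I expect to be the main obstacle. I would handle it by choosing $t_{i+1}$ so that the graph distance from $v$ to $Y_{t_{i+1}}$ exceeds the maximal distance from $v$ to $Y_{t_i}$.

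\emph{Step 2: contraction.} Let $\tau$ be an assignment on $\{v\}$ and $\sigma$ an admissible assignment on other marked vertices, with $W$ the set of those vertices. Iterating Lemma~\ref{lem: conditioning wrt Y} and the law of total probability~\eqref{prop:ind-poly-3} along $Y_{t_T},\dots,Y_{t_1}$ gives
\[
\bigl(\P_{G_n}[\tau:\sigma]\bigr)_{\tau}=A_1A_2\cdots A_{T}\,A_{T+1}\,e_\sigma .
\]
Here $A_i$ is the stochastic matrix of conditional probabilities $\P_{G_n}[\eta_i:\eta_{i+1}]$ between admissible assignments on consecutive separating sets. The last factor $A_{T+1}$ is the matrix from assignments on $W$ to assignments on $Y_{t_T}$, and $e_\sigma$ is the basis vector selecting its $\sigma$-column. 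Lemma~\ref{lem: total var bound} gives $\delta(A_i)\le\varepsilon(\lambda,C)<1$ for each $i$. Lemma~\ref{lem: Dobrushin contraction} then gives $\delta(A_1\cdots A_{T+1})\le\varepsilon^{T}\to0$, so any two admissible $\sigma,\sigma'$ produce distributions of $\tau$ within $\varepsilon^T$ in total variation. Since $\P_{G_n}[\tau]$ is a convex combination of the $\P_{G_n}[\tau:\sigma']$ with weights $\P_{G_n}[\sigma']$, it follows that $|\P_{G_n}[\tau:\sigma]-\P_{G_n}[\tau]|\le\varepsilon^{T(n)}$.

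\emph{Step 3: general $\tau$.} For $\tau$ supported on several marked vertices $v_1,\dots,v_r$, I would use the chain rule
\[
\P[\tau:\sigma]=\prod_i \P[\tau(v_i):\tau|_{\{v_1,\dots,v_{i-1}\}}\wedge\sigma].
\]
Each factor is within $\varepsilon^{T}$ of $\P[\tau(v_i)]$ by Step 2, applied with conditioning on $\tau|_{\{v_1,\dots,v_{i-1}\}}\wedge\sigma$. The same estimate with $\sigma$ empty shows that $\P[\tau]$ is within $O(\varepsilon^T)$ of $\prod_i\P[\tau(v_i)]$. Since $r\le k$, this bounds $\theta_\lambda(G_n)$ by $O(k\,\varepsilon^{T(n)})\to0$. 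If needed, Lemma~\ref{lem: probability lower bound} keeps the conditioning events at probability bounded below, which controls the error terms.
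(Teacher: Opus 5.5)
Your overall route is the paper's own. Lemma~\ref{lem: recursion implies finite ramification} takes as separating sets exactly the marked vertices (other than $v$) of the nested copies containing $v$. Proposition~\ref{prop: decay of correlation} then runs the same Dobrushin contraction via Lemmas~\ref{lem: conditioning wrt Y}, \ref{lem: total var bound} and~\ref{lem: Dobrushin contraction}, and the same chain-rule step for general $\tau$.

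The place where your proposal goes wrong is the step you flag as the main obstacle. No selection of levels is needed, and the rule you propose could not be carried out anyway. Because $\RR^N$ satisfies condition~\ref{cond: superattraction}, each copy $C$ of $G_{n-tN}$ containing $v$ is $\RR^N$ applied to copies of $G_{n-(t+1)N}$. The marked vertices of $C$ other than $v$ correspond to edges $\Phi_N(j')$ disjoint from the edge producing $v$, so they lie in none of the sub-copies of $C$ that contain $v$. Could a vertex of $Y_t$ lie in such a sub-copy of a different copy $C'\ni v$? Then it would be a shared, hence marked, vertex of $C'$ distinct from $v$, and the same argument applied in $C'$ rules this out. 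Thus $Y_t\cap B_{t+1}=\emptyset$, all of $Y_1,\dots,Y_{\lfloor n/N\rfloor}$ are pairwise disjoint, and you may take every level. This gives $T=\lfloor n/N\rfloor$, exactly as in the paper.

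Your distance criterion, by contrast, cannot be satisfied. For $t_{i+1}>t_i$ we have $B_{t_{i+1}}\subset B_{t_i}$ and $Y_{t_i}$ lies outside $B_{t_{i+1}}$, so every path from $v$ to $Y_{t_i}$ crosses $Y_{t_{i+1}}$. Hence $\dist(v,Y_{t_{i+1}})$ never exceeds the distances from $v$ to $Y_{t_i}$. Moreover, for disconnected $G_0$ these distances may all be infinite.

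There are also smaller inaccuracies to fix.
\begin{itemize}
\item $v$ does not lie in a single copy from level $2$ on. It lies in $\#\Phi_N(j)$ copies at every level $t\ge1$, one inside each level-one copy. The uniform bound is therefore $\#Y_t\le (k-1)m^N$, coming from $m^N$ and not from $\Delta$. The degree bound plays no role, since Lemma~\ref{lem: total var bound} has no degree hypothesis.
\item The rows of $A_{T+1}$ should be indexed by assignments on $Y_{t_1}$, not $Y_{t_T}$.
\end{itemize}

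On the positive side, you observe that $Y_t$ separates all of $B_{t+1}$ from the complement of $B_t$. This directly gives the set-to-set separation that Lemma~\ref{lem: conditioning wrt Y} requires. The paper works with the abstract Definition~\ref{def: finite ram order}, where the sets only separate $v$ from the next set, so it needs an extra pruning of the separating sets to obtain this. What it gains in return is a statement valid for any sequence with finite order of ramification.
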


\begin{remark}
    We note that decay of correlation is not a straightforward consequence of the fact that the graph distance between the marked vertices in $G_n$ increases with $n$. For example, fix $d\geq 2$ and consider the sequence of discrete $d$-dimensional tori $T_n=(\Z/2n\Z)^d$, which have $(2n)^d$ vertices. Note that the graphs $T_n$ are bipartite: the vertex set $V(T_n)$ is naturally subdivided into \emph{even} and \emph{odd sublattices} consisting of vertices with even and odd coordinate sums, respectively. For each $n$, we mark a pair $(u_n, v_n)$ of vertices in the even sublattice of $T_n$ so the graph distance between them diverges to $\infty$ as $n\to \infty$.

    It is known that the hard-core model on $T_n$ exhibits \emph{long-range order} for all sufficiently large $\lambda>0$. Namely, for every $\epsilon>0$, at sufficiently large fixed $\lambda>0$, there are disjoint \emph{even-phase} and \emph{odd-phase} events $\Omega^{even}_{n}$ and $\Omega^{odd}_{n}$ such that, for a random independent subset $I\subset V(T_n)$ and any fixed vertex $w$ in the even sublattice,
    \begin{equation}\label{eq: torus_1}
        \P_{T_n,\lambda}[\Omega^{even}_{n}]=\P_{T_n,\lambda}[\Omega^{odd}_{n}]=1/2+o(1)
    \end{equation}
    and such that 
    \begin{equation}\label{eq: torus_2}
        \P_{T_n,\lambda}[w\notin I: \Omega^{even}_{n}]\leq \epsilon,
    \end{equation}
    where $o(1)$ refers to $n\to\infty$ with $\lambda$ fixed.
    These estimates follow from Pirogov--Sinai theory combined with cluster expansions, by rewriting the independence polynomial $Z_{T_n}(\lambda)$ in terms of the \emph{contour partition function} on $T_n$ with parameter $z=\frac{1}{\lambda}$; see  \cite{BorgsImbrie} and \cite[Section~6]{HelmuthPerkinsRegts}. Informally speaking, the event $\Omega^{even}_{n}$ (resp.\ $\Omega^{odd}_{n}$) consists of those independent sets of $T_n$ that deviate from the even (resp.\ odd) sublattice on a small set of vertices.

    Let us now fix some $\epsilon\in (0,\frac{1}{4})$. By vertex transitivity, we have that $\P_{T_n,\lambda}[w\in I]$ does not depend on $w\in V(T_n)$, and since two adjacent vertices cannot be in $I$ at the same time, we actually have $\P_{T_n,\lambda}[w\in I]\leq \frac{1}{2}$. Using~\eqref{eq: torus_1} and~\eqref{eq: torus_2}, we obtain  
    \begin{align*}
        \P_{T_n,\lambda}[u_n,v_n\in I]&\geq \P_{T_n,\lambda}[u_n,v_n\in I: \Omega_n^{even}] \cdot \P_{T_n,\lambda}[\Omega_n^{even}] \\&\geq (1-\P_{T_n,\lambda}[u_n\notin I: \Omega_n^{even}]-\P_{T_n,\lambda}[v_n\notin I: \Omega_n^{even}])\cdot \P_{T_n,\lambda}[\Omega_n^{even}] \\&\geq \frac{1}{2} - \epsilon + o(1),
    \end{align*}   
    and thus 
    \[
       \P_{T_n,\lambda}[u_n\in I : v_n\in I] -\P_{T_n,\lambda}[u_n\in I] = \frac{\P_{T_n,\lambda}[u_n,v_n\in I]}{\P_{T_n,\lambda}[v_n\in I]} - \P_{T_n,\lambda}[u_n\in I] \geq \frac{1}{2}-2\epsilon +o(1).
    \] 
    It follows that decay of correlation does not occur, even though the graph distance between $u_n$ and $v_n$ diverges.
\end{remark}

We now record an immediate consequence of Theorem~\ref{thm: decay of correlation} and Lemma~\ref{lem: conv_to_M}.

\begin{corollary}\label{cor: conv_for_positive}
    Suppose we are in the setting of Theorem~\ref{thm: decay of correlation}, and let $F_\lambda : \P^{2^k-1}\dashrightarrow \P^{2^k-1}$ be the renormalization map associated with the graph recursion $\RR$ for some fixed parameter $\lambda\in \R_+$.  Then the sequence of points $F_\lambda^n(\phi_\lambda(G_0))=\phi_\lambda(G_n)$, $n\geq0$, converges to the invariant variety $\MM$ for $\GG_k$.
\end{corollary}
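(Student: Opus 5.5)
The plan is to combine Theorem~\ref{thm: decay of correlation} with Lemma~\ref{lem: conv_to_M}, using the semi-conjugacy from Corollary~\ref{cor: renormalization map} to identify the orbit with the points $\phi_\lambda(G_n)$. Only two checks are not purely formal: that the orbit is well-defined for positive $\lambda$, and that the hypotheses of Lemma~\ref{lem: conv_to_M} hold.

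\emph{Step 1 (the orbit is well-defined).} For $\lambda\in\R_+$ every coordinate $Z^{\xbf}_{G}(\lambda)$ of $\widehat{\phi}_\lambda(G)$ is non-negative. Their sum is $Z_G(\lambda)\geq 1>0$, since the empty set is always independent. So $\widehat{\phi}_\lambda(G)\neq 0$ for every $G\in\GG_k$, and $\phi_\lambda(G)\in\P^{2^k-1}$ is defined.

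By Corollary~\ref{cor: renormalization map}, $\widehat{F}_\lambda(\widehat{\phi}_\lambda(G_n))=\widehat{\phi}_\lambda(G_{n+1})$, and this vector is non-zero by the same argument. Hence $\phi_\lambda(G_n)$ never lies in the indeterminacy set of $F_\lambda$. Induction on $n$ then gives $F^n_\lambda(\phi_\lambda(G_0))=\phi_\lambda(G_n)$ for all $n\ge 0$, and this point lies outside $\IS(F^n_\lambda)$.

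\emph{Step 2 (decay of correlation).} The recursion $\RR$ is non-degenerate and expanding with $k\geq 2$. Theorem~\ref{thm: decay of correlation} therefore gives decay of correlation between the marked vertices of the graphs $G_n$ in the sense of Definition~\ref{def: decay of correlation}. In particular, for all large $n$ the marked vertices of $G_n$ are pairwise non-adjacent, so the conditional probabilities in $\theta_\lambda(G_n)$ are defined. This is also guaranteed independently by Lemma~\ref{lem: degeneration and expansion}, since the distances between marked vertices diverge.

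\emph{Step 3 (conclusion).} Lemma~\ref{lem: conv_to_M} applies to the sequence $(G_n)_{n\ge0}$ and the fixed $\lambda\in\R_+$. It yields $\phi_\lambda(G_n)\to\MM$, which by Step~1 is exactly $F_\lambda^n(\phi_\lambda(G_0))\to\MM$.

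No step is a real obstacle; all the substance sits in Theorem~\ref{thm: decay of correlation}. The only point to watch is the well-definedness in Step~1: positivity of $\lambda$ is what keeps the orbit away from the indeterminacy locus, so that the projective semi-conjugacy~\eqref{eq: semi-conj_for_F} holds at every step.
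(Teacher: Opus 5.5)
Your proposal is correct and follows the paper's route exactly: the paper also obtains the corollary by combining Theorem~\ref{thm: decay of correlation} with Lemma~\ref{lem: conv_to_M}. Your Step~1, which checks that the orbit avoids the indeterminacy set because $Z_{G_n}(\lambda)\geq 1$ for $\lambda\in\R_+$, makes explicit the identity $F^n_\lambda(\phi_\lambda(G_0))=\phi_\lambda(G_n)$ that the paper only asserts in the introduction.
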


 \begin{rem}\label{rem: conv_to_M0}
    If, in addition to the assumptions of Corollary~\ref{cor: conv_for_positive}, we assume that the graph recursion $\RR$ is pre-fixed, then the sequence $(\phi_\lambda(G_n))_{n\geq 0}$ converges to a fixed point in the subvariety $\MM_0(\lambda)\subset \MM$ from Proposition~\ref{prop: pre fixed dynamics}. Indeed, if $\xi$ is a limit point of the sequence, then $\xi\in \MM$ by the corollary, and thus the fixed point $F_\lambda(\xi)\in \MM_0(\lambda)$ is also a limit point. Theorem~\ref{thm: superattraction} and Lemma~\ref{lem: trans_superattr} now imply that $(\phi_\lambda(G_n))_{n\geq 0}$ converges to $F_\lambda(\xi)$. 
 \end{rem}

Before proving Theorem~\ref{thm: decay of correlation}, we first provide some auxiliary definitions and facts.

\begin{definition}[Finite order of ramification]\label{def: finite ram order} 
Let $k\geq 2$, and let $(G_n)_{n\geq 0} \subset \GG_k$ be a 
sequence of marked graphs.  We say that $(G_n)_{n\geq 0}$ has \emph{finite order of ramification} if there exists a constant $M \in \N$ such that the following condition holds:

For each $n\geq 0$ and each marked vertex $v\in P(G_n)$, there exist pairwise disjoint subsets 
\begin{equation}\label{eq: sep_seq}
    U_0=\{v\}, U_1, \dots, U_T, U_{T+1}=P(G_n)\setminus \{v\} \subset V(G_n),
\end{equation}    
with $T=T(v,n)\geq 0$, such that:
\begin{enumerate}[label=(\roman*)]
\item\label{item: FOR-i} every subset $U_t$ is non-empty and contains at most $M$ vertices;
\item\label{item: FOR-ii} for every $t\in\{1,\dots, T\}$, the set $U_t$ \emph{separates} $U_0$ from $U_{t+1}$ in $G_n$, meaning that each path in $G_n$ from $v$ to a vertex in $U_{t+1}$ passes through some vertex in $U_t$;
\item\label{item: FOR-iii} $\min_{v\in P(G_n)}{T(v,n)} \to \infty$ as $n\to \infty$.
\end{enumerate}

We will refer to a sequence $U_0, \dots, U_{T+1}$ as in \eqref{eq: sep_seq} that satisfies condition~
\ref{item: FOR-ii} above as a \emph{separating sequence} for the pair $(G_n, v)$, and to its elements as \emph{separating sets}. 
\end{definition}

This notion was inspired by Definition~\ref{def: finite order of ramification} for a single infinite graph. The differences are clear: here we consider a sequence of finite graphs $G_n$, and only paths between the marked vertices are relevant.

\begin{remark}
    If $(G_n)_{n\geq 0}\subset \GG_k$ has finite order of ramification, then the graph distance between the marked vertices in $G_n$ necessarily diverges as $n\to \infty$. Consequently, for each $\lambda\in \R_+$ and each partial vertex assignment $\sigma$ on $P(G_n)$, the probability $\P_{G_n,\lambda}[\sigma]$ is positive for all sufficiently large $n$.
\end{remark}

The next lemma shows that the recursive graph sequences generated by a non-degenerate and expanding gluing data have finite order of ramification. 

\begin{lemma}\label{lem: recursion implies finite ramification}
    Let $(H,\Phi)$ be a non-degenerate and expanding gluing data with parameters $k\geq 2, m\geq 2$. Suppose $(G_n)_{n\geq 0}\subset \GG_k$ is a recursive sequence of marked graphs constructed using the graph recursion operator $\RR=\RR_{(H,\Phi)}$. Then $(G_n)_{n\geq 0}$ has finite order of ramification.
\end{lemma}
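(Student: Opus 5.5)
Since non-degeneracy and expansion pass to iterates, I will first replace $\RR$ by a suitable iterate $\RR^N$. By Lemma~\ref{lem: degeneration and expansion} and Remark~\ref{rem: expansion}, I choose $N$ so that $(H_N,\Phi_N)$ is pre-fixed, satisfies condition~\ref{cond: superattraction}, and has $\#\Phi_N(j)=1$ for every periodic label $j$. Finite order of ramification for the full sequence $(G_n)$ then follows from that of the $N$ interleaved subsequences $(G_{r+Nn})_{n\ge0}$, $r<N$. Each of these is a $\RR^N$-recursive sequence with starting graph $G_r$, so it suffices to treat a single pre-fixed recursion $\RR$ with these properties, with arbitrary starting graph $G_0$.

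The separating sets will come from the nested copy structure. Unfolding $n$ steps, $G_n$ is a union of $m^n$ copies of $G_0$, indexed by words of length $n$ in $\{1,\dots,m\}$. Fix a marked vertex $v$ of $G_n$ with label $j$. At level $t$ (with $1\le t\le n$, counted from the top), $v$ lies in the level-$t$ subgraphs $G_{n-t}(w)$ attached along the chain $\Phi(j),\Phi(\Lambda(j)),\dots$. Since $\Lambda(j)$ is periodic, hence fixed, and $\#\Phi(\Lambda(j))=1$, from level $2$ onwards this chain selects exactly one copy. So $v$ is a marked vertex of a uniquely determined nested chain of copies $K_t\cong G_{n-t}$, together with at most $\#\Phi(j)\le m$ copies at level $1$.

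The key step is to set $U_t := P(K_{n-t})\setminus\{v\}$, namely the marked vertices of the level-$(n-t)$ copy containing $v$, other than $v$ itself, for $t$ ranging between $1$ and roughly $n-1$. Each such set has at most $k$ vertices. The separation property holds because a copy $K$ inside the recursive construction meets the rest of the graph only at its marked vertices: every vertex created by gluing corresponds to labeled vertices of copies. Hence any path from $v\in K_{n-t}$ to a vertex outside $K_{n-t}$ must exit through $P(K_{n-t})\setminus\{v\}$.

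I still need nonemptiness and disjointness of the $U_t$, and that the remaining marked vertices $P(G_n)\setminus\{v\}$ lie outside the copies used. For disjointness, condition~\ref{cond: superattraction} should ensure that a marked vertex of $K_s$ other than $v$ is never a marked vertex of the next smaller copy $K_{s-1}$. I expect this to be the main obstacle, and I would handle it by an induction on the level, using that distinct labels map to disjoint edges of $H$. Nonemptiness holds because $k\ge2$. To get strict separation and avoid overlaps near the top levels, I would discard a bounded number of levels near $t=0$ and $t=n$.

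This yields $M=k$ (or $km$ if the top level must be included) and $T(v,n)\ge n-c$, which tends to $\infty$. Isolated vertices and disconnected $G_0$ cause no trouble, since separation concerns only the paths that exist.
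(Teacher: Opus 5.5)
Your overall strategy is the same as the paper's. You pass to a pre-fixed iterate satisfying $(\star)$ with $\#\Phi(j)=1$ for periodic $j$, and you use the marked vertices of the nested copies containing $v$ as separating sets. The paper does this as an induction in steps of $N$ via $G_n=\RR^N(G_{n-N})$, but that difference is cosmetic. The construction fails, however, for marked vertices $v$ whose label $j$ is \emph{non-periodic and critical}. Non-degeneracy only forbids periodic critical labels, so $\#\Phi(j)\ge 2$ is allowed for non-periodic $j$. In that case $v$ lies in the $\#\Phi(j)$ level-$1$ copies $K^{(i)}_1$, $i\in\Phi(j)$. Inside \emph{each} of them (since $\Lambda(j)$ is fixed with $\#\Phi(\Lambda(j))=1$) there is a unique nested chain $K^{(i)}_2\supset K^{(i)}_3\supset\cdots$ containing $v$. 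So $v$ lies in $\#\Phi(j)$ copies at \emph{every} level $t\ge 1$, not only at level $1$. Your step ``from level $2$ onwards this chain selects exactly one copy'' is true only within a single level-$1$ copy, and the ``uniquely determined nested chain'' does not exist.

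With a single chain $i_0$ and $U_t=P(K^{(i_0)}_{n-t})\setminus\{v\}$, separation fails. A path from $v$ can run through $K^{(i_1)}_1$ with $i_1\neq i_0$, leave it through its marked vertices, and reach $U_{t+1}$ without entering $K^{(i_0)}_{n-t}$. Concretely, take $k=2$, $m=3$, $V(H)=\{1,2,3\}$ with the label-$1$ edge $\{1,2,3\}$ and label-$2$ edges $\{1,2\},\{3\}$, and set $\Phi(1)=\{1,2\}$, $\Phi(2)=\{3\}$. This data is pre-fixed, non-degenerate, satisfies $(\star)$, and label $1$ has local degree $2$. With $G_0$ a single edge, already in $G_3$ your $U_1$ (taken from one chain) does not separate the vertex labeled $1$ from $U_2$, because there is a path through the other level-$1$ copy. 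Your hedge ``$km$ if the top level must be included'' does not repair this, since the defect occurs at every level.

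The fix, which is exactly what the paper does, is to take
\[
U_t=\bigcup_{i\in\Phi(j)}P\big(K^{(i)}_{n-t}\big)\setminus\{v\},
\]
so that $\#U_t\le\#\Phi(j)(k-1)$, i.e. at most $m^N(k-1)$ in terms of the original gluing data.

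Separation then holds because every path from $v$ starts inside $\bigcup_i K^{(i)}_{n-t}$ and can leave it only through a marked vertex of one of these copies other than $v$. Disjointness of the $U_t$ follows from three facts:
\begin{itemize}
\item $(\star)$ applied inside each chain;
\item distinct level-$1$ copies meet only in their marked vertices;
\item the deeper copies of chain $i$ meet $P(K^{(i)}_1)$ only in $v$.
\end{itemize}
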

\begin{proof}
    The idea of the proof is simple: for a graph $G_n$ and a marked vertex $v\in P(G_n)$ the separating sets $U_t$ will correspond to the marked vertices of the copies $G_t(i)$ that contain the vertex $v$. The expansion property of $\RR$ guarantees that a path from $v$ to another marked vertex of $G_n$ must pass through an increasing number of these separating sets as $n \rightarrow \infty$, while the non-degeneracy of $\RR$ guarantees that the number of copies $G_t(i)$ containing $v$ stays bounded, which implies that the cardinality of each separating set $U_t$ is uniformly bounded. We now provide a careful realization of this idea.
    
    Given $n\in \N$, let us write $(H_n,\Phi_n)$ for the gluing data with parameters $k,m^n$ induced by the $n$-th iterate of the recursion operator $\RR=\RR_{(H,\Phi)}$. Since the given gluing data $(H,\Phi)$ is expanding, there is an iterate $N$ such that the edges $\Phi_{N}(j)$ and $\Phi_N(j')$ are disjoint for each pair of distinct labels $j,j'\in \{1,\dots,k\}$. Moreover, we may assume that the induced dynamical system $\Lambda^N=\Lambda^N_{(H,\Phi)}$ on labels is pre-fixed: $\Lambda^N(j)=j$ for each periodic label $j\in \{1,\dots, k\}$, while $\Lambda^{2N}(j)=\Lambda^N(j)$ for each non-periodic label $j$. Since $(H,\Phi)$ is non-degenerate, we then have that each periodic label $j$ is not critical for $\Lambda^N$, that is, $\#\Phi_N(j)=1$.    

    First, we construct inductively the required separating sequence $U_0,\dots, U_{T+1}$ for each pair $(G_n,v)$ where the marked vertex $v\in P(G_n)$ has some periodic label $j\in \{1,\dots,k\}$. When $n\in \{0,\dots, N-1\}$, we simply set $T=0$, $U_0=\{v\}$, and $U_{T+1}= P(G_n)\setminus \{v\}$. When $n\geq N$, we can write $G_n=\RR^N(G_{n-N})$, that is, consider $G_n$ as the result of applying the graph recursion $\RR^N$ to $G_{n-N}$. Since $\#\Phi_N(j)=1$ and $\Lambda^N(j)=j$, the marked vertex $v$ of $G_n$ is induced by a single copy $G_{n-N}(i)$ of $G_{n-N}$ during the corresponding identification process, and it is also labeled $j$ in this copy. Denoting by $G'_{n-N}\in \GG_k$ the corresponding (marked) subgraph of $G_n$, we may consider the respective separating sequence \[U'_0=\{v\},\, U'_1,\, \dots,\, U'_{T'+1}=P(G'_{n-N})\setminus\{v\} \subset V(G_n)\] for the pair $(G'_{n-N}, v)$. We then set $T=T(n,v):=T'+1$ and 
    \[U_0=U'_0=\{v\},\, \dots, \, U_T=U'_{T}, \,U_{T+1}=P(G_n)\setminus\{v\}.\] 
    By the choice of the iterate $N$, the vertices in $U_{T+1}$ are induced by the copies of $G_{n-N}$ that are distinct from $G_{n-N}(i)$, and thus the set $U_T$ separates $U_0$ from $U_{T+1}$ in $G_n$. Moreover, by the inductive construction, it follows that $U_t$ separates $U_0$ from $U_{t+1}$ in $G_n$ for each $t\in \{1,\dots, T\}$. Thus the constructed sets $U_0,\dots,U_{T+1}$ form a separating sequence for $(G_n,v)$. It also follows that $T=\lfloor{n/N}\rfloor$ and $\#U_1=\dots=\#U_{T+1}=k-1$.

    When the label $j$ of a marked vertex $v\in P(G_n)$ is non-periodic, we similarly set $T=0$, $U_0=\{v\}$, and $U_{T+1}= P(G_n)\setminus \{v\}$ when $n\in \{0,\dots, N-1\}$. Otherwise, when $n\geq N$, we can again write $G_n=\RR^N(G_{n-N})$. For each $i\in \Phi_N(j)$, let $G'_{n-N}(i)\in \GG_k$ be the (marked) subgraph of $G_n$ corresponding to the copy $G_{n-N}(i)$ of $G_{n-N}$. Then $v$ is a marked vertex in each such subgraph and has the periodic label $\Lambda^N(j)$. 
    Taking now the respective separating sets in each subgraph $G'_{n-N}(i)$ and adding $U_{T+1}=P(G_n)\setminus\{v\}$, we obtain the desired separating sequence for $(G_n,v)$. More precisely, if \[U'_0(i)=\{v\},\, U'_1(i), \dots, U'_{T'+1}(i)=P(G'_{n-N}(i))\setminus\{v\} \subset V(G_n)\] denotes the respective separating sequence for $(G'_{n-N}(i),v)$ for $i\in \Phi_N(j)$, we set $T=T(n,v):=T'+1$, $U_0=\{v\}$, $U_{T+1}= P(G_n)\setminus \{v\}$, and 
    \[U_t=\bigcup_{i\in \Phi_N(j)}U'_t(i), \quad \text{for each $t\in \{1,\dots, T\}$}.\] 
    By the inductive construction, the sets $U_0,\dots,U_{T+1}$ form a separating sequence for $(G_n,v)$. It also follows that $T=\lfloor{n/N}\rfloor$ and \[(k-1)\leq \#U_t \leq \#\Phi_N(j)\cdot (k-1) \leq m^N(k-1)\] for each $t\in \{1,\dots, T\}$.
    
    By construction, for all $n\geq 0$ and $v\in P(G_n)$, the size of each separating set for $(G_n,v)$ is at most $M:=(k-1)m^N$. Moreover, \[\min_{v\in P(G_n)}{T(v,n)}= 
\left\lfloor{n/N}
\right\rfloor \to \infty \quad\text{as $n\to \infty$}.\] This finishes the proof of the lemma.
\end{proof}

By Lemma~\ref{lem: recursion implies finite ramification}, Theorem~\ref{thm: decay of correlation} is an immediate consequence of the following more general result.

\begin{prop}\label{prop: decay of correlation}
    Let $k \ge 2$, and let $(G_n)_{n\geq 0}\subset \GG_k$ be a sequence of marked graphs having finite order of ramification. Then decay of correlation occurs between the marked vertices of the graphs $G_n$.

\end{prop}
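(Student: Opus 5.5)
Fix $\lambda\in\R_+$ and write $\P=\P_{G_n,\lambda}$. The plan is to first treat a single marked vertex $v$: let $\tau$ be an assignment on $X=\{v\}$ and $\sigma$ an assignment on a non-empty subset $W\subset P(G_n)\setminus\{v\}$. I will show
\[
|\P[\tau:\sigma]-\P[\tau]|\le \varepsilon(\lambda,M)^{T(v,n)}
\]
for the constant $\varepsilon(\lambda,M)\in(0,1)$ of Lemma~\ref{lem: total var bound}. The general case of $\tau$ on several marked vertices then follows by peeling off one vertex at a time with the chain rule and the triangle inequality. The errors then add up to a bound of the form $C(k)\,\varepsilon^{\min_v T(v,n)}$, which tends to $0$ by condition~\ref{item: FOR-iii}.

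To prove the single-vertex bound, take the separating sequence $U_0=\{v\},U_1,\dots,U_T,U_{T+1}=P(G_n)\setminus\{v\}$, and assume $n$ is large so that all relevant conditional probabilities are defined. I would like to apply Lemma~\ref{lem: conditioning wrt Y} to factor the conditional law as a Markov chain along $U_{T+1}\to U_T\to\cdots\to U_1\to U_0$. The difficulty is that the sets $U_t$ are only required to be pairwise disjoint and to separate $U_0$ from $U_{t+1}$; they need not separate $U_{t-1}$ from $U_{t+1}$, so the lemma does not apply directly between consecutive sets. I would therefore condition with the accumulated information instead. By the law of total probability and Lemma~\ref{lem: conditioning wrt Y},
\[
\P[\tau : \sigma] = \sum_{\eta_1}\P[\tau:\eta_1]\,\P[\eta_1:\sigma],
\]
where $\eta_1$ runs over assignments on $U_1$. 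For the next step I would use the joint law of $(U_1,\dots,U_t)$, or an induced subgraph argument: let $H_t$ be the component structure cut out by $U_t$ as in the proof of Lemma~\ref{lem: conditioning wrt Y}, and restrict to vertices reachable from $v$ without passing through $U_t$.

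A cleaner route, which I would try first, is to prove $\sup_{\sigma,\tilde\sigma}\|\P[\cdot:\sigma]-\P[\cdot:\tilde\sigma]\|_{TV}$ on $U_0$ by induction on $t$. Let $G^{(t)}$ be the induced subgraph consisting of $U_t$ together with the vertices reachable from $v$ without crossing $U_t$. Conditioning on $U_t$ screens off everything outside, and $G^{(t)}$ contains $U_1,\dots,U_{t-1}$ with the same separating property. Then the matrix $\bigl(\P[\tau:\sigma]\bigr)_{\tau,\sigma}$ from assignments on $U_{T+1}$ to assignments on $U_0$ factors as $A_1A_2\cdots A_T B$. Here $A_t$ has entries $\P_{G^{(t+1)}}[\eta_t:\eta_{t+1}]$, a stochastic matrix from admissible assignments on $U_{t+1}$ to assignments on $U_t$. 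This factorization holds because, inside $G^{(t+1)}$ conditioned on $\eta_{t+1}$, the set $U_t$ separates $v$ from $U_{t+1}$. Note also that the $M$-bound is needed only for the rows, i.e.\ for the sets $U_t$ with $t\le T$; row dimensions are bounded since $\#U_t\le M$. Each $A_t$ (the target set $U_t$ has size $\le M$) has Dobrushin coefficient $\le\varepsilon(\lambda,M)$ by Lemma~\ref{lem: total var bound}. Submultiplicativity (Lemma~\ref{lem: Dobrushin contraction}) then gives $\delta\le\varepsilon(\lambda,M)^{T}$.

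Finally, $\P[\tau]$ is a convex combination of the $\P[\tau:\sigma']$ over $\sigma'$ with $\P[\sigma']>0$. Hence $|\P[\tau:\sigma]-\P[\tau]|$ is at most the Dobrushin coefficient of the full product, which is at most $\varepsilon(\lambda,M)^{T(v,n)}$. The main obstacle is making the Markov factorization rigorous. This requires verifying, via the argument of Lemma~\ref{lem: conditioning wrt Y} applied inside the nested subgraphs $G^{(t)}$, that conditioning on $\eta_{t+1}$ together with all later layers reduces to conditioning on $\eta_{t+1}$ alone. It also requires checking that the law of $U_t$ in $G^{(t+1)}$ agrees with the law in $G_n$ given $\eta_{t+1}$, while handling the admissibility restrictions on the columns.
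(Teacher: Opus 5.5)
The overall architecture matches the paper's: a chain of stochastic matrices through the separating sets, a Dobrushin bound per layer from Lemma~\ref{lem: total var bound}, submultiplicativity from Lemma~\ref{lem: Dobrushin contraction}, the convex-combination comparison with $\P[\tau]$, and peeling for several vertices. The gap is the step you yourself call ``the main obstacle'': the Markov factorization is never established, and the sketch you give for it is wrong as stated.

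It is not true that $G^{(t)}$ contains $U_1,\dots,U_{t-1}$. The definition only requires $U_{t-1}$ to separate $v$ from $U_t$, so $U_{t-1}$ may contain vertices lying beyond $U_t$. Concretely, take a path $v=a_0,a_1,a_2,a_3,w$ with $P(G_n)=\{v,w\}$. Then $U_1=\{a_1,a_3\}$, $U_2=\{a_2\}$, $U_3=\{w\}$ is a separating sequence, but $G^{(2)}$ has vertex set $\{a_0,a_1,a_2\}$. So your entries $\P_{G^{(2)}}[\eta_1:\eta_2]$ are not even defined for assignments $\eta_1$ on $U_1$. Moreover, the Markov property genuinely fails for these raw layers. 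With $\eta_1=(a_1\mapsto 0,\,a_3\mapsto 1)$, $\eta_2=(a_2\mapsto 0)$ and $\sigma=(w\mapsto 1)$, we have $\P[\eta_1:\eta_2\wedge\sigma]=0<\P[\eta_1:\eta_2]$.

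The reason you cite for the factorization, that $U_t$ separates $v$ from $U_{t+1}$, is also not the property needed. What is needed is that $U_{t+1}$ separates the whole layer $U_t$ from the later layers. The product $A_1\cdots A_T B$ is also misindexed: it needs a leading matrix from $U_1$ to $U_0$.

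The missing idea is to prune the layers, as the paper does. Replace $U_t$ by $U'_t$, the set of $u\in U_t$ reachable from $v$ by a path avoiding $U_t\setminus\{u\}$.

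\begin{itemize}
\item Such a path cannot meet $U_{t+1}$. Its initial segment up to a vertex of $U_{t+1}$ would have to meet $U_t$ strictly before $u$.
\item By induction from the definition, $U_{t+1}$ separates $v$ from $U_{T+1}$.
\item Concatenate a path from $v$ to $u\in U'_t$ of the above kind with any path from $u$ to $U_{T+1}$. The second path must meet $U_{t+1}$, and the first vertex where it does lies in $U'_{t+1}$.
\item Hence $U'_{t+1}$ separates $U'_t$ from $U_{T+1}$.
\end{itemize}

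Lemma~\ref{lem: conditioning wrt Y}, applied directly in $G_n$ to the pruned layers, then gives $\P[\eta_t:\eta_{t+1}\wedge\sigma]=\P[\eta_t:\eta_{t+1}]$. The nested subgraphs $G^{(t)}$ become unnecessary, and the remainder of your argument goes through. That includes the bound $\delta\le\varepsilon(\lambda,M)^{T+1}$ and the estimate $|\P[\tau:\sigma]-\P[\tau]|\le\delta$.
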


\begin{proof}
    We will first prove the desired convergence statement from Definition~\ref{def: decay of correlation} for the case when $\tau$ is an assignment on a single marked vertex $v$ of $G_n$; in this case, $\sigma$ is an assignment on a subset of $P(G_n)\setminus \{v\}$. 

    Fix a sufficiently large $n\in\N$ such that no two marked vertices of $G_n$ are connected by an edge, and let 
    \[
        U_0=\{v\}, U_1, \dots, U_T, U_{T+1}=P(G_n)\setminus \{v\} 
    \]
    be a separating sequence for $(G_n, v)$ as in Definition~\ref{def: finite ram order}, where $v\in P(G_n)$. For each $t\in\{1,\dots, T\}$, 
    let us consider the subset $U'_t\subset U_t$ consisting of those vertices $u\in U_t$ that may be connected to $v$ by a path in $G_n$ that does not pass through any vertex in $U_t\setminus\{u\}$. It is immediate that $U'_0=U_0, U'_1, \dots, U'_{T+1}=U_{T+1}$ is a separating sequence for $(G_n, v)$. Moreover, by construction, there are no edges in $G_n$ connecting a vertex in $U'_t$ with a vertex in $U'_{t+2}$ for all $0\leq t\leq T-1$. It follows that, up to redefining the separating sequence $U_0, \dots, U_{T+1}$ for $(G_n, v)$, we may assume that there are no edges in $G_n$ connecting a vertex in $U_t$ with a vertex in $U_{t+1}$ for each $0 \le t \le T$. For simplicity, we set $\P:=\P_{G_n,\lambda}$ in the following, where $\lambda$ is a fixed positive parameter. Moreover, we redefine $U_{T+1}$ to be an arbitrary non-empty subset of $P(G_n)\setminus \{v\}$.

    Now suppose that $\eta_t$ with $t\in \{0,\dots, T+1\}$ is an assignment on $U_t$. Recall that $\eta_t$ is called admissible if $\P[\eta_t] > 0$. We remark that the assignments $\eta_0$ and $\eta_{T+1}$ are always admissible. Moreover, for each $t\in\{0,\dots, T\}$ and  admissible $\eta_t$, the assignment $\eta_{t+1}$ is admissible if and only if  $\eta_t\wedge \eta_{t+1}$ is admissible.

    Using~\eqref{prop:ind-poly-3} and Lemma~\ref{lem: conditioning wrt Y}, for each admissible assignment $\eta_t$ with $t\in \{0,\dots, T\}$, we may write
    \begin{equation}\label{eq: conditioning wrt Ut}
    \begin{aligned}
    \P[\eta_t: \eta_{T+1}] 
    &= \sum_{\text{adm.\ $\eta_{t+1}$}}  \P[\eta_t: \eta_{t+1}\wedge \eta_{T+1}]\cdot \P[\eta_{t+1}: \eta_{T+1}]\\
    &= \sum_{\text{adm.\ $\eta_{t+1}$}} \P[\eta_t: \eta_{t+1}]\cdot \P[\eta_{t+1}: \eta_{T+1}],
    \end{aligned}
    \end{equation}
    where the sums range over all admissible assignments $\eta_{t+1}$ on $U_{t+1}$. For each $t\in \{0,\dots, T\}$, let $A_{t,n}$ be the matrix whose entries are given by the conditional  probabilities $\P[\eta_t: \eta_{t+1}]$, with rows corresponding to fixed admissible assignments $\eta_t$ on $U_t$ and columns corresponding to fixed admissible assignments $\eta_{t+1}$ on $U_{t+1}$. Clearly, every $A_{t,n}$ is a stochastic matrix.
    Moreover, by Lemma~\ref{lem: total var bound}, the Dobrushin contraction coefficient of $A_{t,n}$ satisfies $\delta(A_{t,n})\leq \nu$ for a constant $\nu=\nu(\lambda, k,M)\in (0,1)$, depending only on the fixed parameter $\lambda\in \R_+$, the cardinality $k$ of the marked set $P(G_n)$, and the maximal cardinality $M$ of the separating sets~$U_t$.
    
    Up to an appropriate ordering of the rows and columns in the matrices $A_{t,n}$, equation~\eqref{eq: conditioning wrt Ut} implies that the entries of the product 
    \[B_n:=A_{0,n}\cdots A_{T,n}\]
    represent the conditional probabilities $\P[\eta_0: \eta_{T+1}]$, where the rows and columns of $B_n$ correspond to fixed assignments $\eta_0$ and $\eta_{T+1}$ on $U_0$ and $U_{T+1}$, respectively. By Lemma~\ref{lem: Dobrushin contraction}, we then have $\delta_n=\delta(B_n) \xrightarrow[n \to \infty]{} 0$ at least at the rate $\nu^{T(v,n)}$.

    Now, let us fix an assignment $\tau$ on $U_0=\{v\}$ and an assignment $\sigma$ on $U_{T+1}\subset P(G_n)\setminus \{v\}$. Then
    \begin{align*}
        \big|\P[\tau: \sigma]-\P[\tau]\big|&=\big|\P[\tau: \sigma]\cdot \sum_{\eta_{T+1}}\P[\eta_{T+1}]-
        \sum_{\eta_{T+1}}\P[\tau:\eta_{T+1}]\cdot \P[\eta_{T+1}]\big|\\
        &\leq\sum_{\eta_{T+1}}\P[\eta_{T+1}] \cdot \big|\P[\tau: \sigma]-\P[\tau:\eta_{T+1}]\big| \leq \sum_{\eta_{T+1}}\P[\eta_{T+1}] \cdot (2\delta_n) = 2\delta_n,
    \end{align*} 
    where the sums range over all assignments $\eta_{T+1}$ on $U_{T+1}$. Since $\min_{v\in P(G_n)}{T(v,n)} \to \infty$ as $n\to \infty$, we conclude that  
    \[\Theta_n:=\max_{\tau,\sigma}\big|\P[\tau: \sigma]- \P[\tau]\big| \to 0 \text{ as } n\to \infty, \] where the maximum is taken over all assignments $\tau$ on a single marked vertex of $G_n$ and all assignments $\sigma$ on a non-empty subset of the remaining marked vertices of $G_n$. 

    Suppose now that $\tau$ and $\sigma$ are arbitrary assignments on disjoint subsets of the marked vertices of $G_n$. We may assume that these subsets are non-empty, since $\P[\tau: \sigma]- \P[\tau]$ is identically $0$ otherwise. Let $\{v_1,\dots,v_s\}\subset P(G_n)$ be the domain of $\tau$, and set $\tau_i:=\tau|\{v_i\}$ for each $i=1,\dots,s$, so that $\tau=\tau_1\wedge \dots \wedge \tau_s$. We then have
    \begin{align*}
        \P[\tau:\sigma]-\P[\tau]=\P[\tau_1:\sigma]\cdot\prod_{i=2}^{s}\P[\tau_i:\tau_{i-1}\wedge\dots\wedge \tau_1\wedge \sigma] - \P[\tau_1]\cdot\prod_{i=2}^{s}\P[\tau_i:\tau_{i-1}\wedge\dots\wedge \tau_1]. 
    \end{align*}
    Since $\big|\P[\tau_1:\sigma]-\P[\tau_1]\big|\leq \Theta_n$ and 
    \[
     \big|\P[\tau_i:\tau_{i-1}\wedge\dots\wedge \tau_1\wedge \sigma] - \P[\tau_i:\tau_{i-1}\wedge\dots\wedge \tau_1]\big|\leq 2\Theta_n \; \text{ for each $i=2,\dots, s$},
    \]
    we obtain that
    \[\big|\P[\tau:\sigma]-\P[\tau]\big|\leq (2s-1) \cdot \Theta_n < 2k \cdot \Theta_n.\]
    This implies the desired convergence statement from Definition~\ref{def: decay of correlation} and finishes the proof.
\end{proof}

\begin{rem}\label{rem: correlation_decay}
The proofs of Lemma~\ref{lem: recursion implies finite ramification} and Proposition~\ref{prop: decay of correlation} show that, in the setting of Theorem~\ref{thm: decay of correlation}, the maximal correlation between the marked vertices of $G_n$ decays to $0$ exponentially fast in $n$.
\end{rem}

\section{Absence of zeros} \label{section: zeros}

With the preparation above, we are finally ready to use our dynamical framework to restrict location of zeros of the independence polynomials for recursive graph sequences. We begin with the following key result. 

\begin{theorem}\label{thm: zero-free}
  Let $(H,\Phi)$ be a non-degenerate and expanding gluing data with parameters $k\geq 2, m\geq 2$, and let  
  $\RR=\RR_{(H,\Phi)}$ be the associated graph recursion on $\GG_k$. Fix an arbitrary starting graph $G_0\in \GG_k$, and consider the induced recursive graph sequence $(G_n)_{n\geq 0}$ with $G_{n+1}=\RR(G_n)$. Then the zeros of the independence polynomials  $Z_{G_n}$ avoid a neighborhood of $\R_{\geq 0}$. 
\end{theorem}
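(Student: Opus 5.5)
The plan is a local-to-global argument. I would first pass to an iterate: choose $N$ such that $\RR^N$ is pre-fixed and its gluing data satisfies condition~\ref{cond: superattraction} (Remark~\ref{rem: expansion}). It then suffices to treat the $N$ subsequences $(G_{r+Nn})_{n\ge0}$, $0\le r<N$. Each of these is a recursive sequence for $\RR^N$, which is again non-degenerate and expanding, with starting graph $G_r$. A finite intersection of neighborhoods of $\R_{\ge0}$ is still a neighborhood, so I may assume from now on that $\RR$ itself is pre-fixed and satisfies \ref{cond: superattraction}.

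For the point $\lambda_0=0$ I would not use dynamics at all. By Lemma~\ref{lem: degeneration and expansion}, all vertex degrees of the $G_n$ are bounded by some $\Delta$. The standard Shearer/cluster-expansion bound then gives a zero-free disk $|\lambda|<(\Delta-1)^{\Delta-1}/\Delta^{\Delta}$ for every graph of maximal degree $\Delta$, and in particular for all $G_n$.

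Now fix $\lambda_0>0$; I must find a disk around $\lambda_0$ that is zero-free for all $n$. The union of these disks over $\lambda_0\in\R_{\ge0}$ is then the desired open neighborhood. By Corollary~\ref{cor: conv_for_positive} and Remark~\ref{rem: conv_to_M0}, the points $\xi_n(\lambda_0)=\phi_{\lambda_0}(G_n)$ converge to a fixed point $p\in\MM_0(\lambda_0)$.

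I would work in the chart $\Sigma=1$, where $\xi_n(\lambda_0)$ is the probability vector $(\P_{G_n,\lambda_0}[\xbf])_\xbf$. For large $n$ the marked vertices are non-adjacent, so Lemma~\ref{lem: probability lower bound} bounds every coordinate below by a uniform $c>0$. Hence $p$ has all coordinates positive with sum $1$. This gives two facts at once: $\Sigma(p)\neq0$, and $p\notin \IS(F_{\lambda_0})$, since $(\obf)'$ is a sum of nonnegative terms including the positive term $(\obf)^m$.

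The heart of the proof is a stability statement, uniform in $\lambda$. I claim there are neighborhoods $D$ of $\lambda_0$, $W\subset V$ of $p$ such that, for every $\lambda\in D$, every orbit of $F_\lambda$ starting in $W$ stays in $V$. Here $V$ is chosen to avoid $\{\Sigma=0\}$ and the indeterminacy sets. To prove this, I would combine two ingredients:
\begin{enumerate}[label=(\alph*)]
\item Proposition~\ref{prop: euclidean attracting}, whose constant depends polynomially on $|1/\lambda|$ and on the coordinates, hence is uniform for $\lambda\in D$ and lifts in a compact set. This gives a quadratic decrease of the distance to $\MM$.
\item The fact that $F_\lambda$ restricts to the identity on the holomorphic family $\MM_0(\lambda)$ (Proposition~\ref{prop: pre fixed dynamics}), together with the joint holomorphy and hence local Lipschitz property of $(\lambda,\xi)\mapsto F_\lambda(\xi)$.
\end{enumerate}
Concretely, I would track $d_n=$ the distance to $\MM$, measured by the defining equations. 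Then $d_{n+1}\le Cd_n^2$, while the displacement along $\MM_0(\lambda)$ at step $n$ is $O(d_n)$, because $F_\lambda$ fixes $\MM_0(\lambda)$ and retracts $\MM$ onto it. The total drift is therefore bounded by a geometric sum $\sum_n Cd_n$, which is small when $d_0$ is small. So orbits starting close to $\MM_0(\lambda)$ near $p$ remain in $V$. One also needs $p$ to be close to $\MM_0(\lambda)$ for $\lambda$ near $\lambda_0$, which follows from the continuity of $\MM_0(\lambda)$. I expect this uniform trapping estimate to be the main obstacle. The delicate part is normalizing lifts consistently, as in the Remark after Proposition~\ref{prop: euclidean attracting}: staying away from the indeterminacy set keeps the norms of $\widehat F_\lambda$-images comparable, so the polynomial constants remain controlled.

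Granting the trapping statement, choose $n_0$ with $\xi_{n_0}(\lambda_0)\in W$ deep inside. The map $\lambda\mapsto \phi_\lambda(G_{n_0})$ is continuous, so shrinking $D$ gives $\xi_{n_0}(\lambda)\in W$ for all $\lambda\in D$. Then $\xi_n(\lambda)=F_\lambda^{n-n_0}(\xi_{n_0}(\lambda))\in V$ for all $n\ge n_0$, hence $Z_{G_n}(\lambda)\neq0$ for these $n$. For the finitely many $n<n_0$ we have $Z_{G_n}(\lambda_0)>0$, so shrinking $D$ once more makes them nonvanishing on $D$ as well. This gives the required zero-free disk around $\lambda_0$ and completes the argument.
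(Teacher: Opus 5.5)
Your proposal is correct and has the same overall architecture as the paper's proof. That architecture is: Shearer's bound near $0$, reduction to a pre-fixed iterate, convergence of $\phi_{\lambda_0}(G_n)$ to a fixed point $p\in\MM_0(\lambda_0)$ via decay of correlation, a trapping neighborhood of $p$ that is uniform in $\lambda$, and continuity for the first $n_0$ steps. The genuine difference is how the uniform trapping claim is proved.

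The paper gets it in one stroke. It considers $\widetilde F(\lambda,\xi)=(\lambda,F_\lambda(\xi))$ on $\C^*\times\P^{2^k-1}$, which fixes pointwise the complex submanifold $\mathcal{Y}=\{(\lambda,\xi):\xi\in\MM_0(\lambda)\}$ (last part of Proposition~\ref{prop: pre fixed dynamics}). This map is transversally superattracting there because the $\lambda$-direction is tangent to $\mathcal{Y}$, so Lemma~\ref{lem: trans_superattr} directly yields the neighborhoods $U'\subset U$ and $D'(\lambda_0)$.

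You instead derive the trapping by hand. You use the quadratic estimate of Proposition~\ref{prop: euclidean attracting}, which is why you also impose condition~\ref{cond: superattraction} on the iterate. You add uniformity of its constant over $\lambda\in D$ and over normalized lifts in $\overline V$, the lower bound on $\|\widehat F_\lambda\|$ away from the indeterminacy sets, and the Lipschitz/retraction bookkeeping. This is more elementary and self-contained. It uses only $F_\lambda(\MM)\subset\MM_0(\lambda)$, $F_\lambda|_{\MM_0(\lambda)}=\mathrm{id}$, and joint continuity in $(\lambda,\xi)$. It does not need the submanifold structure of $\mathcal{Y}$ or a parametric form of Lemma~\ref{lem: trans_superattr}, which the paper only quotes as standard. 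The paper's route is shorter and more conceptual.

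Two small bookkeeping corrections to your drift estimate, neither of which affects convergence of the geometric sum:
\begin{enumerate}
\item After the first step, $|\xi_{n+1}-\xi_n|$ is bounded by $O(d_{n-1})$ rather than $O(d_n)$. The point $\xi_n$ is $O(d_n)$-close to $\MM$ but only $O(d_{n-1})$-close to $\MM_0(\lambda)$.
\item The first step is controlled not by $d_0$ but by $|\xi_0-p|$ and $|F_\lambda(p)-p|$. Both are small by continuity.
\end{enumerate}

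Finally, using Lemma~\ref{lem: probability lower bound} to make all coordinates of $p$ positive is a slightly stronger substitute for the paper's bound $Z^{\xbf}_{G_n}(\lambda_0)\le\max\{1,\lambda_0^k\}\,Z^{\obf}_{G_n}(\lambda_0)$. It works equally well for keeping $p$ off $\{\Sigma=0\}$ and off the indeterminacy sets.
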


\begin{proof}
    First note that, since $(H,\Phi)$ is non-degenerate, the vertex degrees of the graphs $G_n$ are uniformly bounded by Lemma~\ref{lem: degeneration and expansion}. It then follows from a result of Shearer \cite[Theorem~2]{Shearer}, see also \cite[Corollaries~5.3 and 5.7]{ScottSokal05}, that the zeros of the independence polynomials $Z_{G_n}$ avoid a fixed neighborhood of the origin for all $n\geq 0$. Hence it is sufficient to prove that for a given $\lambda_0\in \R_+$ there exists a zero-free neighborhood of $\lambda_0$.

   Let $N\in\N$ be chosen so that the $N$-th iterate of the gluing data $(H,\Phi)$ is pre-fixed. Then we may decompose the sequence $(G_n)_{n\geq 0}$ into $N$ recursive subsequences $(\RR^{nN}(G_0))_{n\geq 0},\dots, (\RR^{nN}(G_{N-1}))_{n\geq 0}$ induced by the graph recursion $\RR^N$ and the starting graphs $G_0,\dots, G_{N-1}$, respectively. Since $\RR^N$ is also non-degenerate and expanding, it follows that it is sufficient to prove the existence of a zero-free neighborhood of $\lambda_0$ when the given gluing data $(H,\Phi)$ is pre-fixed (i.e., $N=1$), which we will assume in the following.  

     In line with notation from Section~\ref{subsec: induced dynamics}, let us write $\xi_0(\lambda) = \phi_{\lambda}(G_0) \in \P^{2^k-1}$, and consider the respective orbit
    \[
    \xi_n(\lambda) = \phi_{\lambda}(G_n) = F_{\lambda}^n(\xi_0(\lambda))
    \]
    under the renormalization map $F_{\lambda} : \P^{2^{k}-1}\dashrightarrow \P^{2^{k}-1}$; see Corollary~\ref{cor: renormalization map}. We denote by $\Sigma_0$ the zero set in $\P^{2^{k}-1}$ of the projection map $\Sigma: \C^{2^k}\to \C$ given by the sum of the coordinates.

    Since $\lambda_0\in \R_+$ and the gluing data $(H,\Phi)$ is assumed to be non-degenerate, expanding, and pre-fixed, Corollary~\ref{cor: conv_for_positive} combined with Remark~\ref{rem: conv_to_M0} implies that the orbit $(\xi_n(\lambda_0))_{n\geq0}$ must converge to a fixed point $p(\lambda_0)\in \MM_0(\lambda_0)$. Note also that this orbit stays within the compact subset of $\P^{2^k-1}$ consisting of points with non-negative coordinates, and therefore its limit $p(\lambda_0)\notin\Sigma_0$. In particular, we may find a neighborhood $U$ of $p(\lambda_0)$ that avoids $\Sigma_0$. 

    Since \[Z^{\xbf}_{G_n}(\lambda_0) \leq \max\{1,\lambda_0^k\}\cdot Z^{\obf}_{G_n}(\lambda_0)\] for all $\xbf\in \{0,1\}^k$ and $n\geq0$, we get that the $(\obf)$-coordinate of the limit point $p(\lambda_0)\in \MM_0(\lambda_0)\subset\MM$ does not vanish. It follows that, by shrinking the neighborhood $U$ of $p(\lambda_0)$, we may assume that $U$ avoids the indeterminacy set $\IS(F_\lambda)$ for all $\lambda$ in a complex neighborhood $D(\lambda_0)$ of $\lambda_0$.  

     \begin{claim}
        There exist a neighborhood $U'\subset U$ of $p(\lambda_0)$ and a neighborhood $D'(\lambda_0)\subset D(\lambda_0)$ of $\lambda_0$ such  that the orbits of points in $U'$ under $F_\lambda$ stay within $U$ for all $\lambda\in D'(\lambda_0)$.
    \end{claim}

    Indeed, let us consider the map $\widetilde{F}(\lambda,\xi)= (\lambda, F_\lambda(\xi))$ on $\mathcal{X}=\C^*\times \P^{2^k-1}$ and the complex submanifold \[\mathcal{Y}:=\{(\lambda, \xi): \xi\in \MM_0(\lambda)\}\] from Proposition~\ref{prop: pre fixed dynamics}. Set \[\mathcal{Y}_0:=(D(\lambda_0)\times U)\cap \mathcal{Y} \quad \text{and} \quad  U(\mathcal{Y}_0):=D(\lambda_0)\times U\subset\mathcal{X}.\] By above, $\widetilde{F}: U(\mathcal{Y}_0)\to \mathcal{X}$ is well-defined and holomorphic on the neighborhood $U(\mathcal{Y}_0)$ of the smooth submanifold $\mathcal{Y}_0\subset \mathcal{X}$. Since $\widetilde{F}$ fixes $\mathcal{Y}_0$ pointwise and the $\lambda$-direction is tangent to $\mathcal{Y}_0$, Theorem~\ref{thm: superattraction} implies that $\widetilde{F}$ is transversally superattracting on $\mathcal{Y}_0$. The statement of the claim now immediately follows from Lemma~\ref{lem: trans_superattr} applied to the point $(\lambda_0, p(\lambda_0))\in \mathcal{Y}_0$ and its coordinate neighborhood $D(\lambda_0)\times U$. 

    \medskip

    Now let $r\in \N$ be an iterate such that $\xi_r(\lambda_0)\in U'$. We may now shrink the neighborhood $D'(\lambda_0)\subset D(\lambda_0)$ further so that $\xi_r(\lambda)\in U'$ for all $\lambda\in D'(\lambda_0)$, while $\xi_0(\lambda), \dots, \xi_{r-1}(\lambda)$ avoid $\Sigma_0$. The claim above implies that the orbit $(\xi_n(\lambda))_{n\ge 0}$ avoids $\Sigma_0$ for all $\lambda\in D'(\lambda_0)$. It follows that the independence polynomials $Z_{G_n}$ have no zeros in the neighborhood $D'(\lambda_0)$. This completes the proof. 
\end{proof}

We now derive the less technical formulation of the main result as stated in the introduction.

\begin{proof}[Proof of Theorem~\ref{main result zeros}.]
Suppose $\RR=\RR_{(H,\Phi)}$ is a recursion operator on $\GG_k$, and let $(G_n)_{n\geq0}$, with $G_{n+1}=\RR(G_n)$, be a recursive graph sequence with a starting graph $G_0\in\GG_k$. We assume that the sequence $(G_n)_{n\geq0}$ satisfies the following two conditions:

\ref{item: non-deg intro} the maximal vertex degrees of the graphs $G_n$ are uniformly bounded, and

\ref{item: exp intro} the minimal distance between pairs of distinct labeled vertices in $G_n$ diverges as $n \rightarrow \infty$.

\smallskip

When $k=1$, condition~\ref{item: non-deg intro} implies that either the gluing data $(H,\Phi)$ is non-degenerate (i.e., $\#\Phi(1)=1$) or  the unique marked vertex of $G_0$ is isolated (see Lemma~\ref{lem: degeneration and expansion}). In either case, by definition of the graph recursion $\RR$, each connected component of $G_n$ with $n\geq 2$ appears as a connected component of $G_1$. It follows that the zero set of $Z_{G_n}$ is the same for all $n\geq 1$, and thus the assertion of the theorem trivially holds. Hence, we will assume that $k\geq 2$ in the following. 

If the starting graph $G_0$ is connected, the statement follows immediately from Theorem~\ref{thm: zero-free}, since, by Lemma~\ref{lem: degeneration and expansion}, assumptions~\ref{item: non-deg intro} and~\ref{item: exp intro} imply that the gluing data $(H,\Phi)$ is non-degenerate and expanding, respectively. However, when labeled vertices lie in different connected components of $G_0$, these assumptions do not guarantee that the gluing data is either expanding or non-degenerate. Nevertheless, we can essentially reduce the analysis in the general case to the setting of connected graphs as follows.

\smallskip

First,  without loss of generality, we may assume that the gluing data $(H,\Phi)$ is pre-fixed. Let $S\subset \{1,\dots, k\}$ denote the set of all periodic labels with respect to $\Lambda=\Lambda_{(H,\Phi)}$. For each $n\geq 0$, we consider an equivalence relation $\sim_n$ on $\{1,2,\dots, k\}$ defined by $j\sim_n j'$ if the vertices labeled $j$ and $j'$ belong to the same connected component of $G_n$. We denote by $\approx_n$ the restriction of $\sim_n$ to $S$. By definition of the recursion $\RR$, if $j\not\approx_n j'$ for some  $n\geq 0$ and $j,j'\in S$, then $j\not\approx_{n+1}j'$. It follows that the sequence $(\approx_n)_{n\geq0} \subset S\times S$ is decreasing, and thus it must eventually stabilize, that is, $\approx_{N}=\approx_{N+1}=\approx_{N+2}=\dots$ for some sufficiently large $N\in \N$. Moreover, if $\Lambda(j)\not\approx_N \Lambda(j')$ for some $j,j'\in \{1,\dots, k\}$, then $j\not\sim_{n}j'$ for all $n\ge N+1$. 

Since it is harmless to skip finitely many iterates, we will assume that the starting graph $G_0$ equals $G_{N+1}$ below, so that the following property holds: 
\begin{equation}\label{eq: conn_relation}
    \text{if $\Lambda(j)\not\approx_0 \Lambda(j')$ for some $j,j'\in \{1,\dots, k\}$, then $j\not\sim_{n}j'$ for all $n\ge 0$.} 
\end{equation}

For each equivalence class $\alpha=[j]\subset S$ of $\approx_0$, we now consider a new gluing data $(H^\alpha,\Phi^\alpha)$ obtained from $(H,\Phi)$ by ignoring the labels outside of $\Lambda^{-1}(\alpha)$. More precisely, the hypergraph $H^\alpha$ has the same vertex set $V(H^\alpha)=V(H)$, while the edge multi-set $E(H^\alpha)$ consists only of those edges $e\in E(H)$ whose labels $\ell(e)$ are contained in $\Lambda^{-1}(\alpha)$. The labeling map $\Phi^\alpha$ is defined to be the restriction of $\Phi$ to $\Lambda^{-1}(\alpha)$. The corresponding recursion operator $\RR^\alpha:=\RR_{(H^\alpha, \Phi^\alpha)}$ then acts on graphs with $\#\Lambda^{-1}(\alpha)$ labeled vertices with labels in $\Lambda^{-1}(\alpha)$. We define the starting graph $G^\alpha_0$ to be the graph obtained from $G_0$ by forgetting the labels outside of $\Lambda^{-1}(\alpha)$, and consider the induced recursive graph sequence $(G^\alpha_n)_{n\geq 0}$ with $G^\alpha_{n+1}=\RR^\alpha (G^\alpha_n)$. (We remark a slight abuse of notation compared to Definition~\ref{def:graph recursion}, where the labels are assumed to be in an integer interval $\{1,\dots, k\}$. Clearly, our terminology and discussion in Section~\ref{sec: graph recursion} can be naturally adapted to an arbitrary finite set of labels to cover the setting here.)  

By definition of the graph recursion $\RR$, when constructing $G_{n+1}=\RR(G_n)$ from $G_n$ we can only identify labeled vertices (with identical labels) in the $\#V(H)$ copies of the graph $G_n$. It now follows from \eqref{eq: conn_relation} that every connected component that occurs in $G_n$ must also occur in the graph $G_n^\alpha$ for some  equivalence class $\alpha$ of $\approx_0$. Since the independence polynomial of a disconnected graph equals the product of the independence polynomials of its connected components, to conclude the desired statements it is sufficient to check that the zeros
of the independence polynomials $Z_{G^\alpha_n}$ avoid a uniform neighborhood of $\R_{\geq 0}$ for each equivalence class $\alpha$. 

When $\#\alpha\geq 2$, the gluing data $(H^\alpha, \Phi^\alpha)$ is non-degenerate and expanding due to the standing assumptions~\ref{item: non-deg intro} and~\ref{item: exp intro} and Lemma~\ref{lem: degeneration and expansion} (even though the starting graph $G^\alpha_0$ may be disconnected). Hence Theorem~\ref{thm: zero-free} applies to the sequence $(G^\alpha_n)_{n\geq 0}$. When $\#\alpha=1$, the same is still true, unless the connected component of $G_0^\alpha$ containing the unique vertex with a periodic label is a singleton. In the latter case, the zeros of the independence polynomials $Z_{G^\alpha_n}$ for $n>0$ coincide with those of $Z_{G^\alpha_1}$, and thus obviously avoid a uniform neighborhood of $\R_{\geq 0}$. This finishes the proof of the theorem. 
\end{proof}

We will now switch our attention towards proving that the zeros of the independence polynomials are uniformly bounded for recursive graph sequences with a suitable starting graph.

\begin{definition}[Maximally independent graphs] \label{def: maximally}
    A graph $G\in \GG_k$ with $k\ge 2$ marked vertices is called \emph{maximally independent} if the following two conditions hold: 
    \begin{enumerate}[label=(\roman*)]
    \item\label{item: max_ind_1} for each vertex assignment $\xbf\in \{0,1\}^k$ on the marked set $P(G)$ there exists a unique largest independent subset $I_\xbf$ of $G$ that agrees with $\xbf$ on $P(G)$; 
    \item\label{item: max_ind_2} $\# I_{(1,\dots,1)} - \# I_{(0,\dots,0)} = k$. 
    \end{enumerate} 
\end{definition}

We observe that in this definition, the difference between the two cardinalities in condition~\ref{item: max_ind_2} is assumed to be \emph{maximal}: since any independent subset that agrees with the assignment $(1, \ldots, 1)$ on the marked vertices remains independent when all the marked vertices are taken out from it, we always have \[\# I_{(0,\dots,0)}\geq \# I_{(1,\dots,1)} - k.\] By the same argument applied to an arbitrary assignment $\xbf =(x_1,\dots, x_k)\in \{0,1\}^k$, conditions~\ref{item: max_ind_1} and~\ref{item: max_ind_2} imply that \[\#I_{(x_1,\dots,x_k)} - \# I_{(0,\dots,0)} = x_1+\dots+ x_k.\]

The simplest example of a maximally independent graph is a star:

\begin{example}[Maximally independent stars]
Recall that a $d$-star is a tree with a single non-leaf vertex of degree $d\geq 2$. It is straightforward to verify that, given any $d\geq k \geq  2$, a $d$-star with $k$ of its $d$ leaves labeled $1,\dots, k$ is maximally independent if and only if $d\geq k+2$. In particular, condition~\ref{item: max_ind_2} fails for $d=k$, while  condition~\ref{item: max_ind_1} fails for $d=k+1$.
\end{example}

The rest of the paper is devoted to proving Theorem~\ref{main bounded zeros} from the introduction: for non-degenerate and expanding graph recursion operators $\RR=\RR_{(H,\Phi)}$ on $\GG_k$ with $k\geq 2$ and for maximally independent starting graphs $G_0\in \GG_k$, the zeros of the independence polynomials of $G_n=\RR^n(G_0)$ are uniformly bounded (Theorem~\ref{thm: bounded}) and avoid a cone around $\R_{\geq0}$ (Corollary~\ref{cor: zero-free-cone}). 

Given $\lambda\in \C^*$, it will be convenient to replace the coordinates $(\xbf)$, $\xbf=(x_1,\dots,x_k)\in\{0,1\}^k$, on $\C^{2^{k}}$ and $\P^{2^{k}-1}$ from Section~\ref{subsec: induced dynamics} by
\begin{equation}\label{eq: new_coord}
    \llparenthesis \xbf \rrparenthesis =\llparenthesis x_1, \ldots, x_k \rrparenthesis := \lambda^{-(x_1+ \cdots + x_k)} \cdot (\!(x_1, \ldots, x_k)\!).
\end{equation}
To emphasize that we consider the renormalization map $F_\lambda$ associated with the graph recursion $\RR$ in these new coordinates, we will write $\F_\lambda$ (resp.\ $\widehat{\F}_\lambda$) instead of $F_\lambda$ (resp.\ $\widehat{F}_\lambda)$ below. Using notation from Section~\ref{subsec: induced dynamics}, the map $\widehat{\F}_\lambda$ is then given by
\begin{equation}\label{eq: x_prime_new_coord}
\llparenthesis \xbf \rrparenthesis'=\sum_{\eta: Y\to \{0,1\}} \lambda^{\#\eta^{-1}(1)} \prod_{i=1}^m  \llparenthesis [\xbf \wedge\eta]_i\rrparenthesis, \quad \xbf\in \{0,1\}^k, 
\end{equation}
where we follow the convention that $\left( \llparenthesis \xbf \rrparenthesis': \xbf\in \{0,1\}^k\right)$ denotes the $\llparenthesis\cdot\rrparenthesis$-coordinates of the image $\widehat{\F}_\lambda(\widehat\xi)$ of the point $\widehat\xi= \left( \llparenthesis \xbf \rrparenthesis: \xbf\in \{0,1\}^k\right)$.

Recall that for a graph $G\in \GG_k$, the coordinates $(\xbf)$ of the point $\widehat{\phi}_\lambda(G)\in \C^{2^k}$ record the values $Z_G^\xbf(\lambda)$ of the respective $\xbf$-conditioned independence polynomials. In other words, these coordinates encode the partition~\eqref{eq: sum of conditioned} of the sum defining the independence polynomial $Z_G(\lambda)$ according to the assignments $\xbf$ on the marked vertices. In the new coordinates~\eqref{eq: new_coord}, we obtain the same partition, except that the marked vertices receiving the value $1$ are no longer counted in $\llparenthesis \xbf \rrparenthesis$ and therefore contribute an explicit factor, so that
\[
Z_G(\lambda) = \sum_{\xbf=(x_1,\dots,x_k) \in \{0,1\}^k} \lambda^{x_1+ \cdots + x_k}\llparenthesis \xbf \rrparenthesis.
\]

We note that the invariant variety $\MM$ for $\GG_k$ is preserved by the coordinate change~\eqref{eq: new_coord}, i.e., it is defined by the same homogeneous quadratic equations
\begin{equation*}
    \llparenthesis \xbf\rrparenthesis  \cdot \llparenthesis \ybf\rrparenthesis  = \llparenthesis \zbf\rrparenthesis  \cdot \llparenthesis \wbf\rrparenthesis ,
    \end{equation*}
    where $\xbf,\, \ybf, \zbf, \wbf\in \{0,1\}^{k}$ run over all possible binary $k$-tuples satisfying $\xbf+\ybf=\zbf+\wbf$.
    Since for all $\lambda\in \C^*$ the chart $\{(\obf) \neq 0\} \subset \P^{2^k-1}$ is also invariant under the coordinate change, the intersection $\MM\cap \{\llparenthesis\obf\rrparenthesis\neq 0\}$ is still given by the equations
\[
\frac{\llparenthesis\xbf\rrparenthesis}{\llparenthesis\obf\rrparenthesis} = \prod_{j:\; x_j=1} \frac{\llparenthesis\ebf_j\rrparenthesis}{\llparenthesis\obf\rrparenthesis},
\]
for all $\xbf=(x_1,\dots,x_k)\in\{0,1\}^k\setminus \{\obf\}$. Furthermore, it follows from Lemma~\ref{lem: graph on 0-chart} that the restriction of $\F_\lambda$ to $\MM\cap \{\llparenthesis\obf\rrparenthesis\neq 0\}$ is governed by the equations
\begin{equation}\label{eq: image_in_new_coord}
    \llparenthesis\xbf\rrparenthesis'= \llparenthesis\obf\rrparenthesis'\cdot \prod_{j:\; x_j=1}  \left(\frac{\llparenthesis\ebf_{\Lambda(j)}\rrparenthesis}{\llparenthesis\obf\rrparenthesis}\right)^{\#\Phi(j)},
\end{equation}
for all $\xbf=(x_1,\dots, x_k)\in \{0,1\}^k\setminus \{\obf\}$. In particular, the image of the regular part of $\MM\cap \{\llparenthesis\obf\rrparenthesis\neq 0\}$ under $\F_\lambda$ does not depend on $\lambda$. Consequently, when the gluing data $(H,\Phi)$ is pre-fixed, the subvariety $\MM_0=\MM_0(\lambda)\subset \MM$ from Proposition~\ref{prop: pre fixed dynamics} does not depend on $\lambda$ in the $\llparenthesis \cdot \rrparenthesis$-coordinates. In contrast to $\MM$, the subvariety $\MM_0$ is therefore typically not preserved by our change of coordinates; nevertheless, we keep the notation $\MM_0$ in the new coordinates.

\begin{remark}
    The proof of Theorem~\ref{thm: zero-free} could equally well have been carried out in the $\llparenthesis \cdot \rrparenthesis$-coordinates. There is, however, no significant gain: while the subvariety $\MM_0(\lambda)$ becomes independent of $\lambda$ in these coordinates, the relevant hyperplane $\Sigma_0$, on which the independence polynomial $Z_G(\lambda)$ vanishes, now does depend on $\lambda$. Namely, the hyperplane $\Sigma_0=\Sigma_0(\lambda)$ is cut out by the equation
    \[\sum_{\xbf=(x_1,\dots,x_k) \in \{0,1\}^k} \lambda^{x_1+ \cdots + x_k}\llparenthesis \xbf \rrparenthesis=0.\]
\end{remark}

\begin{theorem}\label{thm: bounded}
      Let $(H,\Phi)$ be a non-degenerate and expanding gluing data with parameters $k\geq 2, m\geq 2$, and let  $\RR=\RR_{(H,\Phi)}$ be the associated graph recursion on $\GG_k$. Fix an arbitrary maximally independent starting graph $G_0\in \GG_k$, and consider the induced recursive graph sequence $(G_n)_{n\geq 0}$ with $G_{n+1}=\RR(G_n)$. Then the zeros of the independence polynomials $Z_{G_n}$ are uniformly bounded.
\end{theorem}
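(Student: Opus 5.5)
The plan is to study the renormalization map near $\lambda=\infty$ in the $\llparenthesis\cdot\rrparenthesis$-coordinates, using the parameter $\mu:=1/\lambda$, and then to rerun the argument of Theorem~\ref{thm: zero-free} at $\mu=0$ instead of at $\lambda_0\in\R_+$. As there, we first pass to an iterate $\RR^N$ that is pre-fixed and satisfies condition~\ref{cond: superattraction}, and split $(G_n)$ into $N$ subsequences.

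For this reduction I need the starting graphs $G_0,\dots,G_{N-1}$ to be maximally independent. I expect this to follow from a short lemma: if $G$ is maximally independent, then so is $\RR(G)$. The largest independent set of $\RR(G)$ agreeing with $\xbf\wedge\eta$ is the union of the sets $I_{[\xbf\wedge\eta]_i}$ in the copies. Its size is $m\#I_{\obf}+\#(\xbf\wedge\eta)^{-1}(1)$, so it is uniquely maximized by $\eta\equiv 1$, which gives both conditions~\ref{item: max_ind_1} and~\ref{item: max_ind_2}.

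Next I identify the starting point. For a maximally independent $G$ we have $Z_G^\xbf(\lambda)=\lambda^{\#I_\obf+|\xbf|}\bigl(1+O(\mu)\bigr)$, where $|\xbf|=x_1+\dots+x_k$. Hence
\[
\llparenthesis\xbf\rrparenthesis=\lambda^{\#I_\obf}\bigl(1+O(\mu)\bigr)\quad\text{for all }\xbf,
\]
so $\phi_\lambda(G_0)\to p:=[1:\cdots:1]$ as $\mu\to0$, holomorphically in $\mu$. The point $p$ lies in $\MM$. By \eqref{eq: image_in_new_coord}, $p$ is fixed by $\F_\lambda$ and lies in the $\lambda$-independent subvariety $\MM_0$.

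I then rescale $\widehat{\F}_\lambda$ by $\lambda^{-\#Y}$ in \eqref{eq: x_prime_new_coord}. This gives a family $\widehat{\mathbb G}_\mu$ whose coefficients are polynomials in $\mu$. At $\mu=0$ only the term $\eta\equiv1$ survives, so $\widehat{\mathbb G}_0(\xbf)=\prod_i\llparenthesis[\xbf\wedge\ibf]_i\rrparenthesis$, a monomial map that is regular near $p$. Thus $(\mu,\xi)\mapsto(\mu,\mathbb G_\mu(\xi))$ is holomorphic on a neighborhood $D\times U$ of $(0,p)$, fixes $D\times(\MM_0\cap U)$ pointwise, and maps $\MM$ into $\MM_0$. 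The key step is to show that this family is transversally superattracting along $D\times(\MM_0\cap U)$, including at $\mu=0$. For this I would reprove Proposition~\ref{prop: euclidean attracting} for $\widehat{\mathbb G}_\mu$. Its Claim uses only the symmetry of the exponent $\|\cdot\|_H$ under swapping $\eta,\widetilde\eta$ and the product structure given by \ref{cond: superattraction}. So the same computation goes through with coefficients $\mu^{\#Y-\#\eta^{-1}(1)}$, and the constant is polynomial in $|\mu|$ and is therefore bounded near $\mu=0$. Lemma~\ref{lem: trans_superattr} then gives neighborhoods $U'\subset U$ of $p$ and $D'\ni0$ such that orbits starting in $U'$ stay in $U$ for all $\mu\in D'$.

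Finally I handle the zero set. After multiplying by $\mu^k$, the hyperplane $\Sigma_0$ is $\sum_\xbf\mu^{k-|\xbf|}\llparenthesis\xbf\rrparenthesis=0$. At $\mu=0$ this is $\{\llparenthesis\ibf\rrparenthesis=0\}$, which avoids $p$, so after shrinking $U$ and $D'$ the set $U$ avoids $\Sigma_0(\mu)$ for all $\mu\in D'$. Since $\phi_\lambda(G_0)\in U'$ for $|\mu|$ small, the whole orbit stays in $U$, so $Z_{G_n}(\lambda)\neq0$ for all $n$ and all $|\lambda|$ large. Boundedness follows, uniformly over the $N$ subsequences. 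I expect the main obstacle to be justifying the superattraction estimate at the degenerate parameter $\mu=0$: making sure the rescaled family has no indeterminacy near $p$, and that the constants in Proposition~\ref{prop: euclidean attracting} stay uniform as $\mu\to0$.
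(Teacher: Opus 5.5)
Your proposal is correct and follows the paper's proof essentially step by step. The paper also works in the $\llparenthesis\cdot\rrparenthesis$-coordinates, shows $\phi_\lambda(G_0)\to[1:\cdots:1]\in\MM_0$, rescales $\widehat{\F}_\lambda$ by $\lambda^{-\#Y}$ to get the monomial limit map $\F_\infty$, and uses the dominance of $\lambda^k\llparenthesis 1,\dots,1\rrparenthesis$ near $[1:\cdots:1]$ to keep the whole orbit off $\Sigma_0$.

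The main difference is how superattraction at $\lambda=\infty$ is obtained. The paper just passes to the limit: the characteristic-polynomial condition on the differential along the $\lambda$-independent $\MM_0$ is closed. So your re-derivation of Proposition~\ref{prop: euclidean attracting} with coefficients $\mu^{\#Y-\#\eta^{-1}(1)}$ is valid but more than is needed. Your explicit check that $\RR$ preserves maximal independence correctly supplies the induction that the paper only asserts.
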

\begin{proof}
    First note that all graphs in the sequence $(G_n)_{n\geq 0}$ are maximally independent; this easily follows from the recursive construction by induction on $n$. It is therefore sufficient to prove the statement when the gluing data $(H,\Phi)$ is pre-fixed, which we will assume in the following.

    We work in the $\llparenthesis \cdot \rrparenthesis$-coordinates~\eqref{eq: new_coord}. Let us first consider what happens to the point $\phi_\lambda(G_0) \in \P^{2^k-1}$ as $|\lambda|\to \infty$. 
    
    \begin{claim}
    As $|\lambda| \rightarrow \infty$, the point $\phi_\lambda(G_0)$ converges to the point $[1: \cdots : 1]\in \MM_0$ in the $\llparenthesis\cdot \rrparenthesis$-coordinates. 
    \end{claim}

    Indeed, the $\llparenthesis \xbf\rrparenthesis$-coordinates of $\widehat{\phi}_\lambda(G_0)\in \C^{2^k}$ are given by $\lambda^{-(x_1+\dots+x_k)} Z^\xbf_{G_0}(\lambda)$, where $\xbf=(x_1,\dots, x_k)\in \{0,1\}^k$, and thus they are all monic polynomials in $\lambda$ of equal degrees, because $G_0$ is assumed to be maximally independent. Hence $\phi_\lambda(G_0)$ converges to the point $[1: \cdots : 1]\in \P^{2^k-1}$ as $|\lambda| \rightarrow \infty$. It remains to check that $[1: \cdots : 1]\in \MM_0$. Note that $[1: \cdots : 1]\in \MM$ and $[1: \cdots : 1]$ is fixed under $\F_\lambda$ for all $\lambda\in \C^*$ such that $[1: \cdots : 1]\notin \IS(\F_\lambda)$; see equations~\eqref{eq: image_in_new_coord}. We conclude that $[1: \cdots : 1]\in \F_\lambda(\MM\setminus\IS(\F_\lambda)) \subset \MM_0$ for all such $\lambda$, and thus, since $\MM_0$ is independent of $\lambda$, we have $[1: \cdots : 1]\in \MM_0$ for all $\lambda\in \C^*$. The claim follows.\\

    Let us now consider what happens to the action of $\F_\lambda$ as $|\lambda| \rightarrow \infty$. By~\eqref{eq: x_prime_new_coord}, every coordinate function of $\F_\lambda$ is a polynomial in $\lambda$, with the leading term corresponding to the single assignment $\eta=\ibf_Y$. (Note that $\#Y\geq 1$ as $(H,\Phi)$ is non-degenerate.) Therefore, as $|\lambda|\to \infty$ the maps $\frac{1}{\lambda^{\#Y}}{\widehat{\F}}_\lambda$ converge to the monomial map $\widehat{\F}_\infty$ on $\C^{2^k}$ given by 
     \[\llparenthesis \xbf\rrparenthesis' = \prod_{i=1}^m\llparenthesis[\xbf\wedge \ibf_Y]_i\rrparenthesis, \quad \xbf\in \{0,1\}^k.\]
     Consequently, the rational maps $\F_\lambda: \P^{2^k-1} \dashrightarrow \P^{2^k-1}$ converge to the respective rational self-map $\F_\infty: \P^{2^k-1} \dashrightarrow \P^{2^k-1}$ as $|\lambda| \rightarrow \infty$, where convergence is uniform on compact subsets of the complement of the indeterminacy set $\IS(\F_\infty)$. 
     
     Since each $F_\lambda$ holomorphically retracts $\MM$ to $\MM_0$ and is transversally superattracting there (see Proposition~\ref{prop: pre fixed dynamics} and Theorem~\ref{thm: superattraction}), the same is true for $\F_\lambda$, where now both $\MM$ and $\MM_0$ are independent of $\lambda$ in the $\llparenthesis \cdot\rrparenthesis$-coordinates. Passing to the limit as $|\lambda|\to \infty$, we obtain that (away from the indeterminacy set) $\F_\infty$ is also a holomorphic retraction of $\MM$ to $\MM_0$, and is transversally superattracting on $\MM_0$.

    Let us now fix a small neighborhood $U$ of $[1: \cdots : 1]\in \MM_0\subset  \P^{2^k-1}$ whose closure avoids $\IS(\F_\infty)$.  We may assume that, as long as $|\lambda|$ is sufficiently large, the value
    \[
   \sum_{\xbf=(x_1,\dots,x_k)  \in \{0,1\}^k} \lambda^{x_1+ \cdots + x_k}\llparenthesis \xbf \rrparenthesis \neq 0
    \]
    for all $\xi\in U$ with coordinates $\llparenthesis \xbf \rrparenthesis$, $\xbf\in \{0,1\}^k$. Indeed, the term $\lambda^k\llparenthesis 1,\dots, 1 \rrparenthesis$ dominates the others on $U$ by a factor of order $|\lambda|$.

    By above, the maps $\F_\lambda$ converge uniformly to $\F_\infty$ on $U$ as $|\lambda|\to \infty$.  Moreover, by passing to a sub-neighborhood $U'\subset U$ of $[1:\cdots : 1]$, we may assume that, as long as $\lambda$ lies in a sufficiently small neighborhood $D'(\infty)$ of $\infty\in \widehat{\C}$, the orbits of points in $U'$ under $\F_\lambda$ remain in~$U$; compare the proof of Theorem~\ref{thm: zero-free}. 

    Using now the claim, we have that $\xi_0(\lambda)=\phi_\lambda(G_0)\in U'$ for all $\lambda$ with sufficiently large $|\lambda|$, and thus the orbit 
    \[\xi_n(\lambda):=\phi_\lambda(G_n)=\F_\lambda^n(\xi_0(\lambda))\]
    stays within $U$. In particular, the independence polynomials $Z_{G_n}(\lambda)$ do not vanish as long as $|\lambda|$ is sufficiently large. This finishes the proof.
\end{proof}

By combining the boundedness of the zeros (Theorem~\ref{thm: bounded}) with the fact that the zeros avoid a neighborhood of $\R_{\geq0}$ (Theorem~\ref{thm: zero-free}), we obtain the following immediate corollary.

\begin{corollary}\label{cor: zero-free-cone}
    For non-degenerate and expanding graph recursion operators $\RR$ on $\GG_k$ with $k\geq 2$ and a maximally independent starting graph $G_0\in 
    \GG_k$, the zeros of the independence polynomials of the graphs $G_n=\RR^n(G_0)$ avoid a uniform cone around $\R_{\geq0}$.
\end{corollary}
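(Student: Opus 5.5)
The plan is to combine the two previous results through an elementary compactness argument. By Theorem~\ref{thm: bounded} there is a radius $R>0$ such that every zero of every $Z_{G_n}$ lies in the disc $\{|\lambda|\le R\}$. By Theorem~\ref{thm: zero-free}, applied with the same non-degenerate expanding data and starting graph $G_0$, there is an open set $U\supset\R_{\geq 0}$ containing no zeros of any $Z_{G_n}$. A maximally independent $G_0$ is in particular a valid starting graph there, so no extra hypotheses are needed.

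Next I would extract from $U$ a sector near the origin and a strip along the compact segment $[0,R]$. Since $[0,R]$ is compact and contained in the open set $U$, there is $\delta>0$ such that the $\delta$-neighbourhood $N_\delta=\{\lambda:\dist(\lambda,[0,R])<\delta\}$ lies in $U$. Any $\lambda\neq0$ with $|\arg\lambda|<\alpha$ and $|\lambda|\le R$ then lies in $N_\delta$ as soon as $\sin\alpha<\delta/R$, or more simply if $|\lambda|\,|\sin\arg\lambda|<\delta$ for $\mathrm{Re}\,\lambda\ge0$. Hence the cone $C_\alpha=\{\lambda\neq0: |\arg\lambda|<\alpha\}$ intersected with the disc of radius $R$ is zero-free. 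Outside that disc there are no zeros at all by the choice of $R$. The vertex $\lambda=0$ is not a zero, since $Z_{G_n}(0)=1$.

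Thus $C_\alpha\cup\{0\}$ is a uniform zero-free cone around $\R_{\geq0}$. There is no real obstacle here; the only point to check is that both theorems are stated with uniformity in $n$, which they are. Compactness of $[0,R]$ is what converts the neighbourhood of the ray into an angular opening.
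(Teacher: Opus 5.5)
Your proposal is correct and takes the same route as the paper, which simply combines Theorem~\ref{thm: bounded} with Theorem~\ref{thm: zero-free}. Your compactness argument along $[0,R]$ just makes explicit what the paper calls ``immediate''; note that you should take $\alpha<\pi/2$, so that $\mathrm{Re}\,\lambda\in[0,R]$ on the truncated sector and the distance to $[0,R]$ is $|\mathrm{Im}\,\lambda|$.
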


To complete the discussion, we provide an example that demonstrates that whether the zeros of the independence polynomials for a recursive sequence $(G_n)_{n\geq 0}$ of graphs are uniformly bounded depends in general on the starting graph $G_0$.

\begin{example}[Chebyshev gluing data]\label{ex: chebyshev}
Consider the gluing data $(H, \Phi)$ with parameters $k=2$ and $m=2$ defined as follows. The gluing scheme $H$ with $V(H)=\{1,2\}$ has exactly three edges: edges $\{1\}$ and $\{2\}$ labeled $1$, and an edge $\{1,2\}$ labeled $2$. The corresponding labeling map $\Phi$ is given by $\Phi(1)=\{1\}$ and $\Phi(2)=\{2\}$. In other words, given $G\in \GG_2$, the graph $\RR_{(H,\Phi)}(G)$ is constructed by taking two copies $G(1)$ and $G(2)$ of $G$, identifying in them the two marked vertices labeled $2$, and relabeling the two vertices with old label $1$. Clearly, this recursion operator $\RR=\RR_{(H,\Phi)}$ is both expanding and non-degenerate.

When $G_0\in \GG_2$ is a single edge, the graphs $G_n=\RR^n(G_0)$ are paths with $2^n$ edges. The zeros of the independence polynomial of these graphs all lie on the negative real axis, but are not uniformly bounded. It follows that the recursion operator being expanding and non-degenerate is not sufficient for the zeros to remain bounded.

Now suppose that $G_0\in \GG_2$ is a $3$-star where two of the three leaves are labeled. This graph is not maximally independent, but one easily checks that the graph $G_1 = \RR(G_0)$ is however maximally independent. Thus by Theorem~\ref{thm: bounded} the zeros of the independence polynomial for the sequence $G_n=\RR^n(G_0)$ are uniformly bounded.
\end{example}

\bibliographystyle{alpha}
\bibliography{article}

\end{document}